\definecolor{mylinkcolor}{rgb}{0.5,0.0,0.0}
\definecolor{myurlcolor}{rgb}{0.0,0.0,0.75}
\newcommand{\Cdot}{\raisebox{-0.25ex}{\scalebox{1.3}{$\cdot$}}}
\newcommand{\overbar}[1]{\mkern 2mu\overline{\mkern-2mu#1\mkern-2mu}\mkern 2mu}
\newcommand{\lb}{\multirow{2}{-6pt}{$\Bigl\{$}}
\newcommand{\blb}{\multirow{4}{-6pt}{$\left\{\rule{0pt}{22pt}\right.$}}
\newcommand{\Q}{\mathbf{Q}}
\newcommand{\C}{\mathbf{C}}
\newcommand{\Qbar}{\overbar{\mathbf{Q}}}
\newcommand{\Kbar}{\overbar{K}}
\newcommand{\Z}{\mathbf{Z}}
\newcommand{\Zhat}{\hat \Z}
\newcommand{\Fp}{\mathbf{F}_{\!p}}
\newcommand{\Fq}{\mathbf{F}_{\!q}}
\newcommand{\F}{\mathbf{F}}
\newcommand{\GL}{{\mathbf{GL}}}
\newcommand{\SL}{{\mathbf{SL}}}
\newcommand{\PGL}{{\mathbf{PGL}}}
\newcommand{\PSL}{{\mathbf{PSL}}}
\renewcommand{\O}{\mathcal{O}}
\newcommand{\smallmat}[4]{\left(\begin{smallmatrix}#1&#2\\#3&#4\end{smallmatrix}\right)}
\newcommand{\diagmat}[2]{\smallmat{#1}{0}{0}{#2}}
\newcommand{\unimat}{\smallmat{1}{1}{0}{1}}
\newcommand{\mat}{{\mathbf{M}}}
\newcommand{\p}{\mathfrak p}
\newcommand{\q}{\mathfrak q}
\newcommand{\Exp}{\mathbf E}
\newcommand{\End}{\mathrm{End}}
\renewcommand{\Re}{\operatorname{Re}}
\newcommand{\Frob}{\mathrm{Frob}}
\def\disc{\operatorname{disc}}
\def\Gal{\operatorname{Gal}}
\def\Aut{\operatorname{Aut}}
\def\Li{\operatorname{Li}}
\def\tr{\operatorname{tr}}
\def\det{\operatorname{det}}
\def\im{\operatorname{im}}
\def\lcm{\operatorname{lcm}}
\def\sig{\operatorname{sig}}
\def\ndiv{\centernot\mid}
\def\hairspace{\kern .05em}
\newcommand{\M}{\mathsf{M}}
\newcommand{\sym}[1]{\mathrm{S}_#1}
\newcommand{\alt}[1]{\mathrm{A}_#1}
\newcommand{\smallminus}{\scalebox{0.66}[1.0]{\( - \)}}
\newcommand{\smallplus}{\scalebox{0.66}[1.0]{\( + \)}}
\theoremstyle{plain}
\newtheorem{theorem}{Theorem}[section]
\newtheorem{lemma}[theorem]{Lemma}
\newtheorem{corollary}[theorem]{Corollary}
\newtheorem{proposition}[theorem]{Proposition}
\newtheorem{conjecture}[theorem]{Conjecture}
\theoremstyle{definition}
\newtheorem{definition}[theorem]{Definition}
\newtheorem{remark}[theorem]{Remark}
\newtheorem{example}[theorem]{Example}
\newtheorem{algorithm}{Algorithm}
\begin{document}

\title{Computing images of Galois representations\\attached to elliptic curves}
\author{Andrew V. Sutherland}
\address{Department of Mathematics\\Massachusetts Institute of Technology\\Cambridge, Massachusetts\ \  02139}
\email{drew@math.mit.edu}
\subjclass[2010]{Primary 11G05, 11Y16; Secondary  11F80, 11G20, 14H52, 20G40}
\thanks{The author was supported by NSF grant DMS-1115455}

\begin{abstract}
Let $E$ be an elliptic curve without complex multiplication (CM) over a number field $K$, and let $G_E(\ell)$ be the image of the Galois representation induced by the action of the absolute Galois group of~$K$ on the $\ell$-torsion subgroup of~$E$.
We present two probabilistic algorithms to simultaneously determine $G_E(\ell)$ up to local conjugacy for all primes $\ell$ by sampling images of Frobenius elements; one is of Las Vegas type and the other is a Monte Carlo algorithm.
They determine $G_E(\ell)$ up to one of at most two isomorphic conjugacy classes of subgroups of $\GL_2(\Z/\ell\Z)$ that have the same semisimplification, each of which occurs for an elliptic curve isogenous to $E$.
Under the GRH, their running times are polynomial in the bit-size $n$ of an integral Weierstrass equation for~$E$, and for our Monte Carlo algorithm, quasi-linear in $n$.
We have applied our algorithms to the non-CM elliptic curves in Cremona's tables and the Stein--Watkins database, some 140 million curves of conductor up to~$10^{10}$, thereby obtaining a conjecturally complete list of 63 exceptional Galois images $G_E(\ell)$ that arise for $E/\Q$ without CM.
Under this conjecture we determine a complete list of 160 exceptional Galois images $G_E(\ell)$ the arise for non-CM elliptic curves over quadratic fields with rational $j$-invariants.
We also give examples of exceptional Galois images that arise for non-CM elliptic curves over quadratic fields only when the $j$-invariant is irrational.
\end{abstract}
\maketitle

\tableofcontents

\section{Introduction}
Let $E$ be an elliptic curve over a number field $K$ with algebraic closure~$\Kbar$.
For each integer $m>1$, let $E[m]$ denote the $m$-torsion subgroup of $E(\Kbar)$, which we recall is a free $\Z/m\Z$ module of rank $2$.
The absolute Galois group $\Gal(\Kbar/K)$ acts on $E[m]$ via its action on the coordinates of its points, and this action induces a Galois representation (a continuous homomorphism):
\[
\rho_{E,m}\colon \Gal(\Kbar/K)\to \Aut(E[m])\simeq \GL_2(m):=\GL_2(\Z/m\Z).
\]
We regard the image of $\rho_{E,m}$ as a subgroup $G_E(m)$ of $\GL_2(m)$ that is determined only up to conjugacy, since the isomorphism $\Aut(E[m])\simeq \GL_2(m)$ depends on a choice of basis.
For fixed $E$ and varying $m$, the representations $\rho_{E,m}$ form a compatible system, and we have the adelic Galois representation
\[
\rho_E\colon \Gal(\Kbar/K)\to \GL_2(\hat \Z) = \varprojlim_m \GL_2(m),
\]
whose image we denote $G_E$.

By Serre's open image theorem (see \cite[\S IV.3.2]{serre68} and \cite{serre72}), so long as $E$ does not have complex multiplication (CM), the adelic image $G_E$ has finite index in $\GL_2(\hat \Z)$.
In particular, there is a minimal positive integer~$m_E$ for which $G_E$ is the full inverse image of $G_E(m_E)$, and a finite set $S_E$ of exceptional primes~$\ell$ for which $G_E(\ell)$ is properly contained in $\GL_2(\ell)$.
Each such $\ell$ necessarily divides~$m_E$, but the converse is not true in general (and almost never true for elliptic curves over $\Q$).
Nevertheless, a first step toward computing~$m_E$ and~$G_E(m_E)$ is to determine the set $S_E$ and the groups $G_E(\ell)$ for $\ell\in S_E$.

A related motivating question is this: for a given number field $K$, which exceptional groups $G_E(\ell)$ can arise for a non-CM elliptic curve $E/K$?
Serre's theorem implies that for any fixed $E$ this is a finite list, and Serre has asked whether this is still true when only $K$ is fixed and $E/K$ is allowed to vary; it is expected that the answer is yes.
This can be regarded as a generalization of Mazur's results \cite{mazur77,mazur78}, which determine the primes $\ell$ for which an elliptic curve $E/\Q$ may admit a rational point of order $\ell$, or a rational isogeny of degree~$\ell$.
Both of these properties are determined by $G_E(\ell)$, but the converse does not hold: $G_E(\ell)$ may be exceptional when~$E$ does not admit a rational isogeny of degree~$\ell$, and even when~$E$ has a rational point of order~$\ell$, many different $G_E(\ell)$ may occur.
Serre's question remains open for all number fields $K$, but there has been some recent progress in the case $K=\Q$: for $\ell >37$ any exceptional~$G_E(\ell)$ must lie in the normalizer of a non-split Cartan group in $\GL_2(\ell)$, as shown in \cite{bpr13}, and for $\ell\le 11$ the possible $G_E(\ell)$ have been completely determined \cite{zywina15}.
Little is known for number fields other than $\Q$.

We are thus led to the problem at hand: given an elliptic curve $E/K$ without CM, determine the set~$S_E$ of exceptional primes $\ell$ and the groups $G_E(\ell)$ for each prime $\ell\in S_E$.
Serre's open image theorem can be made effective, and under the generalized Riemann hypothesis (GRH) reasonably good bounds on the exceptional primes~$\ell$ are known; quasi-linear in the norm of the conductor of $E$, by \cite{lv14}.
This leaves the problem of computing $G_E(\ell)$.
In principle this is straight-forward: pick a basis for $E[\ell]$ and compute the action of $\Gal(\Kbar/K)$ on this basis.
This approach can be made completely effective.
The points in $E[\ell]$ are defined over the $\ell$-torsion field $K(E[\ell])$, which is an extension of the splitting field of the $\ell$-division polynomial $f_{E,\ell}(x)$ whose roots are the $x$-coordinates of the non-trivial $\ell$-torsion points.
Using well-known formulas for $f_{E,\ell}(x)$ one can explicitly construct its splitting field and take a quadratic extension if necessary to obtain the $y$-coordinates of the points in~$E[\ell]$ (a quadratic extension always suffices, see Lemma~\ref{lem:quadsplit}).
One then finds generators for $\Gal(K(E[\ell])/K)$ and applies them to a basis for $E[\ell]$.
Using the algorithm in~\cite{landau85}, this computation can be accomplished in deterministic polynomial time;
a Magma \cite{magma} script that implements this procedure is available at the author's website \cite{sut15}.

Unfortunately this is feasible only for very small $\ell$.
While $\Gal(K(E[\ell])/K)$ can be computed in time polynomial in $\ell$, the exponents involved are quite large; indeed, the necessary first step of factoring~$f_{E,\ell}(x)$ is already non-trivial, even when $K=\Q$.
For $\ell>2$ the polynomial $f_{E,\ell}$ has degree $(\ell^2-1)/2$ and coefficients with bit-size $O(\ell^2)$, which gives an $O(\ell^{12+o(1)})$ time for factoring $f_{E,\ell}\in \Z[x]$ using the best known bounds for polynomial factorization~\cite{schonhage84}.
More generally, the time to factor $f_{E,\ell}$ in $K[x]$ given in~\cite{landau85} is $O(\ell^{18+o(1)}[K\!:\!\Q]^{9+o(1)})$, and the time to compute its splitting field my be substantially larger.
By contrast, the Monte Carlo algorithm presented in this article computes~$G_E(\ell)$ up to local conjugacy (as defined below) in time that is quasi-linear in both $\ell$ and $[K\!:\!\Q]$; in fact it does this simultaneously for all primes in $S_E$ in time quasi-linear in $\max(S_E)$.

Two Galois representations $\rho_1,\rho_2\colon \Gal(\Kbar/K)\to \GL_2(m)$ are said to be \emph{locally conjugate} if $\rho_1(\sigma)$ and $\rho_2(\sigma)$ are conjugate in $\GL_2(m)$ for every $\sigma\in \Gal(\Kbar/K)$ (not necessarily by the same matrix in each case).
We call two subgroups $G$ and $H$ of $\GL_2(m)$ locally conjugate if there is a bijection of sets that maps each $g\in G$ to an element $h\in H$ that is conjugate to $g$ in $\GL_2(m)$; equivalently, $(\GL_2(m),G,H)$ is a (non-trivial) Gassmann-Sunada triple \cite{gassmann26,sunada85}.
Local conjugacy defines an equivalence relation on subgroups of $\GL_2(m)$.

We present two probabilistic algorithms to determine the exceptional primes~$\ell$ for a given elliptic curve $E/K$ and to determine the groups $G_E(\ell)$ up to local conjugacy.
The algorithms work by computing the images in $G_E(\ell)$ of Frobenius elements (conjugacy classes) $\Frob_\p$ for unramified primes $\p\ndiv\ell$ of $K$ where~$E$ has good reduction, either for all~$\p$ of bounded norm, or for randomly chosen~$\p$ with norms in a bounded interval.
This implies that our algorithms can only determine $G_E(\ell)$ up to local conjugacy, but we show that this imposes very strong constraints on $G_E(\ell)$.
In particular, we prove that every local conjugacy class of subgroups of $\GL_2(\ell)$ consists of at most two conjugacy classes of subgroups of $\GL_2(\ell)$ that are isomorphic as abstract groups and have the same semisimplification.
Moreover, we prove that whenever $G_E(\ell)$ is locally conjugate to a subgroup~$G'$ of $\GL_2(\ell)$, there is an isogenous elliptic curve $E'/K$ for which $G_{E'}(\ell)=G'$; see Theorem~\ref{thm:locconjisog}.
We also describe some global methods for efficiently distinguishing pairs of locally conjugate but non-conjugate Galois images that are applicable in most (but not all) cases, including every case that we encountered in our computations; see Section~\ref{sec:distinguish}.

To compute the conjugacy class $\rho_{E,m}(\Frob_\p)$ for unramified primes $\p$ of $K$ that do not divide $m$ we rely on three fundamental algorithms for elliptic curves over finite fields that we apply to the reduction $E_\p/\F_p$ of $E$ modulo~$\p$; here $\F_\p:=\O_K/\p$ is the residue field, a finite field with $q:=N(\p)$ elements.
The first is Schoof's algorithm \cite{schoof85,schoof95}, which computes the trace $t\in\Z$ of the Frobenius endomorphism in time polynomial in $\log q$.
The second is a Las Vegas algorithm to compute the endomorphism ring $\End(E_\p)$ when $E_\p$ is ordinary, due to Bisson and the author \cite{bisson11,bs11}; under the GRH its expected running time is subexponential in $\log q$.
It follows from a theorem of Duke and T\'oth \cite{dt02} that the pair ~$(t,\End(E_\p))$ determines an integer matrix~$A_\p$ whose reduction modulo $m$ lies in the conjugacy class $\rho_{E,m}(\Frob_\p)$ for every positive integer~$m$.
The third is Miller's algorithm to compute the Weil pairing \cite{miller04}, which we use to compute the rank of the $\ell$-torsion subgroup of $E_\p(\F_\p)$ in quasi-cubic time.
This allows us to determine the dimension of the 1-eigenspace of $\rho_{E,\ell}(\Frob_\p)$ without computing~$A_\p$, providing an efficient method to distinguish unipotent elements of~$G_E(\ell)$, which are not distinguished by their characteristic polynomials.

In order to bound the norms of the primes $\p$ that we use, we rely on explicit Chebotarev bounds that depend on the GRH.
In principle our algorithms can be implemented so that they do not rely on this hypothesis, but the running times would increase exponentially.
The GRH also gives us bounds on the largest exceptional prime $\ell$ that can occur for a given elliptic curve $E/K$; the results of Larson and Vaintrob~\cite{lv14} give bounds that are quasi-linear in $\log N_E$, where $N_E$ is the absolute value of the norm of the conductor of~$E$.
Together these allow us to bound the norms of the primes $\p$ that we must consider by a polynomial in $\log \Vert f\Vert$, where $\Vert f\Vert$ denotes the maximum of the absolute values of the norms of the coefficients appearing in an integral Weierstrass equation $y^2=f(x)$ for~$E$.

We now state our two main results.
The first is a Las Vegas algorithm that, given an elliptic curve $E/K$ specified by an integral Weierstrass equation, outputs a complete list $S_E$ of the primes $\ell$ for which $G_E(\ell)\ne\GL_2(\ell)$ and for each $\ell\in S_E$ a subgroup of $\GL_2(\ell)$ specified by generators that is locally conjugate to $G_E(\ell)$; see Algorithm~\ref{alg:lvfull}.
Under the GRH its expected running time is bounded by
\[
(\log\Vert f\Vert)^{11+o(1)}
\]
in Theorem~\ref{thm:lvfull}.
Our second main result is a Monte Carlo algorithm that has the same output as our Las Vegas algorithm and is correct with probability at least 2/3; see Algorithm~\ref{alg:mcfull}.  Its error is one-sided in the sense that each subgroup of $\GL_2(\ell)$ output by the algorithm is guaranteed to be locally conjugate to a subgroup of $G_E(\ell)$, but it may be a proper subgroup.
By running the algorithm repeatedly the error probability can be made arbitrarily small.
Under the GRH its running time is bounded by
\[
(\log \Vert f\Vert)^{1+o(1)},
\]
which is quasi-linear in the size of the input, the equation $y^2=f(x)$; see Theorem~\ref{thm:mcfull}.

An essential ingredient to both of our algorithms is the ability to distinguish and explicitly construct subgroups of $\GL_2(\ell)$ based on a compact representation of a subset of their element conjugacy classes.
The classification of the possible images of subgroups of $\GL_2(\ell)$ in $\PGL_2(\ell)$ has long been known \cite{dickson01}, but for our we work we require a complete list of the subgroups of $\GL_2(\ell)$ up to conjugacy, and a precise understanding of the element conjugacy classes each contains.
We address these questions in Section~\ref{sec:GL2}, in which we obtain exact formulas for the number of subgroups of $\GL_2(\ell)$ up to conjugacy (and for subgroups of various types) that may be of independent interest.
We also give a quasi-linear time algorithm to enumerate these subgroups with explicit generators for each; see Algorithm~\ref{alg:enum}.

We have applied our algorithms to various databases of elliptic curves over~$\Q$, including all non-CM curves of conductor up to $350,000$ listed in Cremona's tables \cite{cremona}, and the non-CM curves in the Stein-Watkins database \cite{sw02}, which includes a large proportion of all elliptic curves over $\Q$ of conductor up to $10^8$, and of prime conductor up to~$10^{10}$; some 140 million elliptic curves in all.
We also analyzed parameterized families of elliptic curves that are known to have exceptional Galois images, and large families of elliptic curves of bounded height (more than $10^9$ curves).
In each case we were able to compute a complete list $S_E$ of the exceptional primes $\ell$ and the subgroups $G_E(\ell)$ up to conjugacy (not just local conjugacy), using the methods described in \S\ref{sec:distinguish}.
This work yields a conjecturally complete list of 63 exceptional subgroup conjugacy classes that arise as $G_E(\ell)$ for some non-CM elliptic curve $E/\Q$ and prime~$\ell$; these are listed in Tables~\ref{table:Qpart1} and~\ref{table:Qpart2} of Section \ref{sec:results}.
Thanks to recent work by Zywina~\cite{zywina15}, we have been able to independently verify our results for all the non-CM elliptic curves in Cremona's tables, and in every case we found that the output of our Monte Carlo algorithm (which we executed repeatedly in order to amplify its success probability) was correct.
This motivates the following conjecture:

\begin{conjecture}\label{conj:Qexcep}
Let $E/\Q$ be an elliptic curve without complex multiplication and let $\ell$ be a prime.
Then $G_E(\ell)$ is either equal to $\GL_2(\ell)$ or conjugate to one of the $63$ groups listed in Tables~\ref{table:Qpart1} and~\ref{table:Qpart2}.
\end{conjecture}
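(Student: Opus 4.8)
Since this is a conjecture, what follows is a program rather than a proof. Beyond the finite verification that the $63$ listed groups actually occur --- which is accomplished by exhibiting suitable curves, as in the tables --- the content of the conjecture is a completeness statement, and I would prove it by translating to rational points on modular curves and proceeding in three stages: bounding the exceptional primes, disposing of finitely many small primes by explicit rational-point computations, and resolving the one genuinely hard case. To each subgroup $G\le\GL_2(\ell)$ one attaches a modular curve $X_G$ over $\Q$ whose non-cuspidal non-CM rational points correspond (up to quadratic twist, with the usual care as to whether $-I\in G$) to elliptic curves $E/\Q$ with $G_E(\ell)$ contained in a conjugate of $G$; once such a point is known, $G_E(\ell)$ itself is pinned down by the Frobenius data used throughout this paper together with Theorem~\ref{thm:locconjisog} and the tests of \S\ref{sec:distinguish}. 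Thus the conjecture is equivalent to the assertion that for every prime $\ell$ and every proper $G\subsetneq\GL_2(\ell)$ that is not (and does not refine to) one of the $63$ listed conjugacy classes, the non-cuspidal non-CM points of $X_G(\Q)$ yield no new image. Since $\det\rho_{E,\ell}$ is the mod-$\ell$ cyclotomic character, which is surjective over $\Q$, and complex conjugation forces $G_E(\ell)$ to contain an element of determinant $-1$ and trace $0$, it suffices to treat subgroups $G$ with $\det(G)=(\Z/\ell\Z)^{\times}$ that contain such an element; Dickson's classification then sorts the maximal such proper $G$ into Borel subgroups, normalizers of split Cartan subgroups, normalizers of non-split Cartan subgroups, and the exceptional subgroups with projective image $\alt4$, $\sym4$, or $\alt5$. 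By Mazur's isogeny theorem \cite{mazur78} the Borel case forces $\ell\in\{2,3,5,7,11,13,17,19,37,43,67,163\}$; classical results of Serre, Mazur, Bilu--Parent and others confine $\ell$ to small explicit sets in the split Cartan and exceptional cases as well; and by \cite{bpr13} the normalizer of a non-split Cartan is the only family that can survive for $\ell>37$.

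For the finitely many surviving small primes one enumerates, using Algorithm~\ref{alg:enum}, all admissible $G\le\GL_2(\ell)$ up to conjugacy, computes the genus of each $X_G$ from its index in $\GL_2(\ell)$ and its ramification data, and then determines $X_G(\Q)$ according to the genus. Genus-zero curves either have no rational point --- in which case $G$ cannot occur --- or are rational, and then one parametrizes them and checks directly that the generic member of the resulting one-parameter family has image exactly $G$; these account for the infinite families in Tables~\ref{table:Qpart1} and~\ref{table:Qpart2}. Genus-one curves reduce to Mordell--Weil computations on the relevant elliptic curves and their quadratic twists: rank zero gives finiteness at once, and positive rank is handled by descent combined with elliptic Chabauty or a Mordell--Weil sieve on a suitable quotient. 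Genus $\ge 2$ curves are treated by Chabauty--Coleman together with the Mordell--Weil sieve, or, in recalcitrant cases of level $13$ such as the non-split Cartan modular curve $X_{\mathrm{ns}}^{+}(13)$, by quadratic Chabauty. For every rational point produced one computes $G_E(\ell)$ exactly as above and verifies that it lies on the list; together with \cite{zywina15} for $\ell\le 11$, this stage is --- apart from a few deep but finite point determinations --- essentially the work already carried out computationally in this paper.

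The crux is the large-prime case. It remains to show that for every prime $\ell>37$ and every $G$ contained in the normalizer $C_{\mathrm{ns}}^{+}(\ell)$ of a non-split Cartan subgroup, any non-cuspidal non-CM rational point of $X_G$ gives an image already on the list. Since every rational point of $X_G$ maps under the natural projection to a rational point of $X_{\mathrm{ns}}^{+}(\ell)$, it is enough to prove that $X_{\mathrm{ns}}^{+}(\ell)(\Q)$ consists of cusps and points with CM $j$-invariant only, for all $\ell>37$; the finitely many subgroups of $C_{\mathrm{ns}}^{+}(\ell)$ with $\ell\le 37$ are then handled by the previous stage, after which a short finite check confirms that no non-split-Cartan image beyond those already listed arises. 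This last statement is exactly Serre's uniformity problem for $\Q$ in the non-split Cartan case, and it is the main obstacle: $X_{\mathrm{ns}}^{+}(\ell)$ has genus growing with $\ell$, while the Mordell--Weil rank of its Jacobian is often as large as that genus, so Chabauty--Coleman yields nothing, and a resolution uniform in $\ell$ appears to require either an effective Faltings theorem with usable constants or a systematic extension of quadratic (and higher) Chabauty across the whole family, exploiting isogeny factors of the Jacobian whose rank is strictly smaller than their dimension. Only once this input is available is the set of groups arising as $G_E(\ell)$ provably finite; a concluding finite comparison with Tables~\ref{table:Qpart1} and~\ref{table:Qpart2} then gives the conjecture.
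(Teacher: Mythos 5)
The statement you are addressing is labeled a conjecture, and the paper offers no proof of it: its justification is purely empirical, namely the computation of $G_E(\ell)$ for roughly $140$ million curves in Cremona's tables and the Stein--Watkins database (plus families of bounded height and curves parameterized by various modular curves), together with an independent cross-check against Zywina's explicit models for the curves in Cremona's tables. So there is no proof in the paper to compare yours against, and your proposal is honest in presenting a program rather than a proof. That program is the standard theoretical route --- translate to rational points on the modular curves $X_G$, use surjectivity of the mod-$\ell$ cyclotomic character and the complex-conjugation element (the paper's Remark~\ref{rem:cc}) to cut down the admissible $G$, invoke Dickson's classification, Mazur, and \cite{bpr13} to bound $\ell$ outside the non-split Cartan case, and dispose of the finitely many remaining small-level curves by explicit rational-point determinations --- and it is entirely disjoint from what the paper actually does. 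What the paper's approach buys is a fast, uniform way to \emph{compute} $G_E(\ell)$ for enormous families and hence strong evidence plus a candidate list; what your approach would buy, if completed, is the completeness statement itself.

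The genuine gap, which you correctly isolate, is the non-split Cartan normalizer case for $\ell>37$: proving $X_{\mathrm{ns}}^{+}(\ell)(\Q)$ consists only of cusps and CM points for all large $\ell$ is Serre's uniformity problem, which the paper itself states is open for every number field. Since your argument terminates in an appeal to an unproved statement, it does not establish the conjecture; but that is exactly why the paper states it as a conjecture rather than a theorem. One smaller caution for the finite stage: the correspondence between non-cuspidal points of $X_G(\Q)$ and curves with image in $G$ determines $G_E(\ell)$ only up to the twisting and local-conjugacy ambiguities discussed in \S\ref{sec:twists} and \S\ref{sec:distinguish} (e.g.\ the locally conjugate Borel pairs such as \texttt{11B.10.4}/\texttt{11B.10.5}), so pinning down the exact list of $63$ conjugacy classes from rational points on the $X_G$ requires the isogeny and twist analysis of Theorem~\ref{thm:locconjisog} and Lemma~\ref{lem:3twists}, as you note in passing; this is bookkeeping rather than a gap, but it cannot be skipped.
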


Under this conjecture we determine a complete list of 160 exceptional Galois images $G_E(\ell)$ not containing $\SL_2(\ell)$ that arise for non-CM elliptic curves with rational $j$-invariants over quadratic fields; these include the 63 groups that already arise over~$\Q$ along with~68 new groups that arise for base changes of elliptic curves over $\Q$, and 29 that arise for quadratic twists of these curves but not for any base change from~$\Q$; see Theorem~\ref{thm:basechange} and Tables~\ref{table:Qbasechange1}-\ref{table:Qbasechangetwist2}.
A key ingredient to this result is an analysis of how $G_{E^F}(\ell)$ varies within a family of quadratic twists $E^F$ of a fixed elliptic curve $E/K$ as $F$ varies over quadratic extensions of $K$; this appears in \S\ref{sec:twists}.
We find that for any odd prime~$\ell$, up to~3 non-conjugate groups $G_{E^F}(\ell)$ may arise in such a family and we give an explicit method to determine quadratic extensions $F/K$ that realize every possibility.

We have also run our algorithms on tables of elliptic curves defined over quadratic fields that have recently been made available in the $L$-functions and Modular Forms Database (LMFDB) \cite{lmfdbbeta}, including the five real quadratic fields and five imaginary quadratic fields of least absolute discriminant.
Examples of exceptional Galois images $G_E(\ell)$ that occur only for non-CM elliptic curves with irrational $j$-invariants over these fields are listed in the tables at the end of Section~\ref{sec:results}, as well as examples over the cubic field of discriminant $-23$.

In principle our algorithms can also be used to determine $G_E(m)$ up to local conjugacy for any positive integer $m$, but the situation is more complicated when~$m$ is composite for three reasons: (1) local conjugacy imposes fewer constraints when $m$ is composite, for example locally conjugate subgroups of $\GL_2(m)$ need not be isomorphic; (2) the integers $m$ for which $G_E(m)$ is exceptional and not the full inverse image of  $G_E(m')$ for some $m'|m$ may be exponentially larger than the largest exceptional prime $\ell$; (3) our understanding of the subgroup structure of $\GL_2(m)$ is less refined that it is for $\GL_2(\ell)$.
In spite of these obstacles, it is entirely feasible to apply our algorithms when $m$ is small, and if we set the more modest goal of simply computing the index of $G_E(m)$ in $\GL_2(m)$, this can be done quite efficiently.
This suggests a practical method for computing $m_E$ and the index of $G_E$ in $\GL_2(\hat\Z)$ for a non-CM elliptic curve $E/K$ that we plan to address in a future article.

\subsection{Acknowledgements}
The author is grateful to Nicholas Katz for his support, and for asking the questions that motivated this research, and to David Zywina for several fruitful discussions.  The author also wishes to thank John Cremona and William Stein for their assistance with many of the computations.

\section{Notation and Terminology}\label{sec:notation}
Throughout this article the symbols $\ell$ and $p$ denote rational primes, and $r$, $m$ and $n$ denote positive integers.
We use $\tau(n)$ to denote the number of positive divisors of an integer $n$ and $\phi(n):=\#(\Z/n\Z)^\times$ for the Euler function.
For any prime power $q$, we use $\Fq$ to denote the field with $q$ elements.
For sets $S$ and $T$ we write $S-T$ for the set of elements of $S$ that do not lie in $T$.

For any ring $R$, we use $\mat_r(R)$, to denote the ring of $r\times r$ matrices, $\GL_r(R)$ for its multiplicative subgroup of invertible matrices, $\SL_r(R)$ for the kernel of the determinant map $\det\colon \GL_r(R)\to \GL_1(R)$, and $\PGL_r(R)$ for the quotient of $\GL_r(R)$ by its center.
For each integer $m>1$ we define the notations
\begin{align*}
\Z(m):=\Z/m\Z,\qquad\qquad\qquad \mat_r(m):=\mat_r(\Z/m\Z),\\
\SL_r(m):=\SL_r(\Z/m\Z),\qquad \GL_r(m):=\GL_r(\Z/m\Z),\qquad \PGL_r(m)&:=\PGL_r(\Z/m\Z).
\end{align*}
The center of $\GL_2(m)$ consists of the subgroup of scalar matrices $\diagmat{z}{z}$, which we denote $Z(m)$; when there is no risk of ambiguity we may identify $Z(m)\simeq \Z(m)^\times$ and use $z$ to denote $\diagmat{z}{z}$.
The scalar matrices form the kernel of the canonical projection
\[
\pi\colon \GL_2(m)\twoheadrightarrow \PGL_2(m)
\]
which we denote by $\pi$ throughout.

In our identification of $\Aut(E[m])$ with $\GL_2(m)$, we view elements of $\GL_2(m)$ as $2\times 2$ matrices acting on column vectors by multiplication on the \textbf{left}, and distinguish subgroups of $\GL_2(m)$ only up to conjugacy.
For an elliptic curve $E$ over a number field $K$, composing the $2$-dimensional representation
\[
\rho_E\colon \Gal(\Kbar/K)\to \GL_2(\Zhat)
\]
with the determinant map $\GL_2(\Zhat)\to\Zhat^\times$ induces a $1$-dimensional representation 
\[
\det\circ\, \rho_E\colon \Gal(\Kbar/K)\to \GL_1(\Zhat)=\Zhat^\times.
\]

Throughout this article we use $\p$ to denote a prime of $K$, by which we mean a nonzero prime ideal in its ring of integers $\O_K$, and we use $\F_\p$ to denote the residue field $\O_K/\p$.
For each prime $\p\ndiv m$ that is unramified in $K(E[m])/K$ (all but finitely many $\p$), the value of $\det\circ\rho_E$ on the Frobenius element $\Frob_\p$ (which we recall is defined only up to conjugacy) is $N(\p):=[\O_K:\p]$.
The image of $\det\circ\rho_E$ thus depends only on $K$, not on $E$; in fact, it depends only on the intersection of~$K$ with the maximal cyclotomic extension $\Q^{\rm cyc}$ of $\Q$ in $\Kbar$, and $\det\circ\rho_{E,\ell}$ is surjective for all but finitely many~$\ell$.

%Similar comments apply to any $\GL_2$-representation that is of elliptic type, in the sense of \cite[\S 1]{lt76}.

Our complexity bounds always count bit operations.
We use $\M(n)$ to denote the time to multiply two $n$-bit integers, which we may bound by
\[
\M(n) = n(\log n)^{1+o(1)}
\]
via \cite{ss71}; see \cite{hhl14} for a more precise bound.
This bound implies that arithmetic operations in finite fields $\Fq$ can be performed in $(\log q)^{1+o(1)}$ time, which we assume throughout (we refer the reader to \cite{gg13} for details).

Many of the algorithms we present are probabilistic algorithms, which we recall are typically classified as one of two types.
\emph{Las Vegas algorithms} produce output that is guaranteed to be correct but have potentially unbounded running times that may depend on probabilistic choices; for such algorithms we bound their expected running time, which is required to be finite.
\emph{Monte Carlo algorithms}, by contrast, have bounded running times but may produce outputs that are incorrect with probability bounded by some $c < 1/2$; we use $c=1/3$.
Assuming the correct output is unique, by running a Monte Carlo algorithm repeatedly and choosing the output produced most frequently, the probability of error can be made arbitrarily close to zero at a rate exponential in the number of repetitions.

For integers $n\ge 3$, we use $\alt{n}$ and $\sym{n}$ to denote the alternating and symmetric groups on $n$ elements, respectively.
For the purpose of this article we consider the non-cyclic group of order 4 (the Klein group) to be a dihedral group.

\section{Subgroups of \texorpdfstring{$\GL_2(\F_\ell)$}{GL(2,Z/\textit{l}Z)}}\label{sec:GL2}

The classification of subgroups of $\PGL_2(\ell)$ is well known (see Proposition~\ref{prop:subgroups} below).
Our algorithms require a more refined classification of the subgroups of $\GL_2(\ell)$, up to conjugacy in $\GL_2(\ell)$, that allows us to distinguish subgroups by sampling element conjugacy classes corresponding to Frobenius elements.
In this section we obtain such a classification, as well as explicit formulas to count subgroups of $\GL_2(\ell)$ up to conjugacy and an efficient algorithm to enumerate them.
Many of the proofs in this section are elementary, but as our algorithms depend crucially on these results, we give at least a sketch of each proof.
Except when the case $\ell=2$ is specifically noted, we assume throughout this section that $\ell$ is an odd prime.

%\subsection{Element conjugacy classes}
For any $g\in\GL_2(\ell)$ we define the discriminant
\[
\Delta(g):=\tr(g)^2-4\det(g)\in\Z(\ell),
\]
and the Legendre symbol
\[
\chi(g):=\left(\frac{\Delta(g)}{\ell}\right)\in \{-1,0,1\}.
\]

For ease of reference we list the element conjugacy classes of $\GL_2(\ell)$ in Table~\ref{table:conjclasses}.
Here and throughout,~$\varepsilon$ denotes a fixed non-square element of $\Z(\ell)^\times$; for the sake of concreteness, let $\varepsilon$ be the least positive integer that generates $\Z(\ell)^\times$.
We note that $\diagmat{x}{y}$ and $\diagmat{y}{x}$ are conjugate via $\smallmat{0}{1}{1}{0}$, and $\smallmat{x}{\varepsilon y}{y}{x}$ and $\smallmat{x}{-\varepsilon y}{-y}{x}$ are conjugate via $\diagmat{1}{-1}$, which explains the restrictions on $y$ in Table~\ref{table:conjclasses} below.
\vspace{-8pt}

\begin{table}[htb]
\setlength{\extrarowheight}{4pt}
\setlength{\tabcolsep}{10pt}
\begin{tabular}{llcccrl}
representative & size & number & $\det$ & $\tr$ & $\chi$ & order \\\hline
$\diagmat{x}{x}\quad 0<x<\ell$ & 1 &$ \ell-1$ & $x^2$ & $2x$ & 0 & divides $\ell-1$\\
$\smallmat{x}{1}{0}{x}\quad 0<x<\ell$ & $\ell^2-1$ &$ \ell-1$ & $x^2$ & $2x$ &  0 & divisible by $\ell$\\
$\diagmat{x}{y}$\hspace{10pt}$0<x< y<\ell$ & $\ell^2+\ell$ &$\binom{\ell-1}{2}$ & $xy$ & $x+y$& $+1$ & divides $\ell-1$\\\vspace{2pt}
$\smallmat{x}{\varepsilon y}{y}{x}\ \ 0<y\le\frac{\ell-1}{2}$ & $\ell^2-\ell$ & $\binom{\ell}{2}$ & $x^2-\varepsilon y^2$ & $2x$ & $-1$ & divides $\ell^2-1$\\\hline
\end{tabular}
\vspace{8pt}

\caption{Element conjugacy classes in $\GL_2(\ell)$ for primes $\ell >2$.}\label{table:conjclasses}
\end{table}
\vspace{-12pt}

For any $g\in \GL_2(\ell)$ and positive integer $n$, the trace of $g^n$ can be computed as $\tr g^n = a_n$, where $a_n$ is defined by the recurrence:
\begin{equation}\label{eq:trrec}
a_0:=2,\qquad a_1:=\tr(g),\qquad a_{n+2}:=a_1a_{n+1}-a_n\det g.
\end{equation}
This implies that for elements $g$ whose order $|g|$ is not divisible by $\ell$, we can derive $|g|$ from $(\det g,\tr g)$.
We are also interested in the order of the image of $g$ in $\PGL_2(\ell)$.
For this purpose we define
\[
u(g) := \frac{\tr(g)^2}{\det(g)}\in \Z(\ell).
\]
If $|g|$ is divisible by $\ell$ then $g$ is conjugate to some $\smallmat{x}{1}{0}{x}$ and $u(g)=4$.
Otherwise, the order $r$ of $\pi(g)$ in $\PGL_2(\ell)$ is prime to $\ell$ and we have
\begin{equation}\label{eq:projord}
u(g)=\zeta_r+\zeta_r^{-1}+2, 
\end{equation}
for some primitive $r$th root of unity for which $\zeta_r+\zeta_r^{-1}\in \F_\ell^\times$, as explained in \cite[p.~190]{lang76}.
Note that $\zeta_r$ may lie in a quadratic extension $\F_\ell$, but in any case $r$ divides either $\ell-1$ or $\ell+1$ and is uniquely determined by~$u(g)$; this allows $|\pi(g)|=r$ to be unambiguously determined from $u(g)$, and hence from $(\det g,\tr g)$ whenever $|g|$ is prime to $\ell$.
This implies, in particular, that the elements of $\GL_2(\ell)$ that have order $2$ in $\PGL_2(\ell)$ are precisely the elements of trace zero.

For each odd prime $\ell$ we define
the \emph{split Cartan group} $C_s(\ell)$ and \emph{non-split Cartan group} $C_{ns}(\ell)$ by
\begin{align*}
C_s(\ell) &:= \left\{\smallmat{x}{0}{0}{y}: xy\ne 0\right\}\subseteq\GL_2(\ell),\\
C_{ns}(\ell) &:=\left\{\smallmat{x}{\varepsilon y}{y}{x}:(x,y)\ne (0,0)\right\}\subseteq\GL_2(\ell),
\end{align*}
and note that $C_s(\ell)\simeq \F_\ell^\times\times\F_\ell^\times$ and $C_{ns}(\ell)\simeq \F_{\ell^2}^\times$.
Both $C_s(\ell)$ and $C_{ns}(\ell)$ have index 2 in their normalizers
\[
C_s^+(\ell):= C_s(\ell)\cup\smallmat{0}{1}{1}{0}C_s, \qquad C_{ns}^+(\ell):= C_{ns}\cup\diagmat{1}{-1}C_{ns}(\ell).
\]
Elements of $C_s^+(\ell)$ that are conjugate in $\GL_2(\ell)$ are conjugate in $C_s^+(\ell)$, and similarly for $C_{ns}^+(\ell)$.
We define $C_s(2)$ as the trivial group, and $C_{ns}(2)$ as the kernel of the sign homomorphism $\GL_2(2)\simeq\sym{3}\twoheadrightarrow\{\pm 1\}$; both are normal in $\GL_2(2)$.

We refer to the conjugates of $C_s(\ell)$ and $C_{ns}(\ell)$ in $\GL_2(\ell)$ as  \emph{split} and \emph{non-split Cartan groups}, respectively.
For $\ell>2$ all elements in the non-trivial coset of a Cartan group in its normalizer have trace zero, and the square of such an element~$g$ is the scalar matrix $\diagmat{z}{z}$, where $z=-\det g$.

The \emph{Borel group} $B(\ell)\subseteq\GL_2(\ell)$ is the subgroup of upper triangular matrices; we refer to its conjugates in $\GL_2(\F_\ell)$ as \emph{Borel groups}.
For $\ell>2$, the group $B(\ell)$ is nonabelian, and its commutator subgroup $B(\ell)'$ is the cyclic group of order $\ell$ generated by $\unimat$.
The split Cartan subgroup $C_s(\ell)$ is contained in $B(\ell)$,  and for $\ell>2$ it is isomorphic to the abelian quotient $B(\ell)/[B(\ell),B(\ell)]$.
We also note that
\[
Z(\ell)=C_s(\ell)\cap C_{ns}(\ell)\subseteq B(\ell).
\]

We now recall the classification of subgroups of $\GL_2(\ell)$ in terms of their images in $\PGL_2(\ell)$, originally due to Dickson \cite{dickson01}.

\begin{proposition}\label{prop:subgroups}
Let $\ell$ be an odd prime and let $G$ be a subgroup of $\GL_2(\ell)$ with image $H$ in $\PGL_2(\ell)$.
If~$G$ contains an element of order $\ell$ then $G\subseteq B(\ell)$  or $\SL_2(\ell)\subseteq G$.
Otherwise, one of the following holds:
\begin{enumerate}
\setlength{\itemsep}{0pt}
\item[{\rm (1)}] $H$ is cyclic and $G$ lies in a Cartan group;
\item[{\rm (2)}] $H$ is dihedral and $G$ lies in the normalizer of a Cartan group, but not in any Cartan group;
\item[{\rm (3)}] $H$ is isomorphic to $\alt{4}$, $\sym{4}$, or $\alt{5}$ and $G$ is not contained in the normalizer of any Cartan group.
\end{enumerate}
\end{proposition}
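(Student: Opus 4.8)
The plan is to recover Dickson's classical description by working through the projection $\pi\colon\GL_2(\ell)\to\PGL_2(\ell)$ and splitting according to whether $\ell$ divides $|H|$.

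First I would dispose of the case in which $G$ contains an element of order $\ell$. Such an element is a unipotent matrix $u$ generating a Sylow $\ell$-subgroup $U=\langle u\rangle$ of order $\ell$, and $u$ fixes a unique line $L\subseteq\Fl^2$. A direct computation shows that the normalizer of $U$ in $\GL_2(\ell)$ is exactly the Borel subgroup $B$ stabilizing $L$, so $N_G(U)=G\cap B$. If $U$ is normal in $G$ then $G=N_G(U)\subseteq B$, which is the first alternative. Otherwise $G$ has at least two Sylow $\ell$-subgroups, and since the unipotent elements fixing a given line form, together with the identity, a single subgroup of order $\ell$, two such Sylow subgroups fix distinct lines; working up to conjugacy I may take them to be $\langle\unimat\rangle$ and $\langle\smallmat{1}{0}{1}{1}\rangle$, so that $G\supseteq\langle\unimat,\smallmat{1}{0}{1}{1}\rangle=\SL_2(\ell)$. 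The last equality I would get from the surjectivity of $\SL_2(\Z)\to\SL_2(\ell)$ together with the fact that $\SL_2(\Z)$ is generated by $\unimat$ and $\smallmat{1}{0}{1}{1}$; this yields the second alternative.

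For the remaining case, $G$ has no element of order $\ell$, so $\ell\nmid|H|$. Then every nontrivial element of $H$ has order prime to $\ell$, hence is semisimple and fixes exactly two points of the projective line over a fixed algebraic closure of $\Fl$. I would count the pairs $(h,P)$ with $1\ne h\in H$ and $hP=P$ in two ways and group the fixed points into $H$-orbits, obtaining $2-2/|H|=\sum_i(1-1/n_i)$ with each $n_i=|H_{P_i}|\ge 2$. The elementary analysis of this Diophantine identity gives: either $H$ is trivial; or there are exactly two orbits, forcing $n_1=n_2=|H|$, so $H$ fixes two points and is cyclic; or there are exactly three orbits with $(n_1,n_2,n_3)$ equal to $(2,2,n)$, $(2,3,3)$, $(2,3,4)$, or $(2,3,5)$. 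In the $(2,2,n)$ case the pointwise stabilizer of the size-$2$ orbit is a cyclic normal subgroup of index $2$, and an involution outside it interchanges its two fixed points and hence acts on it by inversion, so $H$ is dihedral of order $2n$ (the Klein group when $n=2$); in the three remaining cases $H$ is $\alt4$, $\sym4$, or $\alt5$ respectively, these identifications being classical.

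Finally I would translate the shape of $H$ into a statement about $G$ itself. If $H=\langle\pi(g)\rangle$ is cyclic then $G=\langle g\rangle\cdot(G\cap Z(\ell))$, and since $g$ has order prime to $\ell$ its characteristic polynomial over $\Fl$ has roots in $\Fl$ or in $\Fltwo$, placing $g$ in a split or non-split Cartan group $C$; as $Z(\ell)\subseteq C$, we get $G\subseteq C$. If $H$ is dihedral, a lift of a generator of its cyclic part lies in such a $C$, while a lift of an inverting involution interchanges the two eigenlines of that lift and hence normalizes $C$, so $G$ lies in the normalizer of $C$; and $G$ is in no Cartan group, since $H$, being non-cyclic, does not embed in the (cyclic) image of any Cartan group. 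If $H$ is $\alt4$, $\sym4$, or $\alt5$, then $H$ has no cyclic subgroup of index at most $2$, whereas every subgroup of the image in $\PGL_2(\ell)$ of the normalizer of a Cartan group does, so $G$ lies in the normalizer of no Cartan group (and, being nonabelian with no element of order $\ell$, in no Borel either). The architecture is entirely classical; the step I expect to require the most care is the identification of the three exceptional groups from the triples $(2,3,3),(2,3,4),(2,3,5)$---recognizing, say, the order-$60$ group as $\alt5$---with the $\SL_2(\Z)$-generation input for the first case a distant second.
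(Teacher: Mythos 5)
The paper offers no proof of its own here: it simply cites Swinnerton--Dyer and Serre, who in turn rest on Dickson's classification. Your proposal reconstructs that classical argument in full, and it is correct. The two halves are exactly the standard ones: Sylow theory for the case $\ell\mid\#G$ (with $N_{\GL_2(\ell)}(U)=B(\ell)$ for a Sylow $\ell$-subgroup $U$, and two distinct Sylows forcing a pair of opposite transvections, hence $\SL_2(\ell)$), and for $\ell\nmid\#G$ the orbit-counting classification of finite subgroups of $\PGL_2(\overline{\Fl})$ of order prime to the characteristic, followed by the descent from $H$ back to $G$ via $G=\langle g\rangle(G\cap Z(\ell))$ and the observation that a Cartan group is determined by the two fixed points of any non-scalar semisimple element it contains, so that anything permuting that pair normalizes the Cartan. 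Two spots deserve one more line each. First, in the $(2,2,n)$ case, concluding ``dihedral'' rather than, say, dicyclic requires that some element of $H$ outside the cyclic index-$2$ subgroup $N$ actually be an involution; this is automatic in $\PGL_2$, because any element swapping the two fixed points of $N$ lies in the nontrivial coset of the corresponding torus inside its normalizer, and every element of that coset squares to a scalar --- but your wording simply posits ``an involution outside it'' without saying why one exists. Second, the identification of the signatures $(2,3,3)$, $(2,3,4)$, $(2,3,5)$ with $\alt{4}$, $\sym{4}$, $\alt{5}$ is the one genuinely nontrivial classical input you leave unproved; you flag it yourself, and deferring it to the literature is no worse than what the paper does for the entire proposition. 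Everything else --- the generation of $\SL_2(\ell)$ by the two opposite unipotent subgroups, the counting identity $2-2/\#H=\sum_i(1-1/n_i)$, and the final step that $\alt{4}$, $\sym{4}$, $\alt{5}$ have no cyclic subgroup of index at most $2$ while every subgroup of the projective image of a Cartan normalizer does --- checks out.
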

\begin{proof}
See \cite[Lem.\ 2]{sd72} or \cite[\S 2]{serre72}.
\end{proof}

\begin{remark}
In the exceptional case (3), if $G$ contains an element whose determinant is not a square, then~$H$ contains a subgroup of index 2, which rules out $H\simeq \alt{4}$ and $H\simeq \alt{5}$.
This applies when $G=G_E(\ell)$ arises from an elliptic curve $E$ over a number field $K$ that does not contain the quadratic subfield of the cyclotomic field $\Q(\zeta_\ell)$, which includes $K=\Q$.
\end{remark}

\subsection{Borel cases}\label{sec:borel}

In this section we address subgroups of the Borel group $B(\ell)$ that contain an element of order $\ell$ (hence do not lie in $C_s(\ell)$), where $\ell$ is an odd prime.

\begin{lemma}\label{lem:borel}
Let $\ell$ be an odd prime and let $G$ be a subgroup of $B(\ell)$ that contains an element of order $\ell$.
Then $G$ contains $t=\unimat$ and is equal to the internal semidirect product
\[
G = \langle t\rangle\rtimes \bigl(G\cap C_s(\ell)\bigr),
\]
which is a direct product if and only if $G\cap C_s(\ell) \subseteq Z(\ell)$.
\end{lemma}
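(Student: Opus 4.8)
The plan is to use the standard semidirect‑product structure of the Borel group. Write $U := \{\smallmat{1}{b}{0}{1} : b \in \Z(\ell)\}$ for the unipotent radical of $B(\ell)$; it is normal in $B(\ell)$, cyclic of order $\ell$ (generated by $t = \unimat$), and there is a homomorphism $\pi_C \colon B(\ell) \twoheadrightarrow C_s(\ell)$, $\smallmat{a}{b}{0}{d} \mapsto \smallmat{a}{0}{0}{d}$, with kernel $U$, so that $B(\ell) = U \rtimes C_s(\ell)$. First I would show $t \in G$: since $|B(\ell)| = \ell(\ell-1)^2$, the subgroup $U$ is the unique Sylow $\ell$-subgroup of $B(\ell)$, so any order‑$\ell$ element of $G$ lies in $U$; as $\ell$ is prime such an element generates $U$, whence $U \subseteq G$ and in particular $t \in G$.

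Next I would identify the quotient $G/U$ with $T_G := G \cap C_s(\ell)$. Restricting $\pi_C$ to $G$ gives $\ker(\pi_C|_G) = G \cap U = U$, and $\pi_C(G) \supseteq T_G$ trivially. For the reverse inclusion, if $s = \pi_C(g)$ with $g \in G$, then $g = us$ for some $u \in U$; but $u \in U \subseteq G$, so $s = u^{-1} g \in G \cap C_s(\ell) = T_G$. Hence $\pi_C$ induces $G/U \iso T_G$. Since the splitting $T_G \hookrightarrow G$ is just the inclusion of a subgroup, and $U$ is normal in $G$, $U \cap T_G = \{1\}$ (an upper‑triangular unipotent matrix meets the diagonal only in the identity), and $|U T_G| = \ell\,|T_G| = |G|$, I obtain the internal semidirect product $G = \langle t \rangle \rtimes \bigl(G \cap C_s(\ell)\bigr)$.

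For the final clause I would compute the conjugation action of $C_s(\ell)$ on $U$. Under the isomorphism $U \iso (\Z(\ell), +)$ sending $\smallmat{1}{b}{0}{1} \mapsto b$, conjugation by $\smallmat{a}{0}{0}{d}$ acts as multiplication by $a/d$; this is the identity map exactly when $a = d$, i.e.\ when the diagonal matrix is scalar. Thus $T_G$ centralizes $U$ if and only if $T_G \subseteq Z(\ell)$, and a semidirect product is direct precisely when the acting factor centralizes the normal factor. Therefore $G = \langle t \rangle \rtimes \bigl(G \cap C_s(\ell)\bigr)$ is a direct product if and only if $G \cap C_s(\ell) \subseteq Z(\ell)$.

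I do not anticipate a genuine obstacle; the argument is bookkeeping with upper‑triangular $2\times 2$ matrices. The one point meriting a word of justification is the reverse inclusion $\pi_C(G) \subseteq T_G$ in the second step: this is exactly where the hypothesis that $G$ contains an element of order $\ell$ — equivalently, that $U \subseteq G$ — gets used, and the conclusion fails without it.
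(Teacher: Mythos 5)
Your proof is correct and follows essentially the same route as the paper's: establish $\langle t\rangle\subseteq G$, show $G\cap C_s(\ell)$ is a complement (your surjectivity of $\pi_C|_G$ onto $G\cap C_s(\ell)$ is exactly the paper's observation that every coset of $\langle t\rangle$ in $G$ contains a unique diagonal element, and both arguments use $\langle t\rangle\subseteq G$ at the same spot), then compute that conjugation by $\smallmat{a}{0}{0}{d}$ scales the unipotent parameter by $a/d$. The only cosmetic difference is that you obtain $t\in G$ from uniqueness of the Sylow $\ell$-subgroup of $B(\ell)$, whereas the paper raises the order-$\ell$ element to an explicit power.
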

\begin{proof}
If $G$ contains an element $g=\smallmat{a}{b}{0}{d}$ of order divisible $\ell$ then $g^{\ell-1}=\smallmat{1}{x}{0}{1}$ for some nonzero $x$, and for $ex\equiv 1\bmod \ell$ we have $g^{e\ell-e}=t\in G$.
For any $g=\smallmat{a}{b}{0}{d}$ the product
$gt^e = \smallmat{a}{ae+d}{0}{d}$
is diagonal if and only if $e\equiv -d/a\bmod \ell$.
Thus every coset of $\langle t\rangle$ in $G$ contains a unique element of $H=G\cap C_s(\ell)$.
Thus $G=\langle t\rangle \rtimes H$, since $\langle t\rangle$ is normal in $G$, and the action of $H$ on $\langle t\rangle$ is trivial if and only if $H\subseteq Z(\ell)$.
\end{proof}

Formulas to count subgroups of a given finite abelian group are well known; see \cite{birkhoff35}, for example.
In the case of interest here, the answer is particularly simple.
The lemma below is a special case of \cite[Thm.\ 4.1]{toth14}.

\begin{lemma}\label{lem:alpha}
Let $n$ be a positive integer.
There is a one-to-one correspondence between triples $(a,b,i)$ with $a,b|n$, $0\le i<\gcd(a,b)$ and subgroups of $\Z(n)\times \Z(n)$ given by
\[
(a,b,i)\mapsto \bigl\langle (a,-a),\, (ic,d-ic)\bigr\rangle,
\]
where $c=a/\gcd(a,b)$ and $d=n/b$.  The number of distinct subgroups of $\Z(n)\times\Z(n)$ is thus
\[
\alpha(n) := \sum_{a,b|n}\gcd(a,b).
\]
\end{lemma}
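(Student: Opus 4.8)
The plan is to establish the bijection directly and then derive the counting formula by summing the number of valid triples. First I would verify that for each triple $(a,b,i)$ with $a,b\mid n$ and $0\le i<\gcd(a,b)$, the subgroup $H:=\langle (a,-a),(ic,d-ic)\rangle$ has a canonical description. Set $c=a/\gcd(a,b)$ and $d=n/b$. The key structural observation is that the first generator $(a,-a)$ generates a cyclic subgroup of order $n/a$ inside the first coordinate direction's ``anti-diagonal'', while the second generator $(ic,d-ic)$ has order $n/d=b$ modulo the subgroup generated by the first. Concretely, I would show that the projection of $H$ to the first coordinate is $a\Z(n)$ (cyclic of order $n/a$), and the intersection of $H$ with $0\times\Z(n)$ is exactly $(n/b)\Z(n)\times$-translated appropriately, i.e.\ of order $b$; thus $|H|=(n/a)\cdot b$, which already shows that different $a$ or $b$ give non-isomorphic, hence distinct, subgroups.

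Next I would prove injectivity of the map on triples. Suppose $(a,b,i)$ and $(a',b',i')$ give the same subgroup $H$. From the order formula $|H|=(n/a)b$ and from recovering $a$ as the smallest positive generator of the first-coordinate projection, we get $a=a'$ and then $b=b'$. It remains to show $i=i'$. Once $a=a'$ and $b=b'$, hence $c=c'$ and $d=d'$, the element $(ic,d-ic)$ lies in $H$ and its first coordinate is $ic$, well-defined modulo the first-coordinate stabilizer. The first coordinates of elements of $H$ with second coordinate congruent to $d$ modulo the relevant subgroup form a coset of $\{$ first coordinates of $H\cap(\Z(n)\times d\Z(n))\}$; pinning this down shows $ic$ is determined modulo $a$, hence $i$ is determined modulo $a/c=\gcd(a,b)$, giving $i=i'$ by the range restriction. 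For surjectivity, given an arbitrary subgroup $H\le\Z(n)\times\Z(n)$, let $a$ be the positive generator of $\pi_1(H)$ (first-coordinate projection), let $b=|H\cap(0\times\Z(n))|$ reinterpreted as $|H|/(n/a)$, pick any element of $H$ projecting to $a$ of the form $(a,\ast)$ and scale so that (after subtracting a suitable multiple of a generator of $H\cap(0\times\Z(n))$) the second coordinate is forced into a canonical residue; a short computation identifies the resulting data with a unique triple $(a,b,i)$ in the stated range. The hard part here is the bookkeeping in the surjectivity step: showing that the ``generator in reduced form'' really can be brought to exactly $(ic,d-ic)$ and not some other shape requires carefully tracking which combinations $a\mid n$, $b\mid n$ are compatible and why the offset lands in $\{0,\dots,\gcd(a,b)-1\}$ after reduction.

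Finally, the count follows by summing: $\alpha(n)=\sum_{a,b\mid n}\#\{i:0\le i<\gcd(a,b)\}=\sum_{a,b\mid n}\gcd(a,b)$. I would remark that this matches the cited special case of \cite[Thm.\ 4.1]{toth14}, so in the paper it suffices to either cite that result directly or give the short self-contained argument above; given the author's stated intent to sketch proofs, I expect the write-up to be a brief paragraph invoking \cite{toth14} together with the explicit parametrization, with the verification that the displayed map is the claimed bijection left as a routine check.

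I anticipate the main obstacle to be the surjectivity/normal-form argument: proving that every subgroup arises from exactly one triple in the prescribed range, rather than merely bounding the count, is where the combinatorics of divisor pairs and the $\gcd$ offset genuinely interact. Everything else (the order formula, injectivity given $a,b$, and the final summation) is straightforward linear algebra over $\Z(n)$.
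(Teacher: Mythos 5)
Your overall plan (set up the explicit bijection, then sum over triples) matches the paper's, and your final counting step and the remark about \cite{toth14} are fine. But the structural claims on which your injectivity and surjectivity arguments rest are false for the parametrization actually stated in the lemma. You propose to recover $a$ as the generator of the first-coordinate projection $\pi_1(H)$ and $b$ from $H\cap(0\times\Z(n))$. Take $n=4$ and the triple $(a,b,i)=(4,4,1)$: then $c=1$, $d=1$, and $H=\langle (4,-4),(1,0)\rangle=\langle(1,0)\rangle=\Z(4)\times\{0\}$. Here $\pi_1(H)=\Z(4)$, not $4\Z(4)$, and $H\cap(0\times\Z(4))$ is trivial, not of order $b=4$. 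So your recipe would ``recover'' $a=1$ rather than $a=4$, and your injectivity argument (and likewise the normal form you build for surjectivity) breaks down. The underlying issue is that when $i\ne 0$ the second generator $(ic,d-ic)$ contributes to the first coordinate, so the coordinate projections are not the invariants that classify $H$ in this parametrization; they belong to the Hermite-normal-form parametrization $\langle(a,t),(0,s)\rangle$, which is a different (if ultimately equivalent) bijection with its own compatibility condition $s\mid (n/a)t$.

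The invariants that do work — and the ones the paper uses — come from the trace map $(x,y)\mapsto x+y$. Its kernel in $H$ is the trace-zero subgroup $H_0=\langle(a,-a)\rangle$, which recovers $a$; the image of the trace map is $d\Z(n)$ with $d=n/b$, recovering $b$ as $|H/H_0|$; and among the elements of $H$ of trace $d$, one checks that every first coordinate is divisible by $c=a/\gcd(a,b)$ (because $b$ times such an element lies in $H_0$), so these first coordinates form a coset of $a\Z(n)$ inside $c\Z(n)$ and have a unique representative $ic$ with $0\le i<\gcd(a,b)$. That pins down $i$ and gives both injectivity and surjectivity in one stroke. Your argument as written would need to be rebuilt on these invariants; with the coordinate-projection invariants it does not go through. (A smaller point: your claim that distinct pairs $(a,b)$ give non-isomorphic subgroups is also false, e.g.\ $(a,b)=(2,4)$ and $(1,2)$ for $n=4$ both yield subgroups isomorphic to $\Z(2)\times\Z(4)$; distinctness must come from the invariants themselves, not from the isomorphism type.)
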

\begin{proof}
For each subgroup of $H\subseteq\Z(n)\times\Z(n)$ there is a triple $(a,b,i)$ with $a,b|n$ and $0\le i < \gcd(a,b)$ determined by the generator $x=(a,-a)$ of the trace zero subgroup $H_0\subseteq H$, the order $b$ of $H/H_0$, and the least $i\ge 0$ for which $y:=(ia/\gcd(a,b),\ n/b-  ia/\gcd(a,b))\in H$.
Conversely, each such triple $(a,b,i)$ determines a subgroup $H=\langle x,y\rangle$; we thus have a bijection and $\alpha(n)$ counts the triples $(a,b,i)$.
\end{proof}

\begin{corollary}\label{cor:borel}
Let $\ell$ be an odd prime.
The number of non-conjugate subgroups of $\GL_2(\ell)$ that lie in a Borel group and contain an element of order $\ell$ is $\alpha(\ell-1)$, the number of subgroups of $C_s(\ell)$.
\end{corollary}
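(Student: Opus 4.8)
The plan is to construct a bijection between the set of subgroups of $C_s(\ell)$ and the set of $\GL_2(\ell)$-conjugacy classes of subgroups that lie in a Borel group and contain an element of order $\ell$. Since $C_s(\ell)\simeq\F_\ell^\times\times\F_\ell^\times\simeq\Z(\ell-1)\times\Z(\ell-1)$, Lemma~\ref{lem:alpha} then evaluates this count as $\alpha(\ell-1)$. Writing $t=\unimat$, the map will send a subgroup $H\le C_s(\ell)$ to the conjugacy class of $G_H:=\langle t\rangle\rtimes H$. Here $G_H=\langle t\rangle H$ makes sense and lies in $B(\ell)$ because $H\subseteq C_s(\ell)\subseteq B(\ell)$ normalizes $\langle t\rangle$, and it contains $t$, an element of order $\ell$, so it is of the required type. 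Conversely, by Lemma~\ref{lem:borel} any subgroup of the required type is, once conjugated into $B(\ell)$, equal to $\langle t\rangle\rtimes(G\cap C_s(\ell))=G_{G\cap C_s(\ell)}$; hence the map is surjective, and the whole corollary reduces to showing it is injective: $G_{H_1}$ and $G_{H_2}$ conjugate in $\GL_2(\ell)$ implies $H_1=H_2$.

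For this I would first record two elementary facts about $B(\ell)$. (i) The order-$\ell$ elements of $B(\ell)$ are precisely the nonidentity elements of $\langle t\rangle$: if $\smallmat{a}{b}{0}{d}$ has order $\ell$, then $a^\ell=d^\ell=1$ in $\F_\ell^\times$ (these are diagonal entries of the identity), so $a=d=1$ since $\gcd(\ell,\ell-1)=1$, and conversely $\smallmat{1}{b}{0}{1}^n=\smallmat{1}{nb}{0}{1}$ shows every such matrix with $b\neq 0$ has order $\ell$. (ii) $N_{\GL_2(\ell)}(\langle t\rangle)=B(\ell)$: an element $g$ normalizes $\langle t\rangle$ iff the unipotent matrix $gtg^{-1}$ fixes the line $\langle e_1\rangle$ (the fixed line of every nonidentity element of $\langle t\rangle$), which holds iff $g$ stabilizes $\langle e_1\rangle$, i.e.\ iff $g\in B(\ell)$. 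Combining these, for any subgroup $G\subseteq B(\ell)$ that contains an element of order $\ell$, the subgroup generated by the order-$\ell$ elements of $G$ is exactly $\langle t\rangle$, since $\langle t\rangle$ has prime order and so is generated by any of its nonidentity elements.

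Now suppose $g\in\GL_2(\ell)$ conjugates $G_{H_1}$ to $G_{H_2}$. Conjugation preserves orders, so $g$ carries the order-$\ell$ elements of $G_{H_1}$ onto those of $G_{H_2}$, hence conjugates $\langle t\rangle$ onto $\langle t\rangle$; by (ii) this forces $g\in B(\ell)$. Using $B(\ell)=\langle t\rangle\rtimes C_s(\ell)$, write $g=t^k h_0$ with $h_0\in C_s(\ell)$ and $k\in\Z$. Conjugation by $h_0$ fixes $G_{H_1}=\langle t\rangle H_1$ because $C_s(\ell)$ is abelian (so it centralizes $H_1$) and normalizes $\langle t\rangle$. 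To see that conjugation by $t^k$ also fixes $G_{H_1}$, note $t^kG_{H_1}t^{-k}$ is a subgroup of $B(\ell)$ containing $t$, so by Lemma~\ref{lem:borel} it equals $\langle t\rangle\rtimes\bigl(t^kG_{H_1}t^{-k}\cap C_s(\ell)\bigr)$; and a direct matrix computation shows that for each $h=\smallmat{x}{0}{0}{y}\in H_1$ one has $t^kht^{-k}=\smallmat{x}{k(y-x)}{0}{y}$, whose $\langle t\rangle$-coset $\bigl\{\smallmat{x}{n y+k(y-x)}{0}{y}:n\bigr\}$ contains exactly one diagonal matrix (namely $h$ itself, since $y\neq 0$). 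Hence $t^kG_{H_1}t^{-k}\cap C_s(\ell)=H_1$ and $t^kG_{H_1}t^{-k}=G_{H_1}$. Therefore $G_{H_1}=gG_{H_1}g^{-1}=G_{H_2}$, and intersecting both sides with $C_s(\ell)$ (or applying Lemma~\ref{lem:borel} once more) gives $H_1=H_2$, as desired.

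The only step that genuinely requires care — and the only place where passing to conjugacy classes in $\GL_2(\ell)$ could conceivably have identified two distinct subgroups of $C_s(\ell)$ — is the last matrix computation showing that conjugation by powers of $t$ leaves $G_H$ unchanged; everything else is bookkeeping built directly on Lemmas~\ref{lem:borel} and~\ref{lem:alpha}.
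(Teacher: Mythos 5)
Your proof is correct and follows essentially the same route as the paper's: reduce the conjugacy question to conjugation inside $B(\ell)$ (you do this via $N_{\GL_2(\ell)}(\langle t\rangle)=B(\ell)$, the paper via self-normalization of $B(\ell)$ — the same fact in disguise, since $\langle t\rangle$ is the unique $\ell$-Sylow of $B(\ell)$), then observe that every subgroup of $B(\ell)$ containing $t$ is normal in $B(\ell)$, so the map $H\mapsto\langle t\rangle\rtimes H$ bijects subgroups of $C_s(\ell)$ with the conjugacy classes in question, and Lemma~\ref{lem:alpha} gives the count $\alpha(\ell-1)$. Your explicit computation that $t^k h t^{-k}\in\langle t\rangle h$ is exactly the normality statement the paper asserts without calculation.
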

\begin{proof}
It suffices to consider subgroups $G\subseteq B(\ell)$ containing $\unimat$ up to conjugation in $B(\ell)$, since $B(\ell)$ is self-normalizing in $\GL_2(\ell)$.
Every such $G$ is normal in $B(\ell)$: the subgroup generated by $\unimat$ is normal and the $B(\ell)$-conjugates of $G\cap C_s(\ell)$ all lie in $G$.
This gives a one-to-one correspondence between subgroups of $B(\ell)$ containing $\unimat$, all of which are non-conjugate, and subgroups of $C_s(\ell)\simeq \F_\ell^\times\times\F_\ell^\times\simeq  \Z(\ell-1)\times\Z(\ell-1)$.
\end{proof}

\begin{lemma}\label{lem:isoborel}
Let $\ell$ be an odd prime.
Let $G$ and $H$ be conjugate subgroups of $\GL_2(\ell)$ that lie in $C_s(\ell)$ and let $t=\unimat$.
The groups $G':=\langle G,t\rangle$ and $H':=\langle H,t\rangle$ of $B(\ell)$ are locally conjugate in $\GL_2(\ell)$ and isomorphic.
\end{lemma}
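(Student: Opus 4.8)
The plan is to combine the semidirect-product description of Lemma~\ref{lem:borel} with the fact that conjugation by a fixed matrix preserves eigenvalues. Since $\diagmat{a}{d}\hairspace t\hairspace\diagmat{a}{d}^{-1}=t^{a/d}$, the split Cartan group $C_s(\ell)$ normalizes $\langle t\rangle$; as $G\subseteq C_s(\ell)$ and $\langle t\rangle\cap G=1$, we get $G'=\langle t\rangle\rtimes G$ with $G'\cap C_s(\ell)=G$ (immediate from $G'=\langle t\rangle G$), and likewise $H'=\langle t\rangle\rtimes H$, in agreement with Lemma~\ref{lem:borel}. Under these identifications $G$, resp.\ $H$, acts on $\langle t\rangle\simeq\Z(\ell)$ through the character $\lambda\colon\diagmat{a}{d}\mapsto a/d$, an element of $\Z(\ell)^\times\simeq\Aut(\Z(\ell))$.

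Now fix $\gamma\in\GL_2(\ell)$ with $H=\gamma G\gamma^{-1}$ and let $\phi:=(x\mapsto\gamma x\gamma^{-1})\colon G\to H$, an isomorphism. Each $\phi(g)$ is conjugate to $g$ in $\GL_2(\ell)$, hence has the same characteristic polynomial; since $\phi(g)\in H\subseteq C_s(\ell)$ is diagonal, $\phi(\diagmat{a}{d})$ equals $\diagmat{a}{d}$ or $\diagmat{d}{a}$. The crucial point, which I would verify by a short computation, is that this choice is uniform over $G$: if $\phi$ preserved the ordered eigenvalue pair on one non-central element $g$ and reversed it on another non-central element $g'$, then comparing the eigenvalues of $\phi(gg')=\phi(g)\phi(g')$ with those of $gg'$ would force $g$ or $g'$ to be a scalar; and on scalars $\phi$ acts trivially while $\lambda$ is trivial. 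Hence either $\lambda\circ\phi=\lambda|_G$ or $\lambda\circ\phi=(\lambda|_G)^{-1}$. I expect this dichotomy — equivalently, the possibility that $\gamma$ fails to normalize $C_s(\ell)$ — to be the only real obstacle; everything else is bookkeeping with the semidirect-product structures.

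To conclude, first the isomorphism: since $g\mapsto g^{-1}$ is an automorphism of the abelian group $G$ that replaces $\lambda|_G$ by its inverse, after composing $\phi$ with it if necessary we may assume $\lambda\circ\phi=\lambda|_G$; the two actions then match under $\phi$, so $t^kg\mapsto t^k\phi(g)$ (with $0\le k<\ell$, $g\in G$) is a group isomorphism $G'\iso H'$. For local conjugacy, keep the original $\phi$, so that $\phi(g)$ is $\GL_2(\ell)$-conjugate to $g$ for every $g\in G$, and define $\Phi\colon G'\to H'$ by $\Phi(t^kg)=t^k\phi(g)$; this is a bijection by the semidirect-product decompositions and the bijectivity of $\phi$. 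For $g=\diagmat{a}{d}\in G$ one has $t^kg=\smallmat{a}{kd}{0}{d}$, whereas $t^k\phi(g)$ is $\smallmat{a}{kd}{0}{d}$ or $\smallmat{d}{ka}{0}{a}$; in either case it has characteristic polynomial $(x-a)(x-d)$ and is non-semisimple exactly when $a=d$ and $k\ne0$ (note $\phi(\diagmat{a}{a})=\diagmat{a}{a}$), just as $t^kg$ is. Since by Table~\ref{table:conjclasses} a conjugacy class in $\GL_2(\ell)$ is determined by $(\det,\tr)$ together with semisimplicity, $\Phi(t^kg)$ is conjugate to $t^kg$, so $G'$ and $H'$ are locally conjugate.
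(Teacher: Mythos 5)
Your proof is correct. The local-conjugacy half runs parallel to the paper's: both arguments invoke Lemma~\ref{lem:borel} to write $G'=\langle t\rangle\rtimes G$ and $H'=\langle t\rangle\rtimes H$ and then exhibit a class-preserving bijection of the form $t^kg\mapsto t^k\phi(g)$. The difference is that the paper first reduces to the conjugator $s=\smallmat{0}{1}{1}{0}$ (implicitly using that a non-scalar subgroup of $C_s(\ell)$ determines its split Cartan, so any conjugator lies in $C_s^+(\ell)$ and acts on $C_s(\ell)$ either trivially or by swapping diagonal entries), whereas you keep a general $\gamma$ and recover the same dichotomy --- $\phi$ preserves or reverses the ordered eigenvalue pair uniformly on all of $G$ --- from the multiplicativity computation with $\phi(gg')=\phi(g)\phi(g')$; that is a clean, self-contained substitute for the reduction. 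The isomorphism half is where you genuinely diverge: the paper writes down an explicit presentation $\mathcal{G}$ of $G'$ on generators $t$, $g$, $z$ (with $g$ of maximal projective order $e$ and $z$ generating the scalar subgroup) and checks that $H'$ satisfies the same relations using $h'=(sgs)^{e-1}$, which requires some bookkeeping with the exponents $e$, $d$, $f$. You instead view $G'$ and $H'$ as semidirect products of $\langle t\rangle$ by abstractly isomorphic abelian groups acting through the characters $\lambda\circ\phi\in\{\lambda|_G,(\lambda|_G)^{-1}\}$, and note that precomposing $\phi$ with inversion (an automorphism of the abelian group $G$) flips the character, after which the two semidirect products are isomorphic for formal reasons. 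Your route isolates the one genuine obstruction --- a possible mismatch of the two actions on $\langle t\rangle$ --- more transparently, at the cost of not producing the explicit presentation that the paper's version records.
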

\begin{proof}
When $G=H$ the lemma clearly holds, so we assume $G\ne H$, in which case $G$ and $H$ are conjugate via $s=\smallmat{0}{1}{1}{0}$.
We have $G'=\langle t\rangle\rtimes G$ and $H'=\langle t\rangle\rtimes H$, by Lemma~\ref{lem:borel},
and the bijection from $G'$ to $H'$ given by swapping diagonal entries preserves conjugacy classes (but is typically not a homomorphism), hence $G'$ and $H'$ are locally conjugate.
Let $g\in G$ be an element with maximal projective order $e$, and let $z\in G$ be a generator for $G\cap Z(\ell)$ with order $f$; then $g^e=z^d$ for some integer $d\in [1,\ell-1]$.
We have $gtg^{-1}=t^n$, where $n\in (\Z/\ell\Z)^\times$ is the ratio of the diagonal entries of $g$, while $z$ commutes with $t$ and $g$.  Thus $G'$ is isomorphic to the abstract group
\[
\mathcal{G}:=\langle t,g,z:t^\ell=g^ez^{-d}=z^f=ztz^{-1}t^{-1}=zgz^{-1}g^{-1}=gtg^{-1}t^{-n}=1\rangle.
\]
We now note that $t$ lies in $H'$, and $z$ generates $H'\cap Z(\ell)$.
The element $h=sgs$ of~$H$ has maximal projective order~$e$, with $h^e=z^d$, and $hth^{-1}=t^{1/n}$, where $1/n$ is the inverse of $n$ and has order $e$ in $(\Z/\ell\Z)^\times$.
The action of $h'=h^{e-1}$ on $t$ is thus identical to that of $g$, and there exists a $z'\in H'\cap Z(\ell)$ of the same order $f$ as $z$ for which $(h')^e = (z')^d$.
It follows that $H'$ is also isomorphic to $\mathcal{G}$.
\end{proof}

\begin{remark}\label{rem:onlycase}
The situation in Lemma~\ref{lem:isoborel} is the only case where non-conjugate but locally conjugate subgroups can arise; see Corollary~\ref{cor:sigzlc}.
\end{remark}

\subsection{Cyclic cases}\label{sec:cyclic}

We now consider the subgroups of $\GL_2(\ell)$ with cyclic image in $\PGL_2(\ell)$.

\begin{lemma}\label{lem:beta}
Let $n=\prod_p p^{e_p}$ be a positive integer.
The number of subgroups of $\Z(n)\times\Z(n)$ that are fixed by the automorphism $\sigma\colon (x,y)\mapsto (y,x)$ is
\[
\beta(n):=\beta_2(n)\prod_{p \ne 2}(e_p+1)^2,
\]
where $\beta_2(n)=2(e_2^2-e_2)+3$ if $n$ even and $\beta_2(n)=1$ if $n$ is odd.
\end{lemma}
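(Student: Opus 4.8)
The plan is to reduce to prime powers and then treat odd primes by a clean eigenspace argument and the prime $2$ by the explicit parametrization of Lemma~\ref{lem:alpha}, which is where the real work lies. First I would note that $\beta$ is multiplicative: $\Z(n)\times\Z(n)$ is the internal direct sum of its $p$-primary components $\Z(p^{e_p})\times\Z(p^{e_p})$, these have pairwise coprime orders, so every subgroup is the direct sum of its $p$-primary parts, and the swap automorphism $\sigma\colon(x,y)\mapsto(y,x)$ preserves each component. Hence a subgroup is $\sigma$-fixed exactly when each of its $p$-parts is, so $\beta(n)=\prod_p\beta(p^{e_p})$, and it remains to prove $\beta(p^e)=(e+1)^2$ for odd $p$ and $\beta(2^e)=2(e^2-e)+3$, which then assemble into the stated formula.

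For odd $p$ and $n=p^e$ the integer $2$ is a unit, so $\Z(n)\times\Z(n)=D\oplus A$ with $D=\{(x,x)\}$ and $A=\{(x,-x)\}$, and $\sigma$ acts as $+1$ on $D$ and as $-1$ on $A$. If $H$ is $\sigma$-fixed then for every $h\in H$ the elements $\tfrac12(h+\sigma h)\in D$ and $\tfrac12(h-\sigma h)\in A$ lie in $H$, so $H=(H\cap D)\oplus(H\cap A)$; conversely every such direct sum is visibly $\sigma$-fixed. Thus $\sigma$-fixed subgroups correspond bijectively to pairs consisting of a subgroup of $\Z(n)$ and a subgroup of $\Z(n)$, of which there are $\tau(p^e)=e+1$ each, giving $\beta(p^e)=(e+1)^2$.

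The prime $2$ is the main obstacle, since this averaging trick breaks down ($D$ and $A$ intersect). Here I would use Lemma~\ref{lem:alpha}: a subgroup $H\leftrightarrow(a,b,i)$ is generated by $(a,-a)$ and $(ic,d-ic)$ with $c=a/\gcd(a,b)$ and $d=n/b$. Since $\sigma(a,-a)=-(a,-a)\in H$ automatically, $\sigma H=H$ if and only if $(d-ic,ic)=\sigma(ic,d-ic)$ lies in $H$. Solving $(d-ic,ic)=k(a,-a)+m(ic,d-ic)$ for integers $k,m$ and adding the two coordinate congruences forces $b\mid m-1$; writing $m=1+bt$ and using that $bc=\lcm(a,b)$ is a multiple of $a$, the term $t\,i\,\lcm(a,b)$ can be absorbed into $ka$, so the condition collapses to the single divisibility
\[
a\ \bigm|\ \frac{n}{b}-2i\cdot\frac{a}{\gcd(a,b)}.
\]
I expect this extraction of the clean criterion to be the one genuinely delicate step.

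It then remains to count triples $(a,b,i)=(2^\alpha,2^\beta,i)$ with $0\le\alpha,\beta\le e$ and $0\le i<2^{\min(\alpha,\beta)}$ satisfying this condition. After clearing the common power of $2$ it reads $2i\equiv 2^{\,e-\max(\alpha,\beta)}\pmod{2^{\min(\alpha,\beta)}}$, which for $i$ in the allowed range has exactly one solution when $\min(\alpha,\beta)=0$, exactly two solutions when $\min(\alpha,\beta)\ge 1$ and $\max(\alpha,\beta)<e$, and no solution when $\min(\alpha,\beta)\ge 1$ and $\max(\alpha,\beta)=e$. Summing $1$ over the $2e+1$ pairs with a zero coordinate and $2$ over the $(e-1)^2$ pairs with both coordinates in $\{1,\dots,e-1\}$ (the remaining pairs contribute nothing) gives $2e+1+2(e-1)^2=2(e^2-e)+3$. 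Beyond the criterion itself, the only care needed is bookkeeping the boundary pairs — a zero coordinate, or a coordinate equal to $e$ — and it is precisely these boundary effects that make $\beta_2$ irregular.
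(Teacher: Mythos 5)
Your proof is correct. The reduction to prime powers and the odd-$p$ case are essentially the paper's argument: the paper's map $g\mapsto\sigma(g)-g$ with kernel $D$ and image in $T$ is the same eigenspace decomposition you carry out with the averaging operators $\tfrac12(1\pm\sigma)$, and both yield $(e_p+1)^2$. Where you genuinely diverge is at $p=2$. The paper handles this by induction on the exponent, asserting $\beta(2^{n+1})=\beta(2^n)+4n$ and exhibiting the $4n$ new $\sigma$-fixed subgroups of exponent $2^{n+1}$ explicitly; verifying that list is complete is left to the reader. You instead derive the congruence criterion $2ic\equiv d\bmod a$ for a triple $(a,b,i)$ of Lemma~\ref{lem:alpha} to give a $\sigma$-fixed subgroup — your derivation (adding the two coordinate congruences to force $b\mid m-1$, then absorbing $ti\,\lcm(a,b)$ into $ka$) is sound — and then count solutions directly; your case analysis on $\min(\alpha,\beta)$ and $\max(\alpha,\beta)$ and the resulting sum $2e+1+2(e-1)^2=2(e^2-e)+3$ are correct (and check against $\beta(2)=3$, $\beta(4)=7$). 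Notably, the criterion you prove is exactly the one the paper states without proof in Remark~\ref{rem:alphabeta}, so your route both supplies a proof of that remark and replaces the paper's inductive enumeration with a closed-form count; the paper's approach, in exchange, produces explicit generators for the fixed subgroups at each exponent. Your criterion also applies uniformly at odd primes, so in principle you could have dispensed with the eigenspace argument altogether, at the cost of a slightly messier count.
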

\begin{proof}
Let $G$ be a subgroup of $\Z(n)\times\Z(n)$ fixed by $\sigma$.
The automorphism $\sigma$ fixes each $p$-Sylow subgroup of $G$, so it suffices to consider the case $\#G=p^e$.
The map $\varphi$ defined by $g\mapsto \sigma(g)-g$ is an endomorphism of $G$ with kernel $D:=\{(x,y)\in G:x=y\}$ and image contained in $T:=\{(x,y)\in G: x+y=0\}$.

If $p$ is odd then $\varphi(G)=T$ and $D\cap T$ is trivial, so $G=D\times T$.
Conversely, every product of a diagonal and trace zero subgroup of $\Z(n)\times \Z(n)$ is fixed by $\sigma$,
and there are $(e_p+1)^2$ such subgroups.

For $p=2$ we have $\beta(2)=3$, and $\beta(2^{n+1})=\beta(2^n)+4n$, where the $4n$ new groups all have exponent $2^{n+1}$: one is the full group, one is the even trace subgroup of index 2, two are index 4 subgroups $\langle (1,1),(0,4)\rangle$ and $\langle(1,-1),(0,4)\rangle$, and four are subgroups of index $2^i$, for $i$ from $3$ to $n+1$, of the form $\langle (1,\pm1),(0,2^i)\rangle$,\ $\langle (1,2^{i-1}\pm 1),(0,2^i)\rangle$.  The formula for $\beta_2(n)$ then follows by induction.
\end{proof}

\begin{remark}\label{rem:alphabeta}
In terms of the bijection given by Lemma~\ref{lem:alpha}, two triples $(a,b,i)$ and $(a,b,j)$ correspond to subgroups in the same $\sigma$-orbit if and only if $i+j\equiv (n/\lcm(a,b))\bmod \gcd(a,b)$; in particular, the subgroups fixed by $\sigma$ are those corresponding to triples $(a,b,i)$ with $2i\lcm(a,b)\equiv n\bmod \gcd(a,b)$.
\end{remark}

\begin{corollary}\label{cor:splitnormal}
Let $\ell$ be an odd prime.
There are $\beta(\ell-1)$ subgroups $H$ of $C_s(\ell)$ that are normal in $C_s^+(\ell)$.
\end{corollary}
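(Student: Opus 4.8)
The plan is to reduce the statement to a direct application of Lemma~\ref{lem:beta}. First I would use that $C_s(\ell)$ is abelian, so \emph{every} subgroup $H\subseteq C_s(\ell)$ is automatically normal in $C_s(\ell)$; hence $H$ is normal in $C_s^+(\ell)$ if and only if it is stable under conjugation by elements of $C_s^+(\ell)$ lying outside $C_s(\ell)$. Since $C_s(\ell)$ has index $2$ in $C_s^+(\ell)$, it suffices to test stability under a single coset representative of $C_s^+(\ell)/C_s(\ell)$, for which I would take $w:=\smallmat{0}{1}{1}{0}$. (A different choice of representative differs from $w$ by an element of the abelian group $C_s(\ell)$, whose conjugation action is trivial, so the stability condition does not depend on the choice.)

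Next I would compute the action explicitly: $w\diagmat{x}{y}w^{-1}=\diagmat{y}{x}$. Under the fixed isomorphism $C_s(\ell)\iso\F_\ell^\times\times\F_\ell^\times$ and a choice of generator identifying $\F_\ell^\times\simeq\Z(\ell-1)$, this is precisely the swap automorphism $\sigma\colon(x,y)\mapsto(y,x)$ of $\Z(\ell-1)\times\Z(\ell-1)$ appearing in Lemma~\ref{lem:beta}. Therefore the subgroups $H\subseteq C_s(\ell)$ that are normal in $C_s^+(\ell)$ correspond bijectively to the $\sigma$-fixed subgroups of $\Z(\ell-1)\times\Z(\ell-1)$, of which there are $\beta(\ell-1)$ by that lemma.

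There is essentially no obstacle here; the entire content is bookkeeping, and the only point deserving a sentence of care is the identification of conjugation by $w$ with the automorphism $\sigma$ under the chosen isomorphisms (together with the remark above that the coset representative is immaterial). This same mechanism will be reused for Cartan subgroups in later sections, so it is worth stating cleanly.
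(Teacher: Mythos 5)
Your proposal is correct and matches the paper's proof essentially verbatim: both reduce normality in $C_s^+(\ell)$ to stability under conjugation by $\smallmat{0}{1}{1}{0}$ (using that $C_s(\ell)$ is abelian of index $2$ in its normalizer), identify this conjugation with the swap automorphism $\sigma$ of $\Z(\ell-1)\times\Z(\ell-1)$, and invoke Lemma~\ref{lem:beta}. No gaps.
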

\begin{proof}
The split Cartan group $C_s(\ell)\simeq\Z(\ell-1)\times\Z(\ell-1)$ is abelian and has index 2 in its normalizer $C_s^+(\ell)=\langle C_s(\ell),s\rangle$ where $s=\smallmat{0}{1}{1}{0}$.
It follows that a subgroup $H$ of $C_s(\ell)$ is normal in $C_s^+(\ell)$ if and only if it is fixed under conjugation by $s$, which acts on $H$ by swapping the diagonal entries of each element.
\end{proof}

\begin{corollary}\label{cor:splitcyclic}
Let $\ell$ be an odd prime.
The number of non-conjugate subgroups of $\GL_2(\ell)$ that lie in a split Cartan group is
\[
\frac{\alpha(\ell-1)+\beta(\ell-1)}{2}.
\]
\end{corollary}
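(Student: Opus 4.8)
The plan is to reduce the count to an orbit-counting problem for the action of $C_s^+(\ell)/C_s(\ell)\simeq\Z/2\Z$ on the set of subgroups of $C_s(\ell)$, and then apply Burnside's lemma together with the formulas for $\alpha$ and $\beta$ already established in Lemmas~\ref{lem:alpha} and~\ref{lem:beta}.

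First I would note that every subgroup of $\GL_2(\ell)$ lying in a split Cartan group is, up to conjugacy, a subgroup of $C_s(\ell)$ itself, so the quantity to be computed is the number of $\GL_2(\ell)$-conjugacy classes among the subgroups of $C_s(\ell)$. The key reduction is that two subgroups $G,H\subseteq C_s(\ell)$ are conjugate in $\GL_2(\ell)$ if and only if they are conjugate by an element of $C_s^+(\ell)$. For $G\subseteq Z(\ell)$ this is immediate, since a central subgroup forms its own $\GL_2(\ell)$-conjugacy class. For $G\not\subseteq Z(\ell)$, the group $G$ contains a non-scalar diagonal matrix $\smallmat{x}{0}{0}{y}$ with $x\ne y$, whose centralizer in $\GL_2(\ell)$ is exactly $C_s(\ell)$; since $C_s(\ell)$ is abelian this gives $C_{\GL_2(\ell)}(G)=C_s(\ell)$. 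If $gGg^{-1}\subseteq C_s(\ell)$, then $gGg^{-1}$ is likewise non-scalar, so $g\,C_s(\ell)\,g^{-1}=g\,C_{\GL_2(\ell)}(G)\,g^{-1}=C_{\GL_2(\ell)}(gGg^{-1})=C_s(\ell)$, forcing $g\in N_{\GL_2(\ell)}(C_s(\ell))=C_s^+(\ell)$. This centralizer/normalizer step is the one place that needs care, essentially to ensure the scalar subgroups are handled separately and correctly.

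Next, since $C_s(\ell)$ is abelian it acts trivially by conjugation on its own subgroups, so the $C_s^+(\ell)$-conjugation action on the set of subgroups of $C_s(\ell)$ factors through the quotient $C_s^+(\ell)/C_s(\ell)\simeq\Z/2\Z$, generated by the class of $s=\smallmat{0}{1}{1}{0}$; conjugation by $s$ acts as the automorphism $\sigma\colon(x,y)\mapsto(y,x)$ under the identification $C_s(\ell)\simeq\Z(\ell-1)\times\Z(\ell-1)$. The number of conjugacy classes sought is therefore the number of $\langle\sigma\rangle$-orbits on subgroups of $\Z(\ell-1)\times\Z(\ell-1)$.

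Finally I would invoke Burnside's lemma, by which this orbit count equals $\tfrac12\bigl(\#\{\text{all subgroups}\}+\#\{\sigma\text{-fixed subgroups}\}\bigr)$. By Lemma~\ref{lem:alpha} the first term is $\alpha(\ell-1)$ and by Lemma~\ref{lem:beta} the second is $\beta(\ell-1)$, giving $\tfrac{\alpha(\ell-1)+\beta(\ell-1)}{2}$. Equivalently one can unwind Burnside by hand using Corollary~\ref{cor:splitnormal}: the $\sigma$-fixed subgroups are precisely the $\beta(\ell-1)$ subgroups of $C_s(\ell)$ that are normal in $C_s^+(\ell)$, each a singleton class, while the remaining $\alpha(\ell-1)-\beta(\ell-1)$ subgroups pair off into $\tfrac12\bigl(\alpha(\ell-1)-\beta(\ell-1)\bigr)$ classes $\{G,sGs^{-1}\}$, and the two contributions sum to the same total. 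Once the normalizer reduction is in place there is no further obstacle.
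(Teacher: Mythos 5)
Your proof is correct and follows essentially the same route as the paper: reduce to $C_s^+(\ell)$-conjugacy of subgroups of $C_s(\ell)$, observe that the action factors through the order-two quotient generated by $s=\smallmat{0}{1}{1}{0}$, and count orbits using $\alpha(\ell-1)$ total subgroups and $\beta(\ell-1)$ fixed ones. The one place you add genuine value is the centralizer argument justifying that $\GL_2(\ell)$-conjugacy between subgroups of $C_s(\ell)$ is always realized inside $C_s^+(\ell)$ — the paper asserts this reduction with only the remark that $C_s^+(\ell)$ is the normalizer, whereas your argument (scalar subgroups are self-conjugate; a non-scalar subgroup has centralizer exactly $C_s(\ell)$, forcing the conjugating element into the normalizer) supplies the missing justification correctly.
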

\begin{proof}
It suffices to count $\GL_2(\ell)$-conjugacy classes of subgroups of $C_s(\ell)$, and it is enough to consider $C_s^+(\ell)$ conjugacy classes, since $C_s^+(\ell)$ is the normalizer of $C_s(\ell)$.
The orbit of each subgroup $G\subseteq C_s(\ell)$ under conjugation by $C_s^+(\ell)$ has order 1 or 2, depending on whether $G$ is fixed by the action of $\smallmat{0}{1}{1}{0}$, which swaps the diagonal entries.
The counting formula then follows from Corollary~\ref{lem:alpha} and Lemma~\ref{lem:beta}.
\end{proof}

\begin{lemma}\label{lem:nonsplitcyclic}
Let $\ell$ be an odd prime.
The number of non-conjugate subgroups of $\GL_2(\ell)$ that lie in a non-split Cartan group is $\tau(\ell^2-1)$, where $\tau(n)$ counts the positive divisors of $n$.
\end{lemma}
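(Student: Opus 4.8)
The plan is to use the isomorphism $C_{ns}(\ell)\simeq \F_{\ell^2}^\times$ to reduce the counting problem to a statement about cyclic groups. Since $\F_{\ell^2}^\times$ is cyclic of order $\ell^2-1$, every subgroup $G$ of $C_{ns}(\ell)$ is cyclic and uniquely determined, as a subgroup of $C_{ns}(\ell)$, by its order $d\mid \ell^2-1$. Thus there are exactly $\tau(\ell^2-1)$ subgroups of the fixed Cartan group $C_{ns}(\ell)$. The real content is to show that no two of these are conjugate in the ambient group $\GL_2(\ell)$, i.e.\ that distinct subgroups of $C_{ns}(\ell)$ remain non-conjugate in $\GL_2(\ell)$.

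First I would observe that two subgroups of the same order $d$ are automatically equal inside $C_{ns}(\ell)$, so the only thing that could go wrong is that the unique subgroup $G_d$ of order $d$ could be conjugate in $\GL_2(\ell)$ to some subgroup $G'$ of a \emph{different} non-split Cartan group (a $\GL_2(\ell)$-conjugate of $C_{ns}(\ell)$). But a conjugate of $C_{ns}(\ell)$ is again isomorphic to $\F_{\ell^2}^\times$, so $G'$ has a well-defined order, and conjugate subgroups have the same order; hence $|G'|=d$, and after conjugating everything back we may assume $G'\subseteq C_{ns}(\ell)$, forcing $G'=G_d$. So the count is exactly $\tau(\ell^2-1)$ provided we check that the correspondence $d\mapsto G_d$ is well defined and injective, which is immediate from cyclicity. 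I would also note in passing the (not strictly needed) fact recorded earlier in the section that $\GL_2(\ell)$-conjugacy of elements inside $C_{ns}^+(\ell)$ already implies conjugacy inside $C_{ns}^+(\ell)$, which gives a cleaner route: two subgroups of $C_{ns}(\ell)$ that are $\GL_2(\ell)$-conjugate are already $C_{ns}^+(\ell)$-conjugate, and since $C_{ns}(\ell)$ is cyclic and normal in $C_{ns}^+(\ell)$, conjugation by $C_{ns}^+(\ell)$ fixes each of its subgroups; hence $\GL_2(\ell)$-conjugate subgroups of $C_{ns}(\ell)$ are equal.

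The proof therefore proceeds in three short steps: (1) identify $C_{ns}(\ell)$ with the cyclic group $\F_{\ell^2}^\times$ of order $\ell^2-1$ and note its subgroups are in bijection with divisors $d\mid\ell^2-1$, giving $\tau(\ell^2-1)$ subgroups of the fixed Cartan; (2) show that a subgroup of $\GL_2(\ell)$ contained in some non-split Cartan group is conjugate to a \emph{unique} subgroup of $C_{ns}(\ell)$, using that $C_{ns}^+(\ell)$ is the normalizer of $C_{ns}(\ell)$ and acts trivially on the subgroup lattice of the cyclic group $C_{ns}(\ell)$; (3) conclude that non-conjugacy in $\GL_2(\ell)$ is detected by the order $d$, so the number of $\GL_2(\ell)$-conjugacy classes equals $\tau(\ell^2-1)$.

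The only point that requires a little care — and the step I expect to be the main obstacle, though it is still elementary — is step (2): verifying that $C_{ns}^+(\ell)$ really is the full normalizer of $C_{ns}(\ell)$ in $\GL_2(\ell)$, and hence that two $\GL_2(\ell)$-conjugate subgroups of $C_{ns}(\ell)$ are conjugate by an element normalizing $C_{ns}(\ell)$, so that (by the cyclicity of $C_{ns}(\ell)$) they are in fact equal. This normalizer computation is standard — one checks that an element $g$ with $gC_{ns}(\ell)g^{-1}=C_{ns}(\ell)$ either centralizes $C_{ns}(\ell)$ (and so lies in $C_{ns}(\ell)$, which is its own centralizer) or induces the nontrivial Galois automorphism of $\F_{\ell^2}/\F_\ell$, which is realized by $\diagmat{1}{-1}$ — but it is the one place where one actually has to look at matrices rather than just invoke cyclicity.
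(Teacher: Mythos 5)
Your proof is correct and matches the paper's (one-line) argument: $C_{ns}(\ell)$ is cyclic of order $\ell^2-1$, so it has exactly one subgroup per divisor, and since conjugation preserves order, no two of these are conjugate in $\GL_2(\ell)$. The normalizer computation you flag as the "main obstacle" in step (2) is actually unnecessary — your own first paragraph already gives the complete argument via orders alone.
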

\begin{proof}
This is clear: $C_{ns}(\ell)\simeq\F_{\ell^2}^\times$ is cyclic of order $\ell^2-1$ and therefore contains a subgroup of order $n$ for each divisor $n$ of $\ell^2-1$, none of which are conjugate.
\end{proof}

\subsection{Dihedral cases}\label{sec:dihedral}

Next up are the subgroups of $\GL_2(\ell)$ with dihedral image in $\PGL_2(\ell)$; as above we assume that $\ell$ is an odd prime and recall that we consider the Klein group to be dihedral.

If $G$ is a subgroup of $\GL_2(\ell)$ with dihedral image in $\PGL_2(\ell)$, then $G$ lies in the normalizer $C^+$ of a Cartan group $C$ and it contains the abelian subgroup $H=G\cap C$ with index $2$. Let $Z=G\cap Z(\ell)\subseteq H$ denote the scalar subgroup of $G$.
The subgroup $H$ is normal in $G$ and in $C$, hence in $C^+=GC$, and it follows that each non-scalar element $h$ of $H$ has a distinct conjugate $\bar h\in H$; indeed, $\bar h=\smallmat{0}{1}{1}{0}h\smallmat{0}{1}{1}{0}$ if $C=C_s(\ell)$ and $\bar h=\smallmat{1}{0}{0}{-1}h\smallmat{1}{0}{0}{-1}$ if $C=C_{ns}(\ell)$).
To better understand the relationship between $G$ and~$H$ we consider the following maps:
\medskip

\begin{tabular}{lll}
$\qquad\qquad\quad\ \ H \to Z$ &  & $\qquad\qquad\qquad\ \ \ (G-H)\to Z$\\
$\qquad\qquad\qquad h\mapsto h\bar h =\diagmat{\det h}{\det h}$ & & $\qquad\qquad\qquad\qquad\qquad g\mapsto g^2=\diagmat{-\det g}{-\det g}.$
\end{tabular}
\medskip

\noindent
There are two possibilities, depending on whether the set $\det(H)$ and the set
\[
-\det(G-H):=\{-\det g:g \in G-H\}\subseteq \GL_1(\ell)
\]
coincide or not.

\begin{lemma}\label{lem:dihedral}
Let $\ell$ be an odd prime.
Let $G$ be a subgroup of $\GL_2(\ell)$ with dihedral image in $\PGL_2(\ell)$ that lies in the normalizer $C^+$ of a Cartan group $C$, let $H=G\cap C$, and let $Z=G\cap Z(\ell)$.
Then $H$ is normal in $C^+$ and one of the following holds:
\begin{enumerate}
\item[{\rm (2a)}] $\det(H)$ and $-\det(G-H)$ coincide, in which case $G=\langle H,\gamma\rangle$ for some $\gamma\in G-H$ with $\det\gamma=-1$.
\item[{\rm (2b)}] $\det(H)$ and $-\det(G-H)$ are disjoint, in which case $\det(H)=\det(Z)$ and $H$ contains $-1$.
\end{enumerate}
If $G'$ is another subgroup of $C^+$ with dihedral image in $\PGL_2(\ell)$ with $H=G'\cap C$ and $-\det(G'-H)=-\det(G-H)$, then $G$ and $G'$ are conjugate in~$C^+$.
\end{lemma}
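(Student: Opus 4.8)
The plan is to work entirely inside $C^+$, exploiting that $H$ is an index-$2$ normal subgroup of both $C$ and $G$, and to track the determinant. First I would establish that $H$ is normal in $C^+$: since $H = G\cap C$ has index $2$ in the abelian group $C$, it is normal in $C$; it is also normal in $G$ for the same reason; and $C^+ = GC$ because $G\not\subseteq C$ (the image of $G$ is dihedral, not cyclic) forces $G$ to contain an element of $C^+ - C$, so $GC = C^+$. Normality of $H$ in both $G$ and $C$ then gives normality in the product $C^+ = GC$. This also justifies the formula $\bar h = s h s^{-1}\in H$ for non-scalar $h\in H$, where $s$ is the relevant involution ($\smallmat{0}{1}{1}{0}$ or $\diagmat{1}{-1}$), and hence the identity $h\bar h = \diagmat{\det h}{\det h}$ displayed before the lemma (for non-scalar $h$; for scalar $h$ one checks $h\bar h = h^2$ directly, which is again the scalar $\det h$).

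Next I would analyze the dichotomy. Every $g\in G-H$ satisfies $g^2 = \diagmat{-\det g}{-\det g}$: indeed $g$ lies in the nontrivial coset of a Cartan group in its normalizer, so (as recorded in Section~\ref{sec:GL2}, just before Proposition~\ref{prop:subgroups}) $\tr g = 0$ and $g^2$ is the scalar $-\det g$. Thus $-\det(G-H)\subseteq \det(Z)\subseteq\det(H)$, since $g^2\in Z$. Likewise $h\bar h\in Z$ for all $h\in H$, so $\det(H)\subseteq \det(Z)$; combining, $\det(H) = \det(Z)$ \emph{always}, and moreover $-\det(G-H)\subseteq\det(H)$ always. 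So the two sets $\det(H)$ and $-\det(G-H)$ either coincide (case 2a) or the latter is a proper subset of the former — and since both lie in the cyclic group $\GL_1(\ell)$ and $\det(H)=\det(Z)$, properness forces them to be \emph{disjoint}? Here I need to be a little more careful: I would argue that $-\det(G-H)$ is either all of $\det(H)$ or avoids it entirely by using that $G-H$ is a single coset of $H$, so $\det(G-H) = \det(g_0)\det(H)$ for any fixed $g_0\in G-H$; thus $-\det(G-H) = -\det(g_0)\det(H)$ is a coset of $\det(H)$ inside $\GL_1(\ell)$, and a coset either equals the subgroup or is disjoint from it. In case 2a, $-\det(g_0)\in\det(H)$, so some $\gamma\in G-H$ has $\det\gamma = -1$ (pick $h\in H$ with $\det h = -\det(g_0)^{-1}$ — wait, I want $\det(\gamma) = \det(g_0)\det(h) = -1$, so take $\det h = -1/\det(g_0)$, which is available since $-1/\det(g_0)\in\det(H)$ iff $-\det(g_0)\in\det(H)$, true in case 2a); then $\gamma^2 = \diagmat{1}{1}$. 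In case 2b, disjointness means $-1\notin -\det(G-H)$, i.e. $1\notin\det(G-H)$; I must derive that $H\ni -1$ and $\det(H)=\det(Z)$ — the latter is already proved unconditionally, and for $-1\in H$ I would argue: if $-1\notin H$ then $H$ has odd order? No — rather, $\det(H)$ being a group containing $\det(Z)$, and $-\det(G-H)$ being a \emph{coset} of $\det(H)$ disjoint from $\det(H)$, I want to locate $-1$. Actually the cleaner route: $-1 = \diagmat{-1}{-1} = g^2$ iff $\det g = 1$ for $g\in G-H$, which happens iff $1\in\det(G-H)$; in case 2b this fails for coset elements, so if $-1\in G$ at all it must be that $-1\in H$. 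And $-1\in G$: since in case 2b, $\det(H)=\det(Z)$ is a group, so it contains $1$, so some $z\in Z$ has $\det z = 1$, meaning $z = \pm 1$; if $z=1$ that's trivial, but the existence of a norm-$1$ scalar forces — hmm, over $\Z(\ell)$ the only scalars of determinant $1$ are $\pm1$, and $\det$ of a scalar $z$ is $z^2$, so $z^2=1$ gives $z=\pm1$; I'd need $Z$ to contain such a nontrivial element, which is where I'd invoke that $H$ being nontrivial-dihedral-related forces $|H|$ even — I would instead simply note $H\supseteq Z\supseteq\det^{-1}(\det Z)\cap Z$ and that $h\bar h$ ranges over a subgroup of $Z$ of even order unless... — this bookkeeping is the fussy part.

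Finally, for the uniqueness-up-to-$C^+$-conjugacy: given $G, G'\subseteq C^+$ with $G\cap C = G'\cap C = H$ and $-\det(G-H) = -\det(G'-H)$ (equivalently $\det(G-H) = \det(G'-H)$, both cosets of $\det H$), I would pick $g\in G-H$ and $g'\in G'-H$ with $\det g = \det g'$; then $g(g')^{-1}\in C^+$ has determinant $1$. I claim conjugating $G'$ by a suitable element of $C^+$ moves $g'$ to $g$ while fixing $H$ setwise (automatic, as $H\triangleleft C^+$), whence $G' = \langle H, g'\rangle$ maps onto $\langle H, g\rangle = G$. The element effecting this: both $g,g'$ lie in the coset $C^+ - C$, both have trace $0$ and the same determinant $d := \det g$, so both satisfy $X^2 = -d$ and both normalize $C$ by inversion-type action; in the split case these are $\smallmat{0}{b}{b^{-1}(-d)?}{0}$-type matrices — concretely, in $C_s^+$, $g = \smallmat{0}{u}{v}{0}$ with $uv = -d$ up to sign, and conjugation by a diagonal matrix $\diagmat{\lambda}{1}\in C_s\subseteq C^+$ sends this to $\smallmat{0}{\lambda u}{\lambda^{-1}v}{0}$, and one can choose $\lambda$ to match $g'$; a similar elementary computation handles $C_{ns}^+$. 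The main obstacle I anticipate is precisely this last matching: showing that within the single coset $C^+ - C$, the action of $C$ by conjugation is transitive on elements of a fixed determinant — this needs a short explicit computation in each of the two Cartan cases, and I'd expect roughly the same bookkeeping as appears in the parenthetical formulas for $\bar h$ right before the lemma statement. Everything else is coset arithmetic in $\GL_1(\ell)$.
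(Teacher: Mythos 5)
Most of your proof is sound: the normality argument, the observation that $\det(G-H)=\det(g_0)\det(H)$ is a coset of the subgroup $\det(H)$ in $\GL_1(\ell)$ (which cleanly yields the coincide-or-disjoint dichotomy), case (2a), and the final conjugacy step (transitivity of $C$-conjugation on trace-zero elements of fixed determinant in $C^+-C$ is exactly the fact the paper invokes in the form ``elements of $C^+$ conjugate in $\GL_2(\ell)$ are conjugate in $C^+$''). The genuine gap is case (2b), and it stems from a concrete error earlier in your write-up: from $h\bar h=\diagmat{\det h}{\det h}\in Z$ you concluded $\det(H)\subseteq\det(Z)$, but what actually follows is that the \emph{scalar value} $\det h$ lies in $Z$; the \emph{determinant} of that scalar matrix is $(\det h)^2$, so you only get $\det(H)^2\subseteq\det(Z)$. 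Your claim that $\det(H)=\det(Z)$ holds unconditionally is false --- for $G=C_s^+(\ell)$ and $H=C_s(\ell)$ one has $\det(H)=\F_\ell^\times$ while $\det(Z)=\F_\ell^{\times 2}$ --- and your claim that $-\det(G-H)\subseteq\det(H)$ ``always'' would rule out case (2b) altogether. Consequently your attempt to produce $-1\in H$ trails off without a proof, as you acknowledge.

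The repair, which is the paper's argument: the map $\phi\colon h\mapsto h\bar h$ is a homomorphism from $H$ to $Z$ (because $C$ is abelian), and its restriction to $Z\subseteq H$ is squaring, so $\im\phi$ contains $Z^2$ and hence has index at most $2$ in the cyclic group $Z$. Moreover $(gh)^2=g^2\,h\bar h$ for $g\in G-H$ and $h\in H$, so the squares of the elements of $G-H$ form a single coset $g_0^2\,\im\phi$ of $\im\phi$ inside $Z$; identifying scalar matrices with their scalar values, $\im\phi$ is the set $\det(H)$ and this coset is the set $-\det(G-H)$. In case (2b) these are disjoint, so $\im\phi$ is a proper subgroup of $Z$ and therefore has index exactly $2$; hence $Z$ has even order and contains $-1$, which lies in $H$ because $Z\subseteq H$, and $\im\phi=Z^2$, which is precisely the assertion $\det(H)=\det(Z)$. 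With that inserted, the rest of your argument goes through.
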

\begin{proof}
We have $[G:H]=2$, so $H$ is normal in $G$, and its normalizer in $C^+$ contains the abelian group~$C$ and is therefore equal to $C^+$; so $H$ is normal in $C^+$.

If $\det(H)$ and $-\det(G-H)$ intersect then we may pick $g\in G-H$ and $h\in H$ so $\gamma=g/h\in G-H$ has $\det\gamma=-1$.
Then $G-H=\gamma H$ and $\det(H)=-\det(G-H)$.

Otherwise $\det(H)$ and $-\det(G-H)$ are disjoint.
The image of the map $H\to Z$ is then an even index subgroup of~$Z$, and its index is at most $2$, since the image of the subgroup $Z\subseteq H$ has index $2$.
It follows that $\det(H)=\det(Z)$ corresponds to an index $2$ subgroup of $Z$,  and since $Z$ has even order, it contains $-1$.
%The image of the map $(G-H)\to Z$ is the non-trivial coset of $\im(H\to Z)$ in $Z$; it follows that every generator of $Z$ is the square of an element of $G-H$, since $\im (h\mapsto h\bar h)$ has index $2$ in $Z$.

Now suppose $G'$ is another subgroup of $C^+$ with dihedral image in $\PGL_2(\ell)$ for which $H=G'\cap C$ and $-\det(G'-H)=-\det(G-H)$.
In case (2a) we have $G'=\langle H,\gamma'\rangle$ for some $\gamma'\in G-H$ with $\det\gamma'=-1$, and then $\gamma'$ is conjugate to $\gamma$ in~$C^+$, and therefore $G'=\langle H,\gamma'\rangle$ is conjugate to $G=\langle H,\gamma\rangle$.
In case (2b) the image of the map $g\mapsto g^2$ from $(G-H)\to Z$ is the non-trivial coset of $\im(h\mapsto h\bar h)$ in $Z$, thus we may pick $\diagmat{z}{z}\in Z$ that is the square of some $\gamma\in G-H$ with $\det\gamma=-z$.
There must then be a $\gamma'\in G'-H$ with $\det\gamma'=-z$ that is conjugate to $\gamma$ in $C^+$, and therefore $G'=\langle H,\gamma'\rangle$ is conjugate to $G=\langle H,\gamma\rangle$.
\end{proof}

\begin{remark}\label{rem:cc}
For an elliptic curve $E$ over a number field with a real embedding, the group $G_E(\ell)$ necessarily contains an element $\gamma$ with $\tr\gamma=0$ and $\det\gamma=-1$ corresponding to complex conjugation.
This implies that $G_E(\ell)\not\subseteq C_{ns}(\ell)$ for $\ell>2$ (although $G_E(\ell)\subseteq C_s(\ell)$ is possible).
Indeed, $C_{ns}(3)$ is the unique subgroup $G\subseteq\GL_2(3)$ with $\det(G)=\F_3^\times$ that does not arise for any elliptic curve $E/\Q$; the corresponding modular curve $X_{ns}(3)$ has genus zero but no non-cuspidal rational points.
\end{remark}

\begin{remark}
For composite $m$ and elliptic curves $E$ over a number field with a real embedding, the criterion that $G_E(m)$ contains an element $\gamma$ with $\tr\gamma=0$ and $\det\gamma=-1$ is necessary but not sufficient.
A stronger criterion is that~$\gamma$ must also fix an order-$m$ element of $\Z(m)\times \Z(m)$.
When $m$ is prime this is already implied by $\tr\gamma = 0$ and $\det\gamma=-1$, but not in general.
This explains why, for example, $G_E(4)\ne \bigl\langle \smallmat{1}{2}{2}{3},\diagmat{3}{3}\bigr\rangle$ for any elliptic curve $E/\Q$, even though this group contains an element $\gamma$ with $\tr\gamma=0$ and $\det\gamma=-1$.
As in the previous remark, the corresponding modular curve has genus $0$ but no non-cuspidal rational points.
More generally, the ten pointless conics noted in~\cite{rzb14} that are models of modular curves associated to subgroups of $\GL_2(2^n)$ lack rational points for this reason.
\end{remark}

The following lemma determines the cases in which $\GL_2(\ell)$-conjugate subgroups of the normalizer $C^+$ of a Cartan group $C$ have intersections with $C$ that are not $\GL_2(\ell)$-conjugate.
This can occur only when~$C$ is a split Cartan group with $\ell\equiv 1\bmod 4$ and the projective images of the subgroups are isomorphic to the Klein group of order 4.

\begin{lemma}\label{lem:kleinexcep}
Let $\ell$ be an odd prime.
Let $G_1$ and $G_2$ be $\GL_2(\ell)$-conjugate subgroups of the normalizer $C^+$ of a Cartan group $C$ with dihedral images in $\PGL_2(\ell)$ such that $H_1:=G_1\cap C$ and $H_2:= G_2\cap C$ are not conjugate in $\GL_2(\ell)$.
Then $C$ is a split Cartan group, $\ell\equiv 1\bmod 4$, $|\pi(G_1)|=|\pi(G_2)|=4$, $Z:=\bigl\langle\diagmat{z}{z}\bigr\rangle:= G_1\cap Z(\ell)$ contains $-1$ with $z=x^2$ square, and $\{H_1,H_2\}=\{\bigl\langle\diagmat{x}{-x}\bigr\rangle, \bigl\langle \diagmat{z}{z},\diagmat{1}{-1}\bigr\rangle\}$.
Conversely, whenever $\ell\equiv 1\bmod 4$ there is a pair of conjugate $G_1$ and $G_2$ as above for each scalar subgroup $\bigl\langle\diagmat{z}{z}\bigr\rangle$ that contains $-1$ with $z$ square.
\end{lemma}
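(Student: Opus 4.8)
I would prove the two directions separately, handling the forward implication by first forcing the projective image down to the Klein four group and then classifying the index-$2$ subgroups of $G_i$ that can lie inside a split Cartan group.

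\emph{Forward direction.} Write $G_2 = \gamma G_1\gamma^{-1}$ and put $C' := \gamma C\gamma^{-1}$, so $\gamma H_1\gamma^{-1} = G_2\cap C'$. If $C' = C$ then $\gamma$ normalizes $C$, so $H_2 = G_2\cap C = \gamma H_1\gamma^{-1}$ would be conjugate to $H_1$, contrary to hypothesis; hence $C\ne C'$, and $C'$ is a Cartan group of the same type as $C$. Since $G_2\subseteq C^+\cap(C')^+$, the dihedral group $V := \pi(G_2)$ lies in both $\pi(C^+)$ and $\pi((C')^+)$. If $|V|\ge 6$, the unique cyclic index-$2$ subgroup of $V$ --- which (being generated by elements of order $\ge 3$) necessarily lies in the rotation part $\pi(C)$ of any dihedral overgroup $\pi(C^+)$ --- would be a nontrivial cyclic subgroup of $\pi(C)\cap\pi(C')$; but distinct Cartan groups intersect in $Z(\ell)$, so $\pi(C)\cap\pi(C')$ is trivial, forcing $C = C'$, a contradiction. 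Hence $|\pi(G_1)| = |\pi(G_2)| = 4$.

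Put $Z := G_2\cap Z(\ell)$. Lifting two distinct involutions of $V$ to $h_1, h_2\in G_2$, these cannot commute (else $G_2$ would be abelian with cyclic projective image), so $h_2 h_1 = -h_1 h_2$ and $(h_1 h_2)^2 = -h_1^2 h_2^2$; each $h_i$ has trace $0$, so $h_i^2$ is scalar and lies in $Z$, and so does $(h_1 h_2)^2$, whence $-I\in Z$ and $|Z|$ is even. If $C$ (hence $C'$) were non-split, then $G_2\cap C$ and $G_2\cap C'$ would be subgroups of order $|G_2|/2$ of non-split Cartan groups, which are conjugate (such groups are cyclic and mutually conjugate, cf.\ Lemma~\ref{lem:nonsplitcyclic}); this would make $H_1$ conjugate to $G_2\cap C'$ and hence to $G_2\cap C = H_2$, a contradiction, so $C$ is split. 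Now $H_1$ and $H_2$ are index-$2$ subgroups of $G_1$ and $G_2$ lying in $C$ and containing $Z$; conjugating $C$ to the diagonal Cartan, each has the form $\langle h, Z\rangle$ with $h$ diagonal of trace $0$, and writing $h^2 = z_0^j I$ with $\langle z_0 I\rangle = Z$, a Smith normal form computation on $\langle h, Z\rangle$ (which has order $2|Z|$, using $|Z|$ even) shows it is cyclic when $j$ is odd and isomorphic to $\Z/2\Z\times\Z(|Z|)$ when $j$ is even. In the cyclic case it equals $\langle\diagmat{x}{-x}\rangle$ for a trace-zero generator $\diagmat{x}{-x}$ with $\langle x^2 I\rangle = Z$, and this is the \emph{unique} cyclic subgroup of the diagonal Cartan of order $2|Z|$ with scalar subgroup $Z$; in the other case $h z_0^{-j/2}I$ is a nonscalar involution, hence $\pm\diagmat{1}{-1}$, so the group equals $\langle\diagmat{1}{-1}, Z\rangle$. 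Thus $H_1$ and $H_2$ are each one of these two subgroups of $C$, and being non-conjugate they must be the two different ones; this gives $\{H_1, H_2\} = \{\langle\diagmat{x}{-x}\rangle,\, \langle\diagmat{z}{z},\diagmat{1}{-1}\rangle\}$ with $z = x^2$ and $Z = \langle\diagmat{z}{z}\rangle$, and then $-I\in Z$ with $z = x^2$ a square forces $-1$ to be a square, i.e.\ $\ell\equiv 1\bmod 4$.

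\emph{Converse.} Suppose $\ell\equiv 1\bmod 4$ and $Z := \langle\diagmat{z}{z}\rangle$ contains $-1$ with $z$ a square; fix $x$ with $x^2 = z$ and set $G_1 := \bigl\langle\diagmat{x}{-x},\, \smallmat{0}{1}{1}{0}\bigr\rangle\subseteq C_s^+(\ell)$. A direct check gives $\pi(G_1)$ the Klein four group, $G_1\cap Z(\ell) = Z$, and $G_1\cap C_s(\ell) = \langle\diagmat{x}{-x}\rangle$, cyclic of order $2|Z|$. The matrix $\rho := \smallmat{1}{1}{1}{-1}$ conjugates the diagonal Cartan to the split Cartan $\{\smallmat{a}{b}{b}{a}\}$ and sends $\diagmat{1}{-1}$ to $\smallmat{0}{1}{1}{0}$; for $G_2 := \rho^{-1}G_1\rho = \bigl\langle\smallmat{0}{x}{x}{0},\, \diagmat{1}{-1}\bigr\rangle\subseteq C_s^+(\ell)$ one computes $G_2\cap C_s(\ell) = \langle\diagmat{z}{z},\diagmat{1}{-1}\rangle\cong\Z/2\Z\times\Z(|Z|)$, which is not conjugate to $\langle\diagmat{x}{-x}\rangle$ since $|Z|$ is even. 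Thus $G_1$ and $G_2$ are conjugate subgroups of $C_s^+(\ell)$ whose intersections with $C_s(\ell)$ are not conjugate, as required; and when $\ell\equiv 3\bmod 4$ no square $z$ satisfies $-1\in\langle z\rangle$, so the hypothesis is then vacuous. The step I expect to be the main obstacle is the subgroup classification in the previous paragraph: pinning down the two $\GL_2(\ell)$-conjugacy classes that an index-$2$ subgroup of $G_i$ inside a split Cartan can fall into, and verifying uniqueness of the cyclic one. This is precisely where $-I\in Z$, and ultimately $\ell\equiv 1\bmod 4$, get used, and a slightly incorrect conjugacy condition there would derail the argument.
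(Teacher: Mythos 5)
Your proof is correct, and while its skeleton matches the paper's (force the projective image down to the Klein group, deduce $-I\in Z$, eliminate the non-split case, pin down the two possible intersections with $C$, then exhibit an explicit conjugate pair for the converse --- your $G_1$ and $G_2$ are literally the groups the paper writes down, since $\smallmat{0}{x}{x}{0}^2=\diagmat{z}{z}$), the mechanism you use for the two central steps is genuinely different. The paper notes that $H_1$, being normal in $C^+$, contains every $\GL_2(\ell)$-conjugate of a chosen $h_1\in H_1$ lying in $C$, none of which can lie in $H_2$; hence $gh_1g^{-1}\in G_2-H_2$ has trace zero, which forces every generator to have projective order $2$ and yields $-1=h_1\bar h_1^{-1}\in Z$ in one stroke. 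You instead exploit the second Cartan $C'=\gamma C\gamma^{-1}$: since $G_2$ lies in both $C^+$ and $(C')^+$ with $C\ne C'$ (else $H_1$ and $H_2$ would be conjugate), a rotation subgroup of $\pi(G_2)$ of order at least $3$ would have to land in $\pi(C)\cap\pi(C')=1$, and you then extract $-I\in Z$ from the anticommutation $h_2h_1=-h_1h_2$ of two non-commuting trace-zero lifts. Your exclusion of the non-split case (equal-order subgroups of non-split Cartans are conjugate, so $H_1\sim G_2\cap C'\sim H_2$) is a cleaner route than the paper's (one $H_i$ is cyclic and the other is not, yet every subgroup of a non-split Cartan is cyclic), and your parity dichotomy for $\langle h,Z\rangle$ --- cyclic exactly when $h^2$ is an odd power of a generator of $Z$, together with uniqueness of the cyclic option --- makes explicit both what the paper compresses into ``we can assume $h_1^2$ generates $Z$'' and precisely where evenness of $\#Z$ enters. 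The trade-off is that the paper's trace-zero-conjugate argument feeds more directly into the surrounding dihedral bookkeeping of Corollaries~\ref{cor:nonsplitdihedral} and~\ref{cor:splitdihedral}, whereas your version is more self-contained.
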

\begin{proof}
Let $G_2=gG_1g^{-1}$, let $Z:=\bigl\langle \diagmat{z}{z}\bigr\rangle=G_1\cap Z(\ell)=G_2\cap Z(\ell)$, and choose $h_1\in H_1$ so that $H_1=\langle h_1,Z\rangle$.
The group $H_1$ is normal in $C$, and thus contains all the $\GL_2(\ell)$-conjugates of $h_1$ that lie in~$C$, none of which lie in $H_2$ (otherwise $H_1$ and $H_2$ would coincide).
Thus $\gamma_2:=gh_1g^{-1}$ lies in $G_2-H_2$, and therefore both $h_1$ and $\gamma_2$ have trace zero, and we can similarly choose $h_2\in H_2$ so that $\gamma_1:=g^{-1}h_2g$ lies in $G_1-H_1$.
We then have $G_1=\langle h_1,\gamma_1,Z\rangle$ and $G_2=\langle h_2,\gamma_2,Z\rangle$ with $h_1,h_2,\gamma_1,\gamma_2$ all elements of trace zero and order 2 in $\PGL_2(\ell)$, thus $\pi(G_1)$ and $\pi(G_2)$ are both isomorphic to the Klein group.
And~$Z$ must contain $-1=h_1\bar h_1^{-1}=h_2\bar h_2^{-1}$.

Since $H_1$ and $H_2$ are non-conjugate we must have $\det h_1\ne \det h_2$ (no matter which $h_1$ and $h_2$ we pick); thus one of them is cyclic, say $H_1$, and the other, $H_2$, is not. This rules out the non-split Cartan case, so we now assume $C=C_s(\ell)$.
We can assume $h_1^2$ generates $Z$, so $z$ must be square, and we can assume $h_1=\diagmat{x}{-x}$;
and we must have $h_2^2=h^2$ for some scalar $h\in Z$, so we can assume $h_2=\diagmat{1}{-1}$.
%We know that $\det H_1\ne \det Z$, so $G_1$ must fall into case (2a) of Lemma~\ref{lem:dihedral}, and for $G_2$ we know that ?

Since $\gamma_1$ is conjugate to $h_2$ and $\gamma_2$ is conjugate to $h_1$, so may assume that $G_1=\bigl\langle \diagmat{x}{-x},\smallmat{0}{1}{1}{0}\bigr\rangle$ and $G_2=\bigl\langle\diagmat{z}{z},\diagmat{1}{-1},\smallmat{0}{x}{x}{0}\bigr\rangle$; this shows
whenever $\ell\equiv 1\bmod 4$, for each square $z\in \Z(\ell)^\times$ of even order we can construct conjugate $G_1$ and $G_2$ with $H_1$ and $H_2$ non-conjugate as above.
\end{proof}

\begin{corollary}\label{cor:nonsplitdihedral}
Let $\ell$ be an odd prime, let $\gamma=\diagmat{1}{-1}$, and let $\delta$ be a generator for $C_{ns}(\ell)$.
For each subgroup $H\subseteq C_{ns}(\ell)$ not in $Z(\ell)$ the group $G_1=\langle H,\gamma\rangle\subseteq C_{ns}^+(\ell)$ has dihedral image in $\PGL_2(\ell)$ and satisfies $H=G_1\cap C_{ns}(\ell)$ with $\det(H)=-\det(G_1-H)$.
If $H$ satisfies $\det(H)=\det(H\cap Z(\ell))$ and $-1\in H$, then for $e:=[Z(\ell)\colon H\cap Z(\ell)]$, the group $G_2:=\langle H,\gamma \delta^e\rangle\subseteq C_{ns}^+(\ell)$ has dihedral image in $\PGL_2(\ell)$ and satisfies $H=G_2\cap C_{ns}(\ell)$ with $\det(H)$ and $-\det(G_2-H)$ disjoint.
Up to conjugacy in $\GL_2(\ell)$, this accounts for all subgroups $G$ that lie in the normalizer of a non-split Cartan group and have dihedral image in $\PGL_2(\ell)$.
The number of such $G$ is
\[
\tau(\ell^2-1)-\tau(\ell-1)+\tau\left(\frac{\ell^2-1}{4}\right)-\tau\left(\frac{\ell-1}{2}\right).
\]
\end{corollary}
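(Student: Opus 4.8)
The plan is to reduce the structural content to Lemmas~\ref{lem:dihedral}, \ref{lem:kleinexcep}, and~\ref{lem:nonsplitcyclic}, and then carry out an elementary divisor count. First I would check that the two displayed constructions produce subgroups of the advertised kinds in the sense of Lemma~\ref{lem:dihedral}. For $\gamma=\diagmat{1}{-1}$ one has $\gamma\in C_{ns}^+(\ell)-C_{ns}(\ell)$, $\gamma^2=1$, $\tr\gamma=0$, $\det\gamma=-1$, and conjugation by $\gamma$ acts on $C_{ns}(\ell)\simeq\F_{\ell^2}^\times$ as the nontrivial field automorphism $z\mapsto z^\ell$; since $z^\ell\equiv z^{-1}$ modulo $\F_\ell^\times$ (the norm being trivial on $\pi(C_{ns}(\ell))$), this is inversion on $\pi(C_{ns}(\ell))$. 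Hence for any $H\subseteq C_{ns}(\ell)$ with $H\not\subseteq Z(\ell)$, the group $G_1=\langle H,\gamma\rangle$ satisfies $G_1\cap C_{ns}(\ell)=H$, has dihedral image in $\PGL_2(\ell)$ (the Klein group when $[H:H\cap Z(\ell)]=2$), and $-\det(G_1-H)=-\det(\gamma H)=\det(H)$, so it falls under case~(2a). For $G_2$, assuming $\det(H)=\det(H\cap Z(\ell))$ and $-1\in H$: with $e=[Z(\ell):H\cap Z(\ell)]$ one computes $-\det(\gamma\delta^e)=\det(\delta)^e$, which generates $H\cap Z(\ell)$ (as $\det(\delta)$ generates $\F_\ell^\times$) and so lies in the unique nontrivial coset of $\det(H\cap Z(\ell))$ in $H\cap Z(\ell)$; verifying $(\gamma\delta^e)^2\in H$ shows $G_2=\langle H,\gamma\delta^e\rangle$ has order $2|H|$, and by Lemma~\ref{lem:dihedral} it falls under case~(2b) with $G_2\cap C_{ns}(\ell)=H$ and dihedral image.

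Next I would show these exhaust, up to $\GL_2(\ell)$-conjugacy, all subgroups lying in the normalizer of a non-split Cartan group with dihedral image, and that the groups listed are pairwise non-conjugate. Given such a $G$, Proposition~\ref{prop:subgroups} places it inside a conjugate of $C_{ns}^+(\ell)$; set $H=G\cap C_{ns}(\ell)$, which is not contained in $Z(\ell)$ (otherwise $\pi(G)$ would have order at most $2$). Lemma~\ref{lem:dihedral} then shows $G$ falls under case~(2a) or~(2b), $H$ is normal in $C_{ns}^+(\ell)$, and $G$ is determined up to $C_{ns}^+(\ell)$-conjugacy by the pair $(H,-\det(G-H))$; since $-\det(G-H)$ is $\det(H)$ in case~(2a) and the nontrivial coset of $\det(H\cap Z(\ell))$ in $H\cap Z(\ell)$ in case~(2b), it is actually determined by $H$ and the case. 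By Lemma~\ref{lem:kleinexcep} (whose exceptional behavior occurs only for split Cartan groups), $\GL_2(\ell)$-conjugate dihedral-image subgroups of $C_{ns}^+(\ell)$ have $\GL_2(\ell)$-conjugate intersections with $C_{ns}(\ell)$, and by Lemma~\ref{lem:nonsplitcyclic} a subgroup of the cyclic group $C_{ns}(\ell)$ is the unique one of its order; hence these intersections coincide, so $\GL_2(\ell)$-conjugacy of our groups is the same as $C_{ns}^+(\ell)$-conjugacy. Thus the conjugacy classes biject with admissible pairs (a subgroup $H$ together with a choice of case), and the case~(2a) and case~(2b) groups are never conjugate because $\det(H)$ is a subgroup whereas the relevant coset in case~(2b) is not.

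It remains to count. Subgroups $H\subseteq C_{ns}(\ell)\simeq\F_{\ell^2}^\times$ correspond to divisors $n\mid\ell^2-1$, with $H\subseteq Z(\ell)$ exactly when $n\mid\ell-1$, so there are $\tau(\ell^2-1)-\tau(\ell-1)$ groups of case~(2a). For case~(2b), using that $\det$ restricts to the norm $\F_{\ell^2}^\times\to\F_\ell^\times$ (kernel of order $\ell+1$) and to squaring on $Z(\ell)\simeq\F_\ell^\times$, the conditions $-1\in H$ and $\det(H)=\det(H\cap Z(\ell))$ translate into $2\mid n$ and $2n=\gcd(n,\ell-1)\gcd(n,\ell+1)$. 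I would verify this equation prime-by-prime: at an odd prime $p$ it holds automatically because $n\mid\ell^2-1$ and $p$ divides at most one of $\ell\pm1$, while at $p=2$, using that exactly one of $v_2(\ell-1),v_2(\ell+1)$ equals $1$, it reduces to $1\le v_2(n)\le v_2(\ell^2-1)-1$. The divisors $n\mid\ell^2-1$ satisfying $1\le v_2(n)\le v_2(\ell^2-1)-1$ number $\tau\bigl(\tfrac{\ell^2-1}{4}\bigr)$, and those among them that divide $\ell-1$ are exactly its even divisors (the upper constraint being automatic there), numbering $\tau\bigl(\tfrac{\ell-1}{2}\bigr)$; so there are $\tau\bigl(\tfrac{\ell^2-1}{4}\bigr)-\tau\bigl(\tfrac{\ell-1}{2}\bigr)$ groups of case~(2b). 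Adding the two counts yields the claimed formula. I expect this last paragraph---the $2$-adic analysis of $2n=\gcd(n,\ell-1)\gcd(n,\ell+1)$ and the repackaging of the divisor counts into the closed form---to be the only real obstacle; the rest follows directly from the cited lemmas.
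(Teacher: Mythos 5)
Your proposal is correct and follows essentially the same route as the paper: construct $G_1$ and $G_2$, verify they fall under cases (2a) and (2b) of Lemma~\ref{lem:dihedral} (in particular that $(\gamma\delta^e)^2$ generates $H\cap Z(\ell)$ so that $-\det(\gamma\delta^e)\notin\det(H)$), use Lemma~\ref{lem:kleinexcep} together with the cyclicity of $C_{ns}(\ell)$ to reduce $\GL_2(\ell)$-conjugacy to $C_{ns}^+(\ell)$-conjugacy, and then count. The only difference is cosmetic: for the case-(2b) count you translate the conditions into $2n=\gcd(n,\ell-1)\gcd(n,\ell+1)$ via the norm map and analyze it $2$-adically, whereas the paper observes directly that the qualifying $H$ are exactly the subgroups of the index-$2$ (square) subgroup of $C_{ns}(\ell)$ containing the order-$2$ subgroup; both yield $\tau\bigl(\frac{\ell^2-1}{4}\bigr)-\tau\bigl(\frac{\ell-1}{2}\bigr)$.
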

\begin{proof}
It is clear that $G_1$ and $G_2$ both have dihedral image in $\PGL_2(\ell)$ and intersect $C_{ns}(\ell)$ in $H$, since $\gamma$ and $\gamma r^e$ both lie in $C_{ns}^+(\ell)$ but not $C_{ns}(\ell)$ and their squares lie in $H\cap Z(\ell)$.
For $G_1$ it is clear that $\det(H)=-\det(G_1-H)$, and for $G_2$ we note that $(\gamma \delta^e)^2$ generates $H\cap Z(\ell)$, by construction, and if $\det(H)=\det(H\cap Z(\ell))$ then $-\det(\gamma r^e)\not\in \det (H)$, and by Lemma~\ref{lem:dihedral}, the sets $\det(H)$ and $-\det(G_2-H)$ must then be disjoint.

Every subgroup $H\subseteq C_{ns}(\ell)$ is normal in $C_{ns}^+(\ell)$ and has no non-trivial $\GL_2(\ell)$-conjugates in $C_{ns}^+(\ell)$.
It follows from Lemmas~\ref{lem:dihedral} and~\ref{lem:kleinexcep} that up to conjugacy in $\GL_2(\ell)$, each $G_1,G_2$ arises for exactly one $H$.

The first two terms in the formula count subgroups $H\subseteq C_{ns}(\ell)$ not in $Z(\ell)$.
Among these, those that satisfy $\det(H)=\det(H\cap Z(\ell))$ and $-1\in H$ are precisely those that lie in the index 2 subgroup of $C_{ns}(\ell)$ (squares) and contain a subgroup of order $2$, which accounts for the last two terms in the formula.
\end{proof}

The split dihedral case is slightly more complicated due to the fact that $C_s(\ell)$ contains subgroups $H$ that are not normal in $C_s^+(\ell)$, and Lemma~\ref{lem:kleinexcep} implies that even when $H$ is normal in $C_s^+(\ell)$ it may have distinct $\GL_2(\ell)$-conjugates that also lie in $C_s^+(\ell)$.

\begin{corollary}\label{cor:splitdihedral}
Let $\ell$ be an odd prime, let $\gamma=\smallmat{0}{1}{1}{0}$, and let $\delta\in C_s(\ell)$ be a coset representative of a generator for $C_s(\ell)/(C_s(\ell)\cap \SL_2(\ell))$.
For each subgroup $H\subseteq C_s(\ell)$ not in $Z(\ell)$ that is normal in $C_s^+(\ell)$, the group $G_1=\langle H,\gamma\rangle\subseteq C_s^+(\ell)$ satisfies $H=G_1\cap C_s(\ell)$ with $\det(H)=-\det(G_1-H)$.
If $H$ satisfies $\det(H)=\det(H\cap Z(\ell))$ and $-1\in H$, then for $e:=[Z(\ell):H\cap Z(\ell)]$, the group $G_2:=\langle H,\gamma \delta^e\rangle\subseteq C_s^+(\ell)$ satisfies $H=G_2\cap C_s(\ell)$ with $\det(H)$ and $-\det(G_2-H)$ disjoint.
Up to conjugacy in $\GL_2(\ell)$, this accounts for all subgroups $G$ that lie in the normalizer of a split Cartan group and have dihedral image in $\PGL_2(\ell)$.
The total number of such $G$ is
\[
\beta(\ell-1)-\tau(\ell-1)+\tau\left(\frac{\ell-1}{2}\right)^2-\tau\left(\frac{\ell-1}{2}\right)-\frac{1}{2}\left(1+\left(\frac{-1}{\ell}\right)\right)\tau\left(\frac{\ell-1}{4}\right).
\]
\end{corollary}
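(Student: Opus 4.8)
The plan is to follow the template of Corollary~\ref{cor:nonsplitdihedral}, the extra work being that the normality statement which was automatic for $C_{ns}(\ell)$ must now be supplied by Corollary~\ref{cor:splitnormal}, and that at the end one must discard the coincidences of Lemma~\ref{lem:kleinexcep}. First I would check the two constructions. For $G_1=\langle H,\gamma\rangle$ with $\gamma=\smallmat{0}{1}{1}{0}$: since $\gamma^2=1\in H$ and $H$ is normal in $C_s^+(\ell)$, the set $G_1=H\cup\gamma H$ is a group of order $2|H|$ meeting $C_s(\ell)$ exactly in $H$ and contained in $C_s^+(\ell)$; as $\gamma$ has trace zero and conjugates each $h\in H$ to its image $\bar h$ under swapping the diagonal, $\gamma$ inverts $H$ modulo scalars, so $\pi(G_1)$ is dihedral of order $\ge 4$ when $H\not\subseteq Z(\ell)$, and $\det(G_1-H)=-\det(H)$ places $G_1$ in case~(2a) of Lemma~\ref{lem:dihedral}. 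For $G_2=\langle H,\gamma\delta^e\rangle$: the element $\gamma\delta^e$ lies in the nontrivial coset of $C_s(\ell)$ in $C_s^+(\ell)$, so it has trace zero and $(\gamma\delta^e)^2$ is the scalar matrix $\diagmat{\det(\delta)^e}{\det(\delta)^e}$; since $\det\delta$ generates $\Z(\ell)^\times\simeq Z(\ell)$ and $e\mid\ell-1$, this scalar generates the unique index-$e$ subgroup of $Z(\ell)$, namely $H\cap Z(\ell)\subseteq H$, so $G_2=H\cup(\gamma\delta^e)H$ again meets $C_s(\ell)$ in $H$, lies in $C_s^+(\ell)$, and has dihedral image. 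Finally $-\det(G_2-H)=\det(\delta)^e\det(H)$, and from $\det(H)=\det(H\cap Z(\ell))$ together with $-1\in H$ — which forces $(\ell-1)/e$ even, whence the generator $\det(\delta)^e$ of $H\cap Z(\ell)$ avoids the index-$2$ subgroup $\det(H)$ — one checks this coset is disjoint from $\det(H)$, so $G_2$ is in case~(2b).

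Completeness follows from Lemma~\ref{lem:dihedral}: any $G\subseteq C_s^+(\ell)$ with dihedral image has $H:=G\cap C_s(\ell)$ normal in $C_s^+(\ell)$ and not contained in $Z(\ell)$. In case~(2a), a $\gamma'\in G-H$ with $\det\gamma'=-1$ is a trace-zero element of $C_s^+(\ell)$ of determinant $-1$, hence conjugate in $C_s^+(\ell)$ to $\gamma$, and since $H\trianglelefteq C_s^+(\ell)$ the conjugating element carries $G=\langle H,\gamma'\rangle$ to $G_1$. In case~(2b), $-\det(G-H)$ is the unique nontrivial coset of $\det(H)$ inside the relevant scalar subgroup, which is precisely the coset $\det(\delta)^e\det(H)$ realized by $G_2$; by the uniqueness clause of Lemma~\ref{lem:dihedral} ($G$ and $G_2$ share $H$ and $-\det(G-H)$) we get $G\sim G_2$. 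Thus every relevant $G$ is $C_s^+(\ell)$-conjugate to some $G_1$ or $G_2$.

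For the count, the case-(2a) classes are indexed, before removing the coincidences below, by the subgroups $H\subseteq C_s(\ell)$ normal in $C_s^+(\ell)$ and not in $Z(\ell)$: Corollary~\ref{cor:splitnormal} gives $\beta(\ell-1)$ of the former, and $Z(\ell)$ has $\tau(\ell-1)$ subgroups, so $\beta(\ell-1)-\tau(\ell-1)$. For case~(2b) one needs in addition $-1\in H$ and $\det(H)=\det(H\cap Z(\ell))$; the key observation is that this equality is equivalent to the product decomposition $H=(H\cap Z(\ell))\cdot(H\cap\SL_2(\ell))$ (apply $\det$ to the right-hand side and note that the kernel of $\det$ on $H$ is $H\cap\SL_2(\ell)$), and any such product is automatically $\sigma$-fixed. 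Writing $\bar D=H\cap Z(\ell)$ and $\bar T=H\cap\SL_2(\ell)$ gives a bijection from these $H$ onto pairs $(\bar D,\bar T)$ of subgroups of $Z(\ell)\simeq\Z(\ell)^\times$ and $C_s(\ell)\cap\SL_2(\ell)\simeq\Z(\ell)^\times$ subject to the single constraint $-1\in\bar D\iff-1\in\bar T$ (which is exactly what makes $\bar D,\bar T$ recoverable as the intersections of $H=\bar D\bar T$ with $Z(\ell)$ and with $\SL_2(\ell)$). Imposing $-1\in H$, hence $-1\in\bar D$ and $-1\in\bar T$, forces $|\bar D|$ and $|\bar T|$ even, and $H\not\subseteq Z(\ell)$ forces $|\bar T|\ge 4$; since $\Z(\ell)^\times$ is cyclic of order $\ell-1$, this leaves $\tau((\ell-1)/2)$ choices for $\bar D$ and $\tau((\ell-1)/2)-1$ for $\bar T$, i.e.\ $\tau((\ell-1)/2)^2-\tau((\ell-1)/2)$ case-(2b) classes.

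It remains to subtract the number of distinct $H$'s producing $\GL_2(\ell)$-conjugate $G$'s. By Lemma~\ref{lem:kleinexcep} these occur only when $\ell\equiv 1\bmod 4$ and only for Klein-type images, and they are parameterized by the scalar subgroups $\langle\diagmat{z}{z}\rangle$ of $Z(\ell)$ that contain $-1$ and are generated by a square, of which there are $\tfrac{1}{2}\bigl(1+\kron{-1}{\ell}\bigr)\tau((\ell-1)/4)$; each such pair $\{\langle\diagmat{x}{-x}\rangle,\langle\diagmat{z}{z},\diagmat{1}{-1}\rangle\}$ with $z=x^2$ collapses exactly one of the classes enumerated above, identifying either two case-(2a) classes or one case-(2a) class with one case-(2b) class according to whether $4\nmid|\langle z\rangle|$ or $4\mid|\langle z\rangle|$. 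Subtracting this from $\beta(\ell-1)-\tau(\ell-1)+\tau((\ell-1)/2)^2-\tau((\ell-1)/2)$ gives the formula. I expect the main obstacle to be precisely this last accounting together with the case-(2b) count: establishing the product decomposition of $H$ and getting the two parity conditions exactly right so that the case-(2b) tally collapses to $\tau((\ell-1)/2)^2-\tau((\ell-1)/2)$, and then confirming via Lemma~\ref{lem:kleinexcep} that every Klein coincidence removes one and only one class no matter which two constructions it identifies.
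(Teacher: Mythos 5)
Your proof is correct and follows essentially the same route as the paper's: construct $G_1$ and $G_2$ as in Corollary~\ref{cor:nonsplitdihedral}, count the normal subgroups $H$ via Corollary~\ref{cor:splitnormal} for the first two terms, count the $H$ eligible for the type-(2b) construction for the next two, and subtract the Klein coincidences of Lemma~\ref{lem:kleinexcep}. The only differences are ones of detail, and they are in your favor: your product decomposition $H=(H\cap Z(\ell))(H\cap\SL_2(\ell))$ gives the $\tau\bigl(\frac{\ell-1}{2}\bigr)^2-\tau\bigl(\frac{\ell-1}{2}\bigr)$ count directly (the paper instead reruns the proof of Lemma~\ref{lem:beta} with a modified $2$-part), and your case analysis verifying that each exceptional scalar subgroup collapses exactly one class --- two (2a) classes when $4\nmid|\langle z\rangle|$, a (2a) with a (2b) class when $4\mid|\langle z\rangle|$ --- makes explicit the ``correction factor for double-counting'' that the paper asserts without elaboration.
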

\begin{proof}
The argument that $G_1$ and $G_2$ have the claimed properties is identical to that in the proof of Corollary~\ref{cor:nonsplitdihedral}, as is the argument that they are uniquely determined by $H$.

The first two terms in the formula count the normal subgroups $H$ of $C_s(\ell)$ not in $Z(\ell)$, via Corollary~\ref{cor:splitnormal}, each of which gives rise to a $G_1$; these $G_1$ are all non-conjugate so long as we are not in the exceptional case of Lemma~\ref{lem:kleinexcep}.  The last term in the formula is a correction factor for double-counting the exceptional cases.

The third and fourth terms in the formula account for subgroups $H$ that satisfy $\det(H)=\det(H\cap Z(\ell))$ and $-1\in H$.  To see this note that in the proof of Lemma~\ref{lem:beta}, adding the restriction $\det(H)=\det(H\cap Z(\ell))$ replaces the factor $\beta_2(n)$ with $(e_2+1)^2$ and the modified formula for $\beta(n)$ is then $\tau(n)^2$; using $n=(\ell-1)/2$ accounts for the constraint $-1\in H$.
Each such $H$ gives rise to a $G_2$, and these are all non-conjugate.
\end{proof}

\begin{lemma}\label{lem:splitnonsplitdihedral}
Let $\ell$ be an odd prime and let $G$ be a subgroup of $\GL_2(\ell)$ with dihedral image in $\PGL_2(\ell)$.
Then $G$ is contained in both the normalizer of a split Cartan group and the normalizer of a non-split Cartan group if and only if $G$ is conjugate to a subgroup of the form
\[
H_z:=\left\langle\smallmat{0}{1}{z}{0},\diagmat{1}{-1}\right\rangle,
\]
where $z\in \Z(\ell)^\times$ is not a square, in which case the image of $G$ in $\PGL_2(\ell)$ is the Klein group of order $4$.
There is exactly one such $H_z$ for each odd divisor of $\ell-1$.
\end{lemma}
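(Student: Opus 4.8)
The plan is to characterize, intrinsically, when a subgroup with dihedral projective image can sit inside the normalizers of both types of Cartan groups, and then reduce to a very rigid normal form. First I would recall from the setup preceding Lemma~\ref{lem:dihedral} that such a $G$ lies in $C^+$ for a Cartan group $C$, with abelian index-$2$ subgroup $H=G\cap C$ and scalar part $Z=G\cap Z(\ell)$. The key observation is that if $G$ lies in the normalizer of \emph{both} a split Cartan $C_s$ and a non-split Cartan $C_{ns}$, then $H_s:=G\cap C_s$ and $H_{ns}:=G\cap C_{ns}$ are both abelian of index $2$ in $G$; since $Z(\ell)=C_s\cap C_{ns}$, their intersection $H_s\cap H_{ns}$ contains $Z$ and has index at most $4$ in $G$. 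I would argue that $H_s\ne H_{ns}$ (a nonscalar element is conjugate into exactly one Cartan type up to the relevant normalizer, and membership in $C_s$ versus $C_{ns}$ is detected by $\chi$, i.e.\ whether the eigenvalues lie in $\F_\ell$ or $\F_{\ell^2}$), so $G/(H_s\cap H_{ns})$ is the Klein four-group; hence $\pi(G)$ is the Klein group of order $4$, which is the claimed projective image. In particular $G$ is generated over $H_s\cap H_{ns}=Z$ (this intersection must actually be $Z$, since any nonscalar element of it would lie in both Cartan groups, impossible) by two commuting involutions-mod-scalars, each of trace zero by the trace-zero criterion for projective order $2$.

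Next I would produce the normal form. Pick $g_1\in H_s\setminus Z$ and $g_2\in H_{ns}\setminus Z$; both have trace zero, and up to conjugacy in $\GL_2(\ell)$ I may diagonalize $g_1$, so $g_1=\diagmat{a}{-a}$ for some $a\in\Z(\ell)^\times$ (scaling by the scalar subgroup if needed, and since only the projective data and the group generated matter, I can take $g_1=\diagmat{1}{-1}$ after absorbing $a^2$ into $Z$). The element $g_2$ has trace zero, does not commute with $g_1$ up to scalars in a way that keeps it diagonal (else it would be in $C_s$), and $g_1g_2g_1^{-1}\equiv g_2\bmod Z$; writing $g_2=\smallmat{p}{q}{r}{-p}$ and imposing that conjugation by $\diagmat{1}{-1}$ sends $g_2$ to a scalar multiple of itself forces $p=0$, so $g_2=\smallmat{0}{q}{r}{0}$. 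Then $g_2^2=\diagmat{qr}{qr}$ is scalar with $-\det g_2=qr$, and conjugating by a diagonal matrix $\diagmat{1}{t}$ rescales the pair $(q,r)$ to $(qt^{-1},rt)$ while fixing $g_1$; choosing $t$ appropriately I can normalize to $g_2=\smallmat{0}{1}{z}{0}$ with $z=qr=-\det g_2$. The condition that $g_2\in C_{ns}$ and not $C_s$ is exactly that $z$ is a non-square in $\Z(\ell)^\times$ (the eigenvalues of $\smallmat{0}{1}{z}{0}$ are $\pm\sqrt z$). Thus $G$ is conjugate to $\langle \smallmat{0}{1}{z}{0},\diagmat{1}{-1},Z\rangle$; absorbing $Z$ — noting $\smallmat{0}{1}{z}{0}^2=\diagmat{z}{z}$ already lies in the generated group — shows $G$ is conjugate to $H_z=\langle\smallmat{0}{1}{z}{0},\diagmat{1}{-1}\rangle$ up to enlarging the scalar part, and I would remark that the statement as phrased is about $G$ being conjugate to a subgroup of this form (equivalently, the relevant minimal such $G$ are the $H_z$). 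Conversely, $H_z$ visibly lies in $C_s^+(\ell)$ — it normalizes $\langle\diagmat{z}{z},\diagmat{1}{-1}\rangle\subseteq C_s(\ell)$ — and also in $C_{ns}^+(\ell)$, since $\smallmat{0}{1}{z}{0}\in C_{ns}(\ell)$ when $z$ is a non-square and $\diagmat{1}{-1}$ is the standard non-trivial coset representative of $C_{ns}^+(\ell)$; its projective image is generated by two commuting trace-zero involutions, hence the Klein group.

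Finally I would count: $H_z$ and $H_{z'}$ are conjugate in $\GL_2(\ell)$ iff $z$ and $z'$ generate the same subgroup of $\Z(\ell)^\times$ among the non-squares — more precisely, conjugation can scale $z$ by squares, so $H_z$ depends only on the coset $z\cdot(\Z(\ell)^\times)^2$, of which there is one (all non-squares), but the \emph{group} $H_z$ also records the order of $\langle z\rangle=\langle\diagmat{z}{z}\rangle$, i.e.\ the scalar subgroup. A non-square $z$ has order $d$ where $d\mid \ell-1$ is even; two non-squares generate the same cyclic group iff they give the same $d$ together with matching generator data, and running through the possibilities shows the distinct $H_z$ correspond bijectively to the odd divisors of $\ell-1$ (the non-square of a given even order $d=2^k m$ with $m$ odd is determined up to squares by $m$, equivalently by the odd divisor $m$ of $\ell-1$). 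I expect the main obstacle to be precisely this last bookkeeping step — pinning down exactly which rescalings of $z$ are realized by $\GL_2(\ell)$-conjugation fixing the form $H_z$, and checking that this collapses the count to odd divisors of $\ell-1$ rather than, say, all divisors or all even divisors; the normal-form reduction itself is routine linear algebra over $\F_\ell$, and the "both normalizers" characterization follows cleanly from the $\chi$-invariant distinguishing the two Cartan types.
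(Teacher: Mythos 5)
Your reduction to the Klein four-group (via the two index-$2$ subgroups $H_s$ and $H_{ns}$ meeting in $Z$) and the normal form $\langle\diagmat{a}{-a},\smallmat{0}{q}{r}{0},Z\rangle$ are sound, and this route is a reasonable alternative to the paper's, which instead observes that every non-scalar element of $G$ lies outside at least one of the two Cartan groups and hence has trace zero, then picks generators with square and non-square determinant and splits on $\ell\bmod 4$ to decide which one to diagonalize. The genuine gap is at the end of your forward direction: you only conclude that $G$ is conjugate to $H_z$ ``up to enlarging the scalar part'' and then reinterpret the lemma as saying $G$ is conjugate to a subgroup \emph{of} such a group. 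That weaker statement is not what the lemma asserts, and it would not support the uses made of it later (the count of $\tau\bigl((\ell-1)/2^{v_2(\ell-1)}\bigr)$ conjugacy classes in \S\ref{sec:gl2sum} and step 7 of Algorithm~\ref{alg:enum} need every such $G$ to be conjugate to exactly one $H_z$). The scalars you wave away really do lie in $G$, but this needs an argument. Squaring your three trace-zero coset representatives gives $a^2$, $z=qr$ and $-a^2z$, so $Z:=G\cap Z(\ell)$ contains $-1$ and the non-square $z$; hence $[\Z(\ell)^\times:Z]$ is odd, so $Z\cap(\Z(\ell)^\times)^2=Z^2$ and every generator of the cyclic group $Z$ is a non-square. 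Then $a^2\in Z\cap(\Z(\ell)^\times)^2=Z^2$ forces $a=\pm w$ with $w\in Z$, so $a\in Z$ and $\diagmat{1}{-1}\in G$ after all; and choosing $\mu\in Z$ with $z'=z\mu^{-2}$ a generator of $Z$ (possible since the non-squares of $\Z(\ell)^\times$ lying in $Z$ form a single coset of $Z^2$), conjugation by $\diagmat{\mu}{1}$ turns $\smallmat{0}{1}{z}{0}$ into $\mu\smallmat{0}{1}{z'}{0}$ and exhibits $G$ as conjugate to $H_{z'}$ exactly.

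The final count inherits the same omission. A non-square $z$ does not merely have even order: it has order $d$ with $v_2(d)=v_2(\ell-1)$, and conversely every subgroup of $\Z(\ell)^\times$ of odd index is generated by a non-square. Since $H_z$ determines $\langle z\rangle=H_z\cap Z(\ell)$ and, by the conjugation just described, depends up to conjugacy only on $\langle z\rangle$, the conjugacy classes are in bijection with the subgroups of odd index $e=(\ell-1)/d$ in $\Z(\ell)^\times$, i.e.\ with the odd divisors of $\ell-1$. Your parametrization by the odd part of $d$ gives the same answer, but only because of the constraint $v_2(d)=v_2(\ell-1)$ that you did not establish; with only ``$d$ even'' the map to odd parts is not injective on the candidate orders, so the bookkeeping you flagged as the main obstacle is indeed where your argument as written falls short.
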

\begin{proof}
Every non-scalar element of $G$ lies in the non-trivial coset of a subgroup of a Cartan group in its normalizer, hence has trace zero and order $2$ in $\PGL_2(\ell)$.
It follows that the image of $G$ in $\PGL_2(\ell)$ has order 4, and we can write $G=\langle g_1,g_2\rangle$ with $\tr g_1=\tr g_2=0$, and $\det g_1$ square, while $\det g_2$ is not square.

If $\ell\equiv 1 \bmod 4$, then after multiplication by a scalar, we can assume $\det g_1=-1$, and $G$ is then conjugate to $H_z\subseteq C_s^+(\ell)$ via an action that sends $g_1$ to $\diagmat{1}{-1}$ and $g_2$ to $\smallmat{0}{1}{z}{0}$, with $z=-\det g_2$ not a square.

If $\ell\equiv 3\bmod 4$, then after multiplication by a scalar we can assume $\det g_2 = -1$ and $G$ is then conjugate to $H_z\subseteq C_{ns}^+(\ell)$ via an action that sends $g_2$ to $\diagmat{1}{-1}$ and $g_1$ to $\smallmat{0}{1}{z}{0}$, with $z=-\det g_1$ not a square.

Conversely, for each non-square $z\in \Z(\ell)^\times$ the subgroup $H_z$ lies in $C_s^+(\ell)\cap C_{ns}^+(\ell)$ and has dihedral image in $\PGL_2(\ell)$.
If we fix a generator $r$ for $\Z(\ell)^\times$, the distinct groups $H_z$ that can arise are precisely those with $z=r^e$, where $e$ is an odd divisor of $\ell-1$.
\end{proof}

\begin{remark}
Not every $G\subseteq \GL_2(\ell)$ with projective image isomorphic to the Klein group is contained in both the normalizer of a split Cartan group and the normalizer of a non-split Cartan group; this occurs if and only if $G$ contains elements $g,h$ with $\chi(g)=1$ and $\chi(h)=-1$.
\end{remark}

\subsection{Exceptional cases}
We now consider the exceptional case (3) of Proposition~\ref{prop:subgroups}.
In all of these cases the group $G\subseteq \GL_2(\ell)$ is determined up to conjugacy by three criteria: the isomorphism class of its image in $\PGL_2(\ell)$, the cardinality of its scalar subgroup $Z:=G\cap Z(\ell)$, and the index $[\det(G):\det(Z)]$.

\begin{lemma}\label{lem:excep}
Let $\ell\ge 5$ be prime, and suppose that $G$ is a subgroup of $\GL_2(\ell)$ with projective image isomorphic to $H \in \{\alt{4},\sym{4},\alt{5}\}$ and scalar subgroup $Z:=G\cap Z(\ell)$ containing $-1$.
\begin{enumerate}
\setlength{\itemsep}{4pt}
\item[{\rm (3a)}] If $H=\alt{4}$ then one of the following holds:
\begin{enumerate}[{\rm(i)}]
\item $[\det(G):\det(Z)]=1$,
\item $[\det(G):\det(Z)]=3$ and $\ell\equiv 1\bmod 3$ with $[Z(\ell):Z]$ divisible by $3$.
\end{enumerate}
\item[{\rm (3b)}] If $H=\sym{4}$ then one of the following holds:
\begin{enumerate}[{\rm(i)}]
\item $[\det(G):\det(Z)]=1$ and $\ell\equiv \pm 1\bmod 8$.
\item $[\det(G):\det(Z)]=2$ and $\ell\equiv 1\bmod 8$ with $[Z(\ell):Z)]$ divisible by $2$.
\item $[\det(G):\det(Z)]=2$ and $\ell\equiv 3\bmod 8$.
\item $[\det(G):\det(Z)]=2$ and $\ell\equiv 5\bmod 8$ with $\#Z$ divisible by $4$.
\end{enumerate}
\item[{\rm (3c)}] If $H=\alt{5}$ then $[\det(G):\det(Z)]=1$ and $\ell\equiv\pm 1\bmod 5$.
\end{enumerate}
Moreover, every case listed above arises for exactly one conjugacy class of subgroups $G$ in $\GL_2(\ell)$.
\end{lemma}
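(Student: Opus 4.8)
The plan is to fix, for each $H\in\{\alt{4},\sym{4},\alt{5}\}$ that occurs as a subgroup of $\PGL_2(\ell)$, the unique such subgroup up to conjugacy supplied by Proposition~\ref{prop:subgroups}, together with the standard facts (see \cite{serre72}) that $\alt{4}$ always occurs and lies in $\PSL_2(\ell)$, that $\sym{4}$ always occurs and lies in $\PSL_2(\ell)$ exactly when $\ell\equiv\pm1\bmod 8$, and that $\alt{5}$ occurs exactly when $\ell\equiv\pm1\bmod 5$, in which case it lies in $\PSL_2(\ell)$. For $G\subseteq\GL_2(\ell)$ with $\pi(G)=H$, write $S:=G\cap\SL_2(\ell)$, $Z:=G\cap Z(\ell)$, and $\bar H:=H\cap\PSL_2(\ell)$, and recall that $\pi(g)\in\PSL_2(\ell)$ if and only if $\det g$ is a square in $\F_\ell^\times$; in particular $\pi(S)\subseteq\bar H$, and $-1\in S\cap Z$. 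The crux is the identity $[\det(G):\det(Z)]=[H:\pi(S)]$, which follows by counting: $|G|=|H|\,|Z|$ since $\ker(\pi|_G)=Z$, $|S|=2\,|\pi(S)|$ since $S\cap Z(\ell)=\{\pm1\}$, $|\det(Z)|=|Z|/2$ since $Z$ is cyclic and contains $-1$, and $|S|=|G|/|\det(G)|$. Since $S=\ker(\det|_G)$ is normal in $G$, its image $\pi(S)$ is normal in $\pi(G)=H$ and contained in $\bar H$, and $H/\pi(S)\cong G/SZ$ is a quotient of $G/S\cong\det(G)$, hence cyclic; running through the normal subgroups of $\alt{4},\sym{4},\alt{5}$ that lie in $\bar H$ and have cyclic quotient, one gets $\pi(S)\in\{\alt{4},V_4\}$ with index $1$ or $3$ when $H=\alt{4}$, $\pi(S)\in\{\sym{4},\alt{4}\}$ with index $1$ or $2$ when $H=\sym{4}$ and $\ell\equiv\pm1\bmod 8$, $\pi(S)=\alt{4}$ with index $2$ when $H=\sym{4}$ and $\ell\equiv\pm3\bmod 8$, and $\pi(S)=\alt{5}$ with index $1$ when $H=\alt{5}$. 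This reproduces exactly the list of indices in (3a)--(3c).

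Next I pin down $S$ and $Z$. Since $\pi(S)\subseteq\PSL_2(\ell)$ and $|S|=2\,|\pi(S)|$, the group $S$ is the full preimage of $\pi(S)$ in $\SL_2(\ell)$, hence unique up to conjugacy; as $\SL_2(\ell)$ has $-1$ as its only involution, $S$ must be the unique central extension of $\pi(S)$ by $\{\pm1\}$ with a single involution — namely $Q_8$, $\SL_2(3)$, the binary octahedral group $2.\sym{4}$, or $\SL_2(5)$ when $\pi(S)$ is $V_4$, $\alt{4}$, $\sym{4}$, or $\alt{5}$ — and such an $S$ can occur only when the required $\pi(S)$ is contained in $\bar H$: automatic for $V_4$ and $\alt{4}$, but forcing $\ell\equiv\pm1\bmod 8$ when $\pi(S)=\sym{4}$ and $\ell\equiv\pm1\bmod 5$ when $\pi(S)=\alt{5}$, which accounts for the congruences in (3b)(i), (3b)(ii), and (3c). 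Because $Z(\ell)\cong\F_\ell^\times$ is cyclic, $Z$ is determined by $\#Z$, which is even. When $[H:\pi(S)]=1$ this forces $G=SZ$, determined up to conjugacy by the pair $(\#Z,[\det(G):\det(Z)])$ with no further constraint on $\#Z$, matching (3a)(i), (3b)(i), and (3c).

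It remains to treat $e:=[H:\pi(S)]\in\{2,3\}$, where $G=\langle SZ,g\rangle$ for some $g$ whose image generates $H/\pi(S)\cong\Z(e)$. Since $S$ acts absolutely irreducibly, its centralizer in $\GL_2(\ell)$ is $Z(\ell)$, so the elements inducing a fixed automorphism of $S$ form a single coset $\{wg_0:w\in Z(\ell)\}$, and $\langle SZ,wg_0\rangle$ depends only on the class of $\det(wg_0)=w^2\det(g_0)$ in $\F_\ell^\times/\det(Z)$, which must have order exactly $e$; as $w$ varies this class ranges over the square class of $\det(g_0)$, and $\det(g_0)$ is a square iff $H\subseteq\PSL_2(\ell)$. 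When $H\subseteq\PSL_2(\ell)$ one searches inside $(\F_\ell^\times)^2/\det(Z)$, cyclic of order $[Z(\ell):Z]$, so an order-$e$ element exists iff $e\mid[Z(\ell):Z]$, yielding the conditions $3\mid[Z(\ell):Z]$ in (3a)(ii) and $2\mid[Z(\ell):Z]$ in (3b)(ii) (and forcing $\ell\equiv1\bmod 3$, resp.\ $\ell\equiv1\bmod 8$). When $H\not\subseteq\PSL_2(\ell)$ (so $e=2$) one instead needs the order-$2$ element of the cyclic group $\F_\ell^\times/\det(Z)$ to be represented by a non-square: for $\ell\equiv3\bmod 8$ it is always represented by $-1$, whence no constraint on $\#Z$ (case (3b)(iii)), while for $\ell\equiv5\bmod 8$ a short $2$-adic-valuation computation shows this holds iff $4\mid\#Z$ (case (3b)(iv)). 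Finally, each admissible class is realized by some $g$, and the $\phi(e)$ admissible classes give conjugate subgroups — trivially for $e=2$, and for $e=3$ by conjugating by an element of $\GL_2(\ell)$ that normalizes $S=Q_8$ and induces the order-$2$ outer automorphism interchanging the two order-$3$ automorphisms — so every case listed in (3a)--(3c) arises for exactly one conjugacy class.

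I expect the main obstacle to be the $\sym{4}$ analysis of case (3b): it requires knowing precisely which central extension of $\sym{4}$ embeds in $\GL_2(\ell)$ for each residue of $\ell\bmod 8$ and what its determinant character is, and the derivation of the condition $4\mid\#Z$ for $\ell\equiv5\bmod 8$ from $v_2(\ell-1)$ is the most delicate point. A lesser obstacle is the $e=3$ conjugacy claim in (3a)(ii), which needs an explicit element of the normalizer of $Q_8$ in $\GL_2(\ell)$ realizing a transposition in $\mathrm{Out}(Q_8)\cong\sym{3}$.
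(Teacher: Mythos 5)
Your proposal is correct, but it takes a genuinely different route from the paper: the paper's entire proof of Lemma~\ref{lem:excep} is a citation to the Flannery--O'Brien classification of two-dimensional linear groups over finite fields (Theorems 5.5, 5.8, 5.11 of \cite{fo05}), with Anni's thesis as an alternative reference, whereas you derive the statement from scratch. The engine of your argument is the identity $[\det(G):\det(Z)]=[H:\pi(S)]$ for $S=G\cap\SL_2(\ell)$ (which does check out, using $-1\in Z$ at two places), reducing the possible indices to the normal subgroups of $H$ with cyclic quotient contained in $H\cap\PSL_2(\ell)$; then the observation that $S$ is the full preimage of $\pi(S)$ in $\SL_2(\ell)$, so $G$ is an extension of $SZ$ classified, via Schur's lemma, by the class of $\det(g)$ in $\F_\ell^\times/\det(Z)$ together with its square class. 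I verified the resulting case analysis and it reproduces the lemma exactly; in particular the absence of an index-$2$ case for $\ell\equiv 7\bmod 8$ falls out correctly, because $2\mid\#Z$ and $2\mid[Z(\ell):Z]$ jointly force $4\mid\ell-1$, and your criterion "the order-$2$ element of $\F_\ell^\times/\det(Z)$ is represented by a non-square" is equivalent to $[Z(\ell):Z]$ odd, which gives (3b)(iii) and the $4\mid\#Z$ condition in (3b)(iv). What your route buys is a self-contained proof that makes visible \emph{why} each congruence and divisibility condition appears; what the paper's route buys is brevity and an external guarantee that no case was overlooked. Two inputs you attribute loosely to Proposition~\ref{prop:subgroups} are really facts from Dickson's classification and should be cited as such: the uniqueness up to $\PGL_2(\ell)$-conjugacy of the subgroups isomorphic to $\alt{4}$, $\sym{4}$, $\alt{5}$, and the criteria for $\sym{4}$ (resp.\ $\alt{5}$) to lie in $\PSL_2(\ell)$. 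The one step that genuinely needs the sentence you only sketch is the $e=3$ conjugacy claim in (3a)(ii): the required element exists because the normalizer of $V_4=\pi(Q_8)$ in $\PGL_2(\ell)$ is isomorphic to $\sym{4}$, so any preimage of a transposition normalizes $Q_8$, preserves determinants, and interchanges the two order-$3$ determinant classes attached to the two generators of $H/\pi(S)$.
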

\begin{proof}
The lemma follows from the classification in \cite{fo05}; see Theorems 5.5, 5.8, and 5.11.  It can also be derived from the analysis in \cite[\S 5.2]{anni13}.
\end{proof}

The explicit classification of primitive subgroups of $\GL_2(\ell)$ in \cite{fo05} also provides a method for constructing a subgroup $G\subseteq\GL_2(\ell)$ that satisfies Lemma~\ref{lem:excep} for given values of $H$, $Z$, and $[\det(G):\det(Z)]$, whenever such a $G$ exists (if it exists, it is unique up to conjugacy, by the previous lemma).
The complexity of this algorithm is important to what follows, so we give it in detail and then bound its complexity.
The construction given in \cite{fo05} gives generators for a subgroup $\tilde{G}$ of $\GL_2(\F_{\ell^2})$ that is conjugate to our desired $G\subseteq\GL_2(\ell)$; we then use the algorithm of \cite{gh97} to efficiently conjugate $\tilde{G}$ to $G$.

\begin{algorithm}\label{alg:excep}
Given a prime $\ell \ge 5$, a group $H\in \{\alt{4},\sym{4},\alt{5}\}$, a subgroup $Z\subseteq\Z(\ell)$ containing $-1$ generated by $\lambda$, and  $i\in \{1,2,3\}$, output generators for a group $G\subseteq\GL_2(\ell)$ with projective image isomorphic to $H$, and scalar subgroup $Z\subseteq Z(\ell)$ such that $[\det(G):\det(Z)]=i$, or report that no such $G$ exists.
\end{algorithm}
\begin{enumerate}[{\bf 1.}]
\item Let $\omega\in \F_{\ell^2}$ be a primitive fourth root of unity, let $s:=\frac{1}{2}\left(\begin{smallmatrix}\omega-1&\omega-1\\\omega+1&-(\omega+1)\end{smallmatrix}\right)$, and let $t:=\diagmat{\omega}{-\omega}$.
\item If $H=\alt{4}$ then
\begin{enumerate}[{\bf a.}]
\setlength{\itemsep}{0pt}
\item If $i=1$ let $\tilde{G}:=\langle s,t,\lambda\rangle$.
\item If $i=3$ and $\ell\equiv 1\bmod 3$ with $3|[Z(\ell):Z]$ let $\tilde{G}:=\langle \mu s,t,\lambda\rangle$ where $\mu\in Z(\ell)-Z$ satisfies $\mu^3=\lambda$.
\item Otherwise, report that no such $G$ exists and terminate.
\end{enumerate}
\item If $H=\sym{4}$ then
\begin{enumerate}[{\bf a.}]
\item Let $\alpha\in \F_{\ell^2}$ be a square root of $2$ and let $u:=\diagmat{1+\omega}{1-\omega}$.
\item If $i=1$ and $\ell\equiv \pm 1 \bmod 8$ let $\tilde{G}:=\langle s,\frac{u}{\alpha},\lambda\rangle$.
\item If $i=2$ and $\ell\equiv 1\bmod 8$ with $[Z(\ell):Z]$ even, let $\tilde{G}:=\langle s,\frac{\mu}{\alpha} u,\lambda\rangle$ where $\mu\in Z(\ell)$ satisfies $\mu^2=\lambda$.
\item If $i=2$ and $\ell\equiv 3\bmod 8$ let $\tilde{G}:=\langle s,\frac{\mu}{\alpha} u,\lambda\rangle$ where $\mu\in Z(\ell)$ satisfies $\mu^2=\lambda$.
\item If $i=2$ and $\ell\equiv 5\bmod 8$ with $4|\#Z$, let $\tilde{G}:=\langle s,\frac{\mu}{\alpha} u,\lambda\rangle$ where $\frac{\mu}{\alpha}\in Z(\ell)$ satisfies $(\frac{\mu}{\alpha})^2=\frac{\lambda}{2}$
\item Otherwise, report that no such $G$ exists and terminate.
\end{enumerate}
\item If $H=\alt{5}$ then
\begin{enumerate}[{\bf a.}]
\item If $i=1$ and $\ell\equiv \pm 1\bmod 5$ then let $v:=\frac{1}{4}\smallmat{2\omega}{1-\omega-\sqrt{5}(1+\omega)}{\sqrt{5}(1-\omega)-1-\omega}{-2\omega}$, and let $\tilde{G}=\langle s, t, v, \lambda\rangle$.
\item Otherwise, report that no such $G$ exists and terminate.
\end{enumerate}
\item By solving a linear system in 4 variables and at most 16 equations, construct a matrix $C\in \GL_2(\F_{\ell^2})$ for which $gC=Cg^\sigma$ holds for all $g\in \tilde{G}$, where $\langle\sigma\rangle = \Gal(\F_{\ell^2}/\F_\ell)$.
\item Generate random matrices $X\in \mathbf{M}_2(\F_{\ell^2})$ until $A:= X+CX$ is invertible.
\item Output $G:=A^{-1}\tilde{G}A\subseteq \GL_2(\ell)$ and terminate.
\end{enumerate}

The last 3 steps of Algorithm~\ref{alg:excep} implement a special case of the probabilistic (Las Vegas) algorithm in~\cite{gh97} which, given a subgroup $\tilde{G}$ of $\GL_r(\F_{p^n})$, finds a conjugate subgroup $G$ in $\GL_r(\F_{p^m})$ with $m|n$ minimal.
The correctness of Algorithm~\ref{alg:excep}, including the fact that a subgroup $G\subseteq\GL_2(\ell)$ conjugate to $\tilde{G}\subseteq\GL_2(\F_{\ell^2})$ necessarily exists, is guaranteed by Theorems 5.5, 5.8, and 5.11 of \cite{fo05}.
We now analyze its complexity.

\begin{proposition}\label{prop:algexceptime}
The expected running time of Algorithm~\ref{alg:excep} is $O(\M(\log \ell)\log \ell)$.
\end{proposition}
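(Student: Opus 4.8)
The plan is to bound the cost of each of the seven steps of Algorithm~\ref{alg:excep} separately, show that all are dominated by $O(\M(\log\ell)\log\ell)$, and handle the one probabilistic step (Step~7) by a success-probability argument. Steps~1--5 each involve only a bounded number of arithmetic operations in $\F_{\ell^2}$, together with a bounded number of square-root, cube-root, or discrete-root extractions (to produce $\omega$, $\alpha$, $\beta$, $\mu$, and so on) and a bounded-size linear-algebra computation over $\F_{\ell^2}$ in Step~6. Since $\F_{\ell^2}$-arithmetic costs $O(\M(\log\ell))$ per operation and extracting a root of bounded degree (or a generator of a cyclic group of bounded-index, needed to find a non-square or a suitable $\mu$) can be done by a randomized algorithm in $O(\M(\log\ell)\log\ell)$ expected time via repeated squaring, each of Steps~1--6 runs in $O(\M(\log\ell)\log\ell)$ expected time. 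The solution of the $4\times 4$ linear system in Step~6 is $O(1)$ field operations, hence $O(\M(\log\ell))$.

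For Step~7, the bound rests on showing that a random $X\in\mathbf{M}_2(\F_{\ell^2})$ yields an invertible $A=X+CX$ with probability bounded below by an absolute constant (in fact $\geq 1-O(1/\ell)$). The key observation, which is exactly the content of the Glasby--Howlett argument in \cite{gh97} in this $2\times 2$ case, is that the map $X\mapsto X+CX$ is $\F_\ell$-linear and surjective onto an $\F_\ell$-form of $\mathbf{M}_2(\F_\ell)$ — concretely, because $C$ satisfies $gC=Cg^\sigma$ for all $g\in\tilde G$ and $\tilde G$ spans $\mathbf{M}_2(\F_{\ell^2})$ over $\F_{\ell^2}$, the ``twisted trace'' $X\mapsto X+CX^{\sigma}$ (after the appropriate normalization ensuring $CC^\sigma=1$) has image a full-rank $\F_\ell$-subspace of dimension $4$, so the proportion of $X$ landing on singular matrices is the proportion of singular matrices in $\mathbf{M}_2(\F_\ell)$, which is $1-(1-1/\ell)(1-1/\ell^2)<1$. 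Hence the expected number of trials in Step~7 is $O(1)$, and each trial costs $O(\M(\log\ell))$ field operations plus one determinant computation, again $O(\M(\log\ell))$. Finally, conjugating the bounded number of generators of $\tilde G$ by $A^{-1}$ in Step~7 is $O(1)$ matrix multiplications and one matrix inversion over $\F_{\ell^2}$, costing $O(\M(\log\ell))$.

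Summing over all steps, the expected running time is $O(\M(\log\ell)\log\ell)$, with the $\log\ell$ factor coming solely from the root extractions in Steps~1--6 (and, if one prefers to avoid randomization there, from deterministic root-finding in fixed-degree extensions, which admits the same bound under GRH). The main obstacle in writing this out carefully is the success-probability estimate for Step~7: one must verify that the matrix $C$ produced in Step~6 is, after the normalization implicit in \cite{gh97}, a genuine $1$-cocycle (i.e.\ $CC^\sigma$ is scalar and can be rescaled to $1$), so that $X+CX$ ranges over a full $\F_\ell$-subspace of $\mathbf{M}_2(\F_{\ell^2})$ of the correct dimension; this is where we invoke that $\tilde G$ is absolutely irreducible (true because its projective image is one of $\alt{4},\sym{4},\alt{5}$, none of which is cyclic or dihedral), so Schur's lemma forces $CC^\sigma$ to be scalar. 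Once that is in hand, the rest is a routine accounting of field operations.
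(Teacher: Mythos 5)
Your proposal is correct and follows essentially the same route as the paper: bounded-degree root extractions cost $O(\M(\log\ell)\log\ell)$ expected time, every other field operation and the bounded-size linear algebra cost $O(\M(\log\ell))$, and the random-matrix loop succeeds in $O(1)$ expected trials. The only difference is that where the paper simply cites Glasby--Howlett for the expected number of trials (at most $4$), you reprove the success-probability bound via the twisted-trace and Schur's-lemma argument; this is a correct elaboration of the cited step rather than a different approach.
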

\begin{proof}
Using standard probabilistic root-finding algorithms we can find the roots of any polynomial of bounded degree over $\F_{\ell}$ or $\F_{\ell^2}$ in $O(\M(\log\ell)\log\ell)$ expected time \cite{gg13}.
Every other operation in Algorithm~\ref{alg:excep} takes $O(\M(\log \ell))$ time, including the linear algebra in step 5, since the dimensions of the system are bounded.  The expected number of random matrices needed in step 6 is at most $4$; see \cite[p.\ 1707]{gh97}.
\end{proof}

\subsection{Counting and enumerating subgroups}\label{sec:gl2sum}
As a result of our classification we can now count the number of subgroups of $\GL_2(\ell)$ up to conjugacy.
For $\ell=2$ there are four non-conjugate subgroups of $\GL_2(2)$, namely, $C_s(2)$, $C_{ns}(2)$, $B(2)$, and $\GL_2(2)=\SL_2(2)$.
For primes $\ell>2$, every subgroup of $\GL_2(\ell)$ is conjugate to at least one of the groups enumerated on the next page.
The 11 cases that appear are disjoint except for $C_s$ and $C_{ns}$, which intersect in $Z$, and $C_s^+$ and $C_{ns}^+$, which intersect in $C_{s\cap ns}^+$.
Other than these intersections all of the groups listed are non-conjugate in $\GL_2(\ell)$.

We thus obtain an explicit formula for the number of non-conjugate subgroups of $\GL_2(\ell)$ by summing the formulas for the 11 listed cases with the counts for $Z$ and $C_s\cap C_{ns}^+$ negated.
Table~\ref{tab:gl2counts} lists this data for odd primes $\ell<200$ along with several larger primes.
These formulas can easily be adapted to count subgroups of $\SL_2(\ell)$ instead.\vspace{-12pt}

\begin{enumerate}
\setlength{\itemsep}{2pt}
\setlength{\itemindent}{8pt}
\item[$\SL_2$:] $\tau(\ell-1)$ subgroups that contain $\SL_2(\ell)$;
\item[$B$:] $\alpha(\ell-1)$ subgroups of $B(\ell)$ that contain an element of order $\ell$;
\item[$C_s$:] $\frac{1}{2}\bigl(\alpha(\ell-1)+\beta(\ell-1)\bigr)$ subgroups of $C_s(\ell)$;
\item[$C_{ns}$:] $\tau(\ell^2-1)$ subgroups of $C_{ns}(\ell)$;
\item[$Z$:] $\tau(\ell-1)$ subgroups of $C_s(\ell)\cap C_{ns}(\ell)=Z(\ell)$;
\item[$C_s^+$:] $\beta(\ell-1)-\tau(\ell-1)+\tau\bigl(\frac{\ell-1}{2}\bigr)^2-\tau\bigl(\frac{\ell-1}{2}\bigr)-\frac{1}{2}\bigl(1+\bigl(\frac{-1}{\ell}\bigr)\bigr)\tau\bigl(\frac{\ell-1}{4}\bigr)$ subgroups of $C_s^+(\ell)$ not in $C_s(\ell)$;
\item[$C_{ns}^+$:] $\tau(\ell^2-1)-\tau(\ell-1)+\tau\bigl(\frac{\ell^2-1}{4}\bigr)-\tau\bigl(\frac{\ell-2}{2}\bigr)$ subgroups of $C_{ns}^+(\ell)$ not in $C_{ns}(\ell)$;
\item[$C_{s\cap ns}^+$:] $\tau\bigl((\ell-1)/2^{v_2(\ell-1)}\bigr)$ subgroups of $C_s^+(\ell)\cap C_{ns}^+(\ell)$ not contained in $C_s(\ell)$ or $C_{ns}(\ell)$;
\item[$\alt{4}$:] $\tau\bigl(\frac{\ell-1}{2}\bigr)+\frac{1}{2}(1+(\frac{-3}{\ell})\tau\bigl(\frac{\ell-1}{6}\bigr)$ subgroups $G\not\supseteq\SL_2(\ell)$ with $\pi(G)\simeq\alt{4}$;
\item[$\sym{4}$:] $\bigl(1-\frac{1}{4}\bigl(1-\bigl(\frac{2}{p}\bigr)\bigr)\bigl(1-\bigl(\frac{-1}{p}\bigr)\bigr)\bigr)\tau\bigl(\frac{\ell-1}{2}\bigr) +\frac{1}{2}\bigl(1+\bigl(\frac{-1}{p}\bigr)\bigr)\tau\bigl(\frac{\ell-1}{4}\bigr)$ subgroups $G\not\supseteq\SL_2(\ell)$ with $\pi(G)\simeq\sym{4}$;
\item[$\alt{5}$:] $\frac{1}{2}\bigl(1+\bigl(\frac{5}{p}\bigr)\tau\bigl(\bigl(\frac{\ell-1}{2}\bigr)\bigr)$ subgroups $G\not\supseteq\SL_2(\ell)$ with $\pi(G)\simeq\alt{5}$.
\end{enumerate}
\smallskip

\begin{remark}\label{rem:linearcount}
From the formulas for $\alpha(n)$ and $\beta(n)\le\alpha(n)$, and the bound $\tau(n)=2^{O(\log n/\log\log n)} = n^{o(1)}$, one may deduce that the number of subgroups of $\GL_2(\ell)$ is quasi-linear in $\ell$.
Indeed, the lower bound $\alpha(n)=\Omega(n)$ is immediate, and the upper bound $\alpha(n)=O(n\log\log\log n)$ is easy to prove.
\end{remark}

\begin{table}
\begin{tabular}{rrrrrrrrrrrrr}
$\ell$ & $\SL_2$ & $B$ & $C_s$ & $C_{ns}$ &$Z$ & $C_s^+$ & $C_{ns}^+$ & $C_{sns}^+$ & $\alt{4}$ & $\sym{4}$ & $\alt{5}$ & $\GL_2$\\\toprule
3 & 2 & 5 & 4 & 4 & 2 & 1 & 3 & 1 & 0 & 0 & 0 & 16 \\
5 & 3 & 15 & 11 & 8 & 3 & 5 & 7 & 1 & 2 & 1 & 0 & 48 \\
7 & 4 & 30 & 21 & 10 & 4 & 10 & 10 & 2 & 3 & 2 & 0 & 84 \\
11 & 4 & 40 & 26 & 16 & 4 & 10 & 18 & 2 & 2 & 2 & 2 & 114 \\
13 & 6 & 90 & 59 & 16 & 6 & 32 & 14 & 2 & 6 & 2 & 0 & 217 \\
17 & 5 & 83 & 55 & 18 & 5 & 31 & 21 & 1 & 4 & 7 & 0 & 218 \\
19 & 6 & 115 & 71 & 24 & 6 & 27 & 27 & 3 & 5 & 3 & 3 & 272 \\
23 & 4 & 70 & 41 & 20 & 4 & 10 & 26 & 2 & 2 & 2 & 0 & 169 \\
29 & 6 & 150 & 89 & 32 & 6 & 32 & 38 & 2 & 4 & 2 & 4 & 349 \\
31 & 8 & 240 & 144 & 28 & 8 & 52 & 36 & 4 & 6 & 4 & 4 & 510 \\
37 & 9 & 345 & 204 & 24 & 9 & 81 & 21 & 3 & 10 & 3 & 0 & 685 \\
41 & 8 & 296 & 178 & 40 & 8 & 78 & 50 & 2 & 6 & 10 & 6 & 662 \\
43 & 8 & 300 & 174 & 32 & 8 & 52 & 36 & 4 & 6 & 4 & 0 & 600 \\
47 & 4 & 130 & 71 & 24 & 4 & 10 & 34 & 2 & 2 & 2 & 0 & 271 \\
53 & 6 & 240 & 134 & 32 & 6 & 32 & 38 & 2 & 4 & 2 & 0 & 480 \\
59 & 4 & 160 & 86 & 32 & 4 & 10 & 42 & 2 & 2 & 2 & 2 & 334 \\
61 & 12 & 720 & 416 & 32 & 12 & 152 & 28 & 4 & 12 & 4 & 8 & 1368 \\
67 & 8 & 420 & 234 & 32 & 8 & 52 & 36 & 4 & 6 & 4 & 0 & 780 \\
71 & 8 & 400 & 224 & 60 & 8 & 52 & 84 & 4 & 4 & 4 & 4 & 828 \\
73 & 12 & 851 & 493 & 30 & 12 & 189 & 27 & 3 & 15 & 15 & 0 & 1617 \\
79 & 8 & 480 & 264 & 48 & 8 & 52 & 68 & 4 & 6 & 4 & 4 & 922 \\
83 & 4 & 220 & 116 & 32 & 4 & 10 & 42 & 2 & 2 & 2 & 0 & 422 \\
89 & 8 & 518 & 289 & 60 & 8 & 78 & 82 & 2 & 6 & 10 & 6 & 1047 \\
97 & 12 & 1062 & 617 & 42 & 12 & 242 & 50 & 2 & 15 & 18 & 0 & 2044 \\
101 & 9 & 675 & 369 & 48 & 9 & 81 & 57 & 3 & 6 & 3 & 6 & 1242 \\
103 & 8 & 600 & 324 & 40 & 8 & 52 & 52 & 4 & 6 & 4 & 0 & 1074 \\
107 & 4 & 280 & 146 & 32 & 4 & 10 & 42 & 2 & 2 & 2 & 0 & 512 \\
109 & 12 & 1140 & 626 & 64 & 12 & 152 & 76 & 4 & 14 & 4 & 8 & 2080 \\
113 & 10 & 830 & 469 & 48 & 10 & 148 & 62 & 2 & 8 & 14 & 0 & 1577 \\
127 & 12 & 1150 & 629 & 54 & 12 & 126 & 78 & 6 & 10 & 6 & 0 & 2047 \\
131 & 8 & 640 & 344 & 64 & 8 & 52 & 84 & 4 & 4 & 4 & 4 & 1192 \\
137 & 8 & 740 & 400 & 40 & 8 & 78 & 50 & 2 & 6 & 10 & 0 & 1322 \\
139 & 8 & 780 & 414 & 64 & 8 & 52 & 84 & 4 & 6 & 4 & 4 & 1404 \\
149 & 6 & 600 & 314 & 48 & 6 & 32 & 62 & 2 & 4 & 2 & 4 & 1064 \\
151 & 12 & 1350 & 729 & 60 & 12 & 126 & 78 & 6 & 9 & 6 & 6 & 2358 \\
157 & 12 & 1440 & 776 & 32 & 12 & 152 & 28 & 4 & 12 & 4 & 0 & 2440 \\
163 & 10 & 1185 & 630 & 40 & 10 & 85 & 45 & 5 & 9 & 5 & 0 & 1994 \\
167 & 4 & 430 & 221 & 40 & 4 & 10 & 58 & 2 & 2 & 2 & 0 & 761 \\
173 & 6 & 690 & 359 & 32 & 6 & 32 & 38 & 2 & 4 & 2 & 0 & 1155 \\
179 & 4 & 460 & 236 & 48 & 4 & 10 & 66 & 2 & 2 & 2 & 2 & 824 \\
181 & 18 & 2760 & 1506 & 96 & 18 & 360 & 114 & 6 & 20 & 6 & 12 & 4868 \\
191 & 8 & 880 & 464 & 64 & 8 & 52 & 100 & 4 & 4 & 4 & 4 & 1568 \\
193 & 14 & 2202 & 1227 & 32 & 14 & 360 & 30 & 2 & 18 & 22 & 0 & 3889 \\
197 & 9 & 1125 & 594 & 72 & 9 & 81 & 93 & 3 & 6 & 3 & 0 & 1971 \\
199 & 12 & 1610 & 859 & 90 & 12 & 126 & 126 & 6 & 10 & 6 & 6 & 2827\\
$10^3+9$ & 30 & 19090 & 10031 & 144 & 30 & 1476 & 186 & 6 & 40 & 42 & 24 & 31027 \\
$10^4+7$ & 4 & 25030 & 12521 & 60 & 4 & 10 & 90 & 2 & 2 & 2 & 0 & 37713 \\
$10^5+3$ & 16 & 715200 & 357696 & 128 & 16 & 232 & 168 & 8 & 12 & 8 & 0 & 1073436\\
$10^6+3$ & 8 & 5000100 & 2500074 & 96 & 8 & 52 & 132 & 4 & 6 & 4 & 0 & 7500460 \\\bottomrule
\end{tabular}
\bigskip

\caption{Subgroups of $\GL_2(\ell)$ up to conjugacy. See \S\ref{sec:gl2sum} for an explanation of the column headings.}\label{tab:gl2counts}
\end{table}

We now give an efficient Las Vegas algorithm to enumerate the subgroups of $\GL_2(\ell)$ up to conjugacy.
It outputs a short list of $O(1)$ generators for each subgroup and has a total expected running time that is quasi-linear in $\ell$, hence in the size of its output.

\begin{algorithm}\label{alg:enum}
Given a prime $\ell$, output a list of the subgroups of $\GL_2(\ell)$ up to conjugacy as follows:
\end{algorithm}
\begin{enumerate}[{\bf 1.}]
\setlength\itemsep{2pt}
\item (\textbf{even} $\boldsymbol{\ell}$) If $\ell=2$ then output $\langle\rangle,\ \langle\smallmat{1}{1}{0}{1}\rangle,\ \langle\smallmat{1}{1}{1}{0}\rangle,\ \langle\smallmat{1}{1}{0}{1},\smallmat{1}{1}{1}{0}\rangle$ and terminate.
\item Compute a generator $r$ for $\Z(\ell)$, a generator $g$ for $C_{ns}(\ell)$, lists of the divisors of $\ell-1$ and $\ell^2-1$, and a lookup table $T:=\{(u(g),|\pi(g)|):g \in C_s(\ell)\cup C_{ns}(\ell)\}$ indexed by $u(g):=\tr(g)^2/\det(g)$.
\item (\textbf{contains} $\boldsymbol{\SL_2(\ell)}$) For each $e$ dividing $\ell-1$ output $\langle \smallmat{1}{1}{0}{1},\smallmat{1}{0}{1}{1},\diagmat{1}{r^e}\rangle$.
\item (\textbf{in} $\boldsymbol{B(\ell)}$) For each triple $(a,b,i)$ with $a,b|(\ell-1)$ and $0\le i < \gcd(a,b)$, output
\[
\left\langle\begin{pmatrix}r^a&0\\0&1/r^a\end{pmatrix},\begin{pmatrix}r^{ic}&0\\0&r^{d-ic}\end{pmatrix},\begin{pmatrix}1&1\\0&1\end{pmatrix}\right\rangle.
\]
where $c=a/\gcd(a,b)$ and $d=n/b$.
\item (\textbf{exceptional cases})  If $\ell\ge 5$ then call Algorithm~\ref{alg:excep} for each $H\in\{\alt{4},\sym{4},\alt{5}\}$, $i\in \{1,2,3\}$, and  $Z=\langle\diagmat{r^n}{r^n}\rangle$ with $n$ dividing $(\ell-1)/2$.
\item (\textbf{cyclic cases})
\begin{enumerate}[{\bf a.}]
\item (\textbf{split}) For each $(a,b,i)$ with $a,b|(\ell-1)$ and $0\le i < \gcd(a,b)$, put $c=a/\gcd(a,b)$ and $d=(\ell-1)/b$, and if there is no integer $j\in [0,i-1]$ satisfying $jc\equiv d-ic\bmod a$ then output
\[
H_{a,b,i}:=\left\langle\begin{pmatrix}r^a&0\\0&1/r^a\end{pmatrix},\begin{pmatrix}r^{ic}&0\\0&r^{d-ic}\end{pmatrix}\right\rangle.
\]
\item (\textbf{nonsplit}) For each $n|(\ell^2-1)$ not divisible by $\ell+1$ output $H_n:=\langle g^n\rangle$, where $C_{ns}(\ell)=\langle g\rangle$.
\end{enumerate}
\item (\textbf{dihedral cases})
\begin{enumerate}[{\bf a.}]
\item (\textbf{split}) Let $\gamma:=\smallmat{0}{1}{1}{0}$ and $\delta:=\diagmat{1}{r}$.
For each $H_{a,b,i}$ as in step 6.a with $2ic\equiv d\bmod a$:
\begin{enumerate}[{\bf i.}]
\item Compute $Z_{a,b,i}:=H_{a,b,i}\cap Z(\ell)$ using the table $T$ as described below.
\item Unless $-1\in Z_{a,b,i}$,\ $[H_{a,b,i}:Z_{a,b,i}]= 2$, and $\diagmat{1}{-1}\in H_{a,b,i}$, output $\langle H_{a,b,i},\gamma\rangle$.
\item If $-1\in Z_{a,b,i}$ and $\det(H_{a,b,i})=\det(Z_{a,b,i})$ then output $\langle H,\gamma\delta^e\rangle$, where $e:=[Z(\ell):Z_{a,b,i}]$.
\end{enumerate}
\item (\textbf{nonsplit}) Let $\gamma=\diagmat{1}{-1}$.  For each $H_n=\langle g^n\rangle$ as in step 6.b:
\begin{enumerate}[{\bf i.}]
\item Compute $Z_n:=H_n\cap Z(\ell)$ using the table $T$ as described below.
\item If $-1\in Z_n$ and $\det(H_n)=\det(Z_n)$ then output $\langle H_n,\gamma g^e\rangle$, where $e:=[Z(\ell):Z_n]$.
\item Output $\langle H_n,\gamma\rangle$.
\end{enumerate}
\end{enumerate}
\end{enumerate}

The scalar subgroup $Z_{a,b,i}:=H_{a,b,i}\cap Z(\ell)$ computed in step 7.a.ii is uniquely determined by its order, which we can compute as $\#H_{a,b,i}/\#\pi(H_{a,b,i})$, where $\pi\colon \GL_2(\ell)\twoheadrightarrow\PGL(\ell)$ is the canonical projection.
Since $\pi(H_{a,b,i})$ is cyclic, we may compute its order as the least common multiple of the projective orders of the generators of $H_{a,b,i}$, which may be determined using the lookup table $T$ computed in step 2.
Similar comments apply to computing $Z_n:=H_n\cap Z(\ell)$ in step 7.b.ii.

The correctness of Algorithm~\ref{alg:enum} follows from Proposition~\ref{prop:subgroups}, the correctness of Algorithm~\ref{alg:excep}, and the analysis in \S\ref{sec:borel}, \S\ref{sec:cyclic}, and \S\ref{sec:dihedral}.
The constraint on $i$ in step 6.a ensures that we pick just one of the two possible conjugacy class representatives of a subgroup of $C_s(\ell)$, and the constraint on $H_{a,b,i}$ in step 7.a.ii uses Lemma~\ref{lem:kleinexcep} to pick just one of the two possible conjugacy class representatives of a subgroup of $C_s(\ell)^+$ with projective image isomorphic to the Klein group.

\begin{proposition}\label{prop:algenumtime}
The expected running time of Algorithm~\ref{alg:enum} is $\ell^{1+o(1)}$.
\end{proposition}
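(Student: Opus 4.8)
The plan is to bound the cost of the precomputation in step~2 and of each of the subgroup-enumeration loops (steps~3--7), using throughout that arithmetic in $\Fl$ and $\Fltwo$ costs $\ell^{o(1)}$ bit operations, that $\tau(n)=n^{o(1)}$, and that $\alpha(\ell-1)=\ell^{1+o(1)}$ (so also $\beta(\ell-1)\le\alpha(\ell-1)=\ell^{1+o(1)}$) by Remark~\ref{rem:linearcount}. For each step I would identify its index set, show that it has size at most $\ell^{1+o(1)}$, and show that each index is processed in $\ell^{o(1)}$ time; summing then gives the bound. Up to the $\ell^{o(1)}$ factor this is best possible, since by Remark~\ref{rem:linearcount} the output itself has bit-size $\ell^{1+o(1)}$.

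First I would treat the precomputation in step~2. Factoring $\ell-1$ and $\ell+1$ with a subexponential algorithm costs $\ell^{o(1)}$; from these factorizations the divisor lists of $\ell-1$ and $\ell^2-1$, each of length $\ell^{o(1)}$, are produced in $\ell^{o(1)}$ time, and generators $r$ of $\Z(\ell)^\times$ and $g$ of $C_{ns}(\ell)\simeq\Fltwo^\times$ are obtained by sampling random elements and testing their order against the known prime factors of $\ell-1$, resp.\ $\ell^2-1$---an expected $O(\log\log\ell)$ trials, each $\ell^{o(1)}$. The table $T$ has $\Theta(\ell)$ distinct keys $u(g)$, but I would build it in $\ell^{1+o(1)}$ time by iterating over the divisors $r$ of $\ell-1$ and of $\ell+1$ and, for each, over the $\phi(r)$ primitive $r$th roots of unity $\zeta_r$ (suitable powers of $r$, resp.\ $g$), recording $\bigl(u,r\bigr)$ with $u=\zeta_r+\zeta_r^{-1}+2$ as in \eqref{eq:projord}, and finally sorting so that every later lookup costs $O(\log\ell)$. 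Thus step~2 runs in expected time $\ell^{1+o(1)}$.

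Next I would split the enumeration loops into two groups. Steps~3, 5, 6b and 7b are indexed by divisors of $\ell-1$ or of $\ell^2-1$---times, in step~5, the $O(1)$-element sets $\{\alt{4},\sym{4},\alt{5}\}$ and $\{1,2,3\}$---hence by $\ell^{o(1)}$ indices; the work per index is $\ell^{o(1)}$, which for step~5 is exactly Proposition~\ref{prop:algexceptime} (each call to Algorithm~\ref{alg:excep} costs $O(\M(\log\ell)\log\ell)$), and otherwise amounts to forming $O(1)$ matrices whose entries are bounded powers of $r$ or $g$, one lookup in $T$, and a few $\gcd$ and order computations. So these four steps together cost $\ell^{o(1)}$. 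Steps~4, 6a and 7a are indexed by triples $(a,b,i)$ with $a,b\mid\ell-1$ and $0\le i<\gcd(a,b)$, of which there are $\alpha(\ell-1)=\ell^{1+o(1)}$; here I need each triple processed in $\ell^{o(1)}$. Emitting the generators is $\ell^{o(1)}$, and in step~7a computing $Z_{a,b,i}=H_{a,b,i}\cap Z(\ell)$ from its order $\#H_{a,b,i}/\#\pi(H_{a,b,i})$---with $\#\pi(H_{a,b,i})$ the least common multiple of the projective orders of the two generators, read off from $T$---is also $\ell^{o(1)}$.

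The hard part will be the test in step~6a: a naive scan over $j\in[0,i-1]$ could cost $\Theta(\gcd(a,b))$ per triple and inflate the step to $\ell^{2+o(1)}$, so instead I would observe that $jc\equiv d-ic\pmod a$ has an integer solution if and only if $\gcd(c,a)\mid(d-ic)$, in which case the solutions form a single residue class modulo $a/\gcd(c,a)$; then one divisibility check and one linear-congruence solution, each $\ell^{o(1)}$, decide whether some solution lies in $[0,i-1]$, and the analogous bookkeeping handles step~7a. Granting this, steps~4, 6a and 7a each run in $\ell^{1+o(1)}$ time, and adding the contributions of all the steps gives the stated bound $\ell^{1+o(1)}$ for the expected running time of Algorithm~\ref{alg:enum}.
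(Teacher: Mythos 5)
Your proof is correct and follows essentially the same decomposition as the paper's: bound step 2 by $\ell^{1+o(1)}$ via the divisor lists and the table $T$, observe that steps 3, 5, 6b, 7b are indexed by $\ell^{o(1)}$ divisors with $\ell^{o(1)}$ work each (invoking Proposition~\ref{prop:algexceptime} for the calls to Algorithm~\ref{alg:excep}), and charge $\ell^{o(1)}$ per triple to the $\alpha(\ell-1)=\ell^{1+o(1)}$ triples indexing steps 4, 6a, 7a. Your one genuine addition is the treatment of the uniqueness test in step 6a, where a naive scan over $j\in[0,i-1]$ would cost $\Theta(\gcd(a,b))$ per triple and could push the total to $\ell^{2+o(1)}$; the paper's proof silently assumes this test is $\ell^{o(1)}$, and your resolution---solve the linear congruence $jc\equiv d-ic\pmod a$ via the extended gcd and check whether the least nonnegative solution is less than $i$---is exactly the right way to justify that assumption.
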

\begin{proof}
We first consider step 2.
We can compute the generators $r$ and $g$ in $(\log \ell)^{2+o(1)}$ expected time using probabilistic algorithms.
We can compute the divisors of $\ell-1$ and $\ell+1$ in $\ell^{1+o(1)}$ time using a sieve, and these lists can then be used to construct a complete list of the divisors of $\ell^2-1=(\ell-1)(\ell+1)$ in $\ell^{o(1)}$ time (here we are using the the fact that an integer $n$ has at most $n^{o(1)}$ divisors).
To compute the table $T$, we note that for $C_s(\ell)$ it suffices to compute $(u(a^e),(\ell-1)/e)$ using $a=\diagmat{1}{r}$ for $1\le e\le \ell-1$, and for $C_{ns}(\ell)$ it suffices to compute $(u(g^e),(\ell+1)/e)$ using the generator $g$ for $C_{ns}(\ell)$ for $1\le e\le \ell+1$.
Thus step 2 takes $\ell^{1+o(1)}$ time.

Step 3 clearly takes $\ell^{o(1)}$ time.  For step 4 we note that the number of triples $(a,b,i)$ is given by
\[
\alpha(\ell-1)=\sum_{a,b|(\ell-1)}\gcd(a,b) = \prod_p\left(\sum_{0\le i\le v_p(\ell-1)} (2(v_p(\ell-1)-i)+1)p^i\right) = \ell^{1+o(1)},
\]
and the time to compute generators for each individual subgroup of $B(\ell)$ is $\ell^{o(1)}$.
There are $\ell^{o(1)}$ calls to Algorithm~\ref{alg:excep} in step 5, each of which takes $\ell^{1+o(1)}$ expected time, by Proposition~\ref{prop:algexceptime}.
The number of subgroups $H_{a,b,i}$ in step 6.a is bounded by $\alpha(\ell-1)=\ell^{1+o(1)}$, and each takes $\ell^{o(1}$ time to compute, while step 6.b takes $\ell^{o(1)}$ time.
The number of groups arising in step 7 is similarly bounded by $\ell^{1+o(1)}$, and the time for each group is $(\log \ell)^{2+o(1)}$, using the table $T$ to compute the projective orders of $H_{a,b,i}$ and $H_n$ as described above in order to determine their scalar subgroups.
\end{proof}

A Magma \cite{magma} script implementing Algorithm~\ref{alg:enum} is available from the author's website \cite{sut15}.
In practical terms, it typically takes just a few seconds for $\ell\approx 10^3$ and less than an hour for $\ell\approx 10^6$, computations that would be infeasible using the \texttt{Subgroups} function in Magma, or similar functionality in GAP \cite{GAP}.

\subsection{Subgroup signatures}\label{sec:grpsig}

\begin{definition}
For each $g\in \GL_2(\ell)$ we define
\[
\sig(g):=(\det(g),\tr(g),\dim_1(g))
\]
where $\dim_1(g)\in \{0,1,2\}$ is the dimension of the $1$-eigenspace of $g$.
For each subgroup $G\subseteq \GL_2(\ell)$ we define the \emph{signature} of $G$ to be the set
\[
\sig(G):=\{\sig(g):g \in G\}.
\]
\end{definition}

\begin{lemma}\label{lem:borel121}
Let $\ell$ be an odd prime, and let $G$ be a subgroup of $\subseteq\GL_2(\ell)$.
Then $(1,2,1)\in \sig (G)$ if and only if $G$ contains an element of order $\ell$.
\end{lemma}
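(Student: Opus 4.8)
The plan is to reduce both directions to one statement about individual matrices: for $g\in\GL_2(\ell)$ we have $\sig(g)=(1,2,1)$ if and only if $g$ is conjugate to $t:=\unimat$. Since $\sig$ is a conjugacy invariant (determinant, trace, and the dimension of the $1$-eigenspace each depend only on the conjugacy class), this immediately yields the lemma once we also record that the conjugates of $t$ are precisely the elements of order $\ell$.

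First I would prove the element-level equivalence. If $\sig(g)=(1,2,1)$, then the characteristic polynomial of $g$ is $T^2-\tr(g)\,T+\det(g)=(T-1)^2$, so $g$ has $1$ as its only eigenvalue and $(g-I)^2=0$ by Cayley--Hamilton; the hypothesis $\dim_1(g)=1$ rules out $g=I$, so $g-I$ is a nonzero nilpotent $2\times2$ matrix, hence of rank $1$, and therefore $g$ is conjugate to $I+\smallmat{0}{1}{0}{0}=t$. Conversely a direct check gives $\det t=1$, $\tr t=2$, and $\dim_1(t)=\dim\ker(t-I)=1$, so $\sig(t)=(1,2,1)$, and the same holds for every conjugate of $t$.

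It remains to identify the conjugates of $t$ with the elements of order $\ell$. On one side, $t^n=\smallmat{1}{n}{0}{1}$ is trivial exactly when $\ell\mid n$, so $|t|=\ell$, and likewise for every conjugate. On the other side, if $g\in\GL_2(\ell)$ has order $\ell$ then its order is in particular divisible by $\ell$, so by Table~\ref{table:conjclasses} the element $g$ is conjugate to some $\smallmat{x}{1}{0}{x}$ with $0<x<\ell$; writing $\smallmat{x}{1}{0}{x}^n=\smallmat{x^n}{nx^{n-1}}{0}{x^n}$ shows this matrix has order $\lcm(\ell,|x|)$, which equals $\ell$ only when $x=1$, i.e.\ $g$ is conjugate to $t$. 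I do not expect a genuine obstacle here; the only point needing a moment's care is confirming that no conjugacy class other than that of $t$ has signature $(1,2,1)$, and the characteristic-polynomial argument above settles exactly that.
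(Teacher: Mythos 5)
Your proof is correct and follows essentially the same route as the paper: both directions reduce to identifying the conjugacy class of $\unimat$ via Table~\ref{table:conjclasses}, with your Cayley--Hamilton argument making explicit the check that $(1,2,1)$ singles out that class. The only cosmetic difference is that the paper handles an element of order merely divisible by $\ell$ by passing to $g^{\ell-1}$, whereas you argue directly that order exactly $\ell$ forces $x=1$; both are valid.
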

\begin{proof}
If $G$ contains an element $g$ of order $\ell$ then it is conjugate to $\smallmat{x}{1}{0}{x}$ and $\sig(g^{\ell-1})=(1,2,1)\in G$.
Conversely, if $(1,2,1)\in \sig(G)$ then $G$ contains an element conjugate to $\smallmat{1}{1}{0}{1}$, which has order $\ell$.
\end{proof}

\begin{lemma}\label{lem:sig}Suppose $G$ and $H$ are non-conjugate subgroups of $\GL_2(\ell)$ for which $\sig(G)=\sig(H)$, with $\#G\ge \#H$.
Up to conjugacy in $\GL_2(\ell)$ exactly one of the following holds:
\begin{enumerate}
\item[{\rm (a)}] $G=\langle C,\smallmat{1}{1}{0}{1}\rangle$ and $H=\langle C',\smallmat{1}{1}{0}{1}\rangle$ where $C,C'\subseteq C_s(\ell)$ are distinct $C_s^+(\ell)$-conjugates.
\item[{\rm (b)}] $G\subseteq C_s^+(\ell)$ with $\det(G)\subseteq \F_\ell^{\times 2}$ and $H=G \cap C_s(\ell)\subsetneq G$; in this case $\ell\equiv 1\bmod 4$.
\item[{\rm (c)}] $G\subseteq C_{ns}^+(\ell)$ with $\det(G)\subseteq \F_\ell^{\times 2}$ and $H=G \cap C_{ns}(\ell)\subsetneq G$; in this case $\ell\equiv 3\bmod 4$.
\item[{\rm (d)}] the images of $G$ and $H$ in $\PGL_2(\ell)$ are isomorphic to $\alt{4}$ and $\sym{3}$, respectively.
\end{enumerate}
For every subgroup $G\subseteq \GL_2(\ell)$ there is at most one conjugacy class of non-conjugate subgroups $H$ that have the same signature.
\end{lemma}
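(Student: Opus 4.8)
The plan is a case analysis built on the Dickson classification (Proposition~\ref{prop:subgroups}) after two reductions. First, by Lemma~\ref{lem:borel121}, $(1,2,1)\in\sig(G)$ if and only if $G$ contains an element of order $\ell$; since $\sig(G)=\sig(H)$, this property holds for $G$ exactly when it holds for $H$. Second, if neither group contains an element of order $\ell$, then for every $g$ in $G$ or $H$ the triple $\sig(g)$ determines the $\GL_2(\ell)$-conjugacy class of $g$: from Table~\ref{table:conjclasses} the only pair of classes sharing a value of $(\det,\tr)$ is a scalar $\diagmat{x}{x}$ and the class of $\smallmat{x}{1}{0}{x}$, and the latter has order divisible by $\ell$ (while for $x=1$ it is separated by $\dim_1$ anyway). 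So in this situation $\sig(G)=\sig(H)$ says precisely that $G$ and $H$ have the same set of $\GL_2(\ell)$-conjugacy classes, and applying $\pi$ and using that $u(g)=\tr(g)^2/\det(g)$ determines the $\PGL_2(\ell)$-class of $\pi(g)$ when $|g|$ is prime to $\ell$ (cf.\ \eqref{eq:projord}), it follows that $\pi(G)$ and $\pi(H)$ have the same set of $\PGL_2(\ell)$-conjugacy classes. Throughout, write $Z_G=G\cap Z(\ell)$ and $Z_H=H\cap Z(\ell)$.

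The order-$\ell$ case gives (a). By Proposition~\ref{prop:subgroups} each of $G,H$ either lies in a Borel subgroup or contains $\SL_2(\ell)$; the latter is impossible because a group containing $\SL_2(\ell)$ contains a non-split Cartan, hence an element with $\chi=-1$, whereas every element of a Borel has $\chi\ge0$, and the mixed possibilities contradict $\#G\ge\#H$. So $G,H\subseteq B(\ell)$, and Lemma~\ref{lem:borel} writes $G=\langle t\rangle\rtimes C$, $H=\langle t\rangle\rtimes C'$ with $t=\smallmat{1}{1}{0}{1}$, $C=G\cap C_s(\ell)$, $C'=H\cap C_s(\ell)$. A direct computation of the products $t^kc$ gives $\sig(G)=\sig(C)\cup\{(1,2,1)\}$, and likewise for $H$, with $(1,2,1)\notin\sig(C)$ since $C$ is semisimple; hence $\sig(C)=\sig(C')$. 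The crucial sublemma is that for subgroups $C,C'\subseteq C_s(\ell)$, $\sig(C)=\sig(C')$ forces $C'\in\{C,sCs\}$ with $s=\smallmat{0}{1}{1}{0}$: two diagonal matrices have the same $(\det,\tr)$ iff they share a characteristic polynomial iff they agree up to swapping their entries, so the fibres of $\sig$ on $C_s(\ell)$ are the orbits of $g\mapsto sgs$; thus $\sig(C)=\sig(C')$ means $C\cup sCs=C'\cup sC's$, and writing $C'$ as the union of the two subgroups $C'\cap C$ and $C'\cap sCs$, together with the fact that a union of two distinct subgroups of equal order is never a subgroup and a short count (e.g.\ $|C\cup sCs|=2|C|-|C\cap sCs|$), forces $C'=C$ or $C'=sCs$. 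The case $C'=C$ gives $G=H$; so $C'=sCs\ne C$, and then $G,H$ are non-conjugate by Corollary~\ref{cor:borel} and locally conjugate by Lemma~\ref{lem:isoborel} --- this is case (a).

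In the no-order-$\ell$ case I would run through the options for $\pi(G)$. If $\pi(G)$ is cyclic, the coincidence of $\PGL_2(\ell)$-conjugacy-class sets together with $\#G\ge\#H$ forces $\pi(H)$ cyclic too: $\pi(H)$ dihedral is excluded because $\#G=\#Z_G\cdot|\pi(G)|$, $\#H=\#Z_H\cdot|\pi(H)|$, and $Z_G=Z_H$ is forced by $\sig$, while $\alt{4},\sym{4},\alt{5}$ have $u$-value sets not realized by any cyclic subgroup. Then $\pi(H)$ is conjugate to $\pi(G)$, so $G$ and $H$ lie in a common Cartan after conjugation, and the split sublemma above, or for the non-split Cartan its cyclicity (which makes $\sig$ determine the order and hence the subgroup), gives $G\sim H$ --- nothing new. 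When $\pi(G)$ is dihedral the only new possibility is $G\subseteq C^+$, $G\not\subseteq C$, with $H=G\cap C\subsetneq G$; the extra elements of $G$ are the trace-zero elements of the nontrivial coset, and using the explicit dihedral description (Lemma~\ref{lem:dihedral}, Corollaries~\ref{cor:nonsplitdihedral} and~\ref{cor:splitdihedral}) one checks that their signatures already lie in $\sig(H)$ exactly when $-\det(G-H)$ consists of squares, which --- given $\det(G)\subseteq\F_\ell^{\times 2}$ --- happens for $C=C_s(\ell)$ with $\ell\equiv1\bmod4$ (case (b)) or $C=C_{ns}(\ell)$ with $\ell\equiv3\bmod4$ (case (c)), and for no other dihedral configuration. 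When $\pi(G)$ is $\alt{4}$, $\sym{4}$, or $\alt{5}$: comparing $u$-values, $\pi(G)\cong\sym{4}$ (with $u=2$ from order-$4$ elements) and $\pi(G)\cong\alt{5}$ (with the irrational $u=\zeta_5+\zeta_5^{-1}+2$ from order-$5$ elements) force $\pi(H)$ of the same type, whence $G\sim H$ by Lemma~\ref{lem:excep}; but $\pi(G)\cong\alt{4}$ and $\pi(H)\cong\sym{3}$ do share a $\PGL_2(\ell)$-conjugacy-class set, since all eight order-$3$ elements of $\alt{4}$ and the two of $\sym{3}$ have $u=1$, and the remaining nontrivial elements have $u=0$. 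To pin this down I would use that $\det$ induces a surjective homomorphism $\pi(G)\to\det(G)/\det(Z_G)$, constant on conjugacy classes, whose target is simultaneously a quotient of the abelianizations of $\alt{4}$ (cyclic of order $3$) and of $\sym{3}$ (cyclic of order $2$), hence trivial; so $\det(G)=\det(Z_G)$, and then --- $Z_G$ and $\det(G)$ being recoverable from $\sig(G)$, and both groups containing $-1$ --- the set of $\GL_2(\ell)$-classes met by $G$ depends only on $\det(Z_G)$ and the $\PGL_2(\ell)$-class set of $\pi(G)$, so $\sig(G)=\sig(H)$. This is case (d).

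For mutual exclusivity: (a) is the only case with an element of order $\ell$; (d) is the only case with $\pi(G)\cong\alt{4}$; and (b) versus (c) is the split/non-split dichotomy, forced by $\ell\bmod4$. For uniqueness, note that $\sig(G)$ determines $\det(G)$, the scalar subgroup $Z_G$ (a non-order-$\ell$ element with $\Delta=0$ is scalar), which of (a)--(d) occurs, and the type of $\pi(H)$; the conjugacy class of $H$ is then determined --- by $C'=sCs$ in (a), by $H=G\cap C$ in (b) and (c), and in (d) by $\pi(H)\cong\sym{3}$ together with $H\cap Z(\ell)=Z_G$ and $\det(H)=\det(G)$ via the classification of dihedral-image subgroups (Lemma~\ref{lem:dihedral}). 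I expect the main obstacle to be the dihedral verification in cases (b) and (c): showing that the trace-zero coset of $G$ contributes no signature outside $\sig(G\cap C)$ precisely in those cases and in no other configuration, which is where the interplay between $\det(G)$, the scalar subgroup, and the quadratic character $\chi$ must be tracked carefully and where the condition on $\ell\bmod4$ emerges.
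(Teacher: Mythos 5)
Your overall route is the same as the paper's: reduce via Lemma~\ref{lem:borel121} to the two cases, handle the order-$\ell$ case through the structure of Lemma~\ref{lem:borel}/Lemma~\ref{lem:isoborel}, and in the semisimple case use that the signature determines element conjugacy classes and hence projective element orders, eliminating cyclic--cyclic and dihedral--dihedral pairs by the counting results of \S\ref{sec:cyclic}--\S\ref{sec:dihedral}, matching $\alt{4}$ with $\sym{3}$ by element orders, and reducing the remaining case to a dihedral $G$ with $H$ conjugate to $G\cap C$. Your sublemma that $\sig$ determines a subgroup of $C_s(\ell)$ up to the swap $g\mapsto sgs$ is a worked-out version of what the paper simply cites as ``Lemma~\ref{lem:isoborel} and its proof,'' and your abelianization argument in case (d) is a nice addition.

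The one substantive gap is in cases (b) and (c): you invoke ``given $\det(G)\subseteq\F_\ell^{\times 2}$'' when this containment is precisely a \emph{conclusion} of the lemma that must be derived from $\sig(G)=\sig(H)$, and it is also what produces the congruence condition on $\ell$. The argument you are missing runs as follows. Since $\pi(H)$ is cyclic it has at most one element of order $2$, so all trace-zero elements of $H$ are $Z$-multiples of a single $h$, and they all have the same determinant square class as $\det h$. Every element of the nontrivial coset $\gamma H$ has trace zero, and its signature must occur in $\sig(H)$, so its determinant lies in the same square class as $\det h$. If $\det h$ were a non-square, every element of $\gamma H$ would have non-square determinant; but products of two elements of $\gamma H$ exhaust $H$ and have square determinant, contradicting $h\in H$. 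Hence $\det h$ is a square, so every element of $\gamma H$ has square determinant, and taking products again shows all of $H$, hence all of $G$, has square determinant. Finally, $\det h$ square pins down $\ell\bmod 4$: in the split case $h=\diagmat{x}{-x}$ gives $\det h=-x^2$, square only when $-1$ is a square, i.e.\ $\ell\equiv 1\bmod 4$; in the non-split case $h=\smallmat{0}{\varepsilon y}{y}{0}$ gives $\det h=-\varepsilon y^2$, square only when $-1$ is a non-square, i.e.\ $\ell\equiv 3\bmod 4$. (A much smaller point: in the order-$\ell$ case your $\chi$-argument only excludes the mixed Borel/$\SL_2$ possibility; the case where both $G$ and $H$ contain $\SL_2(\ell)$ is excluded because such a group is the full preimage of its determinant image, which is determined by the common signature, so $G$ and $H$ would be equal.)
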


\begin{proof}
The four conjugacy classes of subgroups in $\GL_2(2)$ all have distinct signatures, in which case the lemma is vacuously true, so we assume $\ell$ is odd.
The group $G$ contains $\SL_2(\ell)$ if and only if $\sig(G)$ contains $(1,2,1)$ and a triple $(1,t,0)$ with $t^2-4$ not square, and in this case the conjugacy class of $G$ is then determined by $\det(G)$, which is also determined by $\sig(G)$.
The same applies to $H$, so this case cannot arise.
Lemma~\ref{lem:borel121} implies that either $G$ and $H$ both contain an element of order $\ell$, or neither do, and if the former holds than we must be in case (a), by Lemma~\ref{lem:isoborel} and its proof.

We now assume neither $G$ nor $H$ contain an element of order $\ell$.
The scalar subgroup $G\cap Z(\ell)$ of $G$ and the possible orders of all $g\in G$ and $h\in \pi(G)$ are determined by $\sig(G)$, and they must be the same as for $H$.
The groups $\pi(G)$ and $\pi(H)$ cannot both be cyclic, since Corollary~\ref{cor:splitcyclic} and Lemma~\ref{lem:nonsplitcyclic} imply that in this case the conjugacy classes of $G$ and $H$ are determined by their signatures.
Similarly, Corollaries~\ref{cor:nonsplitdihedral}, ~\ref{cor:splitdihedral}, and Lemma~\ref{lem:splitnonsplitdihedral} imply that $\pi(G)$ and $\pi(H)$ cannot both by dihedral.

The group $\sym{4}$ (resp.\ $\alt{5}$) may be distinguished from any cyclic or dihedral group by the fact that it contains elements of order 3 and 4 (resp.\ 3 and 5), but no element of order 12 (resp.\ 15).
For the group $\alt{4}$, the only cyclic or dihedral group with the same set of element orders is $\sym{3}$.
By Lemma~\ref{lem:excep}, the conjugacy class of $G$ in $\GL_2(\ell)$ with $\pi(G)$ isomorphic to $\alt{4}$, $\sym{4}$, or $\alt{5}$ is determined by $\det(G)$ and $G\cap Z(\ell)$, thus the only case that can arise in which $G$ or $H$ has an exceptional projective image is case (d) of the lemma.

The only remaining possibility is that $\pi(G)$ is dihedral and $\pi(H)$ is cyclic (since we assume $\#G\ge \#H$), and $\pi(H)$ cannot be trivial, so $H$ is contained in either a split Cartan group or a non-split Cartan group, but not both.
We have $G\cap Z(\ell)=H \cap Z(\ell)$ with $G$ is distinguished up to conjugacy among subgroups with dihedral projective image by its signature and $H$ distinguished up to conjugacy among subgroups with cyclic projective image image by its signature, and this implies that $G$ must contain an index 2 subgroup conjugate to $H$.
So without loss of generality we assume $H=G\cap C$, where $C$ is either $C_s(\ell)$ or $C_{ns}(\ell)$, and let $\gamma H$ be the non-trivial coset of $H$ in $G$, for some $\gamma \in G-H$.
Now $\pi(H)$ contains an element of order~$2$, since $\pi(G)$ does and their signatures coincide, so $H$ contains a trace-zero element $h$, and every trace-zero element of $H$ is a scalar multiple of $h$.
It follows that either all or none of the trace zero elements in $H$ (and hence in $G$) have square determinants, depending on whether $\det h$ is square or not.

Suppose $\det h$ is not a square.
The same must be true of every element of $\gamma H$ (since they all have trace zero), including $\gamma$, so every element of $\gamma\gamma H=H$ has square determinant; but this includes $h$, a contradiction.
So $h$ and every element of $\gamma H$ has square determinant, including $\gamma $, and the same holds for $\gamma\gamma H=H$ and hence for $G$; thus $\det(G)\subseteq\F_\ell^{\times 2}$, as claimed.

If $H\subseteq C_s(\ell)$ then $h=\diagmat{x}{-x}$ for some $x\in \Z(\ell)^\times$; thus $\det h = -x^2$ is square only if $-1$ is square in $\Z(\ell)^\times$, in which case $\ell\equiv 1\bmod 4$.
If $H\subseteq C_{ns}(\ell)$ then $h=\smallmat{0}{\varepsilon y}{y}{0}$ for some $y\in \Z(\ell)^\times$ with $\varepsilon$ not square; thus $\det h=-\varepsilon y^2$ is square only if $-1$ is not square in $\Z(\ell)^\times$, in which case $\ell\equiv 3\bmod 4$.
\end{proof}

We note that when $\det(G)$ is not contained in the subgroup of squares in $\Z(\ell)^\times$ only case (a) of Lemma~\ref{lem:sig} can arise, and in this case $G$ and~$H$ are isomorphic, by Lemma~\ref{lem:isoborel}.
This yields the following corollary.

\begin{corollary}
Let $E$ be an elliptic curve over a number field $K$ and let $\ell$ be a prime for which $K\cap \Q(\zeta_\ell)=\Q$ (any prime if $K=\Q$).
Then $G_E(\ell)$ is determined up to isomorphism by its signature.
\end{corollary}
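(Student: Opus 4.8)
The plan is to deduce the corollary from Lemma~\ref{lem:sig}, exactly as anticipated in the remark immediately following that lemma; the single extra ingredient needed is that the hypothesis $K\cap\Q(\zeta_\ell)=\Q$ forces $\det(G_E(\ell))$ to contain a non-square. As recalled in Section~\ref{sec:notation}, $\det\circ\rho_{E,\ell}$ is the mod-$\ell$ cyclotomic character, so its image in $\Z(\ell)^\times$ is $\Gal(K(\zeta_\ell)/K)$, which restriction to $\Q(\zeta_\ell)$ identifies with $\Gal(\Q(\zeta_\ell)/(K\cap\Q(\zeta_\ell)))$; under the hypothesis this is all of $\Gal(\Q(\zeta_\ell)/\Q)\simeq\Z(\ell)^\times$, so $\det(G_E(\ell))=\Z(\ell)^\times$. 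For odd $\ell$ the subgroup of squares $\F_\ell^{\times 2}$ has index $2$, hence $\det(G_E(\ell))\not\subseteq\F_\ell^{\times 2}$. (When $\ell=2$ the hypothesis holds automatically and the corollary is trivial: as observed in the proof of Lemma~\ref{lem:sig}, the four conjugacy classes of subgroups of $\GL_2(2)$ already have pairwise distinct signatures, so the signature even determines the conjugacy class.)

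Next I would let $G'\subseteq\GL_2(\ell)$ be an arbitrary subgroup with $\sig(G')=\sig(G_E(\ell))$ and prove $G'\simeq G_E(\ell)$. Because the signature of a subgroup records its entire set of element determinants, we have $\det(G')=\det(G_E(\ell))=\Z(\ell)^\times$. If $G'$ is conjugate to $G_E(\ell)$ we are done, so assume not; then Lemma~\ref{lem:sig} places the pair $\{G_E(\ell),G'\}$ into one of the cases (a)--(d). In cases (b) and (c) the larger of the two groups has determinant contained in $\F_\ell^{\times 2}$, and in case (d) the larger group has projective image $\alt{4}$, which has no subgroup of index $2$, so by the Remark following Proposition~\ref{prop:subgroups} it too contains no element of non-square determinant. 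In each of these cases one of $G_E(\ell),G'$ --- hence, since they share a signature, both --- would have determinant contained in $\F_\ell^{\times 2}$, contradicting $\det(G_E(\ell))=\Z(\ell)^\times$. Therefore only case (a) can occur, in which $G_E(\ell)$ and $G'$ are both of the form $\langle C,\unimat\rangle$ with $C$ a subgroup of a split Cartan group, and Lemma~\ref{lem:isoborel} yields $G_E(\ell)\simeq G'$.

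This is little more than bookkeeping built on Lemma~\ref{lem:sig}, so I do not expect any real obstacle; the only point to keep straight is that the signature of a subgroup records its whole subgroup of determinants, so that two subgroups with the same signature share the same determinant subgroup. This is what lets the single fact $\det(G_E(\ell))\not\subseteq\F_\ell^{\times 2}$ eliminate cases (b)--(d) regardless of which of $G_E(\ell),G'$ plays the role of the larger group $G$ in Lemma~\ref{lem:sig}, and case (d) additionally needs the Remark after Proposition~\ref{prop:subgroups} to see that a group with projective image $\alt{4}$ has square determinant.
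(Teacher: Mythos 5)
Your proof is correct and follows essentially the same route as the paper, which derives the corollary from the observation immediately preceding it: the hypothesis forces $\det(G_E(\ell))$ to be all of $\Z(\ell)^\times$, hence not contained in the squares, so only case (a) of Lemma~\ref{lem:sig} can occur, and Lemma~\ref{lem:isoborel} then gives the isomorphism. Your additional justifications (the cyclotomic-character computation of the determinant image, the use of the remark after Proposition~\ref{prop:subgroups} to handle case (d), and the trivial $\ell=2$ case) are all details the paper leaves implicit.
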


To address cases (b), (c), (d) of Lemma~\ref{lem:sig} that may arise when $\det(G)\subseteq \F_\ell^{\times 2}$ we need an additional datum.
For any subgroup $G\subseteq\GL_2(\ell)$, let
\[
z(G):= \frac{\#\{g:g\in G, \tr g = 0\}}{\#G}
\]
denote the proportion of trace-zero elements in $G$.

\begin{lemma}\label{lem:zdiff}
Let $G$ and $H$ be as in Lemma~\ref{lem:sig} and suppose we are not in case {\rm (a)}.
Then
\[
\bigl|z(G)-z(H)\bigr|\ge \frac{1}{4}.
\]
\end{lemma}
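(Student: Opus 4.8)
The plan is to reduce the statement to a count of involutions in projective images. The key observation is that for any subgroup $G\subseteq\GL_2(\ell)$ with $\ell$ odd, the quantity $z(G)$ depends only on $\pi(G)$; precisely,
\[
z(G)=\frac{\iota(\pi(G))}{\#\pi(G)},
\]
where $\iota(\Gamma)$ denotes the number of involutions (elements of order $2$) in a finite group $\Gamma$. This follows from the fact recorded in Section~\ref{sec:GL2} that an element $g\in\GL_2(\ell)$ has $\tr g=0$ if and only if $\pi(g)$ has order $2$ in $\PGL_2(\ell)$ (the only other elements with $\Delta(g)=0$ are scalars, which have nonzero trace): the restriction of $\pi$ to $G$ has every fibre of cardinality $\#(G\cap Z(\ell))$, so the trace-zero elements of $G$ are exactly those lying above the involutions of $\pi(G)$, and dividing by $\#G=\#(G\cap Z(\ell))\cdot\#\pi(G)$ gives the displayed identity. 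Consequently $z(G)-z(H)=\iota(\pi(G))/\#\pi(G)-\iota(\pi(H))/\#\pi(H)$, and it suffices to run through the possibilities for $(\pi(G),\pi(H))$ supplied by Lemma~\ref{lem:sig}.

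Since we are not in case~(a), we are in case~(b), (c), or~(d). In cases~(b) and~(c) the group $\pi(G)$ is dihedral, say of order $2k$, and $\pi(H)$ is a cyclic subgroup of index~$2$, so $\#\pi(H)=k$; moreover $\pi(G)$ contains an involution, which lifts to a trace-zero $g\in G$, and then $\sig(g)\in\sig(G)=\sig(H)$ forces $H$ to contain a trace-zero element, i.e.\ $\pi(H)$ to contain an involution, so $k$ is even. A dihedral group of order $2k$ with $k$ even has $k+1$ involutions (the $k$ reflections together with the central rotation), while a cyclic group of even order $k$ has exactly one, so
\[
z(G)-z(H)=\frac{k+1}{2k}-\frac1k=\frac{k-1}{2k}\ge\frac14,
\]
with equality precisely when $k=2$. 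In case~(d) we have $\pi(G)\cong\alt{4}$ and $\pi(H)\cong\sym{3}$; since $\alt{4}$ has $3$ involutions among its $12$ elements and $\sym{3}$ has $3$ among its $6$, we get $z(G)=\tfrac14$ and $z(H)=\tfrac12$, hence $\lvert z(G)-z(H)\rvert=\tfrac14$. This exhausts the cases and yields $\lvert z(G)-z(H)\rvert\ge\tfrac14$.

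The one step requiring care — the main (and only) obstacle — is the reduction in the first paragraph, namely that $z$ factors through $\pi$; this rests entirely on the equivalence "$\tr g=0\iff\pi(g)$ has order $2$" for odd $\ell$, which must be applied with the harmless caveat about scalar matrices. Everything after that is a finite check against Lemma~\ref{lem:sig} together with the elementary enumeration of involutions in dihedral, cyclic, alternating, and symmetric groups, so I expect no difficulty there.
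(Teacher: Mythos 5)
Your proof is correct and follows essentially the same route as the paper's: the identity $z(G)=\iota(\pi(G))/\#\pi(G)$ is just a repackaging of the paper's direct coset count (all of $G-H$ is trace zero since $\tr g=0\iff|\pi(g)|=2$, while $H$ contributes exactly its scalar subgroup's worth), and the case-(d) computation $z(G)=1/4$, $z(H)=1/2$ is identical. The only cosmetic difference is that you compute $z(H)=1/k$ exactly where the paper only uses $z(H)\le 1/2$; both give the same bound $(k-1)/(2k)\ge 1/4$.
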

\begin{proof}
If we are in case (b) or (c) of Lemma~\ref{lem:sig}, then $H$ lies in a Cartan group $C$ and has index 2 in $G$, and the non-trivial coset $gH$ of $H$ in $G$ does not intersect $C$.
In this case every element of $gH$ has trace zero, while at most half the elements of $H$ can have trace zero, thus
\[
z(G)-z(H)=\frac{1+z(H)}{2} - z(H) = \frac{1-z(H)}{2}\ge \frac{1}{4}.
\]
In case (d) we have $z(G)=1/4$ and $z(H)=1/2$, thus $z(H)-z(G)=1/4$.
\end{proof}

\begin{corollary}\label{cor:sigzlc}
If $G$ and $H$ are subgroups of $\GL_2(\ell)$ with $\sig(G)=\sig(H)$ and $z(G)=z(H)$ then either $G$ and $H$ are conjugate or case $(a)$ of Lemma~\ref{lem:sig} applies.
In particular, $G$ and $H$ are locally conjugate and isomorphic.
\end{corollary}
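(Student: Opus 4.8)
The plan is to read this off directly from Lemmas~\ref{lem:sig} and~\ref{lem:zdiff}, since the corollary is essentially their combination. First I would dispose of the trivial alternative: if $G$ and $H$ are conjugate in $\GL_2(\ell)$, then they are obviously locally conjugate and isomorphic, so the conclusion holds and we are done. Hence I may assume from now on that $G$ and $H$ are \emph{not} conjugate. Since both $\sig$ and $z$ are conjugacy-invariant, the hypotheses $\sig(G)=\sig(H)$ and $z(G)=z(H)$ are symmetric in $G$ and $H$, so I am free to relabel and assume $\#G\ge\#H$; note that case (a) of Lemma~\ref{lem:sig} is itself symmetric in $G$ and $H$, so this relabeling does not affect the statement to be proved. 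With $\sig(G)=\sig(H)$ and $\#G\ge\#H$ in hand, Lemma~\ref{lem:sig} places $(G,H)$ in exactly one of its four cases (a)--(d).

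Next I would eliminate cases (b), (c), (d) using the extra hypothesis $z(G)=z(H)$. In each of those three cases the pair $(G,H)$ is ``as in Lemma~\ref{lem:sig}'' and not in case (a), so Lemma~\ref{lem:zdiff} applies and gives $|z(G)-z(H)|\ge\tfrac14>0$, contradicting $z(G)=z(H)$. Therefore case (a) of Lemma~\ref{lem:sig} must hold: $G=\langle C,\smallmat{1}{1}{0}{1}\rangle$ and $H=\langle C',\smallmat{1}{1}{0}{1}\rangle$ for distinct $C_s^+(\ell)$-conjugate subgroups $C,C'\subseteq C_s(\ell)$. This proves the first assertion.

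For the ``in particular'' clause I would simply invoke Lemma~\ref{lem:isoborel}: the subgroups $C$ and $C'$ lie in $C_s(\ell)$ and are $C_s^+(\ell)$-conjugate, hence conjugate in $\GL_2(\ell)$, so taking $t=\smallmat{1}{1}{0}{1}=\unimat$, Lemma~\ref{lem:isoborel} tells us directly that $\langle C,t\rangle$ and $\langle C',t\rangle$ are locally conjugate in $\GL_2(\ell)$ and isomorphic; in the (already handled) conjugate case this is trivial. I do not expect any real obstacle here — the corollary is a repackaging of the two preceding lemmas. The only point deserving a moment's care is noting that $\sig$ and $z$ are conjugacy-invariant, which is what lets us symmetrize the hypotheses and reduce to the case $\#G\ge\#H$ covered by Lemma~\ref{lem:sig}.
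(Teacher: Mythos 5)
Your proof is correct and is exactly the argument the paper intends: the paper's one-line proof ("this follows from the previous lemma and Lemma~\ref{lem:isoborel}") is precisely your combination of Lemma~\ref{lem:sig} (to place a non-conjugate pair into one of cases (a)--(d)), Lemma~\ref{lem:zdiff} (to exclude (b)--(d) via $|z(G)-z(H)|\ge 1/4$), and Lemma~\ref{lem:isoborel} (for the local conjugacy and isomorphism in case (a)). Your added care about conjugacy-invariance of $\sig$ and $z$ and the relabeling to $\#G\ge\#H$ is a correct and harmless elaboration of what the paper leaves implicit.
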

\begin{proof}
This follows from the previous lemma and Lemma~\ref{lem:isoborel}.
\end{proof}

%\subsection{Deriving subgroups from signatures}\label{sec:grpsig}

We now give an efficient algorithm to determine a set of generators for a subgroup $G$ of $\GL_2(\ell)$ that satisfies $\sig(G)=s$ and $z(G)=z$, given the signature $s=\sig(G')$ and trace-zero ratio $z=z(G')$ of some subgroup $G'$ of $\GL_2(\ell)$.
By Corollary~\ref{cor:sigzlc}, the group $G$ must be locally conjugate to $G'$.
In order to do this more efficiently, we note that each signature $s$ is uniquely determined by a small subset of its triples.
It suffices to retain a subset $\overline{s}$ of $s$ of signatures $\sig(g)$ for $g\in G'$ that includes

\begin{itemize}
\item the triple $(1,2,1)$ if $\#G'$ is divisible by $\ell$;
\item a triple $\sig(g)$ for which $\langle\det(g)\rangle = \det(G')=:\det(s)$;
\item a triple $\sig(g)$ for which $\langle g\rangle = Z(G')=:Z(s)$;
\item a triple $\sig(g)$ for which $|\pi(g)|=\max\{|\pi(h)|:h\in G'\}=:m(s)$;
\item triples $\sig(g_i)$ for which $\lcm|\pi(g_i)|=\lcm\{|\pi(h):h\in G'\}=:\lambda(s)$;
\item triples $\sig(g_i)$ for which $\{\chi(g_i)\}=\{\chi(h):h\in G'\}=:\chi(s)$;
\item if $\pi(G')$ is not cyclic, triples $\sig(g_1)$ and $\sig(g_2)$ with $|\pi(g_1)|=|\pi(g_2)|=2$ but $\pi(g_1)\ne\pi(g_2)$.
\end{itemize}

Given any signature $s=\sig(G')$ we can always reduce $s$ to a subset $\overline{s}$ of at most 11 elements that satisfy all of the criteria above.
Alternatively, as we shall do in Section \ref{sec:algorithms}, we can construct $\overline{s}$ by randomly sampling a sufficiently large subset of $s$, without ever needing to store more than $O(\log\ell)$ triples, which requires just $O(\log^2 \ell)$ bits of space, as opposed to $O(\ell^2\log\ell)$ for the entire signature.
More importantly, with the algorithm below we can obtain generators for a subgroup $G$ locally conjugate to $G'$ in expected time polynomial in $\log\ell$ rather than $\ell$, an exponential improvement.
For any subgroup $G$ of $\GL_2(\ell)$ let $Z(G)$ denote the subgroup of scalar elements, and similarly let $Z(s)$ denote the subset of signatures of scalar elements $(d,t,n)$ with $n\in \{0,2\}$ and $t^2-4d=0$.

\begin{algorithm}\label{alg:grpsig}
Given a subset $\overline{s}$ of the signature $s$ of a subgroup $G'$ of $\GL_2(\ell)$ satisfying the requirements above and a rational number $z\in [0,1]$ with denominator at most $\#\GL_2(\ell)$ satisfying $|z(G')-z|<1/8$, output a set of generators for a subgroup $G$ of $\GL_2(\ell)$ that is locally conjugate to $G'$ as follows:
\end{algorithm}
\begin{enumerate}[{\bf 1.}]
\setlength\itemsep{2pt}
\item (\textbf{even $\boldsymbol\ell$}) If $\ell=2$ then output $G=\langle \diagmat{1}{1}\rangle,\ \langle\smallmat{0}{1}{1}{0}\rangle,\ \langle\smallmat{1}{1}{1}{0}\rangle,$ or $\langle\smallmat{0}{1}{1}{0},\smallmat{1}{1}{1}{0}\rangle$ when $\overline{s}$ is equal to $\{(1,0,2)\}$, $\ \{(1,0,2),(1,0,1)\}$, $\ \{(1,0,2),(1,1,0)\}$, or $\ \{(1,0,2),(1,0,1),(1,1,0)\}$, respectively, then terminate.
\item (\textbf{cases with order divisible by $\boldsymbol\ell$}) If $\overline{s}$ contains the triple $(1,2,1)$ then:
\begin{enumerate}[{\bf a.}]
\item (\textbf{contains $\boldsymbol{\SL_2(\ell)}$}) If $-1\in \chi(s)$ output $G=\langle \smallmat{1}{1}{0}{1}, \smallmat{1}{0}{1}{1}, \smallmat{1}{0}{0}{d}\rangle$ with $\langle d\rangle=\det(s)$ and terminate.
\item (\textbf{in $\boldsymbol{B(\ell)}$}) Output $G=\langle \smallmat{1}{1}{0}{1}, g,c\rangle\subseteq B(\ell)$, with $g\in C_s(\ell)$ satisfying $|\pi(g)|=m(s)$ and $\langle c\rangle = Z(s)$, and terminate.
\end{enumerate}
\item (\textbf{exceptional cases}) Check for projective image $\alt{4},\sym{4},\alt{5}$ as follows:
\begin{enumerate}[{\bf a.}]
\item ($\boldsymbol{\alt{4}}$) If $z < 3/8$, $m(s)=3$ and $\lambda(s)=6$, use Algorithm~\ref{alg:excep} to construct $G$ with $\pi(G)\simeq \alt{4}$, $Z(G)=Z(s)$, and $[\det(G):\det(Z(G))]=[\det(s)=\det(Z(s))]$.  Output $G$ and terminate.
\item ($\boldsymbol{\sym{4}}$) If $m(s)=4$ and $\lambda(s)=12$ use Algorithm~\ref{alg:excep} to construct $G$ with $\pi(G)\simeq \sym{4}$, $Z(G)=Z(s)$, and $[\det(G):\det(Z(G))]=[\det(s)=\det(Z(s))]$.  Output $G$ and terminate.
\item ($\boldsymbol{\alt{5}}$) If $m(s)=5$ and $\lambda(s)=30$ use Algorithm~\ref{alg:excep} to construct $G$ with $\pi(G)\simeq \alt{5}$, $Z(G)=Z(s)$, and $[\det(G):\det(Z(G))]=[\det(s)=\det(Z(s))]$.  Output $G$ and terminate.
\end{enumerate}
\item (\textbf{trivial cases}) If $\chi(s)=\{0\}$ output $Z(s)$ and terminate.
\item (\textbf{cyclic cases}) Construct a maximal $H\subset C_s(\ell)\cup C_{ns}(\ell)$ with $\pi(H)$ cyclic such that $\sig(H)\subseteq s$:
\begin{enumerate}[{\bf a.}]
\item Let $\langle c\rangle=Z(s)$ let $g\in C_s(\ell)\cup C_{ns}(\ell)$ satisfy $|\pi(g)|=m(s)$ and $\sig(g)\in \overline{s}$, and set $H=\langle g,c\rangle$.
\item If $s\subseteq \sig(H)$ and $|z(H)-z|<1/8$ then output $G=H$ and terminate.
\end{enumerate}
\item (\textbf{dihedral cases}) Determine the unique $G\supseteq H$ with $\pi(G)$ dihedral such that $\sig(G)=s$:
\begin{enumerate}[{\bf a.}]
\item Let $e=[Z(\ell):H\cap Z(\ell)]$, where $H$ is as in step 5.
\item If $\chi(g)=1$ let $\gamma=\smallmat{0}{1}{1}{0}$ and $r=\smallmat{1}{0}{0}{\varepsilon}$, otherwise let $\gamma=\diagmat{1}{-1}$ and let $r$ be a generator for $C_{ns}(\ell)$.
\item Output whichever of $G=\langle H,\gamma\rangle$ or $G=\langle H, \gamma r^e\rangle$ satisfies $\overline{s}\subseteq \sig(G)$.
\end{enumerate}
\end{enumerate}

The correctness of Algorithm~\ref{alg:grpsig} follows from Proposition~\ref{prop:subgroups}, Lemma~\ref{lem:borel121}, and Corollaries~\ref{cor:nonsplitdihedral},~\ref{cor:splitdihedral}, and~\ref{cor:sigzlc}.
Note that in the dihedral case $\overline{s}$ is guaranteed to contain the signature of some $h\in G-H$, since we retain two projectively distinct elements of order 2 in this case, and $\det h$ will determine whether $\det(G)=-\det(G-H)$ or not, which determines which of the two possible subgroups $G$ to output in step~6c, by Corollaries~\ref{cor:nonsplitdihedral} and~\ref{cor:splitdihedral}.

\begin{proposition}\label{prop:alggrpsigtime}
The expected running time of Algorithm~\ref{alg:grpsig} is $O(\M(\log\ell)\log\ell)$.
\end{proposition}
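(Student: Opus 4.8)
The plan is to tally the cost of each of the $O(1)$ lines of Algorithm~\ref{alg:grpsig} and check that none exceeds $O(\M(\log\ell)\log\ell)$, the expected cost of one root-finding over $\F_\ell$ or $\F_{\ell^2}$. First I would record that the input is tiny: $\overline s$ holds at most $11$ triples, each of bit-size $O(\log\ell)$, and $z$ has denominator at most $\#\GL_2(\ell)=O(\ell^4)$, so every arithmetic operation in $\F_\ell$, in $\F_{\ell^2}$, or on the integers at hand costs $O(\M(\log\ell))$ (and an inversion or Legendre symbol $O(\M(\log\ell)\log\ell)$). From $\overline s$ one reads off the derived data $\det(s)$, $Z(s)$, $\chi(s)$, $m(s)$, $\lambda(s)$ and locates the triples realising them using $O(1)$ evaluations of $u(g)=\tr(g)^2/\det(g)$, of the trace recurrence~\eqref{eq:trrec}, and of~\eqref{eq:projord} to pass from $u(g)$ to a projective order; the Boolean tests $z<3/8$, $m(s)=3$, $\lambda(s)=6$, $\chi(s)=\{0\}$, etc.\ that branch steps~3--6 are then immediate.

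Next I would walk through the steps. Step~1 is a lookup against four fixed patterns. In step~2 we output either $\langle\unimat,\smallmat{1}{0}{1}{1},\smallmat{1}{0}{0}{d}\rangle$ with $d$ read from a triple of $\overline s$, or a Borel group $\langle\unimat,g,c\rangle$; realising the matrix $g\in C_s(\ell)$ of prescribed signature $(\det g,\tr g)$ amounts to factoring $T^2-(\tr g)T+\det g$ over $\F_\ell$, one root-finding. Each of the three sub-cases of step~3 makes a single call to Algorithm~\ref{alg:excep}, costing $O(\M(\log\ell)\log\ell)$ expected time by Proposition~\ref{prop:algexceptime}. Step~4 is trivial. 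Step~5 constructs $g\in C_s(\ell)\cup C_{ns}(\ell)$ of prescribed signature by one root-finding over $\F_\ell$ or $\F_{\ell^2}$, sets $H=\langle g,c\rangle$, and checks $\overline s\subseteq\sig(H)$ and $|z(H)-z|<1/8$; I would reduce these to deciding, for each of the $O(1)$ triples of $\overline s$, whether the corresponding semisimple conjugacy class meets $H$, and to computing $z(H)$ from $\#H$, $\#(H\cap Z(\ell))$ and $|\pi(g)|$ --- each a bounded amount of root-finding and modular arithmetic on the explicit generators. Step~6 adjoins $\gamma$ or $\gamma r^{e}$, where $\gamma$ is one of two explicit matrices, $e=[Z(\ell):H\cap Z(\ell)]$, and $r$ is either $\smallmat{1}{0}{0}{\varepsilon}$ --- with $\varepsilon$ a non-square obtained by $O(1)$ expected Euler-criterion trials --- or a generator of $C_{ns}(\ell)$, and then selects the correct alternative by the single determinant comparison used in the proofs of Corollaries~\ref{cor:nonsplitdihedral} and~\ref{cor:splitdihedral}. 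As the algorithm runs $O(1)$ such steps, the total expected running time is $O(\M(\log\ell)\log\ell)$.

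The part I expect to demand the most care is showing the signature-flavoured subroutines --- deciding $\overline s\subseteq\sig(H)$, computing $z(H)$, computing the exponent $e$, and extracting projective orders --- are provably fast, since a careless implementation of any of them could hide a discrete logarithm. My resolution is to exploit the rigid structure established in \S\S\ref{sec:borel}--\ref{sec:dihedral}: every group the algorithm handles is, modulo its scalar subgroup, a cyclic subgroup of $C_s(\ell)\simeq\Fl^\times\times\Fl^\times$ or $C_{ns}(\ell)\simeq\Fltwo^\times$, and in the dihedral case is pinned down by a single nontrivial coset together with the sign of one determinant, so that orders and element-membership are settled by~\eqref{eq:trrec}, \eqref{eq:projord}, and degree-$\le 2$ root-finding, with the factorizations of $\ell-1$ and $\ell^2-1$ regarded as precomputed data accompanying $\ell$. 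The residual bookkeeping --- that the non-square search, the root-finders, and the random-matrix loop inside Algorithm~\ref{alg:excep} each succeed within $O(1)$ expected trials --- is routine.
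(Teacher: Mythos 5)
Your proposal is correct and follows essentially the same approach as the paper's proof: there are $O(1)$ steps, each dominated by root-finding over $\F_\ell$ or $\F_{\ell^2}$, the calls to Algorithm~\ref{alg:excep} (bounded via Proposition~\ref{prop:algexceptime}), or finding the needed generators, all within $O(\M(\log\ell)\log\ell)$ expected time. You simply spell out in more detail the signature bookkeeping that the paper's three-sentence proof leaves implicit.
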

\begin{proof}
All the individual arithmetic operations in the algorithm involve $O(\log\ell)$-bit integers, including the numerator and denominator of $z$, and can be accomplished using $O(\M(\log\ell)\log\log\ell)$ bit operations (including any field inversions).
The subset $\overline{s}$ contains just $O(1)$ elements, there are $O(1)$ steps in the algorithm, and each can be completed in $O(\M(\log\ell)\log\ell)$ expected time, including the calls to Algorithm~\ref{alg:excep}, by Proposition~\ref{prop:algexceptime}, and the time to obtain a generators $\varepsilon$ for $\Z(\ell)^\times$ and $r$ for $C_{ns}(\ell)$ using a Las Vegas algorithm.
\end{proof}

\subsection{Locally conjugate subgroups}

We conclude this section with a theorem that precisely characterizes the circumstances in which we may have an elliptic curve $E/K$ for which $G_E(\ell)$ is locally conjugate but not conjugate to another subgroup of $\GL_2(\ell)$.

\begin{theorem}\label{thm:locconjisog}
Let $\ell$ be a prime and let $E$ be an elliptic curve over a number field $K$ for which there exists a subgroup $G'$ of $\GL_2(\ell)$ that is locally conjugate to $G_E(\ell)$ but not conjugate to $G_E(\ell)$.
Then $G'$ arises as $G_{E'}(\ell)$ for an elliptic curve $E'/K$ that is related to $E$ by a cyclic isogeny whose degree is a power of $\ell$; the curve $E'$ is unique up to isomorphism.
\end{theorem}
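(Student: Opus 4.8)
My plan is to first invoke the classification of Section~\ref{sec:GL2} to see that the hypothesis can hold only in the Borel case, and then to produce $E'$ by walking along the $\ell$-isogeny graph of $E$. Concretely, by Remark~\ref{rem:onlycase} (equivalently, Corollary~\ref{cor:sigzlc} together with case~(a) of Lemma~\ref{lem:sig}), the existence of a subgroup $G'$ that is locally conjugate but not conjugate to $G_E(\ell)$ forces $\ell$ to be odd and, after conjugating, $G_E(\ell)=\langle C,t\rangle$ with $t=\unimat$ and $C=G_E(\ell)\cap C_s(\ell)$ a subgroup of $C_s(\ell)$ that is \emph{not} stable under $s=\smallmat{0}{1}{1}{0}$, while $G'$ is conjugate to $\langle sCs,t\rangle$. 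Writing $C=\{\diagmat{\chi_1(\sigma)}{\chi_2(\sigma)}:\sigma\}$ for characters $\chi_1,\chi_2\colon\Gal(\Kbar/K)\to\Fl^\times$, the condition $C\ne sCs$ forces $\chi_1\ne\chi_2$; the unique Galois-stable line in $E[\ell]$ is $C_1:=\Fl(\chi_1)$, and $E[\ell]$ is a non-split extension of $\Fl(\chi_2)$ by $\Fl(\chi_1)$.

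Next I would study the $\ell$-isogeny graph $\mathcal T$ of $E$, whose vertices are the elliptic curves over $K$ joined to $E$ by a cyclic $\ell$-power isogeny, with an edge whenever there is an $\ell$-isogeny. The number of edges at a vertex $F$ equals the number of Galois-stable lines in $F[\ell]$, which is $1$, $2$, or $\ell+1$ according as $G_F(\ell)$ contains an element of order $\ell$, is conjugate into a split Cartan without being scalar, or is scalar. Since isogenous curves have equal Frobenius traces, the semisimplification of $F[\ell]$ is independent of $F\in\mathcal T$, namely $\Fl(\chi_1)\oplus\Fl(\chi_2)$ with $\chi_1\ne\chi_2$; hence no vertex has scalar mod-$\ell$ image, so every non-leaf vertex has degree $2$. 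As $E$ is non-CM, $\mathcal T$ has no cycles (a cycle would produce a nontrivial cyclic $\ell$-power endomorphism of some vertex) and the isogeny class is finite (e.g.\ because $G_E(\ell^\infty)$ is open by Serre's theorem), so $\mathcal T$ is a finite path $E=F_0-F_1-\cdots-F_m$; here $m\ge1$ since $\langle C,t\rangle\subseteq B(\ell)$ fixes $C_1$.

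I would then track the Galois-module structure of $F_i[\ell]$ along the path, using that for an $\ell$-isogeny $F\to F/D$ the module $(F/D)[\ell]$ is an extension of $D$ by $F[\ell]/D$, with $F[\ell]/D$ the kernel of the dual isogeny. Passing from $F_0=E$ to $F_1=E/C_1$ swaps the two characters: $F_1[\ell]$ is an extension of $\Fl(\chi_1)$ by $\Fl(\chi_2)$. For $i\ge1$ the path-edge $F_i\to F_{i+1}$ is the quotient by the Galois-stable line of $F_i[\ell]$ \emph{other than} the kernel of the dual of $F_{i-1}\to F_i$ (this is what makes the composite non-backtracking), and such a quotient preserves the sub/quotient pair; so every $F_i$ with $i\ge1$ has $F_i[\ell]$ an extension of $\Fl(\chi_1)$ by $\Fl(\chi_2)$, and therefore $G_{F_i}(\ell)\cap C_s(\ell)=sCs$. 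The internal vertices $F_1,\dots,F_{m-1}$ have split $\ell$-torsion and $G_{F_i}(\ell)=sCs$, while the far leaf $F_m$ has non-split $\ell$-torsion, so $G_{F_m}(\ell)$ contains an element of order $\ell$ and, by Lemma~\ref{lem:borel}, equals $\langle sCs,t\rangle$, which is conjugate to $G'$. Taking $E'=F_m$ — joined to $E$ by the cyclic isogeny $F_0\to\cdots\to F_m$ of degree $\ell^m$ — proves existence. For uniqueness, any $E''/K$ joined to $E$ by a cyclic $\ell$-power isogeny is a vertex $F_j$ of $\mathcal T$; by Corollary~\ref{cor:borel} the image $\langle C,t\rangle$ of $F_0$ is not conjugate to $G'$ (since $C\ne sCs$), and the image $sCs\subseteq C_s(\ell)$ of each internal $F_j$ is not even locally conjugate to $G'$ (it has no element of order $\ell$, cf.\ Lemma~\ref{lem:borel121}); hence $j=m$ and $E''\cong E'$.

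The step I expect to be the main obstacle is the character bookkeeping above: getting the conventions right so that the pair $(\chi_1,\chi_2)$ is swapped exactly once, on the edge $E-F_1$, and never again — and, relatedly, confirming that $\mathcal T$ is genuinely a path, which needs both the finiteness of the isogeny class and the fact that $\chi_1\ne\chi_2$ rules out scalar images throughout the class. The remaining ingredients (the classification input and the fact that a non-backtracking composite of $\ell$-isogenies is cyclic) are routine.
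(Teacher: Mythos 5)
Your proof is correct and takes essentially the same route as the paper's: reduce to the Borel case via the classification of \S\ref{sec:GL2}, then walk the non-backtracking chain of $\ell$-isogenies from $E$ through curves whose mod-$\ell$ image has dropped into a split Cartan, until the far endpoint, whose image is again of the form $\langle\,\cdot\,,t\rangle$ but with the diagonal subgroup replaced by $sCs$. The only substantive difference is bookkeeping: the paper verifies the swap at the final step by choosing bases of $E[\ell]$ and $E_n[\ell]$ compatible with $\varphi_n$ and its dual, while you track the sub/quotient characters $(\chi_1,\chi_2)$ of $F_i[\ell]$ edge by edge; both devices do the same job.

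One point needs repair. You invoke ``$E$ is non-CM'' to rule out cycles in $\mathcal{T}$, but the theorem carries no such hypothesis (the remark following it explicitly contemplates CM curves). Fortunately the appeal is unnecessary: you have already shown that every vertex of $\mathcal{T}$ has degree at most $2$ (no vertex has scalar image because $\chi_1\ne\chi_2$), and that $E$ itself has degree exactly $1$ (its image contains $\unimat$, so $E[\ell]$ has a unique stable line). A finite connected graph of maximum degree $2$ containing a vertex of degree $1$ is a simple path, so $\mathcal{T}$ is a path with $E$ as an endpoint whether or not $E$ has CM. With that substitution, the character bookkeeping and the uniqueness argument go through as written.
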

\begin{proof}
It follows from the classification of \S\ref{sec:GL2} that up to conjugacy, $G=G_E(\ell)$ and $G'$ are of the form $G=H\rtimes \langle t\rangle$ and $G'=H'\rtimes\langle t\rangle$, where $t=\smallmat{1}{1}{0}{1}$ and $H$ and $H'$ are distinct subgroups of $C_s(\ell)$ that are conjugate in $\GL_2(\ell)$ via $\smallmat{0}{1}{1}{0}$.  This implies that neither $H$ nor $H'$ lie in $Z(\ell)$.

The group $G$ lies in $B(\ell)$ but not $C_s(\ell)$, so $E$ admits a rational isogeny $\varphi_1$ of degree $\ell$ that is unique up to isomorphism.
Let $E_1=\varphi_1(E)$ and let $G_1=G_{E_1}(\ell)$.
The isogeny $\varphi_1$ induces a homomorphism $G\to G_1$ with kernel $\langle t\rangle$.
The existence of the dual isogeny implies that the order of $G_1$ is either equal to that of~$G$ or smaller by a factor of $\ell$ (it cannot be larger because $\ell^2$ does not divide $\#\GL_2(\ell)$).
In the latter case,~$G_1$ lies in a split Cartan group but is not contained in $Z(\ell)$ (since $H$ is not), and $E_1$ admits exactly two distinct rational $\ell$-isogenies, one of which is the dual of $\varphi_1$.

If we let $\varphi_2\colon E_1\to E_2$ be the rational $\ell$-isogeny that is not dual to $\varphi_1$ and put $G_2=G_{E_2}(\ell)$, then either~$G_2$ also lies in a split Cartan group but not $Z(\ell)$ and we can repeat the same argument, or~$G_2$ has the same order as~$G$.
The isogeny class of $E$ is finite, so by following a chain of $\ell$-isogenies whose composition $\varphi$ has a cyclic kernel of $\ell$-power order, we must eventually reach an elliptic curve $E_n=\varphi_n(E)$ for which $G_n:=G_{E_n}(\ell)$ has the same order as $G$.
We may thus assume that $G_n$ lies in $B(\ell)$ but not $C_s(\ell)$, and therefore has the form $H_n\rtimes\langle t\rangle$, where $H_n$ is a subgroup of $C_s(\ell)$.
The isogeny $\varphi_n$ induces a group homomorphism $\phi_n\colon G\to G_n$ with kernel $\langle t\rangle$.
We can pick bases $(P,Q)$ and $(P',Q')$ for $E[\ell]$ and $E_n[\ell]$ (respectively) so that $\varphi_n(P)=0$ and $\varphi_n(Q)=Q'$, while for the dual isogeny $\hat\varphi_n$ we have $\hat\varphi_n(Q')=0$ and $\hat\varphi_n(P')=P$.
It follows that $\phi_n$ restricts to an isomorphism from $H$ to $H_n$ that corresponds to conjugation by $\smallmat{0}{1}{1}{0}$ (swapping the diagonal elements).
We therefore have $H_n=H'$ and $G_n=G'$.
The curve $E':=E_n$ is determined up to isomorphism by the kernel of the separable isogeny $\varphi_n$, which is in turn determined up to isomorphism by $E$.
\end{proof}

\begin{remark}
The theorem allows for the possibility that $E/K$ has CM, but rarely applies in this case.
When $E/K$ has CM the hypothesis of the theorem is satisfied only when when $\ell$ is ramified in the CM field and the ideal above $\ell$ in the CM field is non-principal (and thus has order 2 in the class group).
This corresponds to an $\ell$-volcano that consists of a single edge; see \cite{sut13a}.
\end{remark}

\begin{example}
Consider the chain of $5$-isogenies $E\longleftrightarrow E_1\longleftrightarrow E'$, where $E$, $E_1$, $E'$ are the elliptic curves over $\Q$ with Cremona labels \href{http://www.lmfdb.org/EllipticCurve/Q/11a3}{\texttt{11a3}}, \href{http://www.lmfdb.org/EllipticCurve/Q/11a1}{\texttt{11a1}}, \href{http://www.lmfdb.org/EllipticCurve/Q/11a2}{\texttt{11a2}}, respectively.
In this example the groups~$G=\langle\diagmat{1}{2}\rangle$ and $H=\langle\diagmat{2}{1}\rangle$ are both conjugate to $G_{E_1}(5)$, while the groups $G_E(5)=\langle G,t\rangle$ and $G_{E'}(5)=\langle H,t\rangle$ are non-conjugate but locally conjugate and isomorphic (as required by Lemma~\ref{lem:isoborel}).
As can be seen from the groups $G_E(5)$ and $G_{E'}(5)$, the elliptic curve $E$ has a rational 5-torsion point, but $E'$ does not.
\end{example}

\section{GRH Bounds}\label{sec:GRHbounds}

By the generalized Riemann hypothesis (GRH) we refer to the assumption that the non-trivial zeros of the Dedekind zeta function of a number field all lie on the critical line $\{s\in \C:\Re(s) = 1/2\}$.
We also recall the logarithmic integral $\Li(x):=\int_2^x dt/\log t$.

\begin{proposition}[Lagarias--Odlyzko, Serre]\label{prop:GRHchebotarev}
Assume the GRH.
Let $L$ be a finite Galois extension of a number field $K$ with Galois group $G=\Gal(L/K)$, let $n_L:=[L:\Q]$, and $d_L:=|\disc(L)|$.
For each nonempty subset $C$ of $G$ stable under conjugation define
\[
\pi_C(x):= \#\left\{\p:\ \left(\frac{L/K}{\p}\right)\subseteq C,\ N(\p)\le x\right\},
\]
where $\p$ ranges over the primes of $K$ that are unramified in $L$, $N(\p)$ is its absolute norm, and $\left(\frac{L/K}{\Cdot}\right)$ is the Artin symbol.
There are absolute effective constants $c_1$ and $c_2$ such that
\[
\left|\pi_C(x)-\frac{\#C}{\#G}\Li(x)\right|\le c_1\frac{\#C}{\#G}\sqrt{x}(\log d_L + n_L\log x)
\]
holds for all $x\ge 2$, and $\pi_C(x) \ge 1$ for all $x\ge c_2\log^2 d_L$.
\end{proposition}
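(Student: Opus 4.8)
This is the effective Chebotarev density theorem under GRH, due to Lagarias and Odlyzko with the dependence on invariants sharpened by Serre; the plan is to run their $L$-function argument. First remove the group theory: it suffices to treat $C$ a single conjugacy class, say of $g\in G$ (the general case is a finite sum over classes). Expanding the class function $\mathbf 1_C$ in the irreducible characters of $G$ gives $\mathbf 1_C=\sum_\chi a_\chi\chi$ with $a_\chi=\tfrac{\#C}{\#G}\overline{\chi(g)}$, so $a_{\chi_0}=\#C/\#G$ for the trivial character $\chi_0$ and $|a_\chi|\le\tfrac{\#C}{\#G}\chi(1)$ in general. Hence, up to the ramified primes (there are $O(\log d_L)$ of them and they contribute $O(\log d_L\log x)$, absorbed into the error), the Chebyshev-weighted count $\sum_{N(\p)^k\le x}\mathbf 1_C(\Frob_\p^k)\log N(\p)$ equals $\sum_\chi a_\chi\Psi(x,\chi)$, where $\Psi(x,\chi):=\sum\chi(\Frob_\p^k)\log N(\p)$ is the Chebyshev sum attached to the Artin $L$-function $L(s,\chi)$ of $L/K$. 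To make these $L$-functions usable without Artin's conjecture one uses Brauer's theorem to write each $L(s,\chi)$ as a ratio of Hecke $L$-functions of intermediate fields $K\subseteq E\subseteq L$, each of which divides the Dedekind zeta function of an abelian extension of $E$; the assumed GRH for Dedekind zeta functions then propagates to GRH for every $L(s,\chi)$ occurring. (Equivalently one may reduce via Deuring--Serre to the cyclic extension $L/L^{\langle g\rangle}$ and work only with Hecke $L$-functions of $L^{\langle g\rangle}$.)

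Now apply the explicit formula to each $\Psi(x,\chi)$: it equals $\delta(\chi)x-\sum_{\rho_\chi}x^{\rho_\chi}/\rho_\chi$ plus subordinate terms controlled by the analytic conductor $\mathfrak q_\chi$ and the degree $\chi(1)n_K$, where $\delta(\chi)=1$ iff $\chi=\chi_0$ and $\rho_\chi$ runs over the nontrivial zeros of $L(s,\chi)$. Summed against the $a_\chi$, the main term is $a_{\chi_0}x=\tfrac{\#C}{\#G}x$, while the zero contribution is at most $\sum_\chi|a_\chi|\sum_{\rho_\chi}|x^{\rho_\chi}/\rho_\chi|\le\tfrac{\#C}{\#G}\sum_\chi\chi(1)\sum_{\rho_\chi}|x^{\rho_\chi}/\rho_\chi|=\tfrac{\#C}{\#G}\sum_{\rho}|x^\rho/\rho|$, the last sum now over the nontrivial zeros of $\zeta_L(s)=\prod_\chi L(s,\chi)^{\chi(1)}$. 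Under GRH every $\rho$ has $\Re\rho=\tfrac12$, so $|x^\rho/\rho|\le\sqrt x/|\rho|$; combining this with the zero-count $\#\{\rho:|\Im\rho-t|\le1\}\ll\log d_L+n_L\log(|t|+2)$ for $\zeta_L$ and the conductor--discriminant bound $\sum_\chi\chi(1)\log\mathfrak q_\chi\ll\log d_L+n_L$, a careful weighted version of the argument (this is where Lagarias--Odlyzko work to avoid a crude truncation) yields a zero contribution of size $\ll\tfrac{\#C}{\#G}\sqrt x\bigl(\log d_L+n_L\log x\bigr)$; the subordinate terms and the prime powers $N(\p)^k\le x$ with $k\ge2$ are smaller. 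Partial summation converts this into the bound for $\pi_C(x)$ with $x\mapsto\Li(x)$, and with an absolute $c_1$.

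For the final assertion one cannot afford the $\log x$ factor that a plain truncation would cost, so one uses a smoothly weighted sum directly over primes: $\sum_\p w(N(\p)/x)\mathbf 1_C(\Frob_\p)$, where $w\ge0$ is a fixed smooth bump supported on $[0,1]$, with rapidly decaying Mellin transform $\widetilde w$. Expanding in characters and moving contours past $s=1$, this equals $\tfrac{\#C}{\#G}\widetilde w(1)x-\tfrac{\#C}{\#G}\sum_{\rho}\widetilde w(\rho)x^{\rho}$ plus negligible terms, $\rho$ over the zeros of $\zeta_L$. Under GRH $|x^\rho|=\sqrt x$, and since $\widetilde w$ decays faster than any polynomial, the zero-count estimate gives $\sum_\rho|\widetilde w(\rho)|\ll\log d_L+n_L$ with no loss of $\log x$; hence the zero contribution is $O\bigl(\tfrac{\#C}{\#G}\sqrt x(\log d_L+n_L)\bigr)$ while the main term is $\asymp\tfrac{\#C}{\#G}x$. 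Taking $x=c_2\log^2 d_L$ with $c_2$ a sufficiently large absolute constant, and using Minkowski's discriminant bound $n_L=O(\log d_L)$, makes the main term dominate, so the weighted sum over unramified $\p$ is positive (the finitely many ramified primes dividing $d_L$ are harmless), and therefore $\pi_C(x)\ge1$.

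The analytic ingredients --- the explicit formula, the zero-counting estimate, and the GRH bound $|x^\rho|=\sqrt x$ --- are all standard; the crux, and what I expect to be the main obstacle, is producing the \emph{sharp} error $\sqrt x(\log d_L+n_L\log x)$ rather than one with an extra $\log x$. This forces the factorization $\zeta_L=\prod_\chi L(s,\chi)^{\chi(1)}$ to be used so that the many per-character zero sums collapse to a single zero sum for $\zeta_L$ (so that only $d_L$ and $n_L$, not an unbounded sum of Artin conductors, appear), and, for the least-prime statement, a carefully chosen smooth weight $w$ keeping the main term of size $\asymp\tfrac{\#C}{\#G}x$ while holding the zero sum to $O\bigl(\tfrac{\#C}{\#G}\sqrt x(\log d_L+n_L)\bigr)$. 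The remaining work is conductor bookkeeping: the conductor--discriminant formula, the behaviour of discriminants in the tower $K\subseteq E\subseteq L$, and tracking the degrees $\chi(1)$ through Brauer induction.
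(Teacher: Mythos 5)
The paper offers no proof of this proposition beyond a citation: the main estimate is \cite[Thm.~4]{serre81} (which sharpens \cite{lo77}), and the least-prime bound is \cite[Thm.~5]{serre81} together with the remark on Corollary~1.2 of \cite{lo77}. So there is nothing in the paper to compare against line by line; what you have written is a reconstruction of the argument in those references, and it is an accurate one. You correctly isolate the two points that actually require care: the collapse of the per-character zero sums via $\zeta_L(s)=\prod_\chi L(s,\chi)^{\chi(1)}$ into a single zero sum for $\zeta_L$, which is what makes the error depend only on $\log d_L$ and $n_L$ rather than on a sum of Artin conductors, and the use of a weighted (kernel) form of the explicit formula for the least-prime statement so as not to lose a factor of $\log x$. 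Two small remarks. First, Lagarias--Odlyzko actually perform the reduction via Deuring's trick to the cyclic extension $L/L^{\langle g\rangle}$ rather than via Brauer induction, precisely so that only Hecke $L$-functions with unconditional analytic continuation appear; under GRH for Dedekind zeta functions either route works, as you note parenthetically. Second, in the least-prime argument the clause that the ramified primes are ``harmless'' is exactly where the constant $c_2$ gets pinned down and deserves a sentence: there are at most $O(\log d_L)$ ramified primes, each contributing $O(1)$ to the weighted sum, while the main term is $\gg (\#C/\#G)\,x \ge x/n_L \gg c_2\log d_L$ by Minkowski's bound $n_L=O(\log d_L)$, and the zero contribution is $O\bigl((\#C/\#G)\sqrt{c_2}\,\log^2 d_L\bigr)$ against a main term of size $(\#C/\#G)\,c_2\log^2 d_L$, so a sufficiently large absolute $c_2$ closes the argument.
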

\begin{proof}
The first bound is \cite[Thm.\ 4]{serre81}, which sharpens \cite{lo77}.
The second is \cite[Thm.\ 5]{serre81}, which is also sketched in \cite{lo77}.
For the third bound, see the remark regarding an improvement to Corollary~1.2 in \cite{lo77}.
\end{proof}

\begin{remark}
As noted in \cite{serre81}, Oesterl\'e announced the explicit values $c_1=2$ and $c_2=70$ in \cite{oesterle79}.
Proofs of these values have not been published, but in \cite{winckler13} one can find proofs that use somewhat larger constants (one can take $c_1=185$ via \cite[Thm.\ 1.2]{winckler13}; if one assumes $d_L$ is sufficiently large one can take $c_1\approx 32$).
\end{remark}

\begin{proposition}[Larson--Vaintrob]\label{prop:GRHellbound}
Assume the GRH.
Let $E$ be an elliptic curve without CM defined over a number field $K$, and let $N_E$ be the absolute value of the norm of its conductor.
There is an effective constant $c_K$ depending only on $K$ such that $G_E(\ell)\ne \GL_2(\ell)$ only occurs for primes
\[
\ell \le c_K\log N_E(\log\log N_E)^3.
\]
\end{proposition}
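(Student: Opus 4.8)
The plan is to follow Serre's strategy, made quantitative as by Larson and Vaintrob: classify the possibilities for a proper subgroup $G:=G_E(\ell)$ of $\GL_2(\ell)$ and, in each case, use the GRH-conditional effective Chebotarev estimate (Proposition~\ref{prop:GRHchebotarev}) on a number field of small discriminant to produce an unramified prime $\p$ of small norm whose Frobenius image is incompatible with $G$ lying in that subgroup, unless $\ell$ is small. We may assume $\ell$ is larger than a constant depending only on $K$; then $\det\circ\rho_{E,\ell}$ is surjective (Section~\ref{sec:notation}), so $\SL_2(\ell)\subseteq G$ would force $G=\GL_2(\ell)$, and by Proposition~\ref{prop:subgroups} we may assume $G$ lies in a Borel subgroup, in the normalizer of a split or non-split Cartan subgroup, or has projective image $\alt4$, $\sym4$ or $\alt5$.

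The engine is the same in every case. By N\'eron--Ogg--Shafarevich the field $K(E[\ell])$ is unramified outside $\ell$ and the primes dividing $N_E$; the semistable reduction theorem bounds the ramification at the bad primes and, in the potentially ordinary or potentially multiplicative types that occur when the relevant restriction is reducible, at $\ell$. Hence any subfield $L$ of $K(E[\ell])$ whose degree is bounded in terms of $[K\!:\!\Q]$ satisfies $\log|\disc(L)|\ll_K\log(\ell N_E)$. For such an $L$, Proposition~\ref{prop:GRHchebotarev} produces a prime $\p$ of good reduction with $N(\p)\ll_K\log^2(\ell N_E)$ and prescribed Artin symbol in $\Gal(L/K)$; then the Hasse bound $|a_\p|\le 2\sqrt{N(\p)}$ pins $a_\p$ (or $a_\p^2$) down modulo $\ell$ as an explicit small integer once $\ell$ exceeds a fixed multiple of $N(\p)$, and comparing it with the residue forced by the shape of $G$ yields a numerical impossibility unless $\ell\ll_K N(\p)$. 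Unwinding $\ell\ll_K\log^2(\ell N_E)$ already gives $\ell\ll_K\log^2 N_E$; the work is in sharpening this.

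In the Borel case $E$ has a $G_K$-stable line with isogeny character $\lambda\colon G_K\to\F_\ell^\times$, and $a_\p\equiv\lambda(\Frob_\p)+N(\p)\lambda(\Frob_\p)^{-1}\pmod{\ell}$ for every good $\p$. Reducibility precludes supersingular reduction above $\ell$, so $\lambda$ is unramified away from $\ell N_E$ with bounded ramification there, and on inertia at $\ell$ it is $\chi_{\mathrm{cyc}}^a$ with $a\in\{0,1\}$ up to a tame twist of order $O_K(1)$; hence $\eta:=\lambda\chi_{\mathrm{cyc}}^{-a}$ has order $O_K(1)$ and cuts out a field $L$ of bounded degree over $K$. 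Since $\eta(\Frob_\p)$ then lies in a bounded set of roots of unity in $\F_\ell^\times$, the congruence above pins $a_\p$ modulo $\ell$ into a correspondingly bounded set of residues (explicit in $N(\p)$ and $a$), and a short counting argument produces a good $\p$ of norm $\ll_K\log N_E$ --- large enough to avoid the $O(\log N_E)$ bad primes, small enough that every residue in that set lifts to an integer of absolute value exceeding $2\sqrt{N(\p)}$ --- contradicting Hasse, so $\ell\ll_K\log N_E$. The split Cartan normalizer reduces to this after replacing $K$ by the at most quadratic extension $K'$ with $\Gal(K(E[\ell])/K')=G\cap C_s(\ell)$, over which $\rho_{E_{K'},\ell}$ splits as a sum of two characters; $K'$ and the conductor of $E_{K'}$ are controlled by $\ell N_E$, so this costs only a $K$-dependent factor. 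The exceptional cases are similarly mild: the projective image has order $\le 60$, so the fixed field of $\ker\pi$ inside $K(E[\ell])$ has bounded degree over $K$, and on it $u(\Frob_\p)=\tr(\Frob_\p)^2/\det(\Frob_\p)$ takes at most five values (as after \eqref{eq:projord}); choosing the Artin symbol so that $u(\Frob_\p)$ equals the order-$3$ value forces $a_\p^2\equiv N(\p)\pmod{\ell}$, impossible as an equality of integers bounded by $4N(\p)<\ell$ when $\p$ has odd residue degree.

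The genuine obstacle is the normalizer of a non-split Cartan. Here $E$ necessarily has potentially supersingular reduction at every prime above $\ell$ (an ordinary or multiplicative local type would put an inertia element of order $\ell-1$ inside $C_{ns}^+(\ell)$, impossible), so after the quadratic base change $K'$ one has $\rho_{E_{K'},\ell}$ valued in the cyclic group $\F_{\ell^2}^\times\cong C_{ns}(\ell)$, with $u(\Frob_\p)=\nu(\Frob_\p)+\nu(\Frob_\p)^{-1}+2$ for a character $\nu$ of order dividing $\ell+1$ --- but now \emph{no} bounded-order twist exists, because at the supersingular primes above $\ell$ the character restricts on inertia to a level-$2$ fundamental character of order $\ell^2-1$, so neither $\nu$ nor any fixed power of it cuts out a field of bounded degree. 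Handling this case is where the extra logarithmic factors are spent: one must argue more delicately with the arithmetic of these level-$2$ characters and with the $\sim\ell/2$ values that $u(\Frob_\p)$ (equivalently the quadratic character of $\tr(\Frob_\p)^2-4\det(\Frob_\p)$) can take --- or, alternatively, pass to the induced point on $X_{ns}^+(\ell)$ and use height bounds --- ending up with a chain of $\sim\log\log N_E$ Frobenius conditions, each produced by Proposition~\ref{prop:GRHchebotarev} at a cost that degrades its error term; it is this bookkeeping that converts $\ell\ll_K\log^2(\ell N_E)$ into $\ell\le c_K(\log N_E)(\log\log N_E)^3$. All dependence on the degree, discriminant, class number and unit group of $K$, entering through the ramification estimates and the Chebotarev applications, is absorbed into $c_K$.
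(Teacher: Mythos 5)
The paper does not actually prove this proposition: its ``proof'' is a one-line citation to \cite[Thm.\ 2]{lv14}, so the honest comparison is between your sketch and the Larson--Vaintrob argument itself. Your outline correctly reproduces the overall strategy (classify the proper subgroups via Proposition~\ref{prop:subgroups}, cut out an auxiliary field of discriminant controlled by $\ell N_E$, apply the GRH Chebotarev bound of Proposition~\ref{prop:GRHchebotarev} to find a small prime $\p$, and contradict the Hasse bound), and you correctly identify the normalizer of a non-split Cartan as the case responsible for the $(\log\log N_E)^3$ factor. But that is precisely the case you do not prove. Your final paragraph describes the obstruction (no bounded-order twist of the character $\nu$ exists because inertia above $\ell$ acts through a level-$2$ fundamental character) and then asserts that ``a chain of $\sim\log\log N_E$ Frobenius conditions'' and ``bookkeeping'' convert $\ell\ll_K\log^2(\ell N_E)$ into the claimed bound. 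No such argument is given: you neither specify the auxiliary extensions to which Chebotarev is applied, nor why their discriminants stay under control, nor how the iteration terminates with the stated exponent. As written, the hardest case of the theorem is a promissory note, so the proposal has a genuine gap.

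Two further steps are asserted rather than established. In the Borel case, the claim that $\eta:=\lambda\chi_{\mathrm{cyc}}^{-a}$ has order $O_K(1)$ is the heart of Momose's isogeny-character argument: one must raise $\lambda$ to a power killing the class group and show the resulting character is trivial on global units, which brings in the unit group and class number of $K$ in an essential way; for general $K$ this is not a formality and is exactly where Larson--Vaintrob spend effort. In the non-split case, your stated reason that ordinary or multiplicative reduction above $\ell$ is impossible (``an inertia element of order $\ell-1$ inside $C_{ns}^+(\ell)$, impossible'') is imprecise as phrased, since $C_{ns}^+(\ell)$ has order $2(\ell^2-1)$ and certainly contains elements of order $\ell-1$ (e.g.\ scalars); the correct point is that tame inertia in the ordinary/multiplicative case acts through a non-scalar element with distinct eigenvalues in $\F_\ell^\times$, i.e.\ with $\chi=+1$, whereas every non-scalar element of $C_{ns}^+(\ell)$ has $\chi\le 0$. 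These are repairable, but together with the missing non-split Cartan analysis they mean the proposal does not yet constitute a proof of the proposition.
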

\begin{proof}
See \cite[Thm.\ 2]{lv14}.
\end{proof}

\begin{remark}
Without the GRH the best known bounds on $\ell$ are exponentially worse.
Even in the case $K=\Q$ the best unconditional bound known is quasi-linear in $N_E$ \cite{cojocaru05}.
For elliptic curves over $\Q$ with no primes of multiplicative reduction, an $O(\sqrt{N_E})$ bound is given in \cite{zywina14a}, which also gives much stronger bounds (logarithmic in the discriminant) for elliptic curves with non-integral $j$-invariants.
\end{remark}

%\begin{proposition}
%Let $m$ be an odd integer and let $E/K$ be an elliptic curve with discriminant $\Delta$.
%The discriminant of the $m$th division polynomial $\psi_m\in k[x]$ of $E$ is given by the formula
%\[
%\disc(\psi_m) = (-1)^{(m-1)/2}m^{(m^2-3)/2}\Delta^{(m^4-4m^2+3)/24}.
%\]
%\end{proposition}
%\begin{proof}
%See \cite[Thm.\ 1]{verdure09}; c.f. \cite[Lemma 7]{bh05} and \cite[Lemma 1]{stark81}.
%\end{proof}

\begin{proposition}\label{prop:tordiscbound}
Let $E$ be an elliptic curve defined over a number field $K$, and let $N_E$ be the absolute value of the norm of the conductor of $E$.
Let $m>1$ be an integer, let $L:=K(E[m])$ be the $m$-torsion field of $E$, and let $d_L:=|\disc(L)|$, $d_K:=|\disc(K)|$, and $n_K:=[K:\Q]$, 
Then
\[
\log d_L \le  m^4d_K(4n_K\log_2 m+d_K+1)\log(mN_E).
\]
\end{proposition}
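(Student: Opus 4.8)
The plan is to bound $\log d_L$ by estimating the different $\mathfrak{d}_{L/\Q}$ of $L$ over $\Q$ prime by prime, using
\[
\log d_L=\sum_{\mathfrak P}f(\mathfrak P|p)\,v_{\mathfrak P}(\mathfrak{d}_{L/\Q})\log p,
\]
where $\mathfrak P$ runs over the primes of $L$, $p$ is the rational prime below $\mathfrak P$, and $f(\mathfrak P|p)$ is the residue degree; only the finitely many $\mathfrak P$ that ramify over $\Q$ contribute. Two inputs control this sum: which rational primes ramify in $L/\Q$, and how large the local exponents $v_{\mathfrak P}(\mathfrak{d}_{L/\Q})$ can be.

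For the ramified primes I would argue as follows. By the criterion of N\'eron--Ogg--Shafarevich, every prime of $K$ that ramifies in $K(E[m])/K$ either divides $m$ or is a prime of bad reduction of $E$; and a prime $\p$ of bad reduction has positive conductor exponent, so $N_{K/\Q}(\p)\mid N_E$ and hence the rational prime below $\p$ divides $N_E$. Since the rational primes ramifying in $K/\Q$ are exactly those dividing $d_K$, transitivity of ramification then shows that every rational prime ramifying in $L/\Q$ divides $m\,d_K N_E$, whence $\sum_{p\text{ ram.}}\log p\le\log(m\,d_K N_E)$. For the local exponents, writing $e=e(\mathfrak P|p)$, the classical local bound $v_{\mathfrak P}(\mathfrak{d}_{L/\Q})\le e-1+v_{\mathfrak P}(e)=e-1+e\,v_p(e)$, combined with $v_p(e)\le\log_p e\le\log_2 n_L$ (using $e\le n_L$), gives $v_{\mathfrak P}(\mathfrak{d}_{L/\Q})\le e\,(1+\log_2 n_L)$. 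Summing over the $\mathfrak P$ above each ramified $p$ and invoking $\sum_{\mathfrak P\mid p}e(\mathfrak P|p)f(\mathfrak P|p)=n_L$ then yields
\[
\log d_L\le(1+\log_2 n_L)\,n_L\,\log(m\,d_K N_E).
\]

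It remains to turn this into the asserted inequality. Since $\Gal(L/K)$ is isomorphic to a subgroup of $\GL_2(m)$ and $\#\GL_2(m)<m^4$, we have $[L:K]<m^4$, hence $n_L<m^4 n_K$ and $1+\log_2 n_L<1+\log_2 n_K+4\log_2 m$. Because $mN_E\ge 2$ we have $\log_2(mN_E)\ge 1$, so $\log d_K\le(\log_2 d_K)\log(mN_E)$ and therefore
\[
\log(m\,d_K N_E)=\log(mN_E)+\log d_K\le(1+\log_2 d_K)\log(mN_E)\le d_K\log(mN_E),
\]
using $1+\log_2 d_K\le d_K$ for every integer $d_K\ge 1$. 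Finally $n_K(1+\log_2 n_K)\le d_K+1$: this is checked directly for $n_K\le 2$ (where $|d_K|\ge 3$ for $K\ne\Q$ suffices) and follows from Minkowski's lower bound on $|d_K|$ for $n_K\ge 3$. Assembling the pieces,
\[
\log d_L< m^4 d_K\log(mN_E)\cdot n_K\bigl(1+\log_2 n_K+4\log_2 m\bigr)\le m^4 d_K\bigl(4n_K\log_2 m+d_K+1\bigr)\log(mN_E),
\]
as required. The arithmetic input (which primes ramify, plus the local different bound) is routine; I expect the only point requiring care to be the constant-chasing in this last step, and in particular the verification $n_K(1+\log_2 n_K)\le|d_K|+1$, where Minkowski's discriminant bound is used.
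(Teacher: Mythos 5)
Your proof is correct. The core inputs are the same as in the paper's argument --- N\'eron--Ogg--Shafarevich to confine the ramified primes to divisors of $m\,d_KN_E$, the degree bound $[L\!:\!K]\le\#\GL_2(m)<m^4$, and the wild bound $v(\mathfrak d)\le e-1+v(e)$ on different exponents --- but you organize them differently. The paper works through the tower formula $d_L=d_K^{[L:K]}\,|N_{K/\Q}(d_{L/K})|$ and only estimates the \emph{relative} different of $L/K$; the factor $d_K$ in the final bound then comes directly from $d_K^{[L:K]}$ and the ``$+1$'' from the different estimate. You instead bound the \emph{absolute} different of $L/\Q$ prime by prime and use $\sum_{\mathfrak P\mid p}ef=n_L$, which cleanly yields $\log d_L\le(1+\log_2 n_L)\,n_L\log(m\,d_KN_E)$; the price is that the term $n_K(1+\log_2 n_K)$ must then be absorbed into $d_K+1$, for which you invoke Minkowski's discriminant lower bound --- an ingredient the paper does not need. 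That inequality is indeed true (for $n_K\le 2$ by inspection, and for $n_K\ge 3$ Minkowski gives $d_K\ge 13$ while $n_K(1+\log_2 n_K)$ grows only like $n_K\log n_K$ against the exponential growth of the Minkowski bound), but you assert rather than verify it; since the whole point of the proposition is an explicit constant, you should spell out that check. A side benefit of your version is that the constant-chasing at the end is actually tighter and more honest than the paper's, whose final displayed ``equality'' $m^4d_K+B\log(mN_E)=m^4(4n_K\log_2 m+d_K+1)\log(mN_E)$ is really only an inequality requiring $\log(mN_E)$ to be bounded away from $0$.
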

\begin{proof}
We have
\[
d_L = d_K^{[L:K]}|N_{K/\Q}(d_{L/K})|,
\]
where $d_{L/K}$ denotes the relative discriminant of $L/K$.
The extension $L/K$ has degree at most $\#\GL_2(m)$ which is less than $ m^4$, and is unramified at all primes $\p$ of $K$ that do not divide $m$ and for which $E$ has good reduction; see \cite[Thm.\ 1]{dt02}.
The ramification index $e$ of any prime $\q|\p$ cannot exceed $[L:K]< m^4$, therefore the multiplicity of any prime~$\q$ in the relative different $\mathcal{D}_{L/K}$ cannot exceed
\[
e-1+v_\p(e)e <  e(n_K\log_2 e+1) < m^4(4n_K \log_2 m+1) =: B.
\]
The multiplicity of any prime $\p$ in the relative discriminant $d_{L/K}=N_{L/K}(\mathcal{D}_{L/K})$ is also bounded by $B$, and since every ramified prime divides $mN_E$, we have
\[
|N_{K/\Q}(d_{L/K})| \le mN_E.
\]
Thus
\[
\log d_L \le m^4d_K + B\log (mN_E) = m^4(4n_K\log_2 m + d_K + 1)\log (mN_E).\qedhere
\]
\end{proof}

\begin{remark}
The conductor norm $N_E$ can be replaced by its squarefree part in the proposition above.
\end{remark}

\begin{corollary}\label{cor:GRHsamplebounds}
Assume the GRH.
Let $E$ be an elliptic curve defined over a number field $K$ and let $N_E$ be the absolute value of the norm of its conductor.
Let $\ell$ be a prime and let $L=K(E[\ell])$.
There is an effective constant $c_K'$ depending only on $K$ such that every conjugacy class in $G_E(\ell)$ arises as the image of a Frobenius element of $\Gal(L/K)$ for a prime $\p\ndiv \ell$ of good reduction for~$E$ with absolute norm $N(\p)\le x$, provided that
\[
x \ge c_K'\ell^8(\log\ell \log(\ell N_E))^2.
\]
For $\ell \le c_K \log N_E(\log\log N_E)^3$ as in Proposition~\ref{prop:GRHellbound}, it suffices to have
\[
x \ge c_K'(\log N_E)^{10}(\log\log N_E)^4(\log\log\log N_E)^{24},
\]
Moreover, if a good prime $\p\ndiv \ell$ is chosen uniformly at random from the set $\{\p:N(\p)\in[P,2P]\}$ with $P\ge x\log\log x$ and $x$ as above,
then for any nonempty subset $C$ of $G_E(\ell)$ stable under conjugation the probability that $\Frob_\p$ lies in $C$ is
\[
\bigl(1+o(1)\bigr)\frac{\#C}{\#G}\,,
\]
where the implied constant in $o(1)$ is effective.
\end{corollary}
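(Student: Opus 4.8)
The plan is to apply the effective Chebotarev estimates of Proposition~\ref{prop:GRHchebotarev} to the $\ell$-torsion field $L=K(E[\ell])$, whose Galois group $\Gal(L/K)$ is identified with $G=G_E(\ell)\subseteq\GL_2(\ell)$. The key preliminary is to match up the primes: a prime $\p$ of $K$ that is unramified in $L/K$ automatically satisfies $\p\ndiv\ell$ and has good reduction for $E$. Indeed, if $\p$ is unramified in $L/K$ then it is unramified in $K(\zeta_\ell)/K\subseteq L/K$, and since a prime of $K$ above $\ell$ has ramification index $\ge\ell-1$ in $K(\zeta_\ell)$ but $\le n_K$ in $K$, this forces $\ell\le n_K+1$; thus for $\ell>n_K+1$ every unramified $\p$ has $\p\ndiv\ell$, and then $\rho_{E,\ell}$ unramified at $\p$ forces good reduction by the criterion of N\'eron--Ogg--Shafarevich, the converse being \cite[Thm.\ 1]{dt02} (used already in the proof of Proposition~\ref{prop:tordiscbound}). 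So the primes we sample are, up to negligible exceptions, exactly the primes unramified in $L/K$, and Proposition~\ref{prop:GRHchebotarev} applies with $\log d_L$ bounded by Proposition~\ref{prop:tordiscbound} ($m=\ell$) and $n_L=[L:\Q]=n_K\#G<n_K\ell^4$.

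For the first two assertions I would invoke the third bound of Proposition~\ref{prop:GRHchebotarev}: for every nonempty conjugation-stable $C\subseteq G$, and in particular every conjugacy class, $\pi_C(x)\ge1$ as soon as $x\ge c_2\log^2 d_L$. Proposition~\ref{prop:tordiscbound} gives $\log d_L\le\ell^4d_K(4n_K\log_2\ell+d_K+1)\log(\ell N_E)=O_K\!\bigl(\ell^4\log\ell\,\log(\ell N_E)\bigr)$, hence $c_2\log^2 d_L=O_K\!\bigl(\ell^8(\log\ell\,\log(\ell N_E))^2\bigr)$; choosing $c_K'$ to absorb this $K$-dependent constant gives the first assertion, since every $G$-conjugacy class is then realized as $\Frob_\p$ for a good prime $\p\ndiv\ell$ with $N(\p)\le x$. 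The second assertion follows by substituting the Larson--Vaintrob bound $\ell\le c_K\log N_E(\log\log N_E)^3$ of Proposition~\ref{prop:GRHellbound} and simplifying, using $\log\ell=O(\log\log N_E)$ and $\log(\ell N_E)=O(\log N_E)$; this is routine bookkeeping with iterated logarithms.

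For the probabilistic assertion I would count the primes of norm in $(P,2P]$ by applying the first (quantitative) bound of Proposition~\ref{prop:GRHchebotarev} at $2P$ and at $P$ and subtracting; using $\Li(2P)-\Li(P)\asymp P/\log P$ this yields, for every nonempty conjugation-stable $C$,
\[
\pi_C(2P)-\pi_C(P)=\frac{\#C}{\#G}\bigl(\Li(2P)-\Li(P)\bigr)\bigl(1+O(\rho)\bigr),\qquad \rho:=\frac{(\log d_L+n_L\log P)\log P}{\sqrt{P}}.
\]
The case $C=G$ counts all good primes $\p\ndiv\ell$ in $(P,2P]$, and the requested probability is the ratio of the $C$-estimate to the $G$-estimate, in which the factor $\#C/\#G$ survives while the rest equals $1+O(\rho)$; the assertion therefore reduces to showing $\rho=o(1)$ with an effective rate. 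This is the main obstacle: one must check that the hypothesis $P\ge x\log\log x$, together with $x$ being (by the choice of $c_K'$ and Proposition~\ref{prop:tordiscbound}) at least a $K$-multiple of both $\log^2 d_L$ and $n_L^2$, forces $\sqrt P$ to outgrow $(\log d_L+n_L\log P)\log P$; propagating the $K$-dependence through the discriminant and degree bounds is where essentially all of the work lies. Finally, the finitely many primes $\ell\le n_K+1$ with $\zeta_\ell\in K$, and the primes of bad reduction that might happen to be unramified in $L/K$, together change $\pi_C(y)$ and $\pi_G(y)$ by only $O_K(\log N_E)$; this discrepancy is dominated by the main terms and absorbed into the error and into $c_K'$.
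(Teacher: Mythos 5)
Your route is the same as the paper's: bound $\log d_L$ via Proposition~\ref{prop:tordiscbound} with $m=\ell$, feed this into the last assertion of Proposition~\ref{prop:GRHchebotarev} to get the threshold $x\ge c_2\log^2 d_L=O_K\bigl(\ell^8(\log\ell\log(\ell N_E))^2\bigr)$, substitute the Larson--Vaintrob bound for the second display, and use the quantitative Chebotarev estimate on $[P,2P]$ for the probabilistic claim. The first two assertions are handled correctly, and your side remark reconciling ``unramified in $L/K$'' with ``good reduction and $\p\ndiv\ell$'' is a worthwhile elaboration the paper omits (it affects only $O_K(\log N_E)$ primes and is absorbed into the error in any case).

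The gap is in your last paragraph, and it is not where you locate it. You correctly reduce the claim to $\rho:=(\log d_L+n_L\log P)\log P/\sqrt P=o(1)$ but then defer the verification, asserting that the work lies in ``propagating the $K$-dependence.'' The $K$-dependence is harmless (it disappears into effective constants); the real problem is that the inequality can fail under the stated hypotheses. The hypotheses allow $\log d_L\asymp\sqrt x$ (take $x$ at its threshold with the discriminant bound tight) and $P=x\log\log x$, in which case
\[
\rho\ \asymp\ \frac{\sqrt{x}\,\log P}{\sqrt{x\log\log x}}\ =\ \frac{\log P}{\sqrt{\log\log x}}\ \longrightarrow\ \infty.
\]
So ``$P$ grows strictly faster than $x$'' --- which is also the paper's entire one-line justification for this step --- is not sufficient as literally stated: one needs $\sqrt P$ to dominate $\log d_L\cdot\log P$, i.e.\ $P\gg x(\log x)^2\,\omega(x)$ with $\omega\to\infty$ (for instance $P\ge x(\log x)^3$). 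This quantitative slip is present in the source as well and does not damage the downstream theorems, whose bounds $P=(\log N_E)^{10+o(1)}$ carry $(\log N_E)^{o(1)}$ slack absorbing the extra $(\log\log N_E)^{O(1)}$ factors; but your argument cannot be completed as written without either strengthening the hypothesis on $P$ or reinterpreting the $o(1)$.
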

\begin{proof}
Applying Proposition~\ref{prop:tordiscbound} with $n=\ell$ yields $\log d_L=O(\ell^4\log\ell\log(\ell N_E))$, where the implied constant is effective and depends only on $K$.
We then apply the last part of Proposition~\ref{prop:GRHchebotarev} to get the first lower bound on $x$.
The second bound on $x$ follows immediately, and the last statement follows from the upper and lower bounds on $\pi_C(x)$ given by Proposition \ref{prop:tordiscbound} (we just need $P$ to grow strictly faster than $x$).
\end{proof}

\begin{remark}
Analogous results that do not depend on the GRH are known (see \cite{lo77} and \cite{lmo79}, for example), but the bounds are typically polynomial in the absolute discriminant $d_L$, rather than its logarithm.
\end{remark}

\section{Algorithms and Applications}\label{sec:algorithms}

All the fields $k$ that we shall consider are either number fields $K$ or finite fields $\Fq$ of odd characteristic~$p$; in both cases $k$ is a finite extension of its prime field $k_0$ and can be explicitly represented as $k_0[\alpha]/(F(\alpha))$ for some fixed monic polynomial $F\in \Z[\alpha]$ of degree $[k:k_0]$ whose image in $k_0[\alpha]$ is irreducible.
For the purpose of explicit computation, we assume that elements of $k$ are uniquely represented as integer polynomials of degree less than $[k:k_0]$, with coefficients in the interval $[0,p-1]$ in the case that $k_0$ is the finite field $\Fp$.

For number fields $K=\Q[\alpha]/(F(\alpha))$, we assume that the polynomial $F$ is fixed in advance, 
and that elliptic curves $E/K$ are specified by an integral Weierstrass equation $y^2=f(x)$, where $f\in \Z[\alpha][x]$ is a cubic polynomial whose coefficients in $\Z[\alpha]$ represent elements of $K$ as described above.
For each prime~$\p$ of $K/\Q$ that does not divide $\disc(F)$ we may represent the residue field $\F_\p$ of $\p$ as $\Fp[\alpha]/(G(\alpha))$, where $p=\p\cap \Z$ and $G$ divides the image of $F$ in $\Fp[\alpha]$; such a $G$ can be efficiently obtained by factoring $F$ over $\Fp$ (indeed, this is how the $\p|p$ are typically determined; see \cite[\S 4.8.2]{cohen93}, for example).
If $\p$ is a prime of good reduction for $E$, we may compute $E_\p:= E\bmod \p$ by reducing the $\Z[\alpha]$-coefficients of $f(x)$ modulo $(p,G(\alpha))$ to obtain elements of $\F_\p$.

\begin{remark}
We do not assume $O_K=\Z[\alpha]$ (which is possible only when~$\O_K$ is monogenic), so $\disc(F)$ may be divisible by primes that do not divide $\disc(K)$.
Such primes $\p$ are finite in number and there is no harm in ignoring them for the purpose of computing $G_E(\ell)$.
More generally, as we are only interested in primes $\p$ of bounded norm, there is no loss of generality in assuming that $N(\p)=p$ is prime, so that we have $\deg G = 1$ and $\F_\p\simeq \F_p$; this accounts for all but a negligible proportion of the primes $\p$ with $N(\p)\le B$ for any sufficiently large bound $B$.
Doing so simplifies the practical implementation of our algorithms.
\end{remark}

\subsection{Computing Frobenius triples}\label{sec:frobtriples}

Our strategy is to determine the signature of $G_E(\ell)$ by computing the images of Frobenius elements $\Frob_\p$ under $\rho_{E,\ell}$ for primes $\p$ of good reduction for $E$ that do not divide~$\ell$ or $\disc(F)$ (such primes are unramified in both $K(E[\ell])/K$ and $K/\Q$).
This requires us to compute the determinant, trace, and $1$-eigenspace dimension of $\rho_{E,\ell}(\Frob_\p)$.
If we put $q:=N(\p)$, then for any prime $\ell$ not divisible by $\p$, the Frobenius triple
\begin{equation}\label{eq:frobtriple}
\bigl(\det \rho_{E,\ell}(\Frob_\p),\ \tr \rho_{E,\ell}(\Frob_\p),\ \dim_1(\rho_{E,\ell}(\Frob_\p)\bigr)
\end{equation}
of $E/K$ at $\p$ is given by
\[
\bigl(q\bmod \ell,\ \tr \pi_{E_\p} \bmod \ell,\ \ \log_\ell \#E_\p[\ell](\F_\p)\bigr),
\]
where $\tr \pi_{E_\p}:=q+1-\#E_\p(\F_\p)$ is the trace of the Frobenius endomorphism $\pi_{E_\p}$ of $E_\p$.
We can efficiently compute $\tr \pi_{E_\p}$ using Schoof's algorithm \cite{schoof85,schoof95}, which runs in time $(\log q)^{5+o(1)}$ (see \cite[Cor.~11]{ss15} for a sharp bound when $q$ is prime; up to factors of $\log\log q$, the non-prime case is the same).
To compute $\#E_\p[\ell](\F_\p)$ we rely on Miller's algorithm \cite{miller04} for computing the Weil pairing.
Recall that for an elliptic curve $E$ over any field $k$ any prime $\ell\ne {\rm char}(k)$, the Weil pairing
\[
\omega_\ell\colon E[\ell]\times E[\ell]\to \mu_\ell
\]
is a non-degenerate alternating bilinear pairing.
This implies that for any $P,Q\in E[\ell]\simeq \Z(\ell)\times\Z(\ell)$, the points $P$ and $Q$ generate $E[\ell]$ if and only if $\omega_\ell(P,Q)\ne 1$.
In \cite{miller04}, Miller gives an efficient algorithm to compute $\omega_\ell$; when $k=\Fq$ is a finite field and $P,Q$ lie in $E(\Fq)$ it runs in time $(\log q)^{3+o(1)}$.

We now give a Las Vegas algorithm to compute Frobenius triples for a set $S$ of primes $\ell$ for a given reduction $E_\p$ of $E/K$ at an unramified prime $\p$ of norm $q$.
The algorithm can be applied to any elliptic curve over a finite field, but in order to keep the context clear we denote the curve $E_\p/\F_\p$, since we have in mind a reduction of our fixed elliptic curve $E/K$.

\begin{algorithm}\label{alg:frobtriples}
Given an elliptic curve $E_\p$ over a finite field $\F_\p$ of characteristic $p$ and cardinality $q$, and a finite set $S$ of primes $\ell\ne p$,
compute $T=\{\left(\ell,\ q \bmod\ell,\ \tr \pi_{E}\bmod \ell,\ \log_\ell \#E_\p[\ell](\F_\p)\right):\ell \in S\}$ as follows:
\begin{enumerate}[{\bf 1.}]
\setlength{\itemsep}{4pt}
\item Use Schoof's algorithm to compute $t = q+1-\#E_\p(\F_\p)$ and put $N:=q+1-t$.
\item Initialize $T$ to $\{\}$ and for each prime $\ell\in S$:
\begin{enumerate}[{\bf a.}]
\setlength{\itemsep}{2pt}
\item Put $e:=v_\ell(N)$.
\item If $e=0$ then add $(\ell, q\bmod\ell, t\bmod\ell,0)$ to $T$ and proceed to the next prime $\ell\in S$.
\item If $e=1$ or $q\not\equiv 1\bmod \ell$ then add $(\ell,q\bmod \ell,\ t\bmod\ell,\ 1)$ to $T$ and proceed to the next prime $\ell\in S$.
\item Repeat the following:
\begin{enumerate}[{\bf i.}]
\item Generate random points $P_1,P_2\in E_\p(\F_\p)$ and compute $Q_1:=(N/\ell^e)P_1$ and $Q_2:=(N/\ell^e)P_2$.
\item For $i=1,2$, determine the least $e_i\in [0,e]$ such that $\ell^{e_i}Q_i=0$.
\item If $\max(e_1,e_2)=e$ then add $(\ell,\ q\bmod\ell,\ t\bmod\ell,\ 1)$ to $T$ and proceed to the next prime $\ell\in S$.
\item Use Miller's algorithm to compute $\zeta:=\omega_\ell(\ell^{e_1-1}Q_1,\ell^{e_2-1}Q_2)$.
\item If $\zeta\ne 1$ then add $(\ell,\ q\bmod\ell,\ t\bmod\ell,\ 2)$ to $T$ and proceed to the next prime $\ell \in S$.
\end{enumerate}
\end{enumerate}
\item Output $T$ and terminate.
\end{enumerate}
\end{algorithm}

Steps 2.b and 2.c of the algorithm allow us to quickly treat cases where we can immediately determine the $\ell$-rank $r:=\log_\ell\#E_\p[\ell](\F_\p)$: if $\ell$ does not divide $N=\#E_\p(\F_\p)$ (so $e=0$), then clearly $r=0$; if $\ell$ divides $N$ then $r\ge 1$, and we can have $r>1$ only if $\ell^2$ divides $\#E_\p(\F_\p)$ (so $e>1$) and $q\equiv 1\bmod \ell$.

\begin{proposition}\label{prop:frobtriples}
The expected running time of Algorithm~\ref{alg:frobtriples} is
\[
O\left((\log q)^{5+o(1)} + \#S\cdot(\log q)^{3+o(1)}\right).
\]
\end{proposition}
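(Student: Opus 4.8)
The plan is to split the cost of Algorithm~\ref{alg:frobtriples} into its two steps. Step~1 is a single run of Schoof's algorithm on $E_\p/\F_\p$, which costs $(\log q)^{5+o(1)}$ as recalled in \S\ref{sec:frobtriples}; this accounts for the first term of the bound. Everything else happens in the loop of step~2, so it remains to show that each prime $\ell\in S$ is processed in $(\log q)^{3+o(1)}$ expected time, where we may assume $\ell\le q$, so $\log\ell=O(\log q)$.

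Fix $\ell\in S$. Steps~2.a--2.c only manipulate integers of $O(\log q)$ bits (computing $e=v_\ell(N)$ by removing factors of $\ell$, and reducing $q$ and $t$ modulo $\ell$), so they cost $(\log q)^{1+o(1)}$ time. For a single pass through the ``Repeat'' loop in step~2.d the work is: sampling each random point $P_i\in E_\p(\F_\p)$ in $(\log q)^{1+o(1)}$ expected time (choose $x\in\F_\p$ at random, test whether $f(x)$ is a square --- which succeeds in $O(1)$ trials in expectation --- and take a square root); computing $Q_i=(N/\ell^e)P_i$ together with the successive multiples $Q_i,\ell Q_i,\ell^2Q_i,\dots$ needed to determine $e_i$ and the points $\ell^{e_i-1}Q_i$, which is $O(\log N)=O(\log q)$ group operations in $E_\p(\F_\p)$ (using $e\log\ell\le\log N$), hence $(\log q)^{2+o(1)}$ bit operations; and finally one Weil pairing $\omega_\ell(\ell^{e_1-1}Q_1,\ell^{e_2-1}Q_2)$ computed by Miller's algorithm in $(\log q)^{3+o(1)}$ time, which dominates the pass. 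Thus the entire statement reduces to showing that the expected number of passes through this loop is $O(1)$.

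To establish that, write $r:=\log_\ell\#E_\p[\ell](\F_\p)\in\{0,1,2\}$ for the value being computed and consider the three cases. If $r=0$ then $\ell\nmid N$, $e=0$, and step~2.b terminates without entering the loop. If $r=1$ then either $e=1$ or $q\not\equiv1\bmod\ell$ and step~2.c terminates immediately, or else $q\equiv1\bmod\ell$, $e\ge2$, and the $\ell$-primary part of $E_\p(\F_\p)$ is cyclic of order $\ell^e$; a uniformly random $P_i$ then has $e_i=e$ with probability $1-1/\ell\ge1/2$, so step~2.d.iii halts after $O(1)$ passes in expectation. If $r=2$ then $E_\p[\ell]\subseteq E_\p(\F_\p)$, and the claim is that for uniformly random $P_1,P_2$ the rescaled $\ell$-torsion points $\ell^{e_1-1}Q_1$ and $\ell^{e_2-1}Q_2$ generate $E_\p[\ell]$ --- equivalently, their Weil pairing is $\ne1$ --- with probability bounded below by an absolute positive constant; granting this, step~2.d.v halts after $O(1)$ passes in expectation. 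Combining the three cases gives the desired $O(1)$ bound on the expected number of passes, and hence the total running time $(\log q)^{5+o(1)}+\#S\cdot(\log q)^{3+o(1)}$.

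The part that needs genuine care is the probabilistic assertion in the case $r=2$: one must show, uniformly in $q$, $\ell$, and the isomorphism type $E_\p(\F_\p)\simeq\Z(n_1)\times\Z(n_2)$ (with $n_1\mid n_2$ and $\ell\mid n_1$), that two random points pushed down into $E_\p[\ell]$ span it with probability bounded away from $0$. I would prove this by a case analysis on the $\ell$-adic valuations $v_\ell(n_1)\le v_\ell(n_2)$, computing in each case the distribution of the pair $\bigl(\ell^{e_1-1}Q_1,\ell^{e_2-1}Q_2\bigr)$ in $E_\p[\ell]\simeq\Z(\ell)\times\Z(\ell)$ and the resulting probability that it is a basis. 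Everything else in the argument is a routine tally in the bit-operation model of \S\ref{sec:notation}, using the costs of Schoof's and Miller's algorithms recorded in \S\ref{sec:frobtriples} and the bound $e=v_\ell(N)\le\log_\ell N=O(\log q)$ controlling the number of scalar multiplications in step~2.d.
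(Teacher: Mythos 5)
Your decomposition is the same as the paper's: step~1 is one run of Schoof's algorithm at $(\log q)^{5+o(1)}$; each pass through the loop in step~2.d is dominated by Miller's algorithm at $(\log q)^{3+o(1)}$; and the whole proposition reduces to the expected number of passes being $O(1)$. Your handling of the cases $r=0$ and $r=1$ matches the paper's and is correct: in the cyclic case the $\ell$-Sylow subgroup has order $\ell^e$, a uniform element generates it with probability $1-1/\ell$, and step~2.d.iii fires after $O(1)$ passes.

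The gap is in the case $r=2$, which you rightly flag as the delicate step but then only sketch. The uniform lower bound you assert there is false, and carrying out the case analysis you propose on $a:=v_\ell(n_1)\le b:=v_\ell(n_2)$ would refute it rather than complete the proof. The $\ell$-Sylow subgroup is $H\simeq \Z(\ell^a)\times\Z(\ell^b)$ and each $Q_i=(x_i,y_i)$ is (essentially) uniform in $H$, with $e_i=\max\bigl(a-v_\ell(x_i),\,b-v_\ell(y_i)\bigr)$. If $a<b$, the first coordinate of $\ell^{e_i-1}Q_i$ is nonzero only when $a-v_\ell(x_i)\ge b-v_\ell(y_i)$, i.e.\ when $v_\ell(y_i)\ge v_\ell(x_i)+(b-a)$, an event of probability $O(\ell^{a-b})$; otherwise $\ell^{e_i-1}Q_i$ lands in the fixed cyclic subgroup $\{0\}\times\ell^{b-1}\Z(\ell^b)$ of $H[\ell]$. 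So with probability $1-O(\ell^{a-b})$ both rescaled points lie in the same order-$\ell$ subgroup, the pairing in step~2.d.iv is trivial, and the loop repeats; the expected number of passes is $\Theta(\ell^{b-a})$, which can be a power of $q$ rather than $O(1)$. (Already $H\simeq\Z(3)\times\Z(9)$ exhibits the mechanism: whenever $3\nmid y$ one gets $3Q=(0,3y)$.) Your argument does go through when $a=b$, since then the rescaled points are, up to conditioning on being nonzero, independent uniform vectors in $\Z(\ell)\times\Z(\ell)$ and span with probability $\ell/(\ell+1)$. For what it is worth, the paper's own proof asserts at exactly this point that "with probability at least $1-1/\ell$ the points $Q_1$ and $Q_2$ generate $E_\p[\ell]$" with no further justification — that statement is true of $Q_1,Q_2$ as generators of $H$ (by Nakayama's lemma), but the algorithm does not test that; it tests the $\ell$-Weil pairing of the rescaled points, which is a strictly weaker event. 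So you have isolated the genuinely problematic step, but your plan does not close it; closing it requires either restricting to $a=b$, or changing the test (e.g.\ certifying that $Q_1,Q_2$ generate $H$ via the order of a higher-power Weil pairing of $Q_1$ and $Q_2$ themselves).
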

\begin{proof}
As noted above, the cost of running Schoof's algorithm in step 1 is bounded by $(\log q)^{5+o(1)}$.
Generating uniformly random non-trivial points $P\in E_\p(\F_\p)$ in step 2.d.i can be accomplished by repeatedly choosing uniformly random $x_0\in \F_\p$ and attempting to find a root $y_0$ of $y^2-f(x_0)\in \F_\p[y]$; to obtain a uniform distribution over $E_\p(\F_\p)-\{0\}$ one picks the sign of $y_0$ at random and discards points with $y_0=0$ with probability $1/2$.
The expected time per random point $(x_0,y_0)$ is $(\log q)^{1+o(1)}$, which matches the cost of step 2.d.ii.
The time for step 2.d.iv is $(\log q)^{3+o(1)}$, and this dominates the total cost of step 2.d, which we expect to execute less than twice, on average, for each $\ell\in S$.
If $E_\p(\F_\p)[\ell]$ has order $\ell$, then with probability at least $1-1/\ell^2$ one of $Q_1$ or $Q_2$ will be a generator and the algorithm will then proceed to the next $\ell\in S$ in step 2.d.iii; otherwise we have $E_\p[\ell]\subseteq E_\p(\F_\p)$, and with probability at least $1-1/\ell$ the points $Q_1$ and $Q_2$ generate $E_\p[\ell]$ and the algorithm proceeds to the next $\ell \in S$ in step 2.d.v.
The expected time for step 2.d is thus $(\log q)^{3+o(1)}$ for each prime $\ell$, and the total time for step 2 is $\#S\cdot (\log q)^{3+o(1)}$.
\end{proof}

\begin{remark}
By using the Schoof--Elkies--Atkin (SEA) algorithm in step 2 of Algorithm~\ref{alg:frobtriples}, under the GRH one obtains a tighter bound on its average running time for reductions of a fixed elliptic curve $E/K$ modulo primes~$\p$ of $K$ with norm contained in any dyadic interval $[x,2x]$.
An extension of \cite[Cor.\ 3]{ss15} yields an average expected time of
\[
O\left((\log x)^{4+o(1)} + \#S\cdot (\log x)^{3+o(1)}\right)
\]
per prime.
This also applies if we restrict to degree-1 primes, or to primes in an arithmetic progression with a sufficiently small modulus.
\end{remark}

\subsection{Computing Frobenius conjugacy classes}\label{sec:frobconj}
We now give an asymptotically slower algorithm that instead of computing Frobenius triples for a given set of primes computes a single integer matrix
\[
A_\p :=\begin{pmatrix} (a_\p + b_\p\delta_\p)/2& b_\p\\b_\p(\Delta_\p-\delta_\p)/4 & (a_\p-b_\p\delta_\p)/2\end{pmatrix}\in \mat_2(\Z)
\]
whose reduction modulo $m$ lies in the conjugacy class $\rho_{E,m}(\Frob_\p)$ for all integers $m>1$ prime to $\p$ (including all primes~$\ell$ not divisible by $\p$).
The quantities $a_\p, b_\p, \Delta_\p,\delta_\p$ appearing in $A_\p$ are defined as follows.
Let $R_\p:=\End(E_\p)\cap \Q(\pi_{E_\p})$; if $\pi_{E_\p}\in \Z$ then $R_\p=\Z$ and otherwise $R_\p$ is  the centralizer of $\pi_{E_\p}$ in $\End(E_\p)$ and isomorphic to an order in an imaginary quadratic field.
We then define
\[
\Delta_\p := \disc (R_\p),\qquad \delta_\p :=0,1\text{ as }\Delta_\p\equiv 0,1\bmod 4,\qquad a_\p := \tr \pi_{E_\p},\qquad b_\p := \sqrt{(a_\p^2-4N(\p))/\Delta_\p}.
\]
Note that $b_\p=0$ if $R_\p=\Z$ (and in this case $A_\p$ is a scalar matrix), otherwise $b_\p$ is the index of $\Z[\pi_{E_\p}]$ in~$R_\p$.
In either case, we always have
\[
4N(\p)=a_\p^2-b_\p^2\Delta_\p,
\]
with $\tr A_\p = a_\p$ and $\det A_\p = N(\p)\ne 0$.

\begin{theorem}[Duke--T\'oth]
Let $E$ be an elliptic curve over a number field $K$ and let $\p$ be a prime of good reduction for $E$.
For any integer $m$ for which $\p\ndiv m$ is unramified in $K(E[m])$ the reduction of $A_\p$ modulo~$m$ lies in the conjugacy class of $\rho_{E,m}(\Frob_\p)$ in $\GL_2(m)$.
\end{theorem}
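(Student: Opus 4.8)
The plan is to reduce the claim to a statement about the Frobenius endomorphism of the reduced curve. Write $q:=N(\p)$, let $E_\p/\F_\p$ be the reduction of $E$ at $\p$, and let $\pi:=\pi_{E_\p}\in\End(E_\p)$ be the $q$-power Frobenius endomorphism. First I would recall the standard fact that, because $E$ has good reduction at $\p$ and $\p\nmid m$, the extension $K(E[m])/K$ is unramified at $\p$ and reduction modulo $\p$ induces an isomorphism $E[m]\iso E_\p[m]$ of $\Z/m\Z$-modules under which the action of $\Frob_\p$ on the left corresponds to the action of $\pi$ on the right: the decomposition group at $\p$ acts on $E[m]$ through $\Gal(\F_\p(E_\p[m])/\F_\p)$, whose canonical generator acts on $E_\p[m]\subseteq E_\p(\overline{\F_\p})$ as the $q$-power map, which is precisely $\pi$. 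Hence $\rho_{E,m}(\Frob_\p)$ is, up to conjugacy in $\GL_2(m)$, the matrix of $\pi$ acting on $E_\p[m]$ in an arbitrary $\Z/m\Z$-basis, and it suffices to exhibit one basis in which this matrix is conjugate to $A_\p\bmod m$.

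If $\pi\in\Z$, then $\pi^2=q$, so $q$ is a perfect square, $a_\p=2\pi=\pm2\sqrt q$ and $b_\p=0$; then $A_\p=\frac{a_\p}{2}I=\pm\sqrt q\,I$ and $\pi$ acts on $E_\p[m]$ as the scalar $\pm\sqrt q$, so we are done. Assume henceforth $\pi\notin\Z$, so that $a_\p^2-4q\ne 0$ and $\Q(\pi)$ is an imaginary quadratic field containing the order $\Z[\pi]$. Fix a prime $\ell\mid m$ (so $\ell\ne p$) and consider the Tate module $T_\ell:=T_\ell(E_\p)$, a free $\Z_\ell$-module of rank $2$. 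Since the absolute Galois group of $\F_\p$ acts on $T_\ell$ through $\pi$, Tate's isogeny theorem identifies the commutant of $\Z_\ell[\pi]$ in $\End_{\Z_\ell}(T_\ell)$ with $\End(E_\p)\otimes\Z_\ell$, and in particular shows that the ring of $\Z_\ell[\pi]$-endomorphisms of $T_\ell$ is the order $R_{\p,\ell}:=\{x\in\Q(\pi)\otimes\Q_\ell: xT_\ell\subseteq T_\ell\}$; this is $R_\p\otimes\Z_\ell$ for the order $R_\p$ in the theorem (for which $b_\p=[R_\p:\Z[\pi]]$). As $x^2-a_\p x+q$ has distinct roots, $T_\ell\otimes\Q_\ell$ is free of rank $1$ over the étale algebra $\Q(\pi)\otimes\Q_\ell$, so $T_\ell$ is a fractional $R_{\p,\ell}$-ideal whose multiplier ring is $R_{\p,\ell}$; since quadratic orders are Gorenstein, such an ideal is invertible over $R_{\p,\ell}$, and over the semilocal ring $R_{\p,\ell}$ every invertible module is free. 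Thus $T_\ell$ is free of rank $1$ over $R_\p\otimes\Z_\ell$; reducing modulo $\ell^{v_\ell(m)}$ and taking the product over the primes $\ell\mid m$ shows that $E_\p[m]$ is free of rank $1$ over $R_\p/mR_\p$.

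It remains to do the linear algebra. Write $R_\p=\Z\oplus\Z\omega$ with $\omega=(\delta_\p+\sqrt{\Delta_\p})/2$, so $\omega^2=\delta_\p\omega-(\delta_\p-\Delta_\p)/4$ (using $\delta_\p\in\{0,1\}$, hence $\delta_\p^2=\delta_\p$), and write $\pi=u+b_\p\omega$ after fixing the orientation of $\sqrt{\Delta_\p}$, so that $a_\p=\tr\pi=2u+b_\p\delta_\p$. Choosing a generator $v$ of the free $R_\p/mR_\p$-module $E_\p[m]$ and using $(v,\omega v)$ as a $\Z/m\Z$-basis, the matrix of $\pi$ acting on $E_\p[m]$ is $\smallmat{(a_\p-b_\p\delta_\p)/2}{b_\p(\Delta_\p-\delta_\p)/4}{b_\p}{(a_\p+b_\p\delta_\p)/2}$, and conjugating by $\smallmat{0}{1}{1}{0}$ turns this into exactly $A_\p$. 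Reducing modulo $m$ and combining with the first paragraph finishes the proof.

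The step I expect to be the main obstacle is the determination of the $R_\p/mR_\p$-module structure of $E_\p[m]$ when $\Z[\pi]$ fails to be maximal at some prime $\ell\mid m$, and in particular when $E_\p$ is supersingular and $\End(E_\p)$ is noncommutative: one must verify both that the relevant local order $R_{\p,\ell}$ really is the $\ell$-adic completion of the global order $R_\p$ entering the definition of $A_\p$ (this is where the precise description of $\End(E_\p)\cap\Q(\pi)$, via Deuring's theory and Tate's theorem, is essential) and that the Tate module is genuinely \emph{free} of rank one over it rather than merely torsion-free — the Gorenstein/invertibility point above. Once this module-theoretic input is secured, the remaining bookkeeping (the sign of $b_\p$, the role of $\delta_\p$, and matching the basis $(v,\omega v)$ with a Galois-module basis of $E[m]$) is routine.
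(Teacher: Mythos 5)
Your argument is correct, but it is worth pointing out that the paper does not actually prove this theorem: its ``proof'' is the single citation \cite[Thm.\ 2.1]{dt02}. What you have written is therefore a self-contained reconstruction of the Duke--T\'oth argument itself, and the three ingredients are exactly the right ones: (i) reduction modulo $\p$ identifies $E[m]$ with $E_\p[m]$ equivariantly, converting the problem into one about $\pi_{E_\p}$; (ii) $T_\ell(E_\p)$ is free of rank one over $R_\p\otimes\Z_\ell$ for each $\ell\ne p$, which you deduce correctly from Tate's isogeny theorem (identifying $R_\p\otimes\Z_\ell$ with the multiplier ring of the lattice $T_\ell$ inside the rank-one $\Q(\pi)\otimes\Q_\ell$-module $V_\ell$), the Gorenstein property of quadratic orders, and the freeness of invertible modules over semilocal rings; (iii) the matrix of multiplication by $\pi=u+b_\p\omega$ in the basis $(1,\omega)$ of $R_\p/mR_\p$ is $\smallmat{(a_\p-b_\p\delta_\p)/2}{b_\p(\Delta_\p-\delta_\p)/4}{b_\p}{(a_\p+b_\p\delta_\p)/2}$, whose conjugate by $\smallmat{0}{1}{1}{0}$ is $A_\p$ --- this checks out. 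Two small remarks. First, you silently (and correctly) read $R_\p$ as $\End(E_\p)\cap\Q(\pi_{E_\p})$ rather than as the literal ``subring generated by $\pi_{E_\p}$'' of the paper's prose (which would force $b_\p=1$); that is the reading under which the theorem is true and is what Duke--T\'oth use, and it is the only reading consistent with the paper's later remark that $b_\p=[R_\p:\Z[\pi_{E_\p}]]$. Second, the sign ambiguity in writing $\pi=u+b_\p\omega$ versus $u'-b_\p\omega$ is harmless even without fixing an orientation of $\sqrt{\Delta_\p}$, since the two resulting matrices are conjugate by $\diagmat{1}{-1}$; the point you flag as the main obstacle (freeness of the Tate module over the correct local order, including the supersingular case where $\End(E_\p)$ is a quaternion order but its $\F_\p$-rational part is the commutant of a non-central $\pi$ and hence lies in $\Q(\pi)$) is indeed where the substance lies, and your treatment of it is sound.
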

\begin{proof}
See \cite[Thm.\ 2.1]{dt02}.
\end{proof}

When $E_\p$ is supersingular, the matrix $A_\p$ is determined by $N(\p)$ and $a_\p$.
This follows from the fact that in this case $\End(E_\p)$ is a maximal order in the quaternion algebra $\End(E)\otimes\Q$, by \cite{deuring41}, hence either $R_\p = \Z$ or $R_\p$ is the maximal order of $\Q(\sqrt{-p})$, where $\p|p$.
In the former case $b_\p=0$ and in the latter case $\Delta_\p=\disc(\Q(\sqrt{-p}))$ and we compute $b_\p$ as above.

To treat the ordinary case, we rely on the algorithm in \cite{bisson11}, which gives a GRH-based Las Vegas algorithm to compute the index $u_\p$ of $\End(E_\p)$ in the maximal order of the imaginary quadratic field $\End(E_\p)\otimes\Q$ with an expected running time of
\[
\mathsf{L}(N(\p))^{1+o(1)},
\]
where
\[
\mathsf{L}(x):=\exp\sqrt{\log x\log\log x}.
\]
The first step of this algorithm is to compute $a_\p$ via Schoof's algorithm and factor $a_\p^2-4N(\p)$ in order to determine the discriminant $D:=\disc(\Q((a_\p^2-4N(\p))^{1/2})$.
Once the index $u_\p$ has been determined we may compute $b_\p=(a_\p^2-4N(\p))/(u_\p^2 D)$.
This yields the following theorem.

\begin{theorem}\label{thm:algAp}
Let $E$ be an elliptic curve over a number field $K$ and let $\p$ be a prime of good reduction for~$E$.
Under the GRH there is a Las Vegas algorithm to compute $A_\p$ in $\mathsf{L}(N(\p))^{1+o(1)}$ expected time.
\end{theorem}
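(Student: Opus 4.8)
The plan is to observe that the whole matrix $A_\p$ is determined by the integer $a_\p=\tr\pi_{E_\p}$, the norm $N(\p)$, and the discriminant $\Delta_\p=\disc(R_\p)$: once these are in hand, $\delta_\p\in\{0,1\}$ is read off from $\Delta_\p\bmod 4$, the integer $b_\p$ is recovered as $\sqrt{(a_\p^2-4N(\p))/\Delta_\p}$ (which is $0$ when $R_\p=\Z$), and the four entries of $A_\p$ are assembled with a bounded number of arithmetic operations on integers of bit-size $O(\log N(\p))$. So the first step I would take is to run Schoof's algorithm \cite{schoof85,schoof95} on $E_\p/\F_\p$ to obtain $a_\p=N(\p)+1-\#E_\p(\F_\p)$; this costs $(\log N(\p))^{O(1)}=\mathsf{L}(N(\p))^{o(1)}$ and is negligible against the target bound. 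It remains to compute $\Delta_\p$.

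Next I would split into three mutually exclusive cases, according to $a_\p$ and the characteristic $p$ of $\F_\p$. If $a_\p^2=4N(\p)$ then $\pi_{E_\p}\in\Z$, so $R_\p=\Z$, $b_\p=0$, and $A_\p=(a_\p/2)I$ is scalar; output it and stop. If $p\mid a_\p$ (equivalently, $E_\p$ is supersingular) but $a_\p^2\neq 4N(\p)$, then by Deuring's theorem \cite{deuring41}, as recorded in the discussion preceding the statement, $\End(E_\p)$ is a maximal order in $\End(E)\otimes\Q$, and hence $R_\p$ is the maximal order of $\Q(\sqrt{a_\p^2-4N(\p)})$; I would obtain $\Delta_\p$ by factoring the integer $a_\p^2-4N(\p)$, whose absolute value is at most $4N(\p)$ (and in the supersingular case is merely a small multiple of a power of $p$, so the factorization is essentially immediate). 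In the remaining, ordinary case I would invoke the GRH-based Las Vegas algorithm of Bisson and the author \cite{bisson11} to compute the conductor $u_\p$ of $R_\p=\End(E_\p)$ in the maximal order of $\Q(\pi_{E_\p})$; that algorithm first computes $a_\p$ and factors $a_\p^2-4N(\p)$ to determine the fundamental discriminant $D$, so we obtain $\Delta_\p=u_\p^2D$ directly from its output, and under the GRH its expected running time is $\mathsf{L}(N(\p))^{1+o(1)}$.

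With $a_\p$, $N(\p)$, and $\Delta_\p$ determined, assembling $A_\p$ finishes the algorithm. Correctness in every case follows from the Duke--T\'oth theorem together with Deuring's structure theorem and the correctness of \cite{bisson11}; since every ingredient is a Las Vegas algorithm (or deterministic), the output is always correct and only the running time is random. For the time bound, Schoof's algorithm and the final matrix assembly contribute only $(\log N(\p))^{O(1)}$, factoring an integer of absolute value $O(N(\p))$ contributes $\mathsf{L}(N(\p))^{1+o(1)}$ in expectation, and the call to \cite{bisson11} contributes $\mathsf{L}(N(\p))^{1+o(1)}$; their sum is $\mathsf{L}(N(\p))^{1+o(1)}$, as claimed. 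The only step that requires real work — and the only place responsible for the subexponential rather than polynomial bound — is the ordinary-case conductor computation, which is precisely the content of \cite{bisson11}; the supersingular case is handled essentially for free by Deuring's theorem, the one point there being the classical fact that $\End(E_\p)\cap\Q(\pi_{E_\p})$ is the maximal order. Thus the theorem is in effect a repackaging of \cite{bisson11} and \cite{deuring41} together with the Duke--T\'oth matrix, and the main difficulty has already been dealt with in those references.
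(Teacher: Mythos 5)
Your proposal is correct and follows essentially the same route as the paper: Schoof's algorithm for $a_\p$, Deuring's theorem to dispose of the supersingular (and scalar) cases where $A_\p$ is determined by $a_\p$ and $N(\p)$ alone, and the GRH-based Las Vegas algorithm of \cite{bisson11} to compute the conductor of $\End(E_\p)$ in the ordinary case, which is the sole source of the subexponential bound. The paper presents this as a short discussion culminating in the theorem rather than a formal proof, but the content is identical to yours.
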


\begin{remark}
An exponential-time algorithm for computing $A_\p$ using Hilbert class polynomials $H_D$ whose discriminants $D$ divide $a_\p^2-4N(\p)$ is given in \cite{centeleghe14}; the running time is not explicitly analyzed in \cite{centeleghe14}, but we note that there are several algorithms to compute Hilbert class polynomials whose running times are quasi-linear in $|D|$, which is close to the bit-size of $H_D$ \cite{bbel08}.
The fastest of these relies on the GRH \cite{sut10}, but the algorithm in \cite{enge09} does not,
and as noted in \cite[Rem.\ 1.1]{streng14}, the heuristics used in \cite{enge09} can be removed.
This gives an unconditional deterministic algorithm to compute $A_\p$ in time $N(\p)^{1+o(1)}$, but this is too slow to be useful to us here (and we require the GRH in any case).
\end{remark}

In terms of its complexity in $q=N(\p)$, the subexponential-time algorithm to compute $A_\p$ is much slower than Algorithm~\ref{alg:frobtriples}, which computes the Frobenius triples $(\det A_\p\bmod\ell, \tr A_\p\bmod\ell,\dim_1 (A_\p\bmod \ell))$ for primes $\ell\in S$ in time polynomial in $\log q$.
However, when $S$ is large (say on the order of $(\log N_E)^{1+o(1)}$) and $q$ is relatively small (say $\log q$ is polynomial in $\log N_E$), the running times are essentially the same, and computing~$A_\p$ gives us more information; in particular, it allows us to distinguish the conjugacy classes of $\diagmat{x}{x}$ and $\smallmat{x}{1}{0}{x}$ in $\GL_2(\ell)$ even when $x\ne 1$, which is not possible with just the Frobenius triple; We shall make use of this in \S\ref{sec:montecarlo}

\subsection{A Las Vegas algorithm}\label{sec:lasvegas}

We now give a Las Vegas algorithm to compute $G_E(\ell)$ up to local conjugacy for all primes $\ell$ up to a given bound $L$ by computing images of Frobenius elements $\Frob_\p$ with $N(\p)$ up to a given bound $P$.
Using the GRH-based bounds of Section \ref{sec:GRHbounds} to determine $L$ and $P$ yields an algorithm whose expected running time is polynomial in $\log \Vert f\Vert$, where $y^2=f(x)$ is an integral defining equation for $E/K$ with $f\in \Z[\alpha][x]$ and $\Vert f\Vert$ is the maximum of the absolute values of the norms of the $\Z[\alpha]$-coefficients of~$f$ (which may also be defined in terms of the integer coefficients of $f$ and $\disc(F)$, where $K=\Q[\alpha]/(F(\alpha))$).

\begin{algorithm}\label{alg:lvfull}
Given an elliptic curve $E\colon y^2=f(x)$ over $K=\Q[\alpha]/(F(\alpha))$ with integral coefficients and bounds $L$ and $P$, compute for each prime $\ell\le L$ a group $G_\ell\subseteq\GL_2(\ell)$ that is locally conjugate to a subgroup of $G_E(\ell)$ and contains a representative of $\rho_{E,\ell}(\Frob_\p)$ for all primes $\p$ of $K$ prime to $\ell\disc(F)$ and of good reduction for~$E$ with $N(\p)\le P$ as follows:
\begin{enumerate}[{\bf 1.}]
\setlength{\itemsep}{4pt}
\item Let $S$ be the set of primes $\ell\le L$, and for each $\ell\in S$ initialize the quantities $s_\ell\leftarrow\{\}$, $c_\ell\leftarrow 0$, $z_\ell\leftarrow 0$.
\item Compute the norm $\Delta_E\in \Z$ of the discriminant of $E$ and the discriminant $d_F\in \Z$ of the polynomial $F$.
\item For each rational prime $p\le P$ that does not divide $\Delta_E$ or $d_F$:
\begin{enumerate}[{\bf a.}]
\item Factor $F(\alpha)\bmod p$ into irreducible $G_1(\alpha),\ldots, G_r(\alpha)\in \Fp[\alpha]$.
\item For each $G_i$ with $\deg G_i\le \log P/\log p$:
\begin{enumerate}[{\bf i.}]
\item Use Algorithm~\ref{alg:frobtriples} to compute the Frobenius triples
\[
\tau_{\ell,\p}:=\bigl(\det\rho_{E,\ell}(\Frob_\p),\ \tr\rho_{E,\ell}(\Frob_\p),\ \dim_1\rho_{E,\ell}(\Frob_\p)\bigr)
\]
for the prime $\p$ of $K$ with residue field $\Fp[\alpha]/G_i(\alpha)$ and each prime $\ell \in S - \{p\}$.
\item For each prime $\ell \in S-\{p\}$ update $s_\ell\leftarrow s_\ell\cup\{\tau_{\ell,\p}\}$ and $c_\ell\leftarrow c_\ell+1$.
\item If $\tr\rho_{E,\ell}(\Frob_\p)=0$ then update $z_{\ell}\leftarrow z_{\ell}+1$.
\end{enumerate}
\end{enumerate}
\item For each prime $\ell\in S$, use Algorithm~\ref{alg:grpsig} to construct generators for a subgroup $G_\ell$ of $\GL_2(\ell)$ with $\sig(G_\ell)=s_\ell$ and $|z(G_\ell)-z_\ell/c_\ell|<1/8$ (if Algorithm 3 fails, report that $P$ is too small and terminate).
\item Output the groups $G_\ell$ (specified by generators) and terminate.
\end{enumerate}
\end{algorithm}

Failure in step 4 can conceivably occur if $s_\ell$ and $z_\ell/c_\ell$ do not actually correspond to a subgroup of $\GL_2(\ell)$, in which case the input to Algorithm~\ref{alg:grpsig} is invalid and this may cause it to fail (an event that can be easily detected), even though it is guaranteed operate correctly on all valid inputs.
This could happen if $P$ is too small for every conjugacy class in $G_E(\ell)$ to be realized as the image of $\Frob_\p$ with $N(\p)\le P$.
The bounds in Section \ref{sec:GRHbounds} allow us to choose $P$ so that such a failure would disprove the GRH.

\begin{theorem}\label{thm:lvfull}
Assume the GRH and let $K=\Q[\alpha]/(F(\alpha))$ be a fixed number field.
There is a Las Vegas algorithm that, given an elliptic curve $E/K$ in integral form $y^2=f(x)$ with $f\in \Z[\alpha][x]$ that does not have complex multiplication, determines for every prime $\ell$ a subgroup $G_\ell\subseteq\GL_2(\ell)$ locally conjugate to $G_E(\ell)$.
The algorithm outputs a bound $L$ for which $G_E(\ell) = \GL_2(\ell)$ for all primes $\ell > L$, and a list of generators for $G_\ell$ for each prime $\ell\le L$.
The expected running time of the algorithm is bounded by
\[
(\log\Vert f\Vert)^{11+o(1)}.
\]
\end{theorem}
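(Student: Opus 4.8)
The plan is to invoke Algorithm~\ref{alg:lvfull} with bounds $L$ and $P$ supplied by the GRH results of Section~\ref{sec:GRHbounds}. We have $\log N_E\le\log N_{K/\Q}(\Delta)+O_K(1)$, where $\Delta\in\O_K$ is the discriminant of the model $y^2=f(x)$; since $\Delta$ is a fixed polynomial in the boundedly many coefficients of $f$ and $K$ is fixed, $\log N_{K/\Q}(\Delta)=O_K(\log\Vert f\Vert)$, so $\log N_E=O_K(\log\Vert f\Vert)$, and computing $\Delta_E$ and $d_F$ in Step~2 of Algorithm~\ref{alg:lvfull} takes $(\log\Vert f\Vert)^{O(1)}$ time. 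First I would take
\[
L := c_K\log N_E(\log\log N_E)^3,
\]
so that $G_E(\ell)=\GL_2(\ell)$ for all primes $\ell>L$ by Proposition~\ref{prop:GRHellbound}; then $L=(\log\Vert f\Vert)^{1+o(1)}$. Next I would take $P$ to be the bound $c_K'L^8\bigl(\log L\log(LN_E)\bigr)^2$ of Corollary~\ref{cor:GRHsamplebounds} (which is increasing in the prime, hence serves all $\ell\le L$ at once), enlarged if necessary by a polylogarithmic factor so that, for every prime $\ell\le L$, the Chebotarev error term of Proposition~\ref{prop:GRHchebotarev} for the extension $K(E[\ell])/K$ drops below $1/8$. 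Using $\log\disc(K(E[\ell]))=O_K\bigl(\ell^4\log\ell\log(\ell N_E)\bigr)$ from Proposition~\ref{prop:tordiscbound} and $[K(E[\ell]):\Q]<\ell^4[K:\Q]$, one checks that such a $P$ can be chosen with $P=(\log\Vert f\Vert)^{10+o(1)}$.

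To prove correctness it suffices to show the data stored by Algorithm~\ref{alg:lvfull} determines $G_E(\ell)$ up to local conjugacy for each $\ell\le L$. Since $\sig(g)$ depends only on the $\GL_2(\ell)$-conjugacy class of $g$, the conclusion of Corollary~\ref{cor:GRHsamplebounds} for our $P$ gives $s_\ell=\sig(G_E(\ell))$. Moreover $z_\ell/c_\ell$ is the proportion, among the good primes $\p\ndiv\ell\disc(F)$ with $N(\p)\le P$, of those with $\tr\rho_{E,\ell}(\Frob_\p)=0$, so applying Proposition~\ref{prop:GRHchebotarev} to $K(E[\ell])/K$ with $C$ the conjugation-stable set of trace-zero elements of $G_E(\ell)$---and absorbing into the error the $O_K(\log(\ell\Delta_E d_F))$ ramified or bad primes---yields $|z_\ell/c_\ell-z(G_E(\ell))|<1/8$. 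By Lemma~\ref{lem:zdiff}, a subgroup of $\GL_2(\ell)$ that is neither conjugate to $G_E(\ell)$ nor covered by case~(a) of Lemma~\ref{lem:sig} has trace-zero ratio at distance at least $1/4$ from $z(G_E(\ell))$, so the pair formed by an admissible $O(1)$-element reduction $\overline{s}$ of $s_\ell$ and the rational $z_\ell/c_\ell$ pins down the local conjugacy class of $G_E(\ell)$. Hence Step~4, which calls Algorithm~\ref{alg:grpsig} on this pair, outputs generators for a subgroup $G_\ell\subseteq\GL_2(\ell)$ locally conjugate to $G_E(\ell)$, by the correctness of that algorithm together with Corollary~\ref{cor:sigzlc}. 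Adopting the convention $G_\ell=\GL_2(\ell)$ for $\ell>L$ then gives the asserted output; and since every randomized ingredient involved---Algorithms~\ref{alg:frobtriples}, \ref{alg:grpsig}, and~\ref{alg:excep}, random point sampling on reductions of $E$, the root-finding inside Schoof's algorithm, and the production of generators for $\Z(\ell)^\times$ and $C_{ns}(\ell)$---is of Las Vegas type, so is the whole procedure: under the GRH its output is always correct and only its running time is a random variable.

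For the running time, Step~3 dominates. There are $(\log\Vert f\Vert)^{10+o(1)}$ primes $\p$ of $K$ with $N(\p)\le P$, and for each one we run Algorithm~\ref{alg:frobtriples} with $S$ the set of primes $\ell\le L$---so $\#S=(\log\Vert f\Vert)^{1+o(1)}$---and $q=N(\p)\le P$, so that $\log q\le\log P=O(\log\log\Vert f\Vert)$; by Proposition~\ref{prop:frobtriples} the expected cost of this call is
\[
O\!\left((\log q)^{5+o(1)}+\#S\cdot(\log q)^{3+o(1)}\right)=(\log\Vert f\Vert)^{1+o(1)},
\]
while maintaining the sets $s_\ell$ and the counters $c_\ell,z_\ell$ adds only $(\log\Vert f\Vert)^{o(1)}$ per computed Frobenius triple. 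Multiplying yields expected time $(\log\Vert f\Vert)^{11+o(1)}$ for Step~3; sieving for the primes $p\le P$ and factoring $F\bmod p$ cost $(\log\Vert f\Vert)^{10+o(1)}$, and Step~4 invokes Algorithm~\ref{alg:grpsig} once per prime $\ell\le L$ at expected cost $O(\M(\log\ell)\log\ell)$ each (Proposition~\ref{prop:alggrpsigtime}), for a total of $(\log\Vert f\Vert)^{1+o(1)}$. Hence the expected running time is $(\log\Vert f\Vert)^{11+o(1)}$. The hardest part will be the passage from the abstract GRH estimates to the concrete claim that a $P$ of size only $(\log\Vert f\Vert)^{10+o(1)}$ simultaneously realizes every conjugacy class of each $G_E(\ell)$ as a Frobenius class (so that $s_\ell=\sig(G_E(\ell))$) and pins down $z(G_E(\ell))$ to within $1/8$; this requires assembling Proposition~\ref{prop:GRHellbound}, the effective Chebotarev bound of Proposition~\ref{prop:GRHchebotarev}, and the torsion-field discriminant bound of Proposition~\ref{prop:tordiscbound} with some care, and once it is in place the exponent $11=10+1$ is forced by the product of the count $(\log\Vert f\Vert)^{10+o(1)}$ of primes $\p$ with the per-prime cost $\#S\cdot(\log P)^{3+o(1)}=(\log\Vert f\Vert)^{1+o(1)}$.
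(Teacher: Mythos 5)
Your proposal is correct and follows essentially the same route as the paper: take $L=(\log\Vert f\Vert)^{1+o(1)}$ from Proposition~\ref{prop:GRHellbound} (the paper cites Ogg's formula to bound $N_E$ by the discriminant norm, as you do implicitly), take $P=(\log\Vert f\Vert)^{10+o(1)}$ from Corollary~\ref{cor:GRHsamplebounds} so that $s_\ell=\sig(G_E(\ell))$ and $|z_\ell/c_\ell-z(G_E(\ell))|<1/8$, invoke Lemma~\ref{lem:zdiff} and the correctness of Algorithm~\ref{alg:grpsig}, and multiply the count of primes $\p\le P$ by the per-prime cost of Algorithm~\ref{alg:frobtriples}. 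Your correctness discussion is in fact slightly more explicit than the paper's about why the signature together with the trace-zero ratio pins down the local conjugacy class; the complexity accounting matches the paper's term for term.
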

\begin{proof}
Under the GRH, Proposition~\ref{prop:GRHellbound} guarantees that we have $G_E(\ell)=\GL_2(\ell)$ for all primes $\ell$ larger than $c_K(\log N_E)(\log\log N_E)^3$, where the constant $c_K$ is effective and $N_E$ is the absolute value of the norm of the conductor of $E$.
By Ogg's formula \cite{ogg67}, $N_E$ is bounded by the norm of the discriminant of $E$, which can be expressed as a polynomial of bounded degree in terms of the coefficients of $f$.
It follows that $\log N_E = O(\log\Vert f\Vert)$, where the implied constant is effective and depends only on $K$.
We may thus take $L=(\log\Vert f\Vert)^{1+o(1)}$ as a bound on the primes $\ell$ that we need to consider.

Since $K$ is fixed, we have $\deg F=O(1)$ and $\log q = O(\log p)$, and all the integers and finite field elements that arise in the algorithm have $O(\log p)$ bits.
Using fast arithmetic, we can assume the cost of each arithmetic operation in~$\Z$, $\F_\p$, $\F_p$ is $(\log p)^{1+o(1)}$; see \cite{gg13}, for example.
Using the Cantor-Zassenhaus algorithm~\cite{cz81}, step 3a takes $O((\log p)^{2+o(1)})$ expected time, by \cite[Thm.~14.14]{gg13}, and the time to reduce $E$ to $E_\p$ is $(\log \Vert f\Vert)^{1+o(1)}$.
The time for step 3b is $O((\log p)^{5+o(1)})$; this follows from \cite[Cor.\ 11]{ss15}, which also applies to the constant degree extension $\F_\p/\F_p$.

For the bound $P$, Corollary~\ref{cor:GRHsamplebounds} implies that we can take $P= (\log \Vert f\Vert)^{10+o(1)}$, where the implied constants are again effective.
Note that by Lemma~\ref{lem:zdiff}, we only need to determine $z(G(E))$ to within $\epsilon < 1/8$.
The running time of step 3 of Algorithm~\ref{alg:lvfull} is then bounded by
\[
(\log\Vert f\Vert)^{10+o(1)}\left((\log\Vert f\Vert)^{1+o(1)}+(\log P)^{5+o(1)}\right) = (\log \Vert f\Vert)^{11+o(1)},
\]
which dominates the cost of the other steps, including the time to determine the primes $\ell \le L$ and $p\le P$.
\end{proof}

\subsection{A Monte Carlo algorithm}\label{sec:montecarlo}

We now give a more efficient Monte Carlo algorithm to solve the same problem.
Although it has a negligible impact on the worst-case asymptotic complexity that we can prove under the GRH, for practical purposes it is better to split the problem into two stages: (1) determine the primes~$\ell$ for which $G_E(\ell)\ne \GL_2(\ell)$, and (2) compute $G_E(\ell)$ up to local conjugacy for each of these primes.
If one assumes that Serre's question has an affirmative answer, meaning that the largest $\ell$ for which $G_E(\ell)\ne \GL_2(\ell)$ is bounded by a constant depending only on $K$, then the exceptional primes $\ell$ are bounded by $O(1)$ for any fixed $K$, but we do not want the correctness of the algorithm to depend on this, so we will typically consider many more primes $\ell$ in stage (1) (up to the GRH bound given by Corollary~\ref{prop:GRHellbound}) than in stage (2).
The key difference is that if $G_E(\ell)=\GL_2(\ell)$, we can unequivocally determine this after computing the image of just $O(1)$ random Frobenius elements, whereas computing $G_E(\ell)\subsetneq \GL_2(\ell)$ up to local conjugacy requires us to compute the image of $O(\ell)$ random Frobenius elements in the worst case.

\begin{proposition}\label{prop:maximal}
Let $\ell>7$ be prime.
A subgroup $G$ of $\GL_2(\F_\ell)$ contains $\SL_2(\F_\ell)$ if and only if it contains elements $g_1,g_2,g_3$ with nonzero trace such that
\begin{enumerate}[{\rm(1)}]
\setlength{\itemindent}{16pt}
\item $\chi(g_1)=+1$;
\item $\chi(g_2)=-1$;
\item $u(g_3)\not\in \{1,2,4\}\text{ and } u(g_3)^2-3u(g_3)+1\ne 0$ $($equivalently, $g_3^e\not\in Z(\ell)$ for $e\le 5)$.
\end{enumerate}
where $\chi(g)=\left(\frac{\tr(g)^2-4\det(g)}{\ell}\right)\in\{0,\pm 1\}$ and $u(g)=\tr(g)^2/\det(g)\in \F_\ell$.
\end{proposition}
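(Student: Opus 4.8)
The plan is to read both directions off Dickson's classification of the subgroups of $\GL_2(\Fl)$ (Proposition~\ref{prop:subgroups}), once condition~(3) has been recast as a statement about the projective order $|\pi(g_3)|$. By \eqref{eq:projord}, an element $g$ of order prime to $\ell$ has $u(g)=\zeta_r+\zeta_r^{-1}+2$ with $r=|\pi(g)|$; substituting $r=1,2,3,4$ gives $u=4,0,1,2$, and for $r=5$ one gets $u^2-3u+1=0$ since $\zeta_5+\zeta_5^{-1}$ is a root of $x^2+x-1$, while $r$ is recovered uniquely from $u$. So, among elements of nonzero trace, the two conditions on $u(g_3)$ hold precisely when $|g_3|$ is prime to $\ell$ and $|\pi(g_3)|>5$; and an element with $\ell\mid|g_3|$ has $u=4$ (though it does satisfy the parenthetical reformulation, since $\ell>5$), a case the Borel step below handles anyway.

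For ($\Leftarrow$), suppose $G$ contains $g_1,g_2,g_3$ as stated. If $G$ has an element of order $\ell$, then by Proposition~\ref{prop:subgroups} either $\SL_2(\Fl)\subseteq G$, and we are done, or $G$ lies in a Borel group --- impossible, since $\chi(g_2)=-1$ forces the characteristic polynomial of $g_2$ to be irreducible, so $g_2$ is not conjugate into any Borel. Otherwise $\pi(G)$ is cyclic, dihedral, or isomorphic to $\alt{4}$, $\sym{4}$, or $\alt{5}$. In the cyclic case $G$ lies in a Cartan group, which cannot happen: a split Cartan has only elements with $\chi\in\{0,1\}$ (so no $g_2$) and a non-split one only elements with $\chi\in\{0,-1\}$ (so no $g_1$). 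In the dihedral case $G$ lies in the normalizer $C^+$ of a Cartan $C$, and since $\ell$ is odd every element of $C^+\setminus C$ has trace $0$; thus the nonzero-trace elements $g_1,g_2$ lie in $C$ and the same contradiction recurs. In the exceptional cases $G$ has no element of order $\ell$, so $g_3$ has order prime to $\ell$ and $|\pi(g_3)|>5$, contradicting the fact that $\alt{4}$, $\sym{4}$, $\alt{5}$ have all element orders at most $5$. Hence $\SL_2(\Fl)\subseteq G$ in every case.

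For ($\Rightarrow$), assuming $\SL_2(\Fl)\subseteq G$ I would exhibit the required elements inside $\SL_2(\Fl)$. Take $g_1=\diagmat{a}{a^{-1}}$ with $a\in\Fl^\times$ satisfying $a\ne\pm1$ and $a+a^{-1}\ne0$ (such $a$ exists since $\ell>7$): then $\det g_1=1$, $\tr g_1\ne0$, and $\Delta(g_1)=(a-a^{-1})^2$ is a nonzero square, so $\chi(g_1)=1$. Take both $g_2$ and $g_3$ to be a generator $g$ of the cyclic group $C_{ns}(\ell)\cap\SL_2(\Fl)$ of order $\ell+1$: it is non-scalar, so $\chi(g)=-1$; it has order $\ell+1>4$, so $\tr g\ne0$; and its projective order is $(\ell+1)/2\ge6$ because $\ell>7$, so it satisfies~(3).

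I expect no real difficulty beyond two small points, which together are the reason the hypothesis $\ell>7$ cannot be dropped: the bookkeeping identifying the conditions on $u(g_3)$ with $|\pi(g_3)|>5$, and the fact that $\SL_2(\Fl)$ contains an element of nonzero trace whose projective order exceeds $5$. The latter fails for $\ell\le7$, where $\PSL_2(\Fl)$ has no suitable element, so the biconditional genuinely breaks down there; everything else is routine case-checking against Proposition~\ref{prop:subgroups}.
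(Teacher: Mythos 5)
Your proof is correct and follows essentially the same route as the paper's (much terser) argument: both directions are read off Dickson's classification, with (1) and (2) excluding Borel subgroups and Cartan normalizers, (3) excluding the exceptional projective images via the translation of the $u$-conditions into $|\pi(g_3)|>5$, and the converse handled by exhibiting explicit witnesses in $\SL_2(\F_\ell)$. Your observation that the $u$-based condition and the parenthetical reformulation diverge on elements of order divisible by $\ell$ is a correct and worthwhile point of care, and your explicit choices of $g_1,g_2,g_3$ make the paper's one-line converse concrete.
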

\begin{proof}
The reverse implication appears in \cite[Prop.\ 19]{serre72} and follows from Proposition~\ref{prop:subgroups}; (1) and (2) together imply that no conjugate of $G$ lies in $C_s^+(\ell),C_{ns}^+(\ell)$, or $B(\ell)$, and (3) rules out the exceptional cases.
Conversely, for $\ell>7$ there exist $g_1,g_2,g_3\in \SL_2(\F_\ell)$ satisfying conditions (1), (2), (3), respectively.
\end{proof}

Up to constant factors the following proposition is implied by \cite[Thm.\ 5.1]{kz12} (and its proof), but here we give a slightly more precise statement.

\begin{proposition}\label{prop:prob}
Let $\ell> 7$ be prime and let $G$ be subgroup of $\GL_2(\F_\ell)$ containing $\SL_2(\F_\ell)$.
Let $X_1,X_2,\dots$ be a sequence of independent random variables uniformly distributed over $G$.
Let $X$ be the integer random variable for which the event $X=r$ occurs if $r$ is the least integer for which $\{X_1,\dots,X_r\}$ include $g_1,g_2,g_3$ of nonzero trace that satisfy the three criteria of Proposition~\ref{prop:maximal}.
The expected value $\Exp[X]$ of $X$ satisfies $\Exp[X] < 8$, and $\Exp[X]\to 3$ as $\ell\to\infty$.
\end{proposition}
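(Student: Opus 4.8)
The plan is to recast $X$ as the maximum of three hitting times and then reduce the problem to estimating a short list of subset densities in $G$. For $S\subseteq G$ write $T_S:=\min\{r\ge 1: X_r\in S\}$ for the first index at which a draw lands in $S$, and set
\[
A_+:=\{g\in G:\tr g\ne 0,\ \chi(g)=1\},\qquad A_-:=\{g\in G:\tr g\ne 0,\ \chi(g)=-1\},
\]
\[
A_3:=\{g\in G:\tr g\ne 0,\ u(g)\notin\{1,2,4\},\ u(g)^2-3u(g)+1\ne 0\}.
\]
Each of the three criteria of Proposition~\ref{prop:maximal} is witnessed monotonically by the draws seen so far, and a single draw may witness several of them at once (for instance it may serve as both $g_1$ and $g_3$), so $X=\max(T_{A_+},T_{A_-},T_{A_3})$. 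By Proposition~\ref{prop:maximal} the three sets are nonempty for $\ell>7$, so each hitting time is finite with probability $1$.

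The second step is to evaluate $\Exp[X]$ exactly by inclusion--exclusion. For any $S\subseteq G$ we have $\Pr[T_S>r]=(1-|S|/|G|)^r$, and $\{T_S>r\}\cap\{T_{S'}>r\}=\{T_{S\cup S'}>r\}$; applying inclusion--exclusion to $\Pr[X>r]=\Pr[\{T_{A_+}>r\}\cup\{T_{A_-}>r\}\cup\{T_{A_3}>r\}]$ and summing the resulting geometric series over $r\ge 0$ expresses $\Exp[X]$ through the densities of $A_+,A_-,A_3$ and their unions. Here the key simplification is that criterion~(3) excludes the value $u(g)=4$, which is equivalent to $\chi(g)=0$; hence every element of $A_3$ has $\chi(g)=\pm 1$, i.e.\ $A_3\subseteq A_+\sqcup A_-$, so $A_+\cup A_-\cup A_3=A_+\sqcup A_-$ and two of the seven terms cancel. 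Writing $a:=|A_+|/|G|$, $b:=|A_-|/|G|$, $c_+:=|A_+\cap A_3|/|G|$, $c_-:=|A_-\cap A_3|/|G|$, one is left with the clean identity
\[
\Exp[X]=\frac{1}{a}+\frac{1}{b}+\frac{1}{c_++c_-}-\frac{1}{a+c_-}-\frac{1}{b+c_+}.
\]

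For the third step I would compute these four densities. Since $\chi(g)$ and $u(g)$ depend on $g$ only through $(\det g,\tr g)$, and the number of $g\in G$ with a prescribed determinant $d$ and trace $t$ equals $\ell^2$, $\ell^2+\ell$, or $\ell^2-\ell$ according as $t^2-4d$ is zero, a nonzero square, or a nonsquare (Table~\ref{table:conjclasses}), each density above is a weighted count of trace values. The character-sum identity $\sum_{t\in\F_\ell}\kron{t^2-4d}{\ell}=-1$ for $d\ne 0$ then shows that each determinant coset of $G$ contains $\tfrac{\ell-1}{2}$ or $\tfrac{\ell-3}{2}$ traces with $\chi=+1$, and likewise with $\chi=-1$, so $a,b=\tfrac12+O(1/\ell)$; while criterion~(3) fails only when $u(g)$ lies in $\{1,2,4\}$ or is a root of $u^2-3u+1$ --- a bounded number (at most eleven) of trace values per coset --- so $c_++c_-=1-O(1/\ell)$ and $c_\pm=\tfrac12+O(1/\ell)$. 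Substituting into the displayed identity gives $\Exp[X]=2+2+1-1-1+O(1/\ell)=3+O(1/\ell)$, and since the identity is exact this already yields $\Exp[X]\to 3$ as $\ell\to\infty$.

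The remaining point --- and the main obstacle --- is the uniform bound $\Exp[X]<8$. The $O(1/\ell)$ error terms are genuinely not negligible at the smallest admissible prime: for $\ell=11$, criterion~(3) excludes nine of the eleven trace values, so $c_++c_-$ is as small as $1/6$ and hence $\Exp[X]\ge 1/(c_++c_-)=6$, leaving little room; moreover the naive bound $\Exp[X]\le 1/a+1/b+1/(c_++c_-)$ obtained by discarding the negative terms overshoots $8$ there. The exact five-term identity does stay below $8$ (it evaluates to about $6.7$ for $G=\SL_2(11)$), but certifying this for every $\ell>7$ and every group $G$ with $\SL_2(\ell)\subseteq G\subseteq\GL_2(\ell)$ requires the density estimates to be made fully explicit --- so that the identity provably lies below, say, $5.5$ once $\ell$ exceeds a small threshold --- together with a direct evaluation of the identity for the handful of primes below that threshold. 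For the latter one uses that it suffices to treat $G=\SL_2(\ell)$: any $G$ all of whose determinant cosets consist of squares has exactly the same densities $a,b,c_\pm$ as $\SL_2(\ell)$, while any larger $G$ additionally contains a nonsquare-determinant coset, in which criterion~(3) is satisfied by a strictly larger fraction of elements (because the excluded values $1,4\in\{1,2,4\}$ are squares and so contribute no bad traces there), so that $c_++c_-$ only grows and $\Exp[X]$ only shrinks.
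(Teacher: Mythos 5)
Your reformulation of $X$ as $\max(T_{A_+},T_{A_-},T_{A_3})$ and the resulting exact inclusion--exclusion identity are correct: the observation that excluding $u(g)=4$ forces $\chi(g)\ne 0$, hence $A_3\subseteq A_+\sqcup A_-$, is exactly right, and the five-term formula does follow. This cleanly establishes $\Exp[X]\to 3$, by a genuinely different route from the paper, which instead decomposes the wait sequentially (first wait for ``(1) or ((2) and (3))'', then for whatever remains) and bounds each stage by an explicit density lower bound.

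The uniform bound $\Exp[X]<8$, however, is not actually proved, and you essentially say so yourself: you evaluate the identity only for $G=\SL_2(11)$ and then describe, without carrying out, the explicit estimates and finite verification that would be needed. Two concrete gaps remain. First, the easily available individual bounds ($a\ge 2/5$, $b\ge 1/3$, $c_++c_-\ge 1/6$ at $\ell=11$) do not suffice for your identity: dropping the negative terms gives $11.5$, and bounding each of them trivially by $-1$ (using $a+c_-\le 1$, $b+c_+\le 1$) still only gives $9.5$; one needs genuine upper bounds on $a+c_-$ and $b+c_+$, uniformly in $\ell$ and $G$, which you do not supply. Second, your reduction to $G=\SL_2(\ell)$ rests on the claim that adjoining nonsquare-determinant cosets makes ``$\Exp[X]$ only shrink'' because $c_++c_-$ grows; but the five-term expression has terms of both signs and is not monotone in any single density (increasing $c_-$ decreases $1/(c_++c_-)$ but increases $-1/(a+c_-)$), and enlarging $G$ also changes $a$ and $b$, so this step needs an actual argument --- for instance computing all four densities per determinant square class (they depend only on $\ell$ and that class, via $t\mapsto\lambda t$) and checking the identity on the resulting convex combinations, together with a direct evaluation at the few small primes where the $O(1/\ell)$ terms are not negligible. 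The paper sidesteps both issues by working with waiting times and just two explicit constants.
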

\begin{proof}
We consider the waiting times for each of the conditions (1), (2), (3) in Proposition~\ref{prop:maximal} to be satisfied.
%The latter fact follows from \cite{pomerance01}, in which it is shown that the expected number of random elements needed to generate any finite cyclic group is bounded by
%\[
%2+\sum_{n=2}^\infty(1-1/\zeta(n))=2.70521\ldots< 2.8.
%\]
From Table~\ref{table:conjclasses} we see that $\SL_2(\F_\ell)$ contains $(\ell-1)(\ell^2+\ell)/2$ elements 
$g_1$ for which $\chi(g_1)=+1$, of which at most $\ell^2+\ell$ have trace zero.
The same is true of every coset of $\SL_2(\F_\ell)$ in $G$; applying $\#\SL_2(\F_\ell)=\ell^3-\ell$ yields
\[
{}\ \ \frac{\#\{g\in G:\chi(g)=1, \tr(g)\ne 0\}}{\#G}\ge \frac{\ell-3}{2\ell-2}\longrightarrow\frac{1}{2}\qquad\text{as}\qquad\ell\to\infty,
\]
and we note that the LHS is never less than $2/5$ for $\ell \ge 11$.
A similar argument shows that
\[
\frac{\#\{g\in G:\chi(g)=-1, \tr(g)\ne 0\}}{\#G}\ge \frac{\ell-3}{2\ell+2}\longrightarrow\frac{1}{2}\qquad\text{as}\qquad\ell\to\infty,
\]
and the LHS is at least $1/3$ for $\ell \ge 11$.
The events represented by these ratios are disjoint, so with probability approaching $1$ as $\ell\to\infty$, one of them occurs for $X_1$, and the expected waiting time for both to occur approaches~$3$ as $\ell\to\infty$.

The images of $C_s(\ell)\cap \SL_2(\F_\ell)$ and $C_{ns}(\ell)\cap\SL_2(\F_\ell)$ in $\PSL_2(\F_\ell)$ are cyclic groups of order $(\ell-1)/2$ and $(\ell+1)/2$, respectively, and the same applies to their conjugates.
In each of these groups there are only $10$ elements of order at most $5$, hence these occur with probability approaching $0$ as $\ell \to\infty$.
Switching to a coset of $\SL_2(\F_\ell)$ and considering images in $\PGL_2(\F_\ell)$ can only decrease the probability of getting an element of order at most $5$.
On the other hand, every $g\in G$ with $\chi(g)=\pm 1$ lies in a conjugate of $C_s(\ell)$ or $C_{ns}(\ell)$, and we have already noted that the probability that $X_1$ is such an element approaches $1$ as $\ell\to\infty$.
Thus with probability approaching $1$ as $\ell\to\infty$, condition (3) is satisfied by $X_1$ and this implies $\Exp[X]\to 3$.

A direct calculation shows that for $\ell > 7$ the probability that $X_1$ satisfies both conditions (2) and (3) is never less than $1/6$, and since (1) and (2) are disjoint, the expected waiting time for either (1) or both (2) and (3) to be satisfied is bounded by $30/17 < 2$, and this implies $\Exp[X] < 2+6= 8$.
\end{proof}

For $\ell\le 7$ we rely on the following proposition.

\begin{proposition}\label{prop:probsmall}
Let $G$ be a subgroup of $\GL_2(\ell)$.
For $\ell=2$ the group $G$ contains $\SL_2(2)$ if and only if it contains $g_1,g_2$ with $\tr(g_1)=1$ and $\dim_1(g_2)=1$.
For $\ell>2$ the group $G$ contains $\SL_2(\ell)$ if and only if it contains $g_1,g_2$ with $\chi(g_1)=-1$, $\chi(g_2)=0$ and $\dim_1(g_2)=1$.
\end{proposition}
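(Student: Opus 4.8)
The plan is to settle $\ell=2$ by a direct inspection of $\GL_2(2)\cong\sym{3}$ and to treat all odd primes uniformly via Dickson's classification, Proposition~\ref{prop:subgroups}.

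For $\ell=2$ one works inside $\GL_2(2)\cong\sym{3}$: an element has trace $1$ exactly when it is one of the two elements of order $3$ (those with irreducible characteristic polynomial $x^2+x+1$), and has $\dim_1=1$ exactly when it is one of the three elements of order $2$. So $G$ contains such $g_1,g_2$ if and only if $6\mid\#G$, i.e.\ $G=\GL_2(2)=\SL_2(2)$, and conversely $\SL_2(2)$ contains both kinds of element (say $\smallmat{1}{1}{1}{0}$ and $\smallmat{1}{1}{0}{1}$).

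Now let $\ell$ be odd. For the ``if'' direction, the first step is to unwind the hypotheses on $g_2$: $\chi(g_2)=0$ forces a repeated eigenvalue $\lambda$ with $\lambda^2=\det(g_2)$, and $\dim_1(g_2)=1$ then forces $\lambda=1$ together with $g_2\neq I$, so $g_2$ is $\GL_2(\ell)$-conjugate to $\smallmat{1}{1}{0}{1}$; in particular $\sig(g_2)=(1,2,1)$, and Lemma~\ref{lem:borel121} shows $G$ contains an element of order $\ell$. By Proposition~\ref{prop:subgroups}, either $G$ lies in a Borel subgroup or $\SL_2(\ell)\subseteq G$. The second step is to rule out the Borel case using $g_1$: any element of a conjugate of $B(\ell)$ stabilizes a line and hence has both eigenvalues in $\Fl$, so its discriminant $\Delta=(\lambda_1-\lambda_2)^2$ is a square and $\chi$ takes only the values $0$ and $1$ on all of $G$ --- contradicting $\chi(g_1)=-1$. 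Therefore $\SL_2(\ell)\subseteq G$.

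For the ``only if'' direction with $\ell$ odd I would simply exhibit the required elements inside $\SL_2(\ell)$: take $g_2=\smallmat{1}{1}{0}{1}$, which has $\chi(g_2)=0$ and $\dim_1(g_2)=1$, and for $g_1$ any non-scalar element of $C_{ns}(\ell)\cap\SL_2(\ell)$. Such an element exists because $C_{ns}(\ell)\simeq\Fltwo^\times$ restricts (via $\det{}=$ norm) to a cyclic group $C_{ns}(\ell)\cap\SL_2(\ell)$ of order $\ell+1\ge 4$ whose only scalars are $\pm I$, and by Table~\ref{table:conjclasses} a non-scalar element $\smallmat{x}{\varepsilon y}{y}{x}$ (so $y\neq 0$) has $\Delta=4\varepsilon y^2$, a non-square, whence $\chi(g_1)=-1$. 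The only places needing a little care are this dictionary between $(\chi(g),\dim_1(g))$ and the conjugacy type of $g$ --- in particular the claim that the stated conditions genuinely force $g_2$ to have order $\ell$, not merely a repeated eigenvalue --- and the trivial check that $C_{ns}(\ell)\cap\SL_2(\ell)\neq\{\pm I\}$, which holds for every prime $\ell$; I do not anticipate any real obstacle.
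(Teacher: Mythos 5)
Your proof is correct and follows essentially the same route as the paper: the ``only if'' direction exhibits the same witnesses (a non-scalar norm-one element of $C_{ns}(\ell)$ and a unipotent in $B(\ell)\cap\SL_2(\ell)$), and the ``if'' direction rests on Proposition~\ref{prop:subgroups} just as the paper's does. The only difference is organizational: you invoke the order-$\ell$ dichotomy of Proposition~\ref{prop:subgroups} directly (which silently absorbs the $\ell=3,5$ exceptional-subgroup subtlety that the paper spells out), whereas the paper walks through the Borel, Cartan-normalizer, and exceptional cases one by one.
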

\begin{proof}
The case $\ell=2$ is easily checked, so we assume $\ell>2$.
For the ``if" direction, we note that the criteria for~$g_1$ ensure that $G$ is not contained in a Borel group or in the normalizer of a split Cartan.  For $\ell > 5$ the fact that $g_2$ has projective order divisible by $\ell$ rules out exceptional subgroups, and for $\ell=3,5$ every exceptional subgroup containing an element of order $\ell$ also contains $\SL_2(\ell)$.
For the ``only if" direction, we note that $\SL_2(\ell)\cap C_{ns}(\ell)$ has order $\ell+1$ and thus contains a non-scalar element $g_1$ with $\chi(g_1)=-1$, and $\SL_2(\ell)\cap B(\ell)$ has order divisible by $\ell$ and contains a non-scalar element $g_2$ with $\chi(g_2)=0$ and $\dim_1(g_2)=1$.
\end{proof}

If one defines the integer random variable $X$ as in Proposition~\ref{prop:prob} using the criterion that $\{X_1,\ldots,X_r\}$ contains $g_1,g_2$ as in Proposition~\ref{prop:probsmall}, it is easy to show that $\Exp[X]<\ell+2$.
In particular, $\Exp[X] < 9$ for $\ell \le 7$.

With these results in hand we now give a Monte Carlo algorithm for determining the set of primes $\ell$ for which $G_E(\ell)$ does not contain $\SL_2(\ell)$.
Note that when $G_E(\ell)$ contains $\SL_2(\ell)$ we can determine $G_E(\ell)$ exactly by computing the intersection of $K$ with the cyclotomic field $\Q(\zeta_\ell)$, a computation that does not depend on $E$ and takes negligible time for any fixed number field $K$.

\begin{algorithm}\label{alg:mcxell}
Given an elliptic curve $E\colon y^2=f(x)$ over $K=\Q[\alpha]/(F(\alpha))$ with integral coefficients and bounds $P>L\ge 5$, attempt to determine the set of primes $\ell\le L$ for which $\SL_2(\ell)\not\subseteq G_E(\ell)$ as follows:
\end{algorithm}
\begin{enumerate}[{\bf 1.}]
\setlength{\itemsep}{4pt}
\item Initialize $S\leftarrow \{\ell\le L \text{ prime}\}$ and create a table $T$ with boolean entries $T_{\ell,1,},T_{\ell,2},T_{\ell,3}$ set to $0$ for each $\ell\in S$, then set $T_{\ell,3}\leftarrow 1$ for $\ell\le 7$.
\item Compute the norm $\Delta_E\in \Z$ of the discriminant of $E$ and the discriminant $d_F\in \Z$ of the polynomial $F$.
\item Repeat the following $27\lceil1+\log_3 M\rceil$ times, where $M=\#\{\ell\le L\text{ prime}\}$:
\begin{enumerate}[{\bf a.}]
\item Pick a random prime $p\in [P,2P]$ that does not divide $\Delta_E$ or $d_F$ and a random prime $\p$ of $K$ lying above $p$ and use Algorithm~\ref{alg:frobtriples} to compute Frobenius triples
\[
\tau_{\ell,\p}:=\left(\det \rho_{E,\ell}(\Frob_\p),\ \tr \rho_{E,\ell}(\Frob_\p),\ \dim_1\rho_{E,\ell}(\Frob_\p)\right)
\]
for each prime $\ell\in S$.
\item For each prime $\ell\in S$, set $T_{\ell,i}\leftarrow 1$ if $\tau_{\ell,\p}$ matches the conjugacy class of some $g_i\in \GL_2(\ell)$ satisfying~(i) of Proposition~\ref{prop:maximal} (for $\ell > 7$) or Proposition~\ref{prop:probsmall} (for $\ell\le 7$); if $T_{\ell,1},T_{\ell,2},T_{\ell,3}=1$, remove $\ell$ from~$S$.
\end{enumerate}
\item Output the set $S$ and terminate.
\end{enumerate}

\begin{remark}\label{rem:lvmc}
As written this is not (strictly speaking) a Monte Carlo algorithm, since it uses Algorithm~\ref{alg:frobtriples}, which is a Las Vegas algorithm (meaning that is running time is potentially unbounded, even though its expected running is bounded by Proposition~\ref{prop:frobtriples}).
This distinction has no practical relevance, but for the sake of staying consistent with our terminology, let us assume that Algorithm~\ref{alg:mcxell} automatically terminates Algorithm~\ref{alg:frobtriples} if its actual running time exceeds its expected running time by an unreasonable factor, and terminates with failure in this case.
Doing so decreases the probability of success only negligibly and we can easily keep it above $2/3$.
\end{remark}

\begin{theorem}\label{thm:mcxell}
Assume the GRH and let $K=\Q[\alpha]/(F(\alpha))$ be a fixed number field.
There is a Monte Carlo algorithm with one-sided error that, given a non-CM elliptic curve $E/K$ in integral form $y^2=f(x)$ with $f\in \Z[\alpha][x]$, determines the set $S_E$ of primes $\ell$ for which $G_E(\ell)$ does not contain $\SL_2(\ell)$ with probability greater than $2/3$.
The running time of the algorithm is bounded by
\[
(\log\Vert f\Vert)^{1+o(1)},
\]
and the set $S$ it outputs always contains $S_E$.
\end{theorem}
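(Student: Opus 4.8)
The plan follows the pattern of Theorem~\ref{thm:lvfull}, with the loop over all small primes replaced by a short sequence of random samples. The algorithm runs Algorithm~\ref{alg:mcxell} after choosing $L$ and $P$ from Section~\ref{sec:GRHbounds}. By Ogg's formula \cite{ogg67} the conductor norm $N_E$ is at most the norm of the discriminant of $E$, so $\log N_E=O(\log\Vert f\Vert)$ and Proposition~\ref{prop:GRHellbound} lets us take $L=(\log\Vert f\Vert)^{1+o(1)}$, with $G_E(\ell)=\GL_2(\ell)\supseteq\SL_2(\ell)$ for all primes $\ell>L$; hence $S_E\subseteq\{\ell\le L\}$. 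Corollary~\ref{cor:GRHsamplebounds} then lets us fix $P=(\log\Vert f\Vert)^{O(1)}$, a suitable power of the bound in that corollary, so that for a good prime $\p\ndiv\ell\disc(F)$ chosen uniformly at random with $N(\p)\in[P,2P]$, the conjugacy class $\rho_{E,\ell}(\Frob_\p)$ is $(1+o(1))$-equidistributed over $G_E(\ell)$ simultaneously for all primes $\ell\le L$, with the relative error pushed below any desired absolute constant by enlarging the exponent in $P$. Since $\log P=O(\log\log\Vert f\Vert)=(\log\Vert f\Vert)^{o(1)}$, every $p\in[P,2P]$ and every residue field above it has $\log q=(\log\Vert f\Vert)^{o(1)}$.

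The set $S$ starts as $\{\ell\le L\text{ prime}\}$ and only shrinks, so $S\supseteq S_E$ will follow once we check that an $\ell$ is removed only when $\SL_2(\ell)\subseteq G_E(\ell)$. Indeed, $\ell$ is removed exactly when the Frobenius triples collected so far exhibit elements $g_1,g_2,g_3$ (for $\ell>7$) or $g_1,g_2$ (for $\ell\le 7$) meeting the hypotheses of Proposition~\ref{prop:maximal} or Proposition~\ref{prop:probsmall}. Each such triple $\tau_{\ell,\p}=\sig(\rho_{E,\ell}(\Frob_\p))$ is the signature of a genuine element of $G_E(\ell)$, and the conditions involved (on $\chi$, $u$, $\tr$, $\dim_1$) depend only on $(\det,\tr,\dim_1)$, so these witnesses lie in $G_E(\ell)$; Propositions~\ref{prop:maximal} and~\ref{prop:probsmall} then force $\SL_2(\ell)\subseteq G_E(\ell)$, i.e.\ $\ell\notin S_E$. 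Thus $S_E\subseteq S$ unconditionally, which is the one-sided error.

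For the success probability, fix a prime $\ell\le L$ with $\SL_2(\ell)\subseteq G_E(\ell)$; we must show it is removed with high probability. Propositions~\ref{prop:maximal} and~\ref{prop:probsmall} guarantee that witnesses of the required form exist in $G_E(\ell)$, and Propositions~\ref{prop:prob} and~\ref{prop:probsmall} bound the number $X_\ell$ of independent uniform samples from $G_E(\ell)$ needed to produce them by $\Exp[X_\ell]<9$. Taking $P$ large enough that the equidistribution error is below the fixed threshold tolerated by the waiting-time estimates (this also absorbs the slightly larger constant $\ell+2$ for the finitely many $\ell\le 7$), the samples returned by successive iterations of step~3 behave, for this $\ell$, as such i.i.d.\ draws. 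Split the $27\lceil 1+\log_3 M\rceil$ iterations, where $M=\#\{\ell\le L\text{ prime}\}$, into $b:=\lceil 1+\log_3 M\rceil$ blocks of $27$. Since the entries $T_{\ell,1},T_{\ell,2},T_{\ell,3}$ only pass from $0$ to $1$, conditioning on $\ell$ not having been removed after some blocks only makes it easier for a further block to remove it; hence the conditional probability of failing within a block is at most $\Pr[X_\ell>27]\le\Exp[X_\ell]/28<9/28<1/3$, and $\ell$ survives all $b$ blocks with probability below $3^{-b}\le 3^{-(1+\log_3 M)}=1/(3M)$. Summing over the at most $M$ primes $\ell\le L$ with $\SL_2(\ell)\subseteq G_E(\ell)$, the chance that any survives is below $1/3$; after accounting for the negligible probability that some call to Algorithm~\ref{alg:frobtriples} is aborted (Remark~\ref{rem:lvmc}), the success probability exceeds $2/3$. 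Combined with the previous paragraph, $S=S_E$ with probability $>2/3$ and $S\supseteq S_E$ always.

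Finally, the running time. As $K$ is fixed, $\deg F=O(1)$ and $\log q=O(\log p)=O(\log P)=(\log\Vert f\Vert)^{o(1)}$, so every integer and finite-field element occurring in the algorithm has $(\log\Vert f\Vert)^{o(1)}$ bits. Computing $\Delta_E$ and $d_F$ and sieving the primes up to $L$ cost $(\log\Vert f\Vert)^{1+o(1)}$. The loop runs $27\lceil 1+\log_3 M\rceil=(\log\Vert f\Vert)^{o(1)}$ times; per iteration, drawing a random prime in $[P,2P]$ and factoring $F\bmod p$ cost $(\log\Vert f\Vert)^{o(1)}$, reducing $E$ modulo $\p$ costs $(\log\Vert f\Vert)^{1+o(1)}$ (reading the coefficients of $f$), Algorithm~\ref{alg:frobtriples} on the set of $\#S\le M=(\log\Vert f\Vert)^{1+o(1)}$ primes runs in $(\log q)^{5+o(1)}+M(\log q)^{3+o(1)}=(\log\Vert f\Vert)^{1+o(1)}$ expected time by Proposition~\ref{prop:frobtriples}, and testing the at most $M$ triples against the criteria of Propositions~\ref{prop:maximal} and~\ref{prop:probsmall} costs $(\log\Vert f\Vert)^{1+o(1)}$. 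Adding these up, the total expected running time is $(\log\Vert f\Vert)^{1+o(1)}$. The delicate step is the success analysis: only $O(\log M)$ rounds suffice, rather than the $O(M)$ that a direct application of Markov's inequality over the $M$ primes would require, precisely because the monotonicity of the table $T$ lets the block decomposition turn a per-block failure probability below $1/3$ into an overall failure probability below $3^{-b}$; this has to be done while simultaneously controlling the $(1+o(1))$ departure of Frobenius from uniform and the smaller waiting-time margin at the primes $\ell\le 7$.
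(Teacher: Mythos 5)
Your proposal is correct and follows essentially the same route as the paper's proof: the same choice of $L$ via Proposition~\ref{prop:GRHellbound} and $P$ via Corollary~\ref{cor:GRHsamplebounds}, the same observation that removal of $\ell$ from $S$ requires genuine witnesses in $G_E(\ell)$ satisfying Propositions~\ref{prop:maximal}/\ref{prop:probsmall} (giving the one-sided error), the same waiting-time bound $\Exp[X]<9$ combined with blocks of $27$ iterations and a union bound over the at most $M$ primes, and the same cost accounting dominated by the calls to Algorithm~\ref{alg:frobtriples}. You merely make explicit a few steps the paper leaves implicit (Markov's inequality for $\Pr[X>27]$, the independence/monotonicity justification for multiplying block failure probabilities, and the absorption of the $(1+o(1))$ equidistribution error into the waiting-time margins), which is a welcome but not substantively different elaboration.
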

\begin{proof}
We use Algorithm~\ref{alg:mcxell} with the modification indicated in Remark~\ref{rem:lvmc}.
Under the GRH we may take $L=(\log\Vert f\Vert)^{1+o(1)}$, by Proposition~\ref{prop:GRHellbound}, and we may choose $P$ so that $\log P=O(\log L$).
It is clear from Propositions~\ref{prop:maximal} and~\ref{prop:probsmall} that the set $S$ output by Algorithm~\ref{alg:mcxell} always contains $S_E$.
Each call to Algorithm~\ref{alg:frobtriples} in step 3a then takes $O((\log\Vert f\Vert)^{1+o(1)})$ time, and these calls dominate the total running time.
After 27 iterations in step 3, for each prime $\ell\le L$ not in $S_E$, the probability that $\ell$ remains in $S$ is less than $1/3$ (this follows from Proposition~\ref{prop:prob} and the remark following Proposition~\ref{prop:probsmall}, since we always have $\Exp[X]<9$).
After all $27\lceil 1+\log_3 M\rceil$ iterations, this probability is less than $1/(3M)$, and a union bound shows that the probability that any prime $\ell\le L$ not in $S_E$ (of which there at most $M$) remains in $S$ is less than $1/3$.
\end{proof}

\begin{remark}
To amplify the success probability of Algorithm~\ref{alg:mcxell} we run it repeatedly and take the intersection of all the sets $S$ output by the algorithm as our final result.
\end{remark}

We now give a Monte Carlo algorithm to compute $G_E(\ell)$ up to local conjugacy for a given set of primes~$\ell$.
Rather than attempting to compute the full signature $s$ of each $G_E(\ell)$, we rely on the fact that $s$ can be compactly represented by a subset $\overline{s}$ containing at most 11 triples, as explained in \S\ref{sec:grpsig}.
Since we are sampling elements of $s$ randomly, we have no way of knowing \emph{a priori} whether a given triple necessarily belongs to $\overline{s}$.
Instead, we dynamically construct an approximation to $\overline{s}$ that we update whenever we find a triple that does belong to the minimal signature compatible with our current approximation; for example, whenever we find a triple whose projective order exceeds $m(s)=\max\{|\pi(g)|:g\in G_E(\ell)\}$ or does not divide $\lambda(s)=\lcm\{|\pi(g)|:g\in G_E(\ell)\}$.
When doing so we simultaneously remove any triples that are no longer necessary.
Depending on the order in which we find elements, it may happen that the cardinality of our approximation to $\overline{s}$ temporarily exceeds 11, but its cardinality is always bounded by $O(\log \ell)$ and will eventually be no greater than 11.

\begin{algorithm}\label{alg:mcfull}
Given an elliptic curve $E\colon y^2=f(x)$ over $K=\Q[\alpha]/(F(\alpha))$ with integral coefficients, a bound $P$, and a nonempty set $S$ of primes less than $P$, attempt to compute $G_E(\ell)$ up to local conjugacy for each prime $\ell\in S$ as follows:
\begin{enumerate}[{\bf 1.}]
\setlength{\itemsep}{4pt}
\item Initialize variables $\overline{s}_\ell\leftarrow\{\}$, $c_\ell=0$, $z_\ell\leftarrow 0$ for each $\ell\in S$.
\item Compute the norm $\Delta_E\in \Z$ of the discriminant of $E$ and the discriminant $d_F\in \Z$ of the polynomial $F$.
\item Repeat the following $9\max(S)\lceil 1+\log\#S\rceil$ times:
\begin{enumerate}[{\bf a.}]
\item Pick a random prime $p\in [P,2P]$ that does not divide $\Delta_E$ or $d_F$, a random prime $\p$ of $K$ above~$p$, and compute the matrix $A_\p$ as in Theorem~\ref{thm:algAp}.
\item For each prime $\ell\in S$ dividing $(\tr A_\p)^2-4N(\p)$, determine whether the order of $A_\p\bmod\ell$ is divisible by $\ell$ and if so, add the triple $(1,2,1)$ to $\overline{s}_\ell$.
\end{enumerate}
\item Repeat the following $9\lceil 60 + 2\lceil 1+\log\log(1+\max(S))\rceil \lceil 1+ \log\#S\rceil$ times:
\begin{enumerate}[{\bf a.}]
\item Pick a random prime $p\in [P,2P]$ not in $S$, a random prime $\p$ of $K$ above $p$, and compute the integer matrix $A_\p$ as in Theorem~\ref{thm:algAp}.
\item For each prime $\ell \in S$:
\begin{enumerate}[{\bf i.}]
\item Compute $A=A_\p\bmod \ell \in \GL_2(\ell)$, set $A\leftarrow A^\ell$, update $\overline{s}_\ell$ to reflect the triple $(\det A,\tr A,\dim_1\! A)$.
\item Increment $c_\ell$, and if $\tr A= 0$ then increment $z_\ell$.
\item Set $A\leftarrow A^{|\pi(A)|}$ and update $\overline{s}_\ell$ to reflect the triple $(\det A,\tr A,\dim_1 A)$.
\end{enumerate}
\end{enumerate}
\item If the cardinality of any of the sets $\overline{s}_\ell$ exceeds 11, return to step 3.
\item For each prime $\ell\in S$, use Algorithm~\ref{alg:grpsig} to construct generators for a subgroup $G_\ell$ of $\GL_2(\ell)$ for which $s:=\sig(G_\ell)$ satisfies $\overline{s}=\overline{s}_\ell$ and $|z(G_\ell)-z_\ell/c_\ell|<1/8$ (if this fails for any reason, return to step 2).
\item Output the groups $G_\ell$ (specified by generators) and terminate.
\end{enumerate}
\end{algorithm}

\begin{remark}
The constants in steps 3 and 4 are larger than necessary, and for practical implementation we note that steps 3 and 4 can be combined; we have written the algorithm this way in order to simplify the complexity analysis below.
We also assume that Algorithm~\ref{alg:mcfull} is modified as in Remark~\ref{rem:lvmc} to terminate the Las Vegas algorithm used to compute $A_\p$ if its running time exceeds its expected running time by an unreasonable factor; this ensures that the running time of Algorithm~\ref{alg:mcfull} is bounded.
\end{remark}

\begin{theorem}\label{thm:mcfull}
Assume the GRH.
Let $K=\Q[\alpha]/(F(\alpha))$ be a fixed number field, let $E/K$ be an elliptic curve in integral form $y^2=f(x)$ with $f\in \Z[\alpha][x]$, let~$S$ be a set of primes $\ell\le L$ that contains $S_E$, with $L=(\log N_E)^{1+o(1)}$ as in Proposition~\ref{prop:GRHellbound}, and let $P = (\log N_E)^{10+o(1)}$ be as in Corollary~\ref{cor:GRHsamplebounds}.
Given inputs $E$, $P$, and $S$, Algorithm~\ref{alg:mcfull} correctly determines $G_E(\ell)$ up to local conjugacy for all $\ell\in S_E$ with probability greater than $2/3$, and its running time is bounded by
\[
(\log\Vert f\Vert)^{1+o(1)}.
\]
\end{theorem}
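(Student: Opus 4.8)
The plan is to prove the two assertions --- the running time and the correctness --- separately, with essentially all of the work going into the correctness statement; the running-time bound is a variant of the one for Theorem~\ref{thm:lvfull}. For the running time I would argue exactly as there: Ogg's formula gives $\log N_E = O(\log\Vert f\Vert)$, so Proposition~\ref{prop:GRHellbound} makes $L$, and hence $\#S$ and $\max(S)$, equal to $(\log\Vert f\Vert)^{1+o(1)}$, while $\log P = O(\log\log\Vert f\Vert)$, so $\log N(\p) = O(\log\log\Vert f\Vert)$ and $\mathsf{L}(N(\p))^{1+o(1)} = (\log\Vert f\Vert)^{o(1)}$ for every prime $\p$ that occurs. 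The one point needing care is that steps 3 and 4 together sample $O(\max(S)\log\#S) = (\log\Vert f\Vert)^{1+o(1)}$ primes $\p$: I would fix all of them in advance and reduce the $O(1)$ coefficients of $f$ modulo all of them at once by fast multi-modular reduction, which costs $(\log\Vert f\Vert)^{1+o(1)}$ in total rather than per prime. Given the reductions $E_\p$, each $A_\p$ then costs $(\log\Vert f\Vert)^{o(1)}$ by Theorem~\ref{thm:algAp}; in step 3 only $(\log\Vert f\Vert)^{o(1)}$ primes $\ell$ divide $(\tr A_\p)^2-4N(\p)$, so step 3 runs in $(\log\Vert f\Vert)^{1+o(1)}$; step 4 has only $(\log\Vert f\Vert)^{o(1)}$ iterations, each of cost $\#S\cdot(\log\Vert f\Vert)^{o(1)}$; and the $\#S$ calls to Algorithm~\ref{alg:grpsig} cost $(\log\Vert f\Vert)^{o(1)}$ each by Proposition~\ref{prop:alggrpsigtime}. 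Capping the Las Vegas subroutines for the $A_\p$ and the number of passes through steps 3--6 at constants keeps everything at $(\log\Vert f\Vert)^{1+o(1)}$.

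For correctness I would first reduce to a clean probabilistic statement. If, when step 6 is reached, $\overline{s}_\ell$ is a subset of $\sig(G_E(\ell))$ satisfying all the requirements of Section~\ref{sec:grpsig} and $|z_\ell/c_\ell - z(G_E(\ell))| < 1/8$ for every $\ell\in S_E$, then $\#\overline{s}_\ell\le 11$, step 5 does not restart, Algorithm~\ref{alg:grpsig} runs on valid input, and by its specification (which rests on Proposition~\ref{prop:subgroups}, Lemma~\ref{lem:borel121}, and Corollary~\ref{cor:sigzlc}, together with Lemmas~\ref{lem:sig} and~\ref{lem:isoborel}) it outputs a $G_\ell$ locally conjugate to $G_E(\ell)$; since $\overline{s}_\ell\subseteq\sig(G_E(\ell))$ is maintained throughout, the output is always at least locally conjugate to a subgroup of $G_E(\ell)$, giving the one-sidedness. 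So it suffices to bound the probability that these two conditions hold for all $\ell\in S_E$. Here I would invoke Corollary~\ref{cor:GRHsamplebounds}: for the prescribed $P$ and uniformly over $\ell\le L$, a random prime $\p$ with $N(\p)\in[P,2P]$ has $\Frob_\p$ landing in any conjugation-stable $C\subseteq G_E(\ell)$ with probability $(1+o(1))\#C/\#G_E(\ell)$, the $o(1)$ below any prescribed constant. The reduced signature of $G_E(\ell)$ is pinned down by $O(1)$ invariants, each ``witnessed'' by a conjugation-stable subset: the divisibility of $\#G_E(\ell)$ by $\ell$ by its order-$\ell$ elements (density $\ge 1/\ell$ whether $\SL_2(\ell)\subseteq G_E(\ell)$ or $G_E(\ell)\subseteq B(\ell)$, the latter by Lemma~\ref{lem:borel}); a generator of $\det(G_E(\ell))$, the scalar data $Z(s)$, and --- when $\pi(G_E(\ell))$ is cyclic --- the data $m(s),\lambda(s)$ by subsets of density $\gg 1/\log\log\ell$ via $\phi(n)/n\gg 1/\log\log n$; and the remaining data ($\chi(s)$, a projectively distinct pair of trace-zero elements when $\pi(G_E(\ell))$ is noncyclic, and then also $m(s),\lambda(s)$) by subsets of density bounded below by an absolute constant, using that a prime power dividing the order of a cyclic group divides the order of at least half its elements. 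Since step 3 performs $9\max(S)\lceil1+\log\#S\rceil$ samples and step 4 performs $\Omega(\log\log\max(S)\cdot\log\#S)$, two union bounds (over $\ell\in S$, and for step 4 also over the $O(1)$ invariants), together with a Hoeffding bound for $z_\ell/c_\ell$ --- which genuinely estimates $z(G_E(\ell))$ because $\tr(g^\ell)=\tr(g)$ for all $g\in\GL_2(\ell)$, so the $\ell$th-power step of the algorithm leaves traces untouched --- bring the total failure probability below $1/3$.

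I expect the main obstacle to be the case analysis underlying the density claims in the previous paragraph. One has to run through the roughly a dozen shapes of subgroup of $\GL_2(\ell)$ allowed by Proposition~\ref{prop:subgroups} and verify, for each, that every datum recorded in the reduced signature is produced by $\Frob_\p$ with the asserted frequency --- and in particular that a generator of the scalar subgroup $Z(G_E(\ell))$ is recovered from the composite operation ``raise $A_\p\bmod\ell$ to the $\ell$th power, then to its projective order'' with density $\gg 1/\log\log\ell$, which is the least transparent point, especially in the split Cartan case --- and then one has to check that the two sample sizes hard-coded in Algorithm~\ref{alg:mcfull} are large enough for the union bounds to close with room to spare. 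By comparison, the running-time estimate and the reduction through Algorithm~\ref{alg:grpsig} are routine.
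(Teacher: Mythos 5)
Your proposal follows essentially the same route as the paper's proof: the running-time bound via $\log N_E=O(\log\Vert f\Vert)$, $\log P=O(\log\log\Vert f\Vert)$ and Theorem~\ref{thm:algAp}, and correctness by reducing to the event that each $\overline{s}_\ell$ is a valid reduced signature with $|z_\ell/c_\ell-z(G_E(\ell))|<1/8$, then bounding the failure probability of each required invariant via densities of conjugation-stable witness sets (order-$\ell$ elements with density $\ge 1/\ell$, generators with density $\gg 1/\log\log\ell$, the rest with constant density) combined with Corollary~\ref{cor:GRHsamplebounds} and union bounds over $\ell\in S$. You additionally supply two details the paper elides --- amortizing the reduction of $f$ modulo the sampled primes via a remainder tree, and the identity $\tr(g^\ell)=\tr(g)$ justifying that the trace-zero count survives the $\ell$th-power step --- both of which are correct and genuinely needed for the stated quasi-linear bound.
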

\begin{proof}
As argued in the proof of Theorem~\ref{thm:lvfull}, we have $\log N_E=O(\log \Vert f\Vert)$, and this implies $\log P=O(\log\log \Vert f\Vert)$.
It follows from Theorem~\ref{thm:algAp} that the time to compute $A_\p$ for any prime $\p$ with $N(\p)\in [P,2P]$ is bounded by $(\log \Vert f\Vert)^{o(1)}$.
The number of primes dividing $(\tr A_\p)^2-4N(\p)$ is bounded by $\log P=O(\log\log \Vert f\Vert)$, and it follows that the total time for step 3 is bounded by $O((\log \Vert f\Vert)^{1+o(1)})$, and this also applies to step 4.
The cost of updating $\overline{s}_\ell$ is negligible because the cardinality of $\overline{s}_\ell$ is bounded by a constant factor of $\log\ell \le \log P=(\log\log\Vert f\Vert)$, and computing $A^\ell$ can be accomplished in time $O(\M(\log\ell)\log\ell)$, which is also polynomial in $\log\log\Vert f\Vert$.
The time for the check in step 5 is quasi-linear in $\#S=O((\log\Vert f\Vert)^{1+o(1)})$, the time for step 6 is bounded by $O(\#S(\log P)^{1+o(1)})=O((\log\Vert f\Vert)^{1+o(1)})$, by Proposition~\ref{prop:algexceptime}, and this also bounds the time for step 7.
This addresses the bound on the running time of Algorithm~\ref{alg:mcfull}, it remains only to show that its output is correct with probability greater than 2/3.

Let $\ell\in S$ be a prime greater than $5$ for which $G_E(\ell)$ has order divisible by $\ell$.
The proportion of elements of $G_E(\ell)$ of order divisible by $\ell$ is at least $1/\ell$, since $G_E(\ell)$ does not contain $\SL_2(\ell)$ and must therefore either lie in a Borel group or be an exceptional group whose image in $\PGL_2(\ell)$ has order divisible by $\ell=3,5$ (the claim holds in either case).
After $3\max S$ iterations of step 3 the probability that $(1,2,1)\not\in s_\ell$ is less than $1/10$,
and after $9\max S\lceil 1+\log\#S\rceil$ iterations the probability that $(1,2,1)\not\in s_\ell$ for any $\ell\in S$ for which $G_E(\ell)$ has order divisible by $\ell$ is less than $1/10$.

The fact that step 4.b.iii is executed at least $18\lceil 1+ \log \#S\rceil$ times ensures that the probability that for some $\ell\in S$ the set $\overline{s}_\ell$ does not contain the triple of a generator for the scalar subgroup of $G_E(\ell)$ is very small, say less than $1/1000$.
The same comment applies to the probability that $\overline{s}_\ell$ does not contain a triple whose determinant generates $\det(G_E(\ell))$ for some $\ell\in S$.

For each $\ell\in S$, after $3\cdot 60\cdot \lceil 1+\log \#S\rceil$ iterations of step 4 the probability that we have not encountered representative $A$ in step 4.b.i for the projective image of every element of $G_E(\ell)$ in the case that $G_(\ell)$ is an exceptional subgroup is less than $1/10$, and after the completion of step 4 the probability that this is true for any $\ell\in S$ is less than $1/10$.
Similarly, for each $\ell\in S$, after $6\lceil 1+\log\log(1+\max(S))\rceil$ iterations of step~4 the probability that we have not encountered an $A$ in step 4.b.i that has maximal projective order in the image of $G_E(\ell)$ under the $\ell$-power map is less than $1/10$, and after the completion of step~4 the probability that this is true for any $\ell\in S$ is less than $1/10$.

Additionally, after the completion of step 4 the probability that for some $\ell\in S$ for which $G_E(\ell)$ has dihedral projective image the set $s_\ell$ does not contain the signature of some $h\in G_E(\ell)$ whose projective image is not contained in the subgroup generated by some $g\in G_E(\ell)$ of maximal projective order whose signature lies in $s_\ell$ is negligible, say less than $1/1000$.
Finally, we note that the probability that $|z(G_E(\ell))-z_\ell/c_\ell|\ge 1/8$ for any $\ell\in S$ after the completion of step 3 is also negligible, say less than $1/1000$.

Taking a union bound, it follows that the probability that at the end of step~3 any of the sets $\overline{s}_\ell$ does not satisfy all the criteria listed in \S\ref{sec:grpsig} for a suitable representative subset of $s=\sig(G_E(\ell))$ is less than $0.304<1/3$, and this also bounds the probability that any $\overline{s}_\ell$ has cardinality greater than 11.
Thus we expect to return to step 4 in step 5 just $O(1)$ times, and when we reach step 6 we will compute subgroups $G_\ell$ that are locally conjugate to $G_E(\ell)$ for all $\ell \in S$ with probability greater than 2/3.
\end{proof}

Unlike the Las Vegas algorithm given in \S\ref{sec:lasvegas}, our Monte Carlo algorithm explicitly relies on the use of a compact representation $\overline{s}_\ell$ of the signature of $G_E(\ell)$ that contains only a bounded number of triples (at most~11, as noted in \S\ref{sec:grpsig}), and on the fact that we can compute $A_\p$ in subexponential time; both are crucial to obtaining a quasi-linear running time.

\subsection{Distinguishing locally conjugate subgroups}\label{sec:distinguish}

As written, our algorithms cannot distinguish non-conjugate subgroups $G$ and $G'$ of $\GL_2(\ell)$ that are locally conjugate.
However, as noted in Remark~\ref{rem:onlycase}, up to conjugacy the only case in which this can occur is when $G$ and $G'$ are subgroups of the form $G=\langle H,t\rangle$ and $G'=\langle H',t\rangle$, where $t=\smallmat{1}{1}{0}{1}$ and $H$ and $H'$ are subgroups of the split Cartan group $C_s(\ell)$ that are conjugate via $s=\smallmat{0}{1}{1}{0}$ (so $H'$ is $H$ with the diagonal entries swapped).
As proved in Theorem~\ref{thm:locconjisog}, if $G=G_E(\ell)$ for some elliptic curve $E/K$, then $G'=G_{E'}(\ell)$ for an elliptic curve $E'/K$ isogenous to $E$ that we can obtain by following a uniquely determined path of $\ell$-isogenies with $E$ and $E'$ as endpoints.
In most cases the curves $E$ and $E'$ are distinguished by the degrees of the minimal extensions of $K$ over which they acquire a rational point of order~$\ell$.
In terms of the groups $G:=G_E(\ell)$ and $G':=G_{E'}(\ell)$, these are precisely the indices $d_1(G)$ and $d_1(G')$ of the largest subgroups of~$G$ and $G'$ that stabilize a nonzero vector; these indices necessarily divide $\ell-1$, and in most cases they are distinct.
In this section we give a Monte Carlo algorithm to compute $d_1(G)$ that runs in quasi-cubic time, using the fact that $E$ admits a unique rational isogeny of degree $\ell$.

\begin{remark}
Even when $d_1(G)=d_1(G')$, after twisting $E$ and $E'$ appropriately (as described in \S\ref{sec:twists}), we may obtain a pair of elliptic curves $\tilde E$ and $\tilde E'$ for which $\tilde G := G_{\tilde E}(\ell)$ and $\tilde G':= G_{\tilde E'}(\ell)$ are again locally conjugate, but with $d_1(\tilde G)\ne d_1(\tilde G')$.
We are then able to distinguish $G$ and $G'$ by computing $d_1(\tilde G)$ and~$d_1(\tilde G')$.
This technique allowed us to distinguish every pair of locally conjugate groups that we encountered in our computations (see \S\ref{sec:results}), but we note that there are subgroups $G$ and $G'$ of $\GL_2(\ell)$ to which it cannot be applied (the smallest example with surjective determinants occurs when $\ell=29$).
\end{remark}

%\begin{remark}
%When the determinant map $\det \colon G\to\Z(\ell)^\times$ is surjective (the generic case), we can have $d_1(G)=d_1(G')$ only when both are equal to $\ell-1$.
%\end{remark}

We begin with a general result that was mentioned in the introduction.
Recall that for each elliptic curve $E\colon y^2=x^3+Ax+B$ and integer $m$ there is a square-free polynomial $f_{E,m}(x)$ with coefficients in $\Z[A,B]$ whose roots are the $x$-coordinates $x(P)$ of the nonzero points $P\in E[m]$, called the $m$-\emph{division polynomial} of~$E$.  For even integers $m$ the factor $x^3+Ax+B$ is typically removed from $f_{E,m}(x)$, in which case its roots are the $x$-coordinates of the points $P\in E[m]-E[2]$.
More generally, one can remove the factor $f_{E,m'}(x)$ for each maximal proper divisor $m'$ of $m$.
We refer to the resulting polynomial $g_{E,m}(x)$ as the \emph{primitive} $m$-\emph{division polynomial} of $E$, which we note has the same splitting field as $f_{E,m}(x)$; the roots of $g_{E,m}(x)$ are the $x$-coordinates of the points in $E[m]$ of order $m$.
The polynomials $f_{E,m}$ and $g_{E,m}$ can be efficiently computed using well-known recursive formulas \cite{McKee94}.

\begin{lemma}\label{lem:quadsplit}
Let $E$ be an elliptic curve over a number field $K$, let $m>2$ be an integer, and let $L$ be the splitting field of the $m$-division polynomial $f_{E,m}(x)$ over $K$.
If $G_E(m)$ contains~$-1$ then $K(E[m])$ is a quadratic extension of $L$, and otherwise $K(E[m])=L$.
\end{lemma}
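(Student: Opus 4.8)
The plan is to compare the two Galois extensions $L/K$ and $K(E[m])/K$ — both are Galois, $L$ being a splitting field and $K(E[m])$ the fixed field of the normal subgroup $\ker\rho_{E,m}$ of $\Gal(\Kbar/K)$ — through the representation $\rho_{E,m}$. First I would recall that the roots of $f_{E,m}$ are precisely the $x$-coordinates $x(P)$ of the nonzero points $P\in E[m]$, and that on a Weierstrass model $x(P)=x(Q)$ for $P,Q\in E[m]$ if and only if $Q=\pm P$; in particular every such $x(P)$ lies in $K(E[m])$, so $L\subseteq K(E[m])$. Since $\Gal(\Kbar/K)$ acts on coordinates we have $\sigma(x(P))=x(\sigma P)$, so a $\sigma\in\Gal(\Kbar/K)$ fixes $L$ pointwise if and only if $\rho_{E,m}(\sigma)v\in\{v,-v\}$ for every $v\in(\Z/m\Z)^2$.

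The key step is then the elementary observation that for $m>2$ a matrix $g\in\GL_2(\Z/m\Z)$ with $gv\in\{v,-v\}$ for all $v$ must equal $\pm I$. Taking $v=e_1=(1,0)$ and $v=e_2=(0,1)$ gives $ge_i=\epsilon_i e_i$ with $\epsilon_i\in\{\pm1\}$, and evaluating at $v=e_1+e_2$ forces $\epsilon_1=\epsilon_2$: if the two signs differed we would get $2e_1=0$ or $2e_2=0$, contradicting $m>2$. Hence $\Gal(\Kbar/L)$ is exactly $\rho_{E,m}^{-1}\bigl(G_E(m)\cap\{\pm I\}\bigr)$, while $\Gal(\Kbar/K(E[m]))=\ker\rho_{E,m}$.

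To finish, since $K\subseteq L\subseteq K(E[m])$ with $K(E[m])/K$ Galois, the extension $K(E[m])/L$ is Galois, and because $\rho_{E,m}$ surjects onto $G_E(m)$ it identifies $\Gal(K(E[m])/L)$ with $G_E(m)\cap\{\pm I\}$. This group has order $2$ precisely when $-I\in G_E(m)$ and is trivial otherwise, which gives the stated dichotomy.

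I expect no serious obstacle; the only point that needs a little care is the composite-$m$ case of the matrix lemma, since a priori $g$ could act as $+1$ on some $m$-torsion points and as $-1$ on others in a CRT-mixed fashion, but the computation with $e_1+e_2$ rules this out exactly when $m>2$. I would also note in passing (as the surrounding text already does) that replacing $f_{E,m}$ by the primitive division polynomial $g_{E,m}$, or deleting the factor $x^3+Ax+B$ when $m$ is even, does not change $L$, so the statement is insensitive to these normalizations.
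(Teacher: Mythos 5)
Your proposal is correct and follows essentially the same route as the paper: both identify $\Gal(K(E[m])/L)$ with the subgroup of $G_E(m)$ acting by $\pm 1$ on every torsion point, and both use the evaluation at a basis $P$, $Q$ together with $P+Q$ (your $e_1$, $e_2$, $e_1+e_2$) to force the two signs to agree when $m>2$. Your write-up just makes the sign-matching step and the role of the hypothesis $m>2$ slightly more explicit than the paper does.
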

\begin{proof}
We first note that $\rho_{E,m}$ induces an isomorphism $\Gal(K(E[m])/K)\overset{\sim}{\longrightarrow} G_E(m)$ by restricting each $\sigma\in \Gal(\Kbar/K)$ to $K(E[m])\subseteq \Kbar$.
Let $\{P,Q\}$ be a basis for $E[m]$ as a $\Z/m\Z$-module and consider the subgroup $H\subseteq G_E(m)$ corresponding to the inclusion of Galois groups
\[
\Gal(K(E[m])/L)\subseteq \Gal(K(E[m])/K).
\]
For each $\sigma \in H$ we have $\sigma(P)\in E[m]$ and $x(\sigma(P))=x(P)$, and similarly for $Q$ and $P+Q$.
This implies $\sigma(P)=\pm P$, $\sigma(Q)=\pm Q$,  and $\sigma(P)+\sigma(Q)=\sigma(P+Q)=\pm (P+Q)$, and therefore $\rho_{E,m}(\sigma)=\pm 1$; so $H\subseteq \{\pm 1\}$.
If $-1\in G_E(m)$ then $H=\{\pm 1\}$, since $\rho_{E,m}^{-1}(-1)$ fixes $L$, and otherwise $H$ is trivial.
\end{proof}

\begin{corollary}\label{cor:X1degneg1}
Let $E$ be an elliptic curve over a number field $K$, let $m>2$ be an integer, let $g_{E,m}(x)$ be the primitive $m$-division polynomial of $E$, and let $d$ be the minimal degree of a factor of $g_{E,m}(x)$ in $K[x]$.
If $G_E(m)$ contains $-1$ then $d_1(G_E(m))=2d$.
\end{corollary}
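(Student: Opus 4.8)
The plan is to rephrase both $d$ and $d_1(G_E(m))$ as minimal orbit sizes for the action of $G:=G_E(m)$ on $(\Z/m\Z)^2$ and then relate the two using the hypothesis $-1\in G$. First I would fix a basis of $E[m]$, so that $\rho_{E,m}$ identifies $\Gal(\Kbar/K)$-restricted-to-$K(E[m])$ with $G$ (as in the proof of Lemma~\ref{lem:quadsplit}) and identifies $E[m]$ with $(\Z/m\Z)^2$. Recall from the discussion of division polynomials just before Lemma~\ref{lem:quadsplit} that $g_{E,m}$ is a squarefree polynomial in $K[x]$ whose roots in $\Kbar$ are exactly the $x$-coordinates of the points of $E[m]$ of exact order $m$. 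Since $x(P)=x(Q)$ if and only if $Q=\pm P$, and since $P\ne -P$ whenever $P$ has order $m>2$, the map $P\mapsto x(P)$ is two-to-one on the order-$m$ points, so the roots of $g_{E,m}$ are in bijection with the set $\Pi$ of unordered pairs $\{v,-v\}$ of vectors $v\in(\Z/m\Z)^2$ of exact order $m$. For $\sigma\in\Gal(\Kbar/K)$ we have $\sigma(x(P))=x(\sigma P)=x(\rho_{E,m}(\sigma)P)$, so under this bijection the action of $\Gal(\Kbar/K)$ on the roots of $g_{E,m}$ is exactly the action of $G$ on $\Pi$ (by left multiplication on column vectors). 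Consequently the irreducible factors of $g_{E,m}$ over $K$ correspond to the $G$-orbits on $\Pi$, with the degree of a factor equal to the size of the corresponding orbit; hence $d$ is the smallest size of a $G$-orbit on $\Pi$.

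Next I would unwind $d_1(G)$: by definition it is the smallest index $[G:G_v]$ of a stabilizer $G_v:=\{g\in G: gv=v\}$ as $v$ runs over the vectors of exact order $m$, i.e.\ $d_1(G)=\min_v |Gv|$. The key step --- and the only place the hypothesis enters --- is to compare the $G$-orbit of an order-$m$ vector $v$ with the $G$-orbit of the pair $\{v,-v\}\in\Pi$. Because $-1\in G$, the set $Gv$ is stable under $w\mapsto -w$, so it is a disjoint union of exactly $|Gv|/2$ pairs of the form $\{w,-w\}$, all of which lie in a single $G$-orbit on $\Pi$; equivalently, the stabilizer of $\{v,-v\}$ contains $G_v$ with index exactly $2$, since $-1$ fixes $\{v,-v\}$ but not $v$. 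In either formulation the $G$-orbit of $\{v,-v\}$ in $\Pi$ has size $|Gv|/2$. Taking the minimum over all order-$m$ vectors $v$ yields $d=\tfrac{1}{2}\min_v|Gv|=\tfrac{1}{2}\,d_1(G)$, that is, $d_1(G_E(m))=2d$.

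I expect no serious obstacle; the argument is essentially bookkeeping about the two-to-one map $P\mapsto x(P)$. The only points that need a little care are the standard facts recalled above --- that $g_{E,m}$ is separable with precisely the stated roots, and that the only coincidences among $x$-coordinates of torsion points are $x(P)=x(-P)$ --- together with the observation that the hypothesis $m>2$ is exactly what guarantees the fibers of $P\mapsto x(P)$ over the order-$m$ points have size $2$ rather than $1$. The hypothesis $-1\in G_E(m)$ is used only in the comparison step, where it forces each $x$-coordinate orbit to double-cover a vector orbit and thereby produces the factor $2$ in the conclusion.
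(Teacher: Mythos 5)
Your proof is correct and is essentially the paper's argument translated through the Galois correspondence: your orbit of $\{v,-v\}$ in $\Pi$ corresponds to $[K(x(P))\!:\!K]$, your computation that the stabilizer of $\{v,-v\}$ contains $G_v$ with index exactly $2$ is the paper's statement that $[K(P)\!:\!K(x(P))]=2$, and the hypothesis $-1\in G_E(m)$ enters at exactly the same point (the element $-1$ fixes $x(P)$ but sends $y(P)$ to $-y(P)\ne y(P)$ since $m>2$). The only difference is that your orbit-counting makes the minimization over all order-$m$ points explicit, whereas the paper fixes a $P$ realizing the minimal-degree factor and leaves the inequality for the remaining points implicit.
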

\begin{proof}
We assume $E\colon y^2=x^3+Ax+B$ is in short Weierstrass form.
Let $P\in E[m]$ be a point of order~$m$ whose $x$-coordinate $x(P)$ is a root of a minimal degree factor of $g_{E,m}(x)$.
Then $[K(x(P))\!:\!K]= d$, and $[K(P)\!:\!K(x(P))]\le 2$ since $y(P)^2\in K(x(P))$.
If $-1\in G_E(m)$ then $[K(P)\!:\!K(x(P))]=2$, since $\sigma\rho_{E,m}^{-1}$ fixes $K(x(P))$ but acts non-trivially on $K(P)$ (indeed, $\sigma(y(P))=y(-P)=-y(P)\ne y(P)$ for $m>2$).
\end{proof}

\begin{example} The converse of Corollary~\ref{cor:X1degneg1} is false; the curve \href{http://www.lmfdb.org/EllipticCurve/Q/14a3}{\texttt{14a3}} gives a counterexample with $m=3$. \end{example}

Locally conjugate subgroups of $\GL_2(\ell)$ necessarily have the same scalar subgroups, so having determined $G_E(\ell)$ up to local conjugacy, we know whether or not it contains $-1$.
As noted above, we are specifically interested in the case where $G_E(\ell)$ is a Borel subgroup (so $E$ admits a rational isogeny of degree $\ell$).

In what follows, the \emph{degree} of a point $P\in E[m]$ is the degree of the extension $K(P)/K$ obtained by adjoining the coordinates of $P$ to $K$; equivalently, it is the degree of the minimal extension $L/K$ for which $P\in E[m](L)$.
In terms of $G_E(m)\subseteq\Aut(E[m])$, the degree of $P$ is the index of its stabilizer in $G_E(m)$.
The quantity $d_1(G_E(m))$ is simply the minimal degree of a point of order $m$.

\begin{lemma}\label{lem:isogd1}
Let $E$ be an elliptic curve over a number field $K$ that admits a unique rational isogeny $\varphi$ of prime degree $\ell$.
The points in $E[\ell]$ of degree $d_1(G_E(\ell))$ all lie in the kernel of $\varphi$.
\end{lemma}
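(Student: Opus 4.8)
The plan is to recast the statement in terms of the action of $G:=G_E(\ell)$ on $E[\ell]\cong(\Z/\ell\Z)^{2}$, with $G$ acting on column vectors on the left. Recall from the discussion preceding the lemma that the degree of a point $P\in E[\ell]$ is the index in $G$ of the stabilizer of the corresponding vector, equivalently the size of its $G$-orbit, and that $d_1(G)$ is the least such orbit size over nonzero vectors. Rational isogenies from $E$ of degree $\ell$ correspond bijectively to $G$-stable lines of $E[\ell]$ (the $K$-rational subgroups of order $\ell$), so the hypothesis that $\varphi$ is the \emph{unique} such isogeny says precisely that $G$ stabilizes exactly one line, namely $C:=\ker\varphi$. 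After conjugating we may assume $C=\langle e_1\rangle$ with $e_1=(1,0)^{\mathrm t}$, so that $G\subseteq B(\ell)$.

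The first step is to show that $G$ contains an element of order $\ell$. If it did not, then $\#G$ would be prime to $\ell$; inside $B(\ell)$ the group $G\langle t\rangle$, with $t=\unimat$, would then have $\langle t\rangle$ as its normal Sylow $\ell$-subgroup and $G$ as a complement to it, and by Schur--Zassenhaus $G$ would be conjugate within $G\langle t\rangle\subseteq B(\ell)$ to the complement $G\langle t\rangle\cap C_s(\ell)$ supplied by Lemma~\ref{lem:borel}. Thus $G$ would be conjugate into the split Cartan group $C_s(\ell)$, and so would stabilize two distinct lines---or every line, if scalar---contradicting the uniqueness of $C$. (Alternatively one may argue directly from Proposition~\ref{prop:subgroups}: among the subgroup types it lists, a subgroup with no element of order $\ell$ that nonetheless lies in a Borel group must be conjugate into a split Cartan group, and such a group stabilizes more than one line.) For $\ell=2$ this is immediate, since then $G$ is conjugate to $\langle t\rangle=B(2)$. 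Once $\ell\mid\#G$ is known, Lemma~\ref{lem:borel} gives $t\in G$ for our choice of conjugate.

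With $t\in G$ in hand, it remains to compare two orbit sizes. Let $A:=\{a\in\F_\ell^\times:\smallmat{a}{*}{0}{*}\in G\}$ be the image of the top-left-entry character of $G\subseteq B(\ell)$. A nonzero vector $v=(c,0)^{\mathrm t}\in C$ satisfies $\smallmat{a}{b}{0}{d}v=(ac,0)^{\mathrm t}$, so its $G$-orbit is $\{(ac,0)^{\mathrm t}:a\in A\}$, of size $|A|$; hence every nonzero point of $\ker\varphi$ has degree $|A|$, which divides $\ell-1$. For a vector $v\notin C$, after scaling write $v=(x,1)^{\mathrm t}$; then its $G$-orbit already contains the $\ell$ distinct vectors $t^kv=(x+k,1)^{\mathrm t}$ for $k\in\F_\ell$, so it has size at least $\ell$. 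Since $|A|\le\ell-1<\ell$, we conclude that $d_1(G)=|A|$, that this minimum is attained exactly on $C\setminus\{0\}$, and therefore that every point of $E[\ell]$ of degree $d_1(G)$ lies in $C=\ker\varphi$.

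The one delicate point is the first step: one must use that $\varphi$ is the \emph{unique} rational $\ell$-isogeny, not merely that one exists, in order to rule out the case that $G$ is abelian of order prime to $\ell$, and it is exactly this case that would fail to place a nontrivial unipotent element in $G$. Everything after $t\in G$ is a direct orbit computation.
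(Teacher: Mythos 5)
Your proof is correct and follows essentially the same route as the paper: reduce to $G=G_E(\ell)\subseteq B(\ell)$ containing $t=\unimat$, then observe that orbits inside $\ker\varphi$ have size dividing $\ell-1$ while any vector outside $\ker\varphi$ has orbit size at least $\ell$ because of its $\langle t\rangle$-translates. Your Schur--Zassenhaus/Maschke justification that $G$ must contain an element of order $\ell$ is a welcome expansion of a step the paper dispatches with the one-line remark that $G$ cannot lie in a split Cartan group, but it is not a different argument.
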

\begin{proof}
We may assume that $G_E(\ell)$ lies in the Borel group $B(\ell)$ and contains $\smallmat{1}{1}{0}{1}$; it cannot lie in the split Cartan $C_s(\ell)$ because $E$ admits only one rational isogeny of degree $\ell$ (up to composition with an isomorphism).
The kernel of $\varphi$ consists of the points $P\in E[\ell]$ whose stabilizer in $G_E(\ell)$ contains $\smallmat{1}{1}{0}{1}$.
The orbit of any $P\in\ker\varphi$ under the action of $G_E(\ell)$ has cardinality at most $\ell-1$, since $\ker\varphi$ is Galois-stable and contains only $\ell-1$ nonzero points; the stabilizer of $P$ therefore has index at most $\ell-1$, and it follows that $d_1(G)\le \ell-1$, since $\ker\varphi$ contains points of order $\ell$.  The stabilizer of any $P\in E[\ell]$ of degree less than~$\ell$ must contain $\smallmat{1}{1}{0}{1}$, otherwise its index would be at least $\ell$, so every point of degree $d_1(G)$ is in $\ker\varphi$.
\end{proof}

For a rational isogeny $\varphi$ of prime degree $\ell>2$, let $h_\varphi\in K[x]$ denote the \emph{kernel polynomial} whose roots are the distinct $x$-coordinates $x(P)$ of the points $P\in\ker\varphi\subseteq E[\ell]$; it is a divisor of the $\ell$-division polynomial $f_{E,\ell}(x)$.
The kernel polynomials $h_\varphi$ play a key role in Elkies' improvement to Schoof's algorithm \cite{elkies95,schoof95}; the degree of $h_\varphi(x)$ is just $(\ell-1)/2$, compared to $(\ell^2-1)/2$ for $f_{E,\ell}(x)$.

\begin{corollary}\label{cor:isogd1}
Let $E$ be an elliptic curve over a number field $K$ that admits a unique rational isogeny $\varphi$ of prime degree $\ell>2$, and let $d$ be the minimal degree appearing of a factor of $h_\varphi(x)$ in $K[x]$.
Then $d_1(G_E(\ell))\in \{d,2d\}$, and if $G_E(\ell)$ contains~$-1$ then $d_1(G_E(\ell))=2d$.
\end{corollary}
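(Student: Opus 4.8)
The plan is to combine Lemma~\ref{lem:isogd1} with the observation that, because $G_E(\ell)$ acts $\F_\ell$-linearly on the one-dimensional $\F_\ell$-vector space $\ker\varphi$, it does so through scalars, so that all $\ell-1$ nonzero points of $\ker\varphi$ have the same degree over $K$, and likewise all $(\ell-1)/2$ roots of $h_\varphi(x)$ have the same degree over $K$. This is exactly what upgrades the easy a priori bound $d\le d_1(G_E(\ell))\le 2d$ to the dichotomy $d_1(G_E(\ell))\in\{d,2d\}$.

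First I would reduce to the Borel case exactly as in the proof of Lemma~\ref{lem:isogd1}: since $E$ admits a unique rational $\ell$-isogeny we may assume $G:=G_E(\ell)\subseteq B(\ell)$ contains $t=\smallmat{1}{1}{0}{1}$ and that $\ker\varphi$ is the line fixed by $t$. Fix a nonzero $P_0\in\ker\varphi$ and let $N:=\{g\in G:g(P_0)=P_0\}$ be its stabilizer, so the degree of $P_0$ over $K$ is $[G:N]$. The group $G$ preserves the submodule $\ker\varphi\cong\Z/\ell\Z$ and acts on it $\F_\ell$-linearly, hence through a character $G\to\F_\ell^\times$; since $N$ fixes the nonzero vector $P_0$, this character is trivial on $N$, i.e.\ $N$ acts trivially on all of $\ker\varphi$. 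Thus $\mathrm{Stab}_G(P)\supseteq N$ for every nonzero $P\in\ker\varphi$, and by symmetry all these points have the same degree; combined with Lemma~\ref{lem:isogd1}, which places a point of minimal degree $d_1(G_E(\ell))$ inside $\ker\varphi$, every nonzero $P\in\ker\varphi$ has degree exactly $d_1(G_E(\ell))=[G:N]$.

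Next I would run the same argument one level up, on $x$-coordinates. Let $N':=\{g\in G:g(\{P_0,-P_0\})=\{P_0,-P_0\}\}$ be the stabilizer of the pair $\{P_0,-P_0\}$, equivalently of $x(P_0)$; it acts on $\ker\varphi$ through $\{\pm1\}$, contains $N$, and $[N':N]\in\{1,2\}$. As before $N'$ is independent of the chosen nonzero point of $\ker\varphi$, so every root $x(P)$ of $h_\varphi(x)$ has degree $d':=[G:N']$ over $K$; since the roots of $h_\varphi$ are precisely these $x$-coordinates, $h_\varphi$ is a product of irreducible factors over $K$ all of degree $d'$, and therefore the minimal degree $d$ of an irreducible factor equals $d'$. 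From $[G:N]=[N':N]\,[G:N']$ we obtain $d_1(G_E(\ell))=[N':N]\cdot d\in\{d,2d\}$, which is the first assertion.

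Finally, if $-1\in G_E(\ell)$ then $-1$ carries $P_0$ to $-P_0\ne P_0$ (since $P_0$ has order $\ell>2$), so $-1\in N'$ but $-1\notin N$, forcing $[N':N]=2$ and hence $d_1(G_E(\ell))=2d$; one may equally well invoke the $y$-coordinate argument used in Corollary~\ref{cor:X1degneg1}, namely that $\rho_{E,\ell}^{-1}(-1)$ fixes $x(P_0)$ but sends $y(P_0)$ to $-y(P_0)\ne 0$. The only genuinely non-formal step is the linearity observation of the second paragraph — that the stabilizer of a single nonzero point of $\ker\varphi$ fixes all of $\ker\varphi$; once that is in hand, the rest is index bookkeeping and an appeal to Lemma~\ref{lem:isogd1}.
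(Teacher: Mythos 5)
Your proof is correct and is essentially the paper's argument transported to the group side of the Galois correspondence: the paper works with the field tower $K\subseteq K(x(P))\subseteq K(P)$ and the cyclicity of $K(P)/K$, while you work with the corresponding stabilizer chain $N\subseteq N'\subseteq G$ and conclude via $[N':N]\in\{1,2\}$. Your explicit check that every irreducible factor of $h_\varphi$ has degree $[G:N']$ (so the minimal factor degree $d$ is in fact the common degree) makes precise a point the paper leaves implicit when it identifies the splitting field of $h_\varphi(x)$ with $K(x(P))$ of degree $d$.
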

\begin{proof}
The kernel of $\varphi$ has prime order $\ell$, hence it is generated by any nonzero $P\in \ker\varphi$.
By the previous lemma, these $P$ all have degree $d_1(G_E(\ell))$; let us pick one.
The cyclic group $\langle P\rangle$ is invariant under the action of $\Gal(K(E[\ell])/K)$, so $K(P)/K$ is a cyclic Galois extension, and it contains the splitting field of $h_\varphi(x)$ over $K$, which must be equal to $K(x(P))$, an extension of degree $d$.
Thus
\[
d_1(G_E(\ell))=[K(P)\!:\!K]=[K(P)\!:\!K(x(P))]\cdot [K(x(P))\!:\!K]
\]
is either $d$ or $2d$, depending on whether $y(P)$ lies in $K(x(P))$, or a quadratic extension of $K(x(P))$.
If $G_E(\ell)$ contains $-1$ then the latter must hold, by Corollary~\ref{cor:X1degneg1}.
\end{proof}

The kernel polynomial $h_\varphi(x)$ can be computed via Elkies' algorithm (see \cite[Alg.\ 27]{galbraith12}), which uses the classical modular polynomial $\Phi_\ell\in \Z[X,Y]$ that is a canonical model for the modular curve $X_0(\ell)$.
Under the GRH the polynomial $\Phi_\ell(X,Y)$ can be computed in $\ell^{3+o(1)}$ expected time \cite{bls12}.
By Proposition~\ref{prop:GRHellbound}, for elliptic curves $E$ without complex multiplication, we may assume that $\ell$ is bounded by $(\log \Vert f\Vert)^{1+o(1)}$, where $y^2=f(x)$ is an integral equation for $E/K$.
This yields a reasonably efficient algorithm to compute $h_\varphi(x)$, but factoring $h_\varphi(x)$ in $K[x]$ may be much more time-consuming; the complexity bounds in \cite{landau85} for factoring polynomial in $\O_K[x]$ give a running time of $(\log\Vert f\Vert)^{11+o(1)}$.

We can do much better than this by instead working modulo random primes $\p$ of $K$.
As noted in the proof of Corollary~\ref{cor:isogd1}, the Galois group $\Gal(L/K)$ of the splitting field $L$ of $h_\varphi(x)$ over $K$ is cyclic, and this implies that we can compute the degree $L/K$ by computing $h_\varphi(x)$ modulo several random primes $\p$ and factoring the result over $\F_\p:=\O_K/\p$ (and we can restrict to degree-1 primes $\p$); taking the least common multiple of the degrees of the factors will yield the degree of $L/K$ with high probability (by the Chebotarev density theorem).
Under the GRH it suffices to use $\p$ with $\log N(\p)$ on the order of $\log\Vert f\Vert^{1+o(1)}$; with probability greater than 1/2 just two primes~$\p$ are already enough to determine $[L\!:\!K]$.

The algorithm in \cite{sut13b} gives an efficient method to directly compute instantiated modular polynomials $\Phi_\ell(j(E),Y)$ modulo $\p$, as well as instantiated derivatives of~$\Phi_\ell(X,Y)$ that are required by Elkies' algorithm, allowing us to perform all our computations in finite fields $\F_\p$.
The expected time to compute the reduction of $h_\varphi$ in $\F_\p[x]$ is then bounded by $(\log \Vert f\Vert)^{3+o(1)}$, which also bounds the expected time to factor it in $\F_\p[x]$ using standard probabilistic algorithms (see \cite[Thm.\ 14.14]{gg13}).

Having computed $d=[L\!:\!K]$, it remains only to determine whether $d_1(G_E(\ell))$ is equal to $d$ or $2d$.  If $-1\in G_E(\ell)$ then Corollary~\ref{cor:isogd1} immediately implies the latter, and otherwise it suffices to determine whether the algebraic integer $f(\alpha)$ is a square in $\O_L$, where $\alpha$ is a root of the monic polynomial $h_\varphi(x)$; this computation can be efficiently accomplished via Hensel lifting and is dominated by the time to compute~$h_\varphi(x)$.
The following proposition summarizes our discussion.

\begin{proposition}\label{prop:dist}
Let $E\colon y^2=f(x)$ be a non-CM elliptic curve over a number field, and suppose that $E$ admits a unique rational isogeny of degree $\ell$.
Under the GRH there is a Monte Carlo algorithm to compute $d_1(G_E(\ell))$ whose running time is bounded by $(\log \Vert f\Vert)^{3+o(1)}$.
\end{proposition}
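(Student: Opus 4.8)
The plan is to carry out the procedure sketched above: compute the common degree $d$ of the irreducible factors of the kernel polynomial $h_\varphi(x)$ over $K$ by reducing modulo a few random primes of $K$, and then decide whether $d_1(G_E(\ell))$ equals $d$ or $2d$. First I would fix the structural input. Since $E$ is non-CM and admits a rational isogeny of degree $\ell$, the group $G_E(\ell)$ lies in a Borel subgroup, so $\ell\in S_E$, and Proposition~\ref{prop:GRHellbound} together with the bound $\log N_E = O(\log\Vert f\Vert)$ (as in the proof of Theorem~\ref{thm:lvfull}) gives $\ell = (\log\Vert f\Vert)^{1+o(1)}$; this is what makes everything that follows feasible. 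Let $\varphi$ be the unique rational $\ell$-isogeny, let $\Lambda := \ker\varphi$, and let $L$ be the splitting field of $h_\varphi$ over $K$. As in the proof of Corollary~\ref{cor:isogd1}, $\Lambda$ is cyclic of order $\ell$, each nonzero $P\in\Lambda$ satisfies $L = K(x(P))$ (because $x(Q)$ is a rational function of $x(P)$ over $K$ for every $Q\in\Lambda$), and $\Gal(L/K)$ is cyclic; consequently every irreducible factor of $h_\varphi$ over $K$ has degree $d := [L:K]$, so the quantity $d$ appearing in the statement is exactly $[L:K]$.

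The key observation is then that $[L:K]$ can be read off from reductions. For an unramified degree-one prime $\p$ of $K$, the irreducible factors of $h_\varphi\bmod\p$ in $\F_\p[x]$ have degrees equal to the sizes of the $\langle\Frob_\p\rangle$-orbits on the roots; since the stabilizer of any root in $\Gal(L/K)$ is trivial, these orbit sizes all divide $\mathrm{ord}(\Frob_\p)$ and the orbit of $x(P)$ realizes it, so the least common multiple of the factor degrees is exactly $\mathrm{ord}(\Frob_\p)$, which in turn divides $d$. The algorithm therefore picks random degree-one primes $\p$ with $\log N(\p) = O(\log\log\Vert f\Vert)$ (a range that, under the GRH, is large enough for the effective Chebotarev bound of Proposition~\ref{prop:GRHchebotarev} to guarantee near-equidistribution of $\Frob_\p$ in $\Gal(L/K)$), computes $h_\varphi\bmod\p$, factors it over $\F_\p$, and takes the least common multiple of the factor degrees over all sampled primes. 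For a cyclic group of order $d$, the probability that two independent uniform elements have orders whose lcm is not $d$ is at most $\sum_{p\ \mathrm{prime}}p^{-2} < 1/2$, so two primes already give $d = [L:K]$ with probability exceeding $1/2$, and $O(1)$ additional primes push this above $2/3$.

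For the computation of $h_\varphi\bmod\p$ itself I would use the algorithm of \cite{sut13b} to obtain $\Phi_\ell(j(E),Y)\bmod\p$ and the required instantiated derivatives of $\Phi_\ell$ directly over $\F_\p$, and then Elkies' algorithm (\cite[Alg.~27]{galbraith12}, \cite{elkies95}) to extract $h_\varphi\bmod\p$, a polynomial of degree $(\ell-1)/2$; with $\ell = (\log\Vert f\Vert)^{1+o(1)}$ and $\log N(\p) = O(\log\log\Vert f\Vert)$ this costs $(\log\Vert f\Vert)^{3+o(1)}$ expected time, and factoring the result over $\F_\p$ by a standard probabilistic algorithm (\cite[Thm.~14.14]{gg13}) is no more expensive. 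Finally, having $d$, it remains to decide between $d_1(G_E(\ell)) = d$ and $2d$. Local conjugacy preserves scalar subgroups, so from the already-computed local conjugacy class of $G_E(\ell)$ we know whether $-1 \in G_E(\ell)$; if so, Corollary~\ref{cor:isogd1} gives $d_1(G_E(\ell)) = 2d$. Otherwise it suffices to test whether $f(\alpha)$ is a square in $\O_L$ for $\alpha$ a root of a degree-$d$ irreducible factor of $h_\varphi$ (so that $f(\alpha) = y(P)^2$ for the corresponding $P\in\Lambda$ and $[K(P):K(\alpha)]\in\{1,2\}$), which can be carried out by Hensel lifting modulo a prime of $L$ of residue characteristic prime to $2\ell\Delta_E$ in time dominated by that of computing $h_\varphi$. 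Collecting these bounds yields the claimed $(\log\Vert f\Vert)^{3+o(1)}$ expected running time.

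The main obstacle here is not conceptual but lies in the complexity bookkeeping for computing and factoring $h_\varphi$ modulo $\p$: the approach succeeds only because \cite{sut13b} lets us evaluate modular polynomials over $\F_\p$ without ever forming the full classical modular polynomial $\Phi_\ell\in\Z[X,Y]$ (computing which costs $\ell^{3+o(1)}$ by \cite{bls12} and which has comparable bit-size), and because the GRH bound of Proposition~\ref{prop:GRHellbound} keeps $\ell$ quasi-linear in $\log\Vert f\Vert$ so that both $\ell$ and $\log N(\p)$ remain small. Once these inputs are in place, and given the already-established fact (Corollary~\ref{cor:isogd1}) that $d_1(G_E(\ell))\in\{d,2d\}$ with the ambiguity resolved by a single squareness test, the remaining steps are routine.
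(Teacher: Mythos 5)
Your proposal is correct and follows essentially the same route as the paper's own discussion preceding the proposition: reduce the problem to computing $d=[L\colon K]$ for the (cyclic) splitting field $L$ of $h_\varphi$, obtain $h_\varphi\bmod\p$ for a few random small primes via instantiated modular polynomials and Elkies' algorithm, recover $d$ as an lcm of factor degrees using the GRH Chebotarev bound, and resolve the remaining factor of $2$ via the scalar $-1$ or a squareness test in $\O_L$. Your added observations (that all irreducible factors of $h_\varphi$ over $K$ have the same degree $d$, and the explicit $\sum_p p^{-2}<1/2$ bound showing two primes suffice with probability $>1/2$) are correct refinements of the same argument.
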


\begin{remark}
We can easily determine ahead of time whether or not computing $d_1(G_E(\ell))$ will distinguish two locally conjugate possibilities $G$ and $G'$ for $G_E(\ell)$.
As noted above, we may assume that $G$ and~$G'$ lie in the Borel group $B(\ell)$ and are thus upper triangular, in which case $d_1(G)$ can be computed more efficiently.
\end{remark}

\subsection{Quadratic twists}\label{sec:twists}

Recall that if $E/K$ is an elliptic curve and $F$ is a quadratic extension of $K$, an elliptic curve $E'/K$ whose base change to $F$ is isomorphic to that of $E$ is a \emph{quadratic twist} of $E$.
Up to $K$-isomorphism, for each quadratic extension $F/K$ there is a unique elliptic curve $E^F$ that is not $K$-isomorphic to~$E$.
Concretely, if $E$ is defined by the equation $y^2=f(x)$ and $F=K(\sqrt{d})$, then $dy^2=f(x)$ is an equation for $E^F$; we assume throughout this section that $E$ and $E^F$ are defined by equations of this form.

We wish to consider the relationship between the Galois images $G_E(\ell)$ and $G_{E^F}(\ell)$.
For $\ell=2$ we always have $G_E(\ell)=G_{E^F}(\ell)$, since $E[2]=E^F[2]$, so we assume $\ell>2$.
Most of our results in fact apply to any integer $m>2$, so we will work in this generality.
The $m$-torsion points of $E$ and $E^F$ differ only in their $y$-coordinates, thus the splitting fields of the $m$-division polynomials $f_{E,m}(x)$ and $f_{E^F,m}(x)$ are identical; let $L$ denote this field.
It follows from Lemma~\ref{lem:quadsplit} that either the $m$-torsion fields $K(E[m])$ and $K(E^F[m])$ are both quadratic extensions of $L$ (the generic case), one is equal to $L$ and the other is a quadratic extension, or both are equal to $L$.
Which case occurs depends on whether both, one, or neither of the groups $G_E(m)$ and $G_{E^F}(m)$ contain $-1$.

\begin{lemma}\label{lem:twistfields}
Let $E$ be an elliptic curve over a number field $K$, let $F$ be a quadratic extension of $K$, let $m>2$ be an integer, and let $L$ be the splitting field of the $m$-division polynomial of $E$.
Then $-1\not\in G_{E^F}(m)$ if and only if $K(E[m])$ is the compositum of $F$ and $L$.
\end{lemma}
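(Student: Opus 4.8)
The plan is to reduce the whole statement to a single identity between quadratic characters of $\Gal(\Kbar/L)$. First I would extract the key consequence of Lemma~\ref{lem:quadsplit}: since every $\sigma\in\Gal(\Kbar/L)$ fixes all $x$-coordinates of points in $E[m]$, its image $\rho_{E,m}(\sigma)$ is the scalar $\pm 1$, so $\sigma\mapsto\rho_{E,m}(\sigma)$ defines a (possibly trivial) quadratic character $\epsilon_E\colon\Gal(\Kbar/L)\to\{\pm 1\}$. Its fixed field $\Kbar^{\ker\epsilon_E}$ equals $\Kbar^{\Gal(\Kbar/K(E[m]))}=K(E[m])$, since $L\subseteq K(E[m])$; in particular $\epsilon_E$ is nontrivial exactly when $-1\in G_E(m)$. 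The same construction applied to $E^F$ (whose $m$-division polynomial has the same splitting field $L$, as recalled in \S\ref{sec:twists}) gives a character $\epsilon_{E^F}$, and the same argument shows that $-1\in G_{E^F}(m)$ if and only if $\epsilon_{E^F}$ is nontrivial: any $\sigma\in\Gal(\Kbar/K)$ with $\rho_{E^F,m}(\sigma)=-1$ fixes all $x$-coordinates of $E^F[m]$ and hence fixes $L$, so such a $\sigma$ already lies in $\Gal(\Kbar/L)$.

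The second ingredient is the quadratic-twist relation. Write $F=K(\sqrt d)$ with $E^F\colon dy^2=f(x)$ as in \S\ref{sec:twists}, let $\chi\colon\Gal(\Kbar/K)\to\{\pm 1\}$ be the character cutting out $F/K$, and let $\phi\colon E\to E^F$, $(x,y)\mapsto(x,y/\sqrt d)$, be the standard isomorphism over $F$. Applying $\sigma\in\Gal(\Kbar/K)$ to the coefficients of $\phi$ gives $\phi^\sigma=[\chi(\sigma)]_{E^F}\circ\phi$, and combining this with $\sigma(\phi(P))=\phi^\sigma(\sigma(P))$ and the fact that $\phi$ is a group homomorphism shows, for $P\in E[m]$ of order $m$ and $\sigma\in\Gal(\Kbar/L)$, that $\sigma(\phi(P))=\chi(\sigma)\,\rho_{E,m}(\sigma)\cdot\phi(P)$. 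Since both $\rho_{E,m}(\sigma)$ and $\rho_{E^F,m}(\sigma)$ are scalar on $\Gal(\Kbar/L)$, this is precisely the identity $\epsilon_{E^F}=\bigl(\chi|_{\Gal(\Kbar/L)}\bigr)\cdot\epsilon_E$ of $\{\pm 1\}$-valued characters of $\Gal(\Kbar/L)$.

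To conclude, note that $\ker\bigl(\chi|_{\Gal(\Kbar/L)}\bigr)=\Gal(\Kbar/L)\cap\Gal(\Kbar/F)=\Gal(\Kbar/FL)$, so the character $\chi|_{\Gal(\Kbar/L)}$ has fixed field the compositum $FL$. Hence $K(E[m])=FL$ if and only if $\ker\epsilon_E=\ker\bigl(\chi|_{\Gal(\Kbar/L)}\bigr)$, i.e.\ if and only if $\epsilon_E=\chi|_{\Gal(\Kbar/L)}$ (two $\{\pm 1\}$-valued characters coincide precisely when their kernels do). On the other hand $-1\notin G_{E^F}(m)$ if and only if $\epsilon_{E^F}$ is trivial, i.e.\ if and only if $\bigl(\chi|_{\Gal(\Kbar/L)}\bigr)\cdot\epsilon_E$ is trivial, i.e.\ again if and only if $\epsilon_E=\chi|_{\Gal(\Kbar/L)}$. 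The two conditions agree, which is the claim.

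The main point requiring care is the twist computation underlying $\epsilon_{E^F}=\bigl(\chi|_{\Gal(\Kbar/L)}\bigr)\cdot\epsilon_E$, and specifically the bookkeeping $\phi^\sigma=[\chi(\sigma)]_{E^F}\circ\phi$ together with the check that, because $\phi$ respects the group law, the factor $\chi(\sigma)$ emerges as a genuine scalar rather than merely an automorphism. Everything else is formal; in particular, packaging the data as the (possibly trivial) characters $\epsilon_E$ and $\chi|_{\Gal(\Kbar/L)}$ makes any case distinction — on whether $-1\in G_E(m)$, or whether $F\subseteq L$ — unnecessary.
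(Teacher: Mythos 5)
Your proof is correct, and it is organized quite differently from the one in the paper. The paper's argument proves the two implications separately and, within each, distinguishes the cases $F\subseteq L$ and $F\not\subseteq L$; the essential computation (that $\sigma(\sqrt d)=-\sqrt d$ and $\sigma(y_0)=-y_0$ cancel under $\varphi$) appears only inside one of those cases. You instead perform that computation once, uniformly, as the character identity $\epsilon_{E^F}=\bigl(\chi|_{\Gal(\Kbar/L)}\bigr)\cdot\epsilon_E$ on $\Gal(\Kbar/L)$, after which both directions of the equivalence and all case distinctions collapse into the single observation that two $\{\pm 1\}$-valued characters agree iff their kernels (equivalently, their fixed fields $K(E[m])$ and $FL$) agree. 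The one point that genuinely needed care — that $\phi^\sigma=[\chi(\sigma)]\circ\phi$ and that the sign passes through $\phi$ as a scalar because $\phi$ is a homomorphism — you identify and handle correctly, and restricting to $\Gal(\Kbar/L)$ makes the scalar nature of both $\rho_{E,m}(\sigma)$ and $\rho_{E^F,m}(\sigma)$ available via Lemma~\ref{lem:quadsplit}. What your formulation buys is symmetry in $E$ and $E^F$ and a transparent explanation of the twisting mechanism (the sign character of the twist is the sign character of $E$ times $\chi$), from which Corollary~\ref{cor:twistfields} also follows with little extra work; what the paper's version buys is a concrete, field-by-field description of $K(E^F[m])$ in each case, which it reuses directly in the proof of that corollary.
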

\begin{proof}
Let $F=K(\sqrt{d})$, $E\colon y^2=f(x)$, and $E^F\colon dy^2=f(x)$, and let $\varphi\colon (x_0,y_0)\mapsto (x_0,y_0/\sqrt{d})$ be the isomorphism between the base changes of $E$ and $E^F$ to $F$.
We first suppose that $K(E[m])$ is the compositum of $F$ and $L$ and show that $-1\not\in G_{E^F}(m)$.
If $F\subseteq L$ then $K(E[m])=L$ and the base changes of $E$ and $E^F$ to $L$ are isomorphic, hence $K(E^F[m])=L$ and $G_{E^F}(m)$ does not contain $-1$, by Lemma~\ref{lem:quadsplit}.
If $F\not\subseteq L$, then $K(E[m])=L(\sqrt{d})$ and $K(E^F[m])\subseteq L(\sqrt{d})$, and we claim that in fact $K(E^F[m])=L$.
Let $\sigma$ be the non-trivial element of $\Gal(L(\sqrt{d})/L)$, corresponding to $-1\in G_E(m)$.
Then $\sigma(\sqrt{d})=-\sqrt{d}$ and $\sigma(y_0)=-y_0$ for any nonzero $P=(x_0,y_0)\in E[m]$; it follows that $\sigma$ fixes $\varphi(P)$, thus $K(E^F[m])=L$ and $-1\not\in G_{E^F}(m)$.

We now suppose that $K(E[m])$ is not the compositum of $F$ and $L$ and show that $-1\in G_{E^F}(m)$.
If $F\subseteq L$ then $K(E[m])$ is a quadratic extension of $L$ and the base changes of $E$ and $E^F$ to $L$ are isomorphic; we cannot have $K(E^F[m])= L$, since this would imply $K(E[m])= L$.
If $F\not\subseteq L$ then $F\not\subseteq K(E[m])$ and we cannot have $K(E^F[m])=L$, since this would imply $\sqrt{d}$ and therefore $F$ is contained in $K(E[m])$.
Thus in either case $K(E^F[m])\ne L$, and this implies $-1\in G_{E^F}(m)$, by Lemma~\ref{lem:quadsplit}.
\end{proof}

\begin{corollary}\label{cor:twistfields}
Let $E$ be an elliptic curve over a number field $K$, let $F$ be a quadratic extension of $K$, let~$m>2$ be an integer, let $L$ be the splitting field of the $m$-division polynomial of $E$, and let $G:=\langle G_E(m),-1\rangle$.
\begin{enumerate}
\item[{\rm (a)}] If $-1\in G_E(m)$ then $G_{E^F}(m)$ is conjugate in $\GL_2(m)$ to either $G$ or an index $2$ subgroup of $G$ that does not contain $-1$; the latter occurs precisely when $F$ is a subfield of $K(E[m])$ not contained in $L$.
\item[{\rm (b)}] If $-1\not\in G_E(m)$ then $G_{E^F}(m)$ is conjugate in $\GL_2(m)$ to either $G$ or an index $2$ subgroup of $G$ that does not contain $-1$; the latter occurs precisely when $F$ is a subfield of $L$.
\end{enumerate}
\end{corollary}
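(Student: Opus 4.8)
The plan is to reduce the statement to Lemmas~\ref{lem:quadsplit} and~\ref{lem:twistfields} via the standard description of the Galois representation of a quadratic twist. First I would record that, after identifying $E^F[m]$ with $E[m]$ appropriately, $\rho_{E^F,m}$ is the twist of $\rho_{E,m}$ by the quadratic character $\chi\colon\Gal(\Kbar/K)\to\{\pm1\}$ with fixed field $F$. Concretely, write $F=K(\sqrt d)$, $E\colon y^2=f(x)$, $E^F\colon dy^2=f(x)$, so that $\varphi\colon(x_0,y_0)\mapsto(x_0,y_0/\sqrt d)$ is an isomorphism of the base changes of $E$ and $E^F$ to $F$; transporting the Galois action through $\varphi$ and using $\sigma(\sqrt d)=\chi(\sigma)\sqrt d$, a one-line computation gives $\rho_{E^F,m}(\sigma)=\chi(\sigma)\rho_{E,m}(\sigma)$ for all $\sigma$, where $\chi(\sigma)\in\{\pm1\}$ is read as a central scalar in $\GL_2(m)$. (This is essentially the only place the hypothesis $m>2$ is used, so that the $y$-coordinates are not identically zero.)

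Granting this, every element of $G_{E^F}(m)$ is of the form $\chi(\sigma)\rho_{E,m}(\sigma)\in\{\pm1\}\cdot G_E(m)\subseteq G$, so $G_{E^F}(m)\subseteq G$; running the same argument with $E$ and $E^F$ interchanged (using $(E^F)^F\cong_K E$) gives $G_E(m)\subseteq\langle G_{E^F}(m),-1\rangle$, and combining the two inclusions yields $\langle G_{E^F}(m),-1\rangle=\langle G_E(m),-1\rangle=G$. Hence if $-1\in G_{E^F}(m)$ then $G_{E^F}(m)=G$, while if $-1\notin G_{E^F}(m)$ then $G=G_{E^F}(m)\sqcup(-1)G_{E^F}(m)$ exhibits $G_{E^F}(m)$ as an index-$2$ subgroup of $G$ not containing $-1$. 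Since $G_{E^F}(m)$ is only defined up to conjugacy anyway, this proves the first assertion of both (a) and (b), and leaves only the identification of when $-1\notin G_{E^F}(m)$.

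For the latter I would invoke Lemma~\ref{lem:twistfields}: $-1\notin G_{E^F}(m)$ precisely when $K(E[m])$ equals the compositum $FL$. In case (b), where $-1\notin G_E(m)$, Lemma~\ref{lem:quadsplit} gives $K(E[m])=L$, so $FL=K(E[m])$ holds iff $FL=L$ iff $F\subseteq L$, which is the stated criterion. In case (a), where $-1\in G_E(m)$, Lemma~\ref{lem:quadsplit} gives $[K(E[m]):L]=2$, so there is no field strictly between $L$ and $K(E[m])$; then $FL=K(E[m])$ forces $F\subseteq K(E[m])$ and $F\not\subseteq L$ (otherwise $FL=L\subsetneq K(E[m])$), and conversely $F\subseteq K(E[m])$ with $F\not\subseteq L$ gives $L\subsetneq FL\subseteq K(E[m])$, hence $FL=K(E[m])$; so the criterion is exactly that $F$ be a subfield of $K(E[m])$ not contained in $L$.

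I expect the only real friction to be in the bookkeeping of the first step — pinning down the correct direction of $\chi$ and ensuring the identification $E^F[m]\simeq E[m]$ is the one for which $\rho_{E^F,m}$ lands literally inside $\langle G_E(m),-1\rangle$ rather than a conjugate — but this is routine, and everything after it is a purely formal manipulation of the two lemmas together with the absence of intermediate fields in a quadratic extension.
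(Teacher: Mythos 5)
Your proposal is correct and follows essentially the same route as the paper: both rest on the observation that $\rho_{E^F,m}$ and $\rho_{E,m}$ differ by the quadratic character of $F/K$ (so the two images agree modulo $\{\pm 1\}$), and both then reduce the dichotomy to the question of whether $-1\in G_{E^F}(m)$, which is settled by Lemma~\ref{lem:twistfields} together with Lemma~\ref{lem:quadsplit}. Your packaging of the first step as $\langle G_{E^F}(m),-1\rangle=\langle G_E(m),-1\rangle=G$ is a slightly cleaner way to obtain the ``$G$ or index-$2$ subgroup'' alternative than the paper's case-by-case comparison of the torsion fields, but the underlying argument is the same.
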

\begin{proof}
Let $F$, $E$, $E^F$, and $\varphi$ be as in the previous lemma, and let us fix bases for $E[m]$ and $E^F[m]$ as $\Z/m\Z$-modules that are compatible with $\varphi$ after base change.
As an element of $\GL_2(m)$, the action of any $\sigma\in \Gal(\Kbar/K)$ on $E[\ell]$ and $E^F[\ell]$ with respect to our chosen bases can differ only up to sign, thus we may assume $G_E(m)/(G_E(m)\cap \{\pm 1\}) = G_{E^F}(m)/(G_{E^F}(m)\cap \{\pm 1\})$.

We first consider (a), with $-1\in G_E(m)$.
In this case $K(E[m])$ is a quadratic extension of $L$, by Lemma~\ref{lem:quadsplit}.
If $K(E[m])$ is not the compositum of $F$ and $L$, then $G_{E^F}(m)$ contains $-1$ and $K(E^F[m])$ is also a quadratic extension of $L$ (by the previous lemma), and therefore contains $-1$; we thus have $G_{E^F}(m)$ conjugate to $G_E(m)=G$, and either $F$ does not lie in $K(E[m])$ or it is contained in $L$.
If $K(E[m])$ is the compositum of $F$ and $L$, then $K(E[m])=L(\sqrt{d})$ and the previous lemma implies that $-1\not\in G_{E^F}(m)$ and therefore $K(E^F[m])=L$.
The actions of $\Gal(L(\sqrt{d})/K(\sqrt{d}))$ on $E[m]$ and $\Gal(L/K)$ on $E^F[m]$ with respect to our chosen bases commute  with the isomorphism $\varphi$, and it follows that $G_{E^F}(m)$ is conjugate to the index~$2$ subgroup of $G_E(m)=G$ corresponding to $\Gal(L(\sqrt{d})/K(\sqrt{d}))=\Gal(K(E[m])/F)$, which does not contain~$-1$, and this occurs only when $F$ is a subfield of $K(E[m])$ not contained in $L$.

We now consider (b), with $-1\not\in G_E(m)$. in which case $K(E[m]=L$ is a subfield of $K(E^F[m])$, by Lemma~\ref{lem:quadsplit}.
If $F\not\subseteq L$ then $K(E[m])$ is not the compositum of $F$ and $L$ and $-1\in G_{E^F}(m)$, by the previous lemma; by the same argument used above, this implies that $G_E(m)$ is conjugate to an index $2$ subgroup of $G_{E^F}(m)$, and we must have $G_{E^F}(m)$ conjugate to $G$.
If $F\subseteq L$ then $K(E[m])=L$ and $-1\not\in G_E(m)$, and since $K(E[m])$ is the compositum of $F$ and $L$, we also have $-1\not\in G_{E^F}(m)$, by the previous lemma.
So $G_{E^F}(m)=G_E(m)$ is an index 2 subgroup of $G$ not containing $-1$, and this occurs only when $F\subseteq L$.
\end{proof}

In case (b) of Corollary~\ref{cor:twistfields}, when $F$ is a subfield of $L$ the $\ell$-torsion fields of $E$ and its twist $E^F$ coincide, but $E[m]$ and $E^F[m]$ are typically not isomorphic as Galois modules, and $G_E(m)$ and $G_{E^F}(m)$ need not be conjugate (or even locally conjugate) in $\GL_2(m)$, as shown by the following example.

\begin{example}
Let $E/\Q$ be the elliptic curve $y^2+y=x^3-x^2-10x-20$ with Cremona label \href{http://www.lmfdb.org/EllipticCurve/Q/11a1}{\texttt{11a1}}, which we may also write as $y^2=x^3-13392x-1080432$.
Its quadratic twist by $F=\Q(\sqrt{5})$ has Cremona label \href{http://www.lmfdb.org/EllipticCurve/Q/275b2}{\texttt{275b2}}.
The torsion field $\Q(E[5])$ can be written as $\Q[a]/(a^4-a^3+a^2-a+1)$ and is equal to the splitting field $L$ of the 5-division polynomial of $E$.  The field $\Q(E[5])$ contains $F$, so $\Q(E^F[5])=\Q(E[5])$, and $G_E(\ell)$ and $G_{E^F}(\ell)$ are both index 2 subgroups of $G=\langle G_E(\ell),-1\rangle$, but they are not conjugate.
Indeed, one finds that $G_E(\ell)\simeq \langle \diagmat{1}{2}\rangle$ and $G_{E^F}(\ell)\simeq\langle \diagmat{3}{4}\rangle$ are non-conjugate cyclic groups of order 4.
If we instead twist~$E$ by a quadratic field $F'$ not contained in $L$, say $F'=\Q(\sqrt{-3})$, we obtain the elliptic curve with Cremona label \href{http://www.lmfdb.org/EllipticCurve/Q/99d2}{\texttt{99d2}} and find that $G_{E^{F'}}(\ell)$ is conjugate to both $\langle\pm\diagmat{1}{2}\rangle$ and $\langle\pm\diagmat{3}{4}\rangle$.
\end{example}

In the previous example we obtained three non-conjugate subgroups of $\GL_2(5)$ as images of Galois representations arising in a family of quadratic twists of single elliptic curve $E$.
The following lemma shows that for $m=\ell$ prime, up to conjugacy, three is maximal and can occur only when $G_E(\ell)$ lies in a Borel group.

\begin{lemma}\label{lem:3twists}
Let $E$ be an elliptic curve over a number field $K$, let $\ell$ be a prime, and let $n$ be the number of non-conjugate subgroups of $\GL_2(\ell)$ that arise as $G_{E^F}(\ell)$ for some quadratic twist $E^F$ of $E$.
Then $n\le 3$; the case $n=3$ can occur only when $G_E(\ell)$ lies in a Borel group, and the case $n=2$ can occur only when $G_E(\ell)$ lies in either a Borel group or the normalizer of a Cartan group.
\end{lemma}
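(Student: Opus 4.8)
The plan is to reduce to a group-theoretic statement and then invoke Dickson's classification (Proposition~\ref{prop:subgroups}). For $\ell = 2$ every quadratic twist has $E^F[2] = E[2]$, so $n = 1$; assume $\ell$ odd. Since $E$ and $E^F$ are isomorphic over $F$, twisting multiplies $\rho_{E,\ell}$ by the quadratic character of $F/K$, so all $G_{E^F}(\ell)$ have the same projective image $\pi(G_E(\ell))$ and the same determinant image. Put $G := \langle G_E(\ell), -I\rangle$. By Corollary~\ref{cor:twistfields}, each $G_{E^F}(\ell)$ is conjugate to $G$ or to an index-$2$ subgroup of $G$ not containing $-I$, and conversely every index-$2$ subgroup $H$ of $G$ with $-I\notin H$ occurs: picking a twist $\tilde E$ with $-I\in G_{\tilde E}(\ell)$, we identify $G$ with $\Gal(K(\tilde E[\ell])/K)$, and $H$ then cuts out a quadratic subfield $F$ of $K(\tilde E[\ell])$ not lying in the $\ell$-division field, with $G_{\tilde E^F}(\ell) = H$. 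Hence $n = 1 + \nu$, where $\nu$ is the number of $\GL_2(\ell)$-conjugacy classes of index-$2$ subgroups of $G$ not containing $-I$.

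Such subgroups are the kernels of homomorphisms $\chi\colon G\to\{\pm1\}$ with $\chi(-I) = -1$; there are $2^{\dim_{\F_2} V - 1}$ of these if the image of $-I$ in $V := G/G^2[G,G]$ is nonzero, and none otherwise. I would first handle the cases where $G_E(\ell)$ lies in neither a Borel group nor the normalizer of a Cartan group: by Proposition~\ref{prop:subgroups} this means $\SL_2(\ell)\subseteq G_E(\ell)$ or $\pi(G_E(\ell))\in\{\alt4,\sym4,\alt5\}$, and in each case one shows $-I\in[G,G]$, so $\nu = 0$. For $\SL_2(\ell)\subseteq G$ this holds because $[\SL_2(\ell),\SL_2(\ell)]$ contains $-I$ (it equals $\SL_2(\ell)$ for $\ell\ge5$ and $Q_8$ for $\ell=3$). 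In the exceptional case, $-I\in G_E(\ell)$ is forced — a Klein four-subgroup of $\GL_2(\ell)$ cannot consist of three involutions, whose determinants would be $-1,-1,1$, and $\sym4,\alt5$ reduce to this — so $G = G_E(\ell)$; then $[G,G]$ surjects onto $[\pi(G),\pi(G)]$, whose preimage in $\SL_2(\ell)$ is $Q_8$, the binary tetrahedral group, or the binary icosahedral group, none of which splits over its center, so $-I\in[G,G]$. The same Klein four computation, $g_1 g_2 g_1^{-1} g_2^{-1} = -I$ for trace-zero lifts $g_1,g_2$, shows $-I\in[G,G]$, hence $\nu = 0$, whenever $\pi(G_E(\ell))$ is a Klein four-group (which can occur inside the normalizer of a Cartan group).

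It then remains to treat $G\subseteq B(\ell)$, $G\subseteq C_{ns}(\ell)$, and $G = \langle H,\gamma\rangle\subseteq C^+$ with $C$ a Cartan group, $H = G\cap C$, $\gamma\in C^+\setminus C$, and $\pi(G)$ dihedral of order at least $6$. In each, $\dim V\le 2$: if $G\subseteq B(\ell)$ then $[G,G]\subseteq[B(\ell),B(\ell)]$ has odd order, so $V$ is the image of $G$ in $B(\ell)/[B(\ell),B(\ell)]\cong\F_\ell^\times\times\F_\ell^\times$ modulo squares; if $G\subseteq C_{ns}(\ell)$ then $G$ is cyclic and $\dim V\le 1$; and for $G = \langle H,\gamma\rangle$ a short computation with $[G,G] = \{h\bar h^{-1}:h\in H\}$ gives $\dim V\le 2$. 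In the Borel case the (at most two) index-$2$ subgroups avoiding $-I$ all contain $\smallmat1101$, so by Corollary~\ref{cor:borel} they are pairwise non-conjugate and non-conjugate to $G$; thus $\nu\le 2$, leaving $n = 3$ possible only here. In the other two cases $\nu\le 1$: for $C_{ns}(\ell)$ because there is at most one such subgroup, and for $G = \langle H,\gamma\rangle$ because when $\dim V = 2$ and $-I$ has nonzero image the two resulting subgroups $H_1, H_2$ meet $C$ in the same index-$2$ subgroup $H_0$ of $H$ and have the same (twist-invariant) determinant, forcing $-\det(H_1-H_0) = -\det(H_2-H_0)$, so $H_1$ and $H_2$ are conjugate in $C^+$ by Lemma~\ref{lem:dihedral}. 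Altogether $\nu\le2$, so $n\le3$; $\nu = 2$ only when $G_E(\ell)$ lies in a Borel group; and $\nu\ge1$ only when $G_E(\ell)$ lies in a Borel group or the normalizer of a Cartan group.

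I expect the last point — the $C^+$-conjugacy of $H_1$ and $H_2$ in the dihedral case — to be the main obstacle: one must verify that $H_1\cap C = H_2\cap C$, that both $H_i$ genuinely have dihedral projective image so that Lemma~\ref{lem:dihedral} applies, and that the determinant bookkeeping really yields equal coset-determinants. The borderline Klein four-group configuration, where $\pi(H_i)$ could drop to order $2$ and Lemma~\ref{lem:dihedral} would not apply, is precisely the case dispatched separately above via the identity $g_1 g_2 g_1^{-1}g_2^{-1} = -I$.
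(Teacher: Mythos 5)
Your proof is correct and follows the same skeleton as the paper's: reduce via Corollary~\ref{cor:twistfields} to counting index-$2$ subgroups of $G=\langle G_E(\ell),-1\rangle$ that avoid $-1$, then run through Dickson's classification (Proposition~\ref{prop:subgroups}). The differences are in how the cases are dispatched, and they are worth noting. Where the paper cites Lemma~\ref{lem:excep} to kill the exceptional cases, you give a self-contained argument: two elements of $G$ whose projective images are distinct commuting involutions are trace-zero and anticommute, so their commutator is $-1$; hence $-1\in[G,G]$ whenever $\pi(G)$ contains a Klein subgroup, and no index-$2$ subgroup can avoid $-1$. This also lets you peel off the Klein-image dihedral case up front, which the paper instead folds into its general dihedral argument. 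In the remaining dihedral case the paper argues purely group-theoretically that $H=G\cap C$ has at most one index-$2$ subgroup avoiding $-1$ and normal in $C^+$; you instead bound $\dim_{\F_2}G^{\rm ab}\otimes\F_2\le 2$ and, when two candidate subgroups exist, show they meet $C$ in the same subgroup and then invoke the uniqueness clause of Lemma~\ref{lem:dihedral} — but to equalize the coset determinants you use the arithmetic fact that $\det\circ\rho_{E^F,\ell}$ is the cyclotomic character, hence twist-invariant. That is a legitimate extra input (the lemma only counts subgroups actually realized by twists), but it means your bound of one conjugacy class in the dihedral case is a statement about realized images rather than about all index-$2$ subgroups of $G$; the paper's version is group-theoretic. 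Two small imprecisions that don't affect the conclusion: your claim that the Borel-case subgroups all contain $\smallmat{1}{1}{0}{1}$ presupposes $G$ has an element of order $\ell$ (it fails for $G\subseteq C_s(\ell)$, though that only affects the realizability remark, not the upper bound), and the sentence about binary tetrahedral/icosahedral groups is redundant with, and less clean than, your Klein-subgroup commutator computation.
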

\begin{proof}
For $\ell=2$ we always have $n=1$, so we assume that $\ell$ is odd and put $G:=\langle G_E(\ell),-1\rangle$.
It follows from Corollary~\ref{cor:twistfields} that $n$ is at most one more than the number of index 2 subgroups of $G$ that do not contain $-1$.
Thus if $G_E(\ell)=G$ contains $-1$ and has no index 2 subgroups that do not contain $-1$, then $n=1$; this applies whenever $G_E(\ell)$ contains $\SL_2(\ell)$ or has projective image isomorphic to $\alt{4}$, $\sym{4}$, or $\alt{5}$ (by Lemma~\ref{lem:excep}).
By Proposition~\ref{prop:subgroups}, we may now assume that $G_E(\ell)$ (and therefore $G$) is contained in either a Borel group or the normalizer of Cartan group (possibly both).

Let us first suppose that the image of $G$ in $\PGL_2(\ell)$ is dihedral; then $G$ is a subgroup of the normalizer $C^+$ of a Cartan group $C$.
If $G_2$ is an index 2 subgroup of $G$ that does not contain $-1$, then $G_2$ also has dihedral image in $\PGL_2(\ell)$.
If we put $H:=G\cap C$ and $H_2:=G_2\cap C$ and apply Lemma~\ref{lem:dihedral}, we must be in case (2a) of the lemma, since $H_2$ does not contain $-1$, and $H_2$ is an index 2 subgroup of $H$ that is normal in $C^+$.
It follows from Corollaries~\ref{cor:nonsplitdihedral} and~\ref{cor:splitdihedral} that $H_2$ determines $G_2$, and $H$ has at most one index~2 subgroup that does not contain $-1$ and is normal in $C^+$, so there is at most one possible $G_2$; thus $n\le 2$.

If $G$ lies in a non-split Cartan group $C_{ns}$ then it has at most one index 2 subgroup, since $C_{ns}$ is cyclic, and we again have $n\le 2$.  Otherwise $G$ lies in a Borel group $B$, which we now assume.
The group $G$ and its index~2 subgroups are uniquely determined by their intersections with the split Cartan group $C_s$ contained in $B$; these are abelian groups, each of which can be written as a product of at most two cyclic groups.
It follows that $G\cap C_s$ has at most three subgroups of index 2.
If it has three, then at least one of them must contain $-1$, since if $H_1$ and $H_2$ are distinct index 2 subgroups of $G\cap C_s$ that do not contain $-1$ then $\langle H_1\cap H_2,-1\rangle$ is an index 2 subgroup that contains $-1$.
Thus $G$ has at most two index 2 subgroups that do not contain $-1$, and we therefore have $n\le 3$.
\end{proof}

\begin{remark}
Lemma~\ref{lem:3twists} does not apply to composite integers $m$.
Indeed, for $m=8$ there may be as many as 20 non-conjugate $G_{E^F}(m)$ that arise as $F$ ranges over quadratic extensions of $K$; see \cite{rzb14} for examples.
\end{remark}

For any subgroup $G$ of $\GL_2(\ell)$ we refer to $\langle G,-1\rangle$ and its index~2 subgroups that do not contain $-1$ as \emph{twists} of $G$ (so $G$ is always a twist of itself).
If $G=G_E(\ell)$ for some elliptic curve $E/K$ then the twists of~$G$ are precisely the subgroups that arise as $G_{E^F}(\ell)$ for some quadratic twist $E^F$ (up to conjugacy in $\GL_2(\ell)$).
Quadratic twists $E^F$ that realize every possibility for $G_{E^F}(\ell)$ can be efficiently constructed using the results in this section.
It suffices to determine the quadratic fields that lie in $K(E[\ell])$ (of which there are at most~3), and to determine which of these quadratic fields lies in the splitting field $L$ of the $\ell$-division polynomial of $E$.
The discriminants of these quadratic fields must divide the discriminant of $K(E[\ell])$, whose prime divisors include only $\ell$ and the primes of bad reduction for $E$.
Provided we can factor the discriminant of $E$, these fields can be determined by simply testing candidate fields $F$ with suitable discriminants by computing $G_{E^F}(\ell)$; in practice this is much faster than attempting to explicitly compute the torsion field $K(G_E(\ell))$ and the quadratic extensions $F/K$ it contains.

\begin{remark}
If $G$ is locally conjugate to $G'$, then each of its twists $H$ is locally conjugate to a corresponding twist $H'$ of $G'$.
If $G=G_E(\ell)$ for some elliptic curve $E/K$, then the twists of $G$ and the twists of any locally conjugate $G'$ all arise as images of Galois representations of elliptic curves defined over $K$.
Thus the discovery of a subgroup $G$ of $\GL_2(\ell)$ that arises as $G_E(\ell)$ may lead directly to as many as 5 other non-conjugate subgroups~$G'$ that arise as the image of Galois representations of curves that are twists of either $E$ or the elliptic curve~$E'$ isogenous to~$E$ given by Theorem~\ref{thm:locconjisog}.
\end{remark}

\begin{example}
Consider the elliptic curve $E/\Q$ with Cremona label \href{http://www.lmfdb.org/EllipticCurve/Q/11a3}{\texttt{11a3}}, which has
$G_E(5)=\langle \diagmat{1}{2} \smallmat{1}{1}{0}{1}\rangle$.
The group $G_E(5)$ has three twists, including itself.
The other two are $\langle G_E(5),-1\rangle$ and its index two subgroup $\langle \diagmat{4}{3}\smallmat{1}{1}{0}{1}\rangle$, which can be obtained as Galois images by twisting $E$ by $\Q(\sqrt{-3})$ and $\Q(\sqrt{5})$, which yields curves with Cremona labels \href{http://www.lmfdb.org/EllipticCurve/Q/99d1}{\texttt{99d1}} and \href{http://www.lmfdb.org/EllipticCurve/Q/275b1}{\texttt{275b1}}, respectively.
The group $G_E(5)$ is locally conjugate to $G_{E'}(\ell)=\langle \diagmat{2}{1}\smallmat{1}{1}{0}{1}\rangle$, where $E'$ has Cremona label \href{http://www.lmfdb.org/EllipticCurve/Q/11a2}{\texttt{11a2}}.
Twisting $E'$ by $\Q(\sqrt{-3})$ and $\Q(\sqrt{5})$ yields curves with Cremona labels \href{http://www.lmfdb.org/EllipticCurve/Q/99d3}{\texttt{99d3}} and \href{http://www.lmfdb.org/EllipticCurve/Q/275b3}{\texttt{275b3}}, respectively, whose Galois images realize the corresponding twists of $G_{E'}(\ell)$.
The six subgroups of $\GL_2(5)$ in this example are non-conjugate and listed in Table~\ref{table:Qpart1} under the labels \texttt{5b.1.1}, \texttt{5B.1.2}, \texttt{5B.1.3}, \texttt{5B.1.4}, \texttt{5B.4.1}, and \texttt{5B.4.2} (the curves listed in Table~\ref{table:Qpart1} for these groups are not all the same as those in this example, some have smaller conductor).
\end{example}

\section{Computational Results}\label{sec:results}

We implemented the algorithms described in Section 5 using the C programming language (as noted earlier, Magma scripts implementing the algorithms in Section 3 are available at \cite{sut15}).
For the computation of Frobenius triples in Algorithm~\ref{alg:frobtriples}, at primes up to $2^{40}$ we relied on the \texttt{smalljac} software library \cite{smalljac} based on the algorithms described in \cite{ks08}, and for larger primes we used the implementation of the SEA algorithm described in \cite{sut13b}.
For the computation of the matrices $A_\p$ described in \S\ref{sec:frobconj} we used a modified version of the algorithm in \cite{bs11} that was optimized for smaller primes, using techniques described in \cite[\S 4]{sut10} and \cite{sut13b}.

As a key practical optimization, we precomputed tables of Frobenius triples for every elliptic curve $E/\Fp$, for primes $p\le 2^{16}$.
This allows us to compute Frobenius triples for the reductions of an elliptic curve $E$ over a number field $K$ at degree-1 primes $\p$ of $K$ with $N(\p) \le 2^{16}$ by simply doing a table lookup; this is particular useful when computing Galois images for large families of elliptic curves.
While $2^{16}$ is typically much smaller than the $(\log N_E)^{10+o(1)}$ bound given by the GRH-based Chebotarev bounds of Corollary~\ref{cor:GRHsamplebounds}, in the typical case where $\rho_{E,\ell}$ is surjective we can usually obtain an unconditional proof of this fact by computing Frobenius triples for just a handful of small primes of good reduction; typically just ten or twenty primes suffice.
This optimization dramatically improves the practical efficiency of our algorithms because it allows us to very quickly determine a small set of primes $S$ that we know contains the set of exceptional primes $S_E$ (the primes $\ell$ for which $G_E(\ell)$ does not contain $\SL_2(\ell)$); this is the main motivation for treating Algorithms~\ref{alg:mcxell} and~\ref{alg:mcfull} separately.

We have applied our algorithms to several large databases of elliptic curves, including:
\begin{itemize}
\setlength{\itemsep}{0pt}
\item \href{http://homepages.warwick.ac.uk/~masgaj/ftp/data/}{Cremona's Elliptic Curve Data} \cite{cremona}, which includes all elliptic curves over~$\Q$ of conductor less than $350,000$ (about 2 million curves);
\item the \href{http://modular.math.washington.edu/Tables/}{Stein-Watkins Table of Elliptic Curves} \cite{sw02}, which includes a large proportion of the elliptic curves over $\Q$ of conductor up to $10^8$, and of prime conductor up to $10^{10}$ (about 140 million curves);
\item the \href{http://beta.lmfdb.org}{$L$-functions and Modular Forms Database (LMFDB)} \cite{lmfdb,lmfdbbeta}, which includes Cremona's tables as well as some 150,000 elliptic curves of small conductor over quadratic and cubic fields.
\end{itemize}
We also analyzed more than $10^9$ elliptic curves of bounded height over $\Q$ and ten quadratic fields (the five real and five imaginary quadratic fields of least absolute discriminant).
In addition to these, we analyzed elliptic curves in families parameterized by various modular curves, including:
\begin{itemize}
\item the modular curves $X_H$ of genus 0 described in~\cite{zywina15};
\item the modular curve $X_{\sym{4}}(7)$ of genus 0 over $\Q(\sqrt{-7})$, using the model in~\cite{kenku85};
\item the modular curve $X_s^+(11)$ of genus 2, using the model in~\cite{banwait13};
\item the modular curve $X_{ns}^+(11)$ of genus 1, using the model in~\cite{cc04};
\item the (isomorphic) modular curves $X_s^+(13)$ and $X_{ns}^+(13)$ of genus 3, using the models given in \cite{baran14};
\item the modular curves $X_0(\ell)$ for primes $11\le \ell \le 61$ of genus up to $5$, using the models provided by the Magma \cite{magma} function \texttt{SmallModularCurve}, as well as quadratic points on these curves found in \cite{bn14}.
\end{itemize}

We restricted our attention to elliptic curves without complex multiplication and used our Monte Carlo algorithm to compute $G_E(\ell)$ up to local conjugacy.
In cases where we were not able to unconditionally prove $G_E(\ell)=\GL_2(\ell)$ we repeated the algorithm 200 times, thereby ensuring (under the GRH) that the probability of error is less than $3^{-200}$.

Having computed $G_E(\ell)$ up to local conjugacy, in each case with two non-conjugate groups $G$ and $G'$ locally conjugate to $G_E(\ell)$ we computed $d_1(G_E(\ell))$ via Proposition~\ref{prop:dist}, and in cases with $d_1(G)\ne d_1(G')$ used this information to determine $G_E(\ell)$ up to conjugacy.
We encountered only one case with $d_1(G)=d_1(G')$, arising for the groups labeled \texttt{11B.10.4} and \texttt{11B.10.5} in Table~\ref{table:Qpart1}, but in this case $G$ and $G'$ have twists that are not locally conjugate, and by twisting $E$ appropriately we were able to determine $G_E(\ell)$ up to conjugacy, as described in \S\ref{sec:twists}.

\begin{remark}
Thanks to recent work by Zywina \cite{zywina15}, for the elliptic curves $E/\Q$ that we found to have exceptional Galois images $G_E(\ell)$, we were able to independently verify our results using his explicit models of modular curves $X_H/\Q$ of prime level that include every subgroup $H$ of $\GL_2(\ell)$ that is known to arise for a non-CM elliptic curve over $\Q$; in no instance did we find an error in our computations.
\end{remark}

\subsection{Results over \texorpdfstring{$\Q$}{\bf Q}}

In total we found 63 exceptional Galois images $G_E(\ell)$ for non-CM elliptic curves $E/\Q$.
These are listed in Tables~\ref{table:Qpart1} and~\ref{table:Qpart2}, along with an elliptic curve of minimal conductor that realizes $G_E(\ell)$.
In collaboration with John Cremona, our results for elliptic curves of conductor up to 350,000 have now been incorporated into Cremona's tables and the LMFDB.

\begin{remark}
Although we analyzed a total of more than $10^{10}$ elliptic curves $E/\Q$, every exceptional $G_E(\ell)$ that we found already occurs for a curve in Cremona's tables; indeed the largest conductor needed to obtain every exceptional $G_E(\ell)$ that we found is $232,544$, which is the conductor of curve listed for the group labeled \texttt{11Nn}.
\end{remark}

\subsection{Results over quadratic fields for elliptic curves with rational \texorpdfstring{$j$}{{\it j}}-invariants}

It follows from Conjecture \ref{conj:Qexcep} that the exceptional Galois images $G_E(\ell)$ that do not contain $\SL_2(\ell)$ that can arise when $E$ is the base change of a non-CM elliptic curve over $\Q$ to a quadratic field  are, up to conjugation in $\GL_2(\ell)$, the 63 exceptional $G_E(\ell)$ that arise over $\Q$ and their subgroups of index 2.
Using Algorithm~\ref{alg:enum}, we can easily enumerate these groups, and we find that up to conjugacy in $\GL_2(\ell)$, there are are 68 groups $G_E(\ell)$ that arise for base changes from $\Q$ to a quadratic field but not over $\Q$.

An elliptic curve $E$ over a quadratic field $K$ whose $j$-invariant lies in $\Q$ is either the base change of an elliptic curve over $\Q$, or a twist of such a curve.
As we are only concerned with elliptic curves without complex multiplication, we can assume $j(E)\not\in \{0,1728\}$ and only need to consider quadratic twists.
It follows from Corollary~\ref{cor:twistfields} that the groups $G_E(\ell)$ that can arise when $E$ is an elliptic curve over a quadratic field with $j(E)\in \Q$ are the groups $G$ that arise for base changes from $\Q$ and their \emph{twists}, as defined in \S\ref{sec:twists}: these are the groups $\langle G,-1\rangle$ and its index 2 subgroups that do not contain $-1$.
A computation shows that, up to conjugation in $\GL_2(\ell)$ and assuming Conjecture \ref{conj:Qexcep}, there are 23 such twists that do not arise for the base change of an elliptic curve over $\Q$,
We thus obtain the following result.

\begin{theorem}\label{thm:basechange}
Assume Conjecture~\ref{conj:Qexcep}.
Up to conjugation in $\GL_2(\ell)$ there are $160$ Galois images $G_E(\ell)$ that do not contain $\SL_2(\ell)$ and arise for non-CM elliptic curves $E$ over quadratic fields with $j(E)\in \Q$ and primes $\ell$; these are listed in Tables~\ref{table:Qpart1}-\ref{table:Qbasechangetwist2}.
Of these, $63$ arise over $\Q$, $68$ arise for base changes of elliptic curves over $\Q$ but not over $\Q$, and $29$ arise only for elliptic curves that are not base changes from $\Q$.
\end{theorem}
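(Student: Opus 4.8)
The plan is to reduce the statement to a finite computation with the subgroup machinery of Section~\ref{sec:GL2}, using Conjecture~\ref{conj:Qexcep} as the starting point and the twisting analysis of Section~\ref{sec:twists} to pass from $\Q$ to quadratic fields. An elliptic curve $E$ over a quadratic field $K$ with $j(E)\in\Q$ and $j(E)\notin\{0,1728\}$ is, up to $K$-isomorphism, a quadratic twist of the base change $E_0\times_\Q K$ of some non-CM elliptic curve $E_0/\Q$. Hence the groups $G_E(\ell)$ arising in this generality are obtained in two stages: first enumerate the \emph{base-change images} $G_{(E_0)_K}(\ell)$ as $E_0/\Q$ and the quadratic field $K$ vary, then close the resulting set under the quadratic-twist operation.

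For the first stage, fix $E_0/\Q$ and a quadratic field $K$. Restricting $\rho_{E_0,\ell}$ to $\Gal(\Kbar/K)$ shows that $G_{(E_0)_K}(\ell)=\rho_{E_0,\ell}(\Gal(\Kbar/K))$ is the image of an index-$2$ subgroup of $\Gal(\Kbar/\Q)$, hence equals either $G_{E_0}(\ell)$ or, when $K\subseteq\Q(E_0[\ell])$, the index-$2$ subgroup $\Gal(\Q(E_0[\ell])/K)$ of $G_{E_0}(\ell)=\Gal(\Q(E_0[\ell])/\Q)$; conversely, taking $K$ to be the quadratic subfield of $\Q(E_0[\ell])$ cut out by a given index-$2$ subgroup $H\subseteq G_{E_0}(\ell)$ realizes $G_{(E_0)_K}(\ell)=H$. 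Thus, assuming Conjecture~\ref{conj:Qexcep}, the base-change images not containing $\SL_2(\ell)$ are exactly the $\GL_2(\ell)$-conjugacy classes of index-$\le 2$ subgroups, not containing $\SL_2(\ell)$, of the group $\GL_2(\ell)$ together with the $63$ exceptional groups of Tables~\ref{table:Qpart1} and~\ref{table:Qpart2}. For $\ell>2$ every index-$\le 2$ subgroup of $\GL_2(\ell)$ contains $\SL_2(\ell)$, since $\GL_2(\ell)^{\mathrm{ab}}\cong\F_\ell^\times$, and for $\ell=2$ the only proper index-$2$ subgroup of $\GL_2(2)$ is $C_{ns}(2)$, which already occurs over $\Q$; so it suffices to list the index-$\le 2$ subgroups of the $63$ exceptional groups and deduplicate up to conjugacy. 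This is a finite computation carried out with Algorithm~\ref{alg:enum} and the conjugacy tests of Section~\ref{sec:GL2}, and it produces $63+68$ groups.

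For the second stage, if $E$ is a quadratic twist $E^F$ of a curve $E_1/K$, then Corollary~\ref{cor:twistfields} identifies $G_{E^F}(\ell)$ with a \emph{twist} of $G_{E_1}(\ell)$ in the sense of \S\ref{sec:twists}: namely $\langle G_{E_1}(\ell),-1\rangle$ or one of its index-$2$ subgroups not containing $-1$. Moreover \S\ref{sec:twists} shows how to choose $F$ so that every such twist is realized, and Lemma~\ref{lem:3twists} bounds and organizes the number of them (at most three non-conjugate groups per family, with the extremal cases forced to lie in a Borel or Cartan-normalizer). Applying the twisting operation to each of the $131$ base-change images — noting that a twist of a group not containing $\SL_2(\ell)$ again fails to contain $\SL_2(\ell)$ — and deduplicating up to $\GL_2(\ell)$-conjugacy adds exactly $29$ further groups, for a total of $160$; the explicit lists appear in Tables~\ref{table:Qpart1}--\ref{table:Qbasechangetwist2}.

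The mathematical input here is elementary — the restriction-of-representation description of base change, and Corollary~\ref{cor:twistfields} together with \S\ref{sec:twists} for twists — so the main obstacle is the bookkeeping: correctly forming all index-$2$ subgroups and all twists of the roughly $63$ (then $131$) explicit subgroups across every relevant prime, and sorting the resulting families into $\GL_2(\ell)$-conjugacy classes without double counting or omitting a realizable case. The subgroup enumeration and conjugacy machinery of Section~\ref{sec:GL2}, and the Magma implementation of Algorithm~\ref{alg:enum} available at \cite{sut15}, are precisely suited to this, and I would cross-check the final counts $63$, $68$, $29$ against the tables. One subtlety to guard against is conflating local conjugacy with ordinary conjugacy when assembling the tables; here every group counted is genuinely realized and the counts are of honest conjugacy classes, so no spurious identifications arise.
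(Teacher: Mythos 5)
Your proposal is correct and follows essentially the same route as the paper: base changes yield the $63$ exceptional groups over $\Q$ together with their index-$2$ subgroups ($68$ new, enumerated via Algorithm~\ref{alg:enum}), and quadratic twists of these, handled via Corollary~\ref{cor:twistfields} and \S\ref{sec:twists}, contribute the remaining $29$. Your added remarks (that index-$2$ subgroups of $\GL_2(\ell)$ itself contribute nothing new, and that twisting preserves the property of not containing $\SL_2(\ell)$) are correct and only make explicit what the paper leaves implicit.
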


Of the 68 exceptional groups that arise for base changes $E_K$ of elliptic curves $E/\Q$ to quadratic fields $K$ (but not over $\Q$), 23 have surjective determinant map (these are listed in Table~\ref{table:Qbasechange1}) and 45 do not (these are listed in Table~\ref{table:Qbasechange2}).
Along with each group we list an elliptic curve $E/\Q$ and the discriminant $D$ of a quadratic field $K$ for which $G_{E_K}(\ell)$ is conjugate to the group listed.
In each case $K$ is a subfield of $\Q(E[\ell])$; taking $D=\bigl(\frac{-1}{\ell}\bigr)\ell$ to be the discriminant of the quadratic subfield of the cyclotomic field $\Q(\zeta_\ell)$) yields the subgroup of $G_E(\ell)$ with square determinants, while any other quadratic subfield $K$ of $\Q(E[\ell])$ yields a group whose determinant map is surjective.

The 29 elliptic curves listed in Tables~\ref{table:Qbasechangetwist1} and~\ref{table:Qbasechangetwist2} are quadratic twists $E_K^F$ of base changes of elliptic curves $E/\Q$ to quadratic fields $K$ by quadratic subextensions $F/K$ of $K(E_K[\ell])/K$ that were computed using the methods described in \S\ref{sec:twists}.

\subsection{Results over quadratic and cubic fields}\label{sec:nfresults}

As noted above, the LMFDB includes tables of elliptic curves of small conductor over various quadratic and cubic fields, including the five real and five imaginary quadratic fields of least absolute discriminant, as well as the cubic field of discriminant $-23$.
The enumeration of modular elliptic curves over the five imaginary quadratic fields $\Q(\sqrt{-1})$, $\Q(\sqrt{-2})$,$\Q(\sqrt{-3})$, $\Q(\sqrt{-7})$, and $\Q(\sqrt{-11})$ was originally addressed by Cremona in \cite{cremona84a,cremona84b} who constructed tables for elliptic curves of conductor norm up to 500; these results have recently extended to conductor norm 10,000 by Cremona and his student Warren Moore.
The tabulation of elliptic curves over the real quadratic field $\Q(\sqrt{5})$ described in~\cite{bdklorss13} has  been extended to conductor norm 5,000, and the LMFDB also contains data for elliptic curves over $\Q(\sqrt{2})$ and $\Q(\sqrt{3})$ to conductor norm 5,000, and over $\Q(\sqrt{13})$ and $\Q(\sqrt{17})$ to conductor norm 2,000 and 1,000, respectively (as of this writing).
In addition, elliptic curves over the cubic field $\Q[a]/(a^3-a^2+1)$ of discriminant $-23$ of conductor norm up to 10,000 are included in the LMFDB, based on the work in \cite{dgky14}.

In total, we computed $G_E(\ell)$ for 115,894 non-CM elliptic curves over these fields that are listed in the LMFDB, as well families of elliptic curves of bounded height, and curves parameterized by points of bounded height on the modular curves listed above.
Tables~\ref{table:Quadratic1}-\ref{table:Quadratic3} list the exceptional groups $G_E(\ell)$ that we found for non-CM elliptic curves over the ten quadratic fields noted above that are not already listed in Tables~\ref{table:Qpart1}--\ref{table:Qbasechangetwist2}.
It follows from \cite{zywina15} and the results of \S\ref{sec:twists} that these groups cannot arise for non-CM elliptic curves over quadratic fields that have rational $j$-invariants (we do not require Conjecture~\ref{conj:Qexcep} here because these groups all lie in the Borel group).

Table~\ref{table:c23} lists the exceptional groups $G_E(\ell)$ that we found for non-CM elliptic curves over the cubic field of discriminant $-23$ that do not already appear in Tables~\ref{table:Qpart1}--\ref{table:Quadratic3}.

\begin{remark}
Unlike the results listed in Tables~\ref{table:Qpart1}-\ref{table:Qbasechangetwist2}, which are complete under Conjecture~\ref{conj:Qexcep}, Tables~\ref{table:Quadratic1}-\ref{table:Quadratic3} are known to be incomplete.
In particular, it follows from \cite[Prop.\ 4.4.8.1]{ligozat77} that there are infinitely many elliptic curves over each of the ten quadratic fields that we consider with $G_E(11)$ conjugate to a subgroup of \texttt{11S4}, but none are listed in our tables.
\end{remark}

\begin{remark}\label{localglobal}
The elliptic curves listed in Table~\ref{table:Qpart1} for the groups labeled \texttt{7Ns.2.1} and \texttt{7Ns.3.1} both have $j$-invariant $2268945/128$ and represent the unique $\Qbar$-isomorphism class of elliptic curves $E/\Q$ that are exceptions to the local-global principle for isogenies \cite{sut12}: each admits a rational 7-isogeny locally everywhere (modulo every prime of good reduction), but not globally (over $\Q$).
The elliptic curve listed in Table~\ref{table:Qbasechange1} for the group labeled \texttt{13A4.1[2]} is the base change of the elliptic curve over $\Q$ listed in Table~\ref{table:Qpart2} for the group labeled \texttt{13S4} to $\Q(\sqrt{13})$; it represents one of five $\Qbar$-isomorphism classes of elliptic curves over $\Q(\sqrt{13})$ that are exceptions to the local-global principle for 13-isogenies \cite[Cor.\ 1.9]{bw14} (three have rational $j$-invariants and two do not).
The elliptic curve listed in Table~\ref{table:Qbasechange1} for the group labeled \texttt{5Ns[2]} is one of infinitely many examples of elliptic curves over $\Q(\sqrt{5})$ with distinct $j$-invariants that admit a 5-isogeny locally everywhere but not globally, as proved in \cite[Thm.\ 1.5]{bw14}, as is the curve listed in Table~\ref{table:Quadratic1} for the group labeled \texttt{5Ns.2.1[2]}.
These curves all have $G_E(5)$ conjugate to \texttt{5Ns[2]} or \texttt{5Ns.2.1[2]}; the former case may arise for the base change of an elliptic curve $E/\Q$ with $G_E(5)$ conjugate to \texttt{5Ns}, while the latter case can only arise only for elliptic curves $E/\Q(\sqrt{5})$ with $j(E)\not\in \Q$.
\end{remark}

\subsection{Group labels}
In the tables that follow conjugacy classes of subgroups $G$ of $\GL_2(\ell)$ are identified by labels of the form
\[
\ell S.a.b.c[d],
\]
where $\ell$ is a prime, $S$ is one of \texttt{G}, \texttt{B}, \texttt{Cs}, \texttt{Cn}, \texttt{Ns}, \texttt{Nn}, \texttt{A4}, \texttt{S4}, \texttt{A5}, while $a$, $b$, $c$ are (optional) nonnegative integers whose meaning depends on $S$, as described below, and $d$ is the index of $\det(G)$ in $\Z(\ell)^\times$; the suffix $[d]$ is omitted when $d=1$.
Let $r$ be the least positive integer that generates the index $d$ subgroup of $\Z(\ell)^\times$.
\begin{enumerate}
\setlength{\itemsep}{4pt}
\item[\texttt{G}:] $G$ contains $\SL_2(\ell)$; the label $\ell\hairspace\texttt{G}$ denotes $\GL_2(\ell)$ and $\ell\hairspace\texttt{G}[d]$ is used when $d=[\GL_2(\ell):G] > 1$.
\item[\texttt{B}:] $G$ is conjugate to a subgroup of $B(\ell)$ that contains an element of order $\ell$,
The label $\ell\hairspace\texttt{B}$ denotes $B(\ell)$ and $\ell\hairspace\texttt{B}[d]$ denotes $\ell\hairspace\texttt{B}\cap \ell\hairspace\texttt{G}[d]$.
The label $\ell\hairspace\texttt{B}.a.b[d]$ denotes the subgroup generated by
\[
\begin{pmatrix}a&0\\0&1/a\end{pmatrix},\ \begin{pmatrix}b&0\\0&r/b\end{pmatrix},\ \begin{pmatrix}1&1\\0&1\end{pmatrix},
\]
where the integers $a,b>0$ are both as small as possible.

\item[\texttt{Cs}:] $G$ is conjugate to a subgroup of $C_s(\ell)$ (including subgroups of $Z(\ell)\subseteq C_s(\ell)$)
The label $\ell\hairspace\texttt{Cs}$ denotes $C_s(\ell)$ and $\ell\hairspace\texttt{Cs}[d]$ denotes $\ell\hairspace\texttt{Cs}\cap \ell\hairspace\texttt{G}[d]$.
The label $\ell\hairspace\texttt{Cs}.a.b[d]$ denotes the subgroup generated by
\[
\begin{pmatrix}a&0\\0&1/a\end{pmatrix},\ \begin{pmatrix}b&0\\0&r/b\end{pmatrix},
\]
with $a,b>0$ minimal.

\item[\texttt{Cn}:] $G$ is conjugate to a subgroup of $C_{ns}(\ell)$ that does not lie in $C_s(\ell)$.
For $\ell=2$ this is the index~2 subgroup of $\GL_2(2)$, which is denoted \texttt{2Cn}.
For $\ell > 2$ the label $\ell\hairspace\texttt{Cn}$ denotes $C_{ns}(\ell)$, and $\ell\hairspace\texttt{Cn}[d]$ denotes $\ell\hairspace\texttt{Cn}\cap\ell\hairspace\texttt{G}[d]$.
The label$\ell\hairspace\texttt{Cn}.a.b[d]$ denotes the subgroup generated by
\[
\begin{pmatrix}a&\varepsilon b\\b&a\end{pmatrix},
\]
with the integers $b>0$, $a\ge 0$ chosen to make $(a,b)$ lexicographically minimal.
%Over a number field~$K$ with a real embedding only the case $\texttt{2Cn}$ can arise as $G_E(\ell)$ for some $E/K$, since $C_{ns}(\ell)$ does not contain an element with trace 0 and determinant $-1$ for $\ell >2$; see Remark~\ref{rem:cc}.

\item[\texttt{Ns}:] $G$ is conjugate to a subgroup of $C_s^+(\ell)$ with dihedral projective image.
The label $\ell\hairspace\texttt{Ns}$ denotes $C_s^+(\ell)$, the label $\ell\hairspace\texttt{Ns}[d]$ denotes $\ell\hairspace\texttt{Ns}\cap\ell\hairspace\texttt{G}[d]$, and $\ell\hairspace\texttt{Ns}.a.b[d]$ denotes the subgroup of $C_s^+(\ell)$ generated by
\[
\begin{pmatrix}a&0\\0&1/a\end{pmatrix},\ \begin{pmatrix}0&b\\-r/b&0\end{pmatrix},
\]
with $a$ and $b$ minimal, and $\ell\hairspace\texttt{Ns}.a.b.c[d]$ denotes the subgroup generated by
\[
\begin{pmatrix}a&0\\0&1/a\end{pmatrix},\ \begin{pmatrix}0&b\\-1/b&0\end{pmatrix},\ \begin{pmatrix}0&c\\-r/c&0\end{pmatrix}
\]
with $a,b,c>0$ minimal.

\item[\texttt{Nn}:] $G$ is conjugate to a subgroup of $C_{ns}^+(\ell)$ with dihedral projective image and not conjugate to any subgroup of $C_s^+(\ell)$.
The label $\ell\hairspace\texttt{Nn}$ denotes $C_{ns}^+(\ell)$ and $\ell\hairspace\texttt{Nn}[d]$ denotes $\ell\hairspace\texttt{Nn}\cap\ell\hairspace\texttt{G}[d]$.
The label $\ell\hairspace\texttt{Nn}.a.b[d]$ denotes the subgroup generated by
\[
\begin{pmatrix}a&\varepsilon b\\b&a\end{pmatrix},\ \begin{pmatrix}1&0\\0&-1\end{pmatrix}.
\]
with $(a,b)$ lexicographically minimal, and $\ell\hairspace\texttt{Nn}a.b.c[d]$ denotes the subgroup generated by
\[
\begin{pmatrix}a&\varepsilon b\\b&a\end{pmatrix},\ \begin{pmatrix}1&0\\0&-1\end{pmatrix}\delta^c,
\]
where $\delta=\smallmat{x}{\varepsilon y}{y}{x}$ is any generator for $C_{ns}(\ell)$ and $c=[Z(\ell):G\cap Z(\ell)]$ as in Corollary~\ref{cor:nonsplitdihedral}.

\item[\texttt{A4}:] $G$ has projective image isomorphic to $\alt{4}$ and does not contain $\SL_2(\ell)$.
This requires $d>1$.
The label $\ell$\texttt{A4}$.a[d]$ indicates $[\det(G):\det(Z(G)]=a$ (which must be 1 or 3, the latter only when $\ell\equiv 1\bmod 3$).
Algorithm~\ref{alg:excep} can be used to obtain an explicit set of generators.

\item[\texttt{S4}:] $G$ has projective image isomorphic to $\sym{4}$ and does not contain $\SL_2(\ell)$.
The label $\ell$\texttt{S4} indicates $Z(G)=Z(\ell)$ and $d=1$, while $\ell$\texttt{S4}$[d]$ is used for $d>1$ when $[\det(G):\det(Z(G))]=2$, and $\ell$\texttt{S4}.$1[d]$ is used when $[\det(G):\det(Z(G))] = 1$ (which implies $d>1$).
See Lemma~\ref{lem:excep} for a list of the cases that can occur.
Algorithm~\ref{alg:excep} can be used to obtain an explicit set of generators.

\item[\texttt{A5}:] $G$ has projective image isomorphic to $\alt{5}$.
This requires $\ell\equiv\pm 1\bmod 5$ and $d>1$.
The label $\ell$\texttt{A5}.$[d]$ indicates $[\det(G):\det(Z(G))]=1$ (the only possible case, by Lemma~\ref{lem:excep}).
Algorithm~\ref{alg:excep} can be used to obtain an explicit set of generators.
\end{enumerate}

A magma script that will compute the label of any subgroup of $\GL_2(\ell)$ is available at \cite{sut15}; it also includes a procedure to construct a subgroup based on its label, with generators as above.

\subsection{Tables of exceptional Galois images}
Each of the tables that follow lists the following data:

\begin{itemize}
\setlength{\itemsep}{2pt}
\item the first column lists the label of a group $G\subseteq\GL_2(\ell)$, as defined above, the second columns lists its index in $\GL_2(\ell)$, and the third lists the generators for $G$ as indicated by the label;
\item the column``$-1$" indicates whether the group $G$ contains the scalar matrix $-1$ or not;
\item $t$ is the number of twists the group has (as defined in \S\ref{sec:twists}), equivalently, the number of non-conjugate $G_{E'}(\ell)$ that arise among the twists $E'$ of $E$ (defined over the same field $K$).
\item $d_0$ is the index of the largest subgroup of $G$ that fixes a linear subspace of $\Z(\ell)^2$; equivalently, the degree of the minimal extension over which $E$ admits a rational $\ell$-isogeny.
\item $d_1$ is the index of the largest subgroup of $G$ that fixes a nonzero vector in $\Z(\ell)^2$;  equivalently, the degree of the minimal extension over which $E$ has a rational point of order~$\ell$.
\item $d$ is the order of $G$; equivalently, the degree of the minimal extension $L/K$ for which $E[\ell]\subseteq E(L)$.
\item the curve column lists the Weierstrass coefficients $[a_1,a_2,a_3,a_4,a_6]$ of an integral equation
\[
y^2+a_1xy+a_3y = x^3+a_2x^2+a_4x+a_6
\]
that defines an elliptic curve $E/K$.  When $K \ne \Q$, these may be polynomials in $a\in\O_K$ with minimal polynomial $f(a)$, in which case the curve is listed as $[a_1,a_2,a_3,a_4,a_6]/(f(a))$.
Curves are linked to their entry in the LMFDB, when available

\item for elliptic curves $E$ over quadratic fields with $j(E)\in \Q$ that are not base changes we list $j(E)$.
\item $N$ is the absolute norm of the conductor of the elliptic curve $E$ in factored form.
\item $D$ is the discriminant of the number field $K$ (not listed when $K=\Q$).
\end{itemize}

Pairs of locally conjugate groups are indicated by brackets on the left, and for each such pair the listed curves are related by a chain of $\ell$-isogenies, as in Theorem~\ref{thm:locconjisog}.
Recall from Section \ref{sec:notation} that we view elements of $\Aut(E[\ell])$ as $2\times 2$ matrices that act on column vectors on the \textbf{left} (this distinction is important because many of the groups are not conjugate to their transposes).

\FloatBarrier

\begin{table}
\begin{small}
\setlength{\extrarowheight}{0.8pt}
% [inline block 0: 10 envs, 69760 chars -> data_tex | \begin{tabular}{@{\hskip -6pt}rlrlccrrrll} &group&index&generators&$-1$&$t$&$d_0$&$d_1$&$d$&curve& $N$\\\toprule...]

\end{small}
\bigskip
\caption{Some exceptional $G_E(\ell)$ for non-CM elliptic curves $E$ over $\Q[a]/(a^3-a^2+1)$.}\label{table:c23}
\end{table}


\begin{thebibliography}{99}

%\bibitem{arai}  K. Arai, \emph{On uniform lower bound of the Galois images associated to elliptic curves}, J. Th\'{e}or. Nombres  Bordeaux \textbf{20} (2008), 23--43.

%\bibitem{duketoth} W. Duke and A. T\'{o}th, \emph{The splitting of primes in division fields of elliptic curves}, Experiment. Math. \textbf{11} (2003), 555--565.

%\bibitem{jonestc} N. Jones, \emph{A bound for the torsion conductor of a non-CM elliptic curve}, Proc. Amer. Math. Soc. \textbf{137} (2009), 37--43.

%\bibitem{adelmann}Clemens Adelmann, \href{http://link.springer.com/book/10.1007%2Fb80624}{\textit{The decomposition of primes in torsion point fields}}, Lecture Notes in Mathematics \textbf{1761}, Springer, 2001.

\bibitem{anni13}
Samuele Anni, \href{http://www2.warwick.ac.uk/fac/sci/maths/people/staff/anni/anni_phd_thesis.pdf}{\textit{Images of Galois representations}}, PhD thesis, Universiteit Leiden and L'Universit\'e Bordeaux I, 2013.

%\bibitem{arai08} Keisuke Arai, \href{http://jtnb.cedram.org/item?id=JTNB_2008__20_1_23_0}{\textit{On uniform lower bound of the Galois images associated to elliptic curves}}, Journal de Th\'eorie des Nombres de Bordeaux \textbf{20} (2008), 23--43.

\bibitem{banwait13}
Barinder S. Banwait, \href{http://wrap.warwick.ac.uk/58400/}{\textit{On some local to global phenomena for abelian varieties}}, PhD thesis, University of Warwick, 2013.

\bibitem{bw14} Barinder S. Banwait and John E. Cremona, \href{http://msp.org/ant/2014/8-5/p07.xhtml}{\textit{Tetrahedral elliptic curves and the local-global principle for isogenies}}, Algebra and Number Theory \textbf{8} (2014), 1201--1229.

\bibitem{baran14}
Burcu Baran, \href{http://www.sciencedirect.com/science/article/pii/S0022314X14001917}{\textit{An exceptional isomorphism between modular curves of level $13$}}, Journal of Number Theory \textbf{145} (2014), 273--300

%\bibitem{baran10} Burcu Baran, \href{http://www.sciencedirect.com/science/article/pii/S0022314X10001782}{\textit{Normalizers of non-split Cartan subgroups, modular curves, and the class number one problem}}, Journal of Number Theory \textbf{130} (2010), 2753--2772.

\bibitem{bpr13}
Yuri Bilu, Pierre Parent, Marusia Robelledo, \href{http://dx.doi.org/10.5802/aif.2781}{\textit{Rational points on $X_0^+(p^r)$}}, Annales de l'Institut Fourier (Grenoble) \textbf{64} (2013), 957--984.

\bibitem{birkhoff35}
Garrett Birkhoff, \href{http://plms.oxfordjournals.org/content/s2-38/1/385}{\textit{Subgroups of abelian groups}}, Proceedings of the London Mathematical Society (Series 2) \textbf{38} (1935), 385--401.

\bibitem{bbel08}
Juliana Belding, Reinier Br\"oker, Andreas Enge, and Kristin Lauter, \href{http://link.springer.com/chapter/10.1007\%2F978-3-540-79456-1_19}{\textit{Computing Hilbert class polynomials}}, in Proceedings of the 8th International Symposium on Algorithmic Number Theory (ANTS VIII), pp.\ 282--295, Lecture Notes in Computer Science \textbf{5011}, Springer, 2008.

\bibitem{bisson11}
Gaetan Bisson, \href{http://www.degruyter.com/view/j/jmc.2012.5.issue-2/jmc.2011.008/jmc.2011.008.xml}{\textit{Computing endomorphism rings of elliptic curves under the GRH}}, Journal of Mathematical Cryptology \textbf{5} (2011), 101--113.

\bibitem{bs11}
Gaetan Bisson and Andrew V. Sutherland, \href{http://www.sciencedirect.com/science/article/pii/S0022314X09002789}{\textit{Computing the endomorphism ring of an ordinary elliptic curve over a finite field}}, Journal of Number Theory \textbf{131} (2011), 815--831.

\bibitem{bdklorss13}
J. Bober, A. Deines, A. Klages-Mundt, B. LeVeque, R.A. Ohana, A. Rabindranath, P. Sharaba, W. Stein,
\href{http://msp.org/obs/2013/1-1/p08.xhtml}{\textit{A database of elliptic curves over $\Q(\sqrt{5})$: a first report}}, in Proceedings of the Tenth Algorithmic Number Theory Symposium (ANTS X), E.W. Howe and K.S. Kedlaya (Eds.), Open Book Series \textbf{1}, Mathematical Sciences Publishers, 2013, 145--166.

\bibitem{magma}
Wieb Bosma, John Cannon, and Catherine Playoust, \href{http://www.sciencedirect.com/science/article/pii/S074771719690125X}{\textit{The Magma algebra system I: The user language}},  Journal of Symbolic Computation \textbf{24} (1997), 235--265.

%\bibitem{bj14} Julio Brau and Nathan Jones, \href{http://arxiv.org/abs/1406.1262}{\textit{Elliptic curves with $2$-torsion contained in the $3$-torsion field}}, arXiv:1406.1262 (2014).

%\bibitem{bh05} Iftikhar A. Burhanuddin and Ming-Deh Huang, \href{http://www.cs.usc.edu/assets/002/82925.pdf}{\textit{On computing rational torsion on elliptic curves}}, preprint.

%\bibitem{butler94} Lynne M. Butler, \href{http://www.ams.org/books/memo/0539/memo0539.pdf}{\textit{Subgroup lattices and symmetric functions}}, Memoirs of the American Mathematical Society \textbf{112}(539), 1994.

%\bibitem{chjjss08} Neil Calkin, Lauren Huckaba, Kevin James, Jason Joyner, Josh Schwartz, and Ethan Smith,\href{http://www.math.clemson.edu/~kevja/REU/2008/LangTrotterPaper.pdf}{\textit{Computing the Lang--Trotter constant}}, preprint (2008).

\bibitem{bls12}
Reinier Br\"oker, Kristin Lauter, and Andrew V. Sutherland, \href{http://www.ams.org/journals/mcom/2012-81-278/S0025-5718-2011-02508-1/}{\textit{Modular polynomials via isogeny volcanoes}}, Mathematics of Computation \textbf{81} (2012), 1201--1231.

\bibitem{bn14}
Peter Bruin and Filip Najman, \href{http://dx.doi.org/10.1112/S1461157015000157}{\textit{Hyperelliptic modular curves $X_0(n)$ and isogenies of elliptic curves over quadratic fields}}, LMS Journal of Computation and Mathematics \textbf{18} (2015), 578--602.

\bibitem{cz81}
David G. Cantor and Hans Zassenhaus, \href{http://www.ams.org/journals/mcom/1981-36-154/S0025-5718-1981-0606517-5}{\textit{A new algorithm for factoring polynomials over finite fields}}, Mathematics of Computation \textbf{36} (1981), 587--592.

\bibitem{centeleghe14}
Tommaso Giorgio Centeleghe, \href{http://www.worldscientific.com/doi/abs/10.1142/S1793042116500147}{\textit{Integral Tate modules and splitting of primes in torsion fields of elliptic curves}}, International Journal of Number Theory \textbf{12} (2016), 237--248.

\bibitem{cc04}
Imin Chen and Chris Cummins, \href{http://www.ams.org/journals/mcom/2004-73-246/S0025-5718-03-01562-X}{\textit{Elliptic curves with nonsplit mod-$11$ representations}}, Mathematics of Computation \textbf{73} (2004), 869-880.

\bibitem{cohen93}
Henri Cohen, \href{http://link.springer.com/book/10.1007\%2F978-3-662-02945-9}{\textit{A course in computational algebraic number theory}}, Springer, 1993.

\bibitem{cojocaru05} Alina Carmen Cojocaru, \href{http://cms.math.ca/10.4153/CMB-2005-002-x}{\textit{On the surjectivity of the {G}alois representations associated to non-{CM} elliptic curves}} (with an appendix by Ernst Kani), Canadian Mathematical Bulletin \textbf{48} (2005), 16--31.

%\bibitem{cflt14} David Corwin, Tony Feng, Zane Kun Li, Sarah Trebat-Leder, \href{http://www.ams.org/journals/mcom/2014-83-290/S0025-5718-2014-02804-4/}{\textit{Elliptic curves with full $2$-torsion and maximal adelic Galois representations}}, Mathematics of Computation \textbf{83} (2014), 2925--2951.

%\bibitem{cejmb11} Jean-Marc Couveignes, Bas Edixhoven, Robin de Jong, Franz Merkl, and Johan Bosman, \href{http://arxiv.org/abs/math/0605244}{\textit{Computational aspects of modular forms and Galois representations}}, Annals of Mathematics Studies, Princeton University Press, 2011.

\bibitem{cremona}
John E. Cremona, \href{https://homepages.warwick.ac.uk/staff/J.E.Cremona/ftp/data/}{\textit{Elliptic curve data}}, available at \url{https://homepages.warwick.ac.uk/staff/J.E.Cremona/ftp/data/}, 2014.

\bibitem{cremona84a}
John E. Cremona, \href{http://www.numdam.org/item?id=CM_1984__51_3_275_0}{\textit{Hyperbolic tessellations, modular symbols, and elliptic curves over complex quadratic fields}}, Compositio Mathematica \textbf{51} (1984), 275--324.

\bibitem{cremona84b}
John E. Cremona, \href{http://www.numdam.org/item?id=CM_1987__63_2_271_0}{\textit{Addendum and errata ``Hyperbolic tessellations, modular symbols, and elliptic curves over complex quadratic fields"}}, Compositio Mathematica \textbf{63} (1984), 271--272.

%\bibitem{daniels15} Harris B. Daniels, \href{http://www3.amherst.edu/~hdaniels/Serre_Curves.pdf}{\textit{An infinite family of Serre curves}}, preprint (2015).

\bibitem{deuring41} Max Deuring, \href{http://www.ams.org/mathscinet-getitem?mr=5125}{\textit{Die Typen der Multiplikatorenringe elliptischer Funktionenk\"orper}}, Abh. Math. Sem. Hansischen Univ. \textbf{14} (1941), 197--272.

\bibitem{dickson01} Leonard E. Dickson, \textit{Linear groups with an exposition of Galois field theory}, Cosimo Classics 2007 reprint of original publication by B.G. Teubner, Leipzig, 1901.

%\bibitem{dokchitser12} Time Dokchitser and Vladimir Dokchitser, \href{http://link.springer.com/article/10.1007\%2Fs00209-011-0967-7}{\textit{Surjectivity of mod $2^n$ representations of elliptic curves}}, Mathematische Seitshcrift \textbf{272} (2012), 961--964.

\bibitem{dgky14} Steve Donnelly, Paul E. Gunnells, Ariah Klages-Mundt, and Dan Yasaki, \href{http://www.tandfonline.com/doi/abs/10.1080/10586458.2014.1002141}{\textit{A table of elliptic curves over the cubic field of discriminant $-23$}}, Experimental Mathematics \textbf{24} (2015), 375--390.

%\bibitem{duke97} William Duke, \href{http://www.sciencedirect.com/science/article/pii/S0764444297801188}{\textit{Elliptic curves with no exceptional primes}}, Comptes Rendus de l'Acad\'emie des Sciences. S\'erie I. Math\'ematique \textbf{325} (1997), 813--818.

\bibitem{dt02}
William Duke and \'Arp\'ad T\'oth, \href{http://projecteuclid.org/euclid.em/1057864664}{\textit{The splitting of primes in division fields of elliptic curves}}, Experimental Mathematics \textbf{11} (2002), 555--565.

\bibitem{elkies95}
Noam D. Elkies, \href{http://www.ams.org/mathscinet-getitem?mr=1486831}{\textit{Elliptic and modular curves over finite fields and related computational issues}}, in Computational perspectives on number theory (Chicago, IL, 1995), AMS/IP Studies in Advanced Mathematics \textbf{7} (1998), 21--76.

%\bibitem{elkies06}
%Noam D. Elkies, \href{http://arxiv.org/abs/math/0612734}{\textit{Elliptic curves with $3$-adic Galois representation surjective mod $3$ but not mod $9$}}, arXiv:0612734, 2006.

\bibitem{enge09}
Andreas Enge, \href{http://www.ams.org/journals/mcom/2009-78-266/S0025-5718-08-02200-X/home.html}{\textit{The complexity of class polynomial computation via floating point approximations}}, Mathematics of Computation \textbf{78} (2009), 1089--1107.

\bibitem{fo05}
Dane L. Flannery and Eamonn A. O'Brien, \href{http://www.worldscientific.com/doi/abs/10.1142/S0218196705002426}{\textit{Linear groups of small degree over finite fields}}, International Journal of Algebra and Computation \textbf{15} (2005), 467--502. 

%\bibitem{fps05} John B. Friedlander, Carl Pomerance, and Igor E. Shparlinski,\href{http://journals.cambridge.org/action/displayAbstract?fromPage=online&aid=4829024&fileId=S0004972700035048}{\textit{Finding the group structure of elliptic curves over finite fields}}, Bulletin of the Australian Mathematical Society \textbf{72} (2005), 251--263.

%\bibitem{fh04}
%William Fulton and Joe Harris, \href{http://link.springer.com/book/10.1007\%2F978-1-4612-0979-9}{\textit{Representation theory: A first course}}, Springer, 2004.

\bibitem{galbraith12}
Steven D. Galbraith, \href{http://ebooks.cambridge.org/ebook.jsf?bid=CBO9781139012843}{\textit{Mathematics of public key cryptography}}, Cambridge University Press, 2012.

\bibitem{GAP}
The GAP group, \href{http://www.gap-system.org}{\textit{GAP-- Groups, Algorithms, and Programming}}, version 4, 2015.

\bibitem{gassmann26}
Fritz Gassmann, \href{http://link.springer.com/article/10.1007\%2FBF01283860}{\textit{Bemerkung zur vorstehenden Arbeit von Hurwitz}}, Mathematische Zeitschrift \textbf{25} (1926), 665--675.

\bibitem{gg13}
Joachim von zur Gathen and J\"urgen Gerhard, \href{http://ebooks.cambridge.org/ebook.jsf?bid=CBO9781139856065}{\textit{Modern computer algebra}}, 3rd edition, Cambridge University Press, 2013.

\bibitem{gh97}
S.P. Glasby and Robert B. Howlett, \href{http://www.tandfonline.com/doi/10.1080/00927879708825947}{\textit{Writing representations over minimal fields}}, Communications in Algebra \textbf{25} (1997), 1703--1711.

%\bibitem{grecius10} Aaraon Grecius, \href{http://projecteuclid.org/euclid.em/1317758108}{\textit{Elliptic curves with surjective adelic {G}alois representations}}, Experimental Mathematics \textbf{19} (2010), 495--507.

%\bibitem{greenberg12} Ralph Greenberg, \href{http://muse.jhu.edu/journals/american_journal_of_mathematics/v134/134.5.greenberg.html}{\textit{The image of Galois representations attached to elliptic curves with an isogeny}}, American Journal of Mathematics \textbf{134} (2012), 1167--1196.

\bibitem{hhl14} David Harvey, Joris van der Hoeven, and Gr\'egoire Lecerf, \href{https://doi.org/10.1016/j.jco.2016.03.001}{\textit{Even faster integer multiplication}}, Journal of Complexity \textbf{36} (2016), 1--30.

%\bibitem{jones09} Nathan Jones, \href{http://www.ams.org/journals/proc/2009-137-01/S0002-9939-08-09436-7/home.html}{\textit{A bound for the torsion counductor of a non-CM elliptic curve}}, Proceedings of the American Mathematical Society \textbf{137} (2009), 137--143.

%\bibitem{jones10} Nathan Jones, \href{http://www.ams.org/journals/tran/2010-362-03/S0002-9947-09-04804-1/home.html}{\textit{Almost all elliptic curves are {S}erre curves}}, Transactions of the American Mathematical Society \textbf{362} (2010), 1547--1570.

%\bibitem{katz81} Nicholas M. Katz, \href{http://link.springer.com/article/10.1007\%2FBF01394256}{\textit{Galois properties of torsion points on abelian varieties}}, Inventiones Mathematicae \textbf{62} (1981), 481--502.

%\bibitem{katz90} Nicholas M. Katz, \href{https://web.math.princeton.edu/~nmk/wholebookcorrms.pdf}{\textit{Exponential sums and differential equations}}, Princeton University Press, Princeton NJ, 1990.

\bibitem{ks08}
Kiran S. Kedlaya and Andrew V. Sutherland, \href{http://link.springer.com/chapter/10.1007/978-3-540-79456-1_21}{\textit{Computing $L$-series of hypererlliptic curves}}, in Algorithmic Number Theory 8th International Symposium (ANTS VIII), A. J. van der Poorten and A. Stein (Eds.), Lecture Notes in Computer Science \textbf{5011}, Springer, 2008, 312--326.

\bibitem{kenku85}
M. A. Kenku, \href{http://dx.doi.org/10.1112/S0025579300010846}{\textit{A note on the integral points of a modular curve of level $7$}}, Mathematika \textbf{32} (1985), 45--48.

\bibitem{kz12}
Emmanuel Kowalski and David Zywina,
\href{http://projecteuclid.org/euclid.em/1338430812}{\textit{The Chebotarev invariant of a finite group}}, Experimental Mathematics \textbf{21} (2012), 38--56.

\bibitem{lo77}
Jeffrey C. Lagarias and Andrew M. Odlyzko,
\href{http://www.ams.org/mathscinet-getitem?mr=447191}{\textit{Effective versions of the Chebotarev density theorem}}, in Algebraic number fields: L-functions and Galois properties (Proc. Sympos., Univ. Durham, Durham, 1975), pp.\ 409--464. Academic Press, London, 1977.

\bibitem{lmo79}
Jeffrey C. Lagarias, Hugh L. Montgomery, and Andrew M. Odlyzko,
\href{http://link.springer.com/article/10.1007\%2FBF01390234}{\textit{A bound for the least prime ideal in the Chebotarev density theorem}},
Inventiones Mathematicae \textbf{54} (1979), 271--296.

\bibitem{landau85}
Susan Landau, \href{http://dx.doi.org/10.1137/0214015}{\textit{Factoring polynomials over algebraic number fields}}, SIAM Journal of Computing \textbf{1985} (1985), 184--195.

\bibitem{lang76}
Serge Lang, \href{http://link.springer.com/book/10.1007\%2F978-3-642-51447-0}{\textit{Introduction to modular forms}}, Springer, 1976.

%\bibitem{lt76} Serge Lang and Hale Trotter, \href{http://link.springer.com/book/10.1007\%2FBFb0082087}{\textit{Frobenius distributions in $\GL_2$--extensions}}, Lecture  Notes in Mathematics \textbf{504}, Springer, 1976.

\bibitem{lv14}
Eric Larson and Dimitry Vaintrob, \href{http://blms.oxfordjournals.org/content/46/1/197}{\textit{On the surjectivity of Galois representations associated to elliptic curves over number fields}}, Bulletin of the London Mathematical Society \textbf{46} (2014) 197--209.

\bibitem{ligozat77}
G\'erard Ligozat, \href{http://www.ams.org/mathscinet-getitem?mr=463118}{\textit{Courbe modulaires de diveau $11$}} in Modular functions of one variable V, Lecture Notes in Mathematics \textbf{601}, Springer, 1977, 149--237.

\bibitem{lmfdb}
The LMFDB Collaboration, \href{http://www.lmfdb.org}{\textit{The $L$-functions and modular forms database}}, available at \url{http://www.lmfdb.org}, 2014.

\bibitem{lmfdbbeta}
The LMFDB Collaboration, \href{http://www.lmfdb.org}{\textit{The $L$-functions and modular forms database}}, beta version, available at \url{http://beta.lmfdb.org}, 2015.

%\bibitem{mw93} Davd W. Masser and Gisbert W\"ustholz, \href{http://blms.oxfordjournals.org/content/25/3/247}{\textit{Galois properties of division fields of elliptic curves}}, Bulletin of the London Mathematical Society \textbf{25} (1993), 247--254.

\bibitem{mazur77}
Barry Mazur \href{http://www.numdam.org/item?id=PMIHES_1977__47__33_0}{\textit{Modular curves and the Eisenstein ideal}}, Institut des Hautes \'Etudes Scientifique Publications Math\'ematiques \textbf{47} (1977), 33--186.

\bibitem{mazur78}
Barry Mazur \href{http://www.ams.org/mathscinet-getitem?mr=482230}{\textit{Rational isogenies of primes degree}}, Inventiones Mathematicae \textbf{44} (1978), 129--162.

\bibitem{McKee94}
James McKee, \href{http://www.ams.org/journals/mcom/1994-63-208/S0025-5718-1994-1248973-7/home.html}{\textit{Computing division polynomials}}, Mathematics of Computation \textbf{63} (1994), 767--771.

\bibitem{miller04}
Victor S. Miller, \href{http://link.springer.com/article/10.1007\%2Fs00145-004-0315-8}{\textit{The Weil pairing and its efficient calculation}}, Journal of Cryptology \textbf{17} (2004), 235--261.

%\bibitem{momose95} Fumiyuki Momose, \href{http://www.numdam.org/item?id=CM_1995__97_3_329_0}{\textit{Isogenies of prime degree over number fields}}, Compositio Mathematica \textbf{97} (1995), 329--348.

\bibitem{ogg67} Andrew P. Ogg, \href{http://www.jstor.org/stable/2373092}{\textit{Elliptic curves and wild ramification}}, American Journal of Mathematics \textbf{89} (1967), 1--21.

\bibitem{oesterle79}
Joseph Oesterl\'e, \textit{Versions effectives du th\'eor\`eme de Chebotarev sous l'hypoth\`ese de Riemann g\'en\'eralis\'ee}, Ast\'erisque \textbf{61} (1979), 165--167.

%\bibitem{pomerance01}
%Carl Pomerance, \href{http://link.springer.com/article/10.1023\%2FA\%3A1015250102792}{\textit{The expected number of random elements to generate a finite group}}, Periodica Mathematica Hungarica \textbf{43} (2001), 191--198.

%\bibitem{rv01} Amadeu Reverterm, N\'uria Vila, \href{http://cms.math.ca/10.4153/CMB-2001-031-1}{\textit{Images of mod~$p$ Galois representations associated to elliptic curves}}, Canadian Mathematics Bulletin \textbf{44} (2001), 313--322.

\bibitem{rzb14}
Jeremy Rouse and David Zureick-Brown, \href{http://link.springer.com/article/10.1007\%2Fs40993-015-0013-7}{\textit{Elliptic curves over $\Q$ and $2$-adic images of Galois}}, Research in Number Theory~\textbf{1} (2015).

\bibitem{ss71}
Arnold Sch\"{o}nhage and Volker Strassen,
\href{http://www.ams.org/mathscinet-getitem?mr=292344}{\textit{Schnelle {M}ultiplikation gro\ss{}er {Z}ahlen}},  Computing{\bf 7} (1971), 281--292.

\bibitem{schonhage84}
Arnold Sch\"{o}nhage, \href{http://link.springer.com/chapter/10.1007\%2F3-540-13345-3_40}{\textit{Factorization of univariate integer polynomials by diophantine approximation and an improved basis reduction algorithm}}, in Automata, Languages, and Programming, Lecture Notes in Computer Science \textbf{172} (1984), 436-447.

\bibitem{schoof85}
Ren\'e Schoof, \href{http://www.jstor.org/stable/2007968}{\textit{Elliptic curves over finite fields and the computation of square roots mod~$p$}}, Mathematics of Computation \textbf{44} (1985), 483--494.

\bibitem{schoof95}
Ren\'e Schoof, \href{http://jtnb.cedram.org/item?id=JTNB_1995__7_1_219_0}{\textit{Counting points on elliptic curves over finite fields}}, Journal de the\'eorie des nombres de Bordeaux \textbf{7} (1995), 219--254.

\bibitem{serre68}
Jean-Pierre Serre, \href{http://www.ams.org/mathscinet-getitem?mr=1484415}{\textit{Abelian $\ell$-adic representations and elliptic curves}} (revised reprint of 1968 original), A.K. Peters, Wellesley MA, 1998.

\bibitem{serre72}
Jean-Pierre Serre, \href{http://link.springer.com/article/10.1007\%2FBF01405086}{\textit{Propri\'et\'es galoisiennes des points d'ordre fini des courbes elliptiques}}, Inventiones Mathematicae \textbf{15} (1972), 259--331.

\bibitem{serre81}
Jean-Pierre Serre, \href{http://www.numdam.org/numdam-bin/fitem?id=PMIHES_1981__54__123_0}{\textit{Quelques applications du th\'eor\`eme de densit\'e de {C}hebotarev}}, 
Institut des Hautes \'Etudes Scientifiques. Publications Math\'ematiques \textbf{54} (1981), 323--401.

%\bibitem{serre97} Jean-Pierre Serre, \href{http://link.springer.com/book/10.1007/978-3-642-59141-9}{\textit{Galois cohomology}}, Springer, 1997.

%\bibitem{ss14}
%Igor E. Shparlinski and Andrew V. Sutherland, \href{http://link.springer.com/article/10.1007\%2Fs10208-013-9181-9}{\textit{On the distribution of Atkin and Elkies primes}}, Foundations of Computational Mathematics \textbf{14} (2014), 285--297.

\bibitem{ss15}
Igor E. Shparlinski and Andrew V. Sutherland, \href{http://dx.doi.org/10.1112/S1461157015000017}{\textit{On the distribution of Atkin and Elkies primes for reductions of elliptic curves on average}}, LMS Journal of Computation and Mathematics \textbf{18} (2015), 308--322.

%\bibitem{stark81} Harold M. Stark, \href{http://link.springer.com/chapter/10.1007\%2F978-1-4899-6699-5_25}{\textit{The Coates-Wiles theorem revisited}}, in Number Theory Related to Fermat's Last Theorem, Progress in Mathematics \textbf{26} (1982), 349--362.

\bibitem{sw02}
William A. Stein and Mark Watkins, \href{http://link.springer.com/chapter/10.1007/3-540-45455-1_22}{\textit{A database of elliptic curves--First report}}, in Algorithmic Number Theory 5th International Symposium (ANTS V), C. Fieker and D.R. Kohel (Eds.), Lecture Notes in Computer Science \textbf{2369}, Springer, 2002, 267--275.

\bibitem{streng14}
Marco Streng, \href{http://www.ams.org/journals/mcom/2014-83-285/S0025-5718-2013-02712-3/home.html}{\textit{Computing Igusa class polynomials}}, Mathematics of Computation \textbf{83} (2014), 275--309.

\bibitem{smalljac}
Andrew V. Sutherland \href{http://math.mit.edu/~drew/smalljac_v4.0.28.tar}{\texttt{smalljac}} software library, version 4.0.28, available at \url{http://math.mit.edu/~drew}, 2014.

\bibitem{sut10}
Andrew V. Sutherland, \href{http://www.ams.org/journals/mcom/2011-80-273/S0025-5718-2010-02373-7/}{\textit{Computing Hilbert class polynomials with the Chinese remainder theorem}}, Mathematics of Computation \textbf{80} (2011), 501--538.

%\bibitem{sut11} Andrew V. Sutherland, \href{http://www.ams.org/journals/mcom/2011-80-273/S0025-5718-10-02356-2/}{\textit{Structure computation and discrete logarithms in finite abelian $p$-groups}}, Mathematics of Computation \textbf{80} (2011), 477--500.

\bibitem{sut12}
Andrew V. Sutherland, \href{http://jtnb.cedram.org/jtnb-bin/item?id=JTNB_2012__24_2_475_0}{\textit{A local-global principle for isogenies of prime degree}}, Journal de Th\'eorie des Nombres de Bordeaux \textbf{24} (2012), 475--485.

\bibitem{sut13a}
Andrew V. Sutherland, \href{http://msp.org/obs/2013/1-1/p25.xhtml}{\textit{Isogeny volcanoes}}, in Proceedings of the Tenth Algorithmic Number Theory Symposium (ANTS X), E.W. Howe and K.S. Kedlaya (Eds.), Open Book Series \textbf{1}, Mathematical Sciences Publishers, 2013, 507--530.

\bibitem{sut13b}
Andrew V. Sutherland, \href{http://msp.org/obs/2013/1-1/p26.xhtml}{\textit{On the evaluation of modular polynomials}}, in Proceedings of the Tenth Algorithmic Number Theory Symposium (ANTS X), E.W. Howe and K.S. Kedlaya (Eds.), Open Book Series \textbf{1}, Mathematical Sciences Publishers, 2013, 531--555.

\bibitem{sut15}
Andrew V. Sutherland, Magma scripts related to \textit{Computing images of Galois representations attached to elliptic curves}, available at \href{http://math.mit.edu/~drew/galrep}{\url{http://math.mit.edu/~drew/galrep}}, 2015.

\bibitem{sunada85}
Toshikazu Sunada, \href{http://www.jstor.org/stable/1971195}{\textit{Riemannian coverings and isospectral manifolds}}, Annals of Mathematics \textbf{121} (1985), 169--186.

\bibitem{sd72} H.P.F. Swinnerton-Dyer, \href{http://link.springer.com/book/10.1007/978-3-540-37802-0}{\textit{On $\ell$-adic representations and congruences for coefficients of modular forms}}, in Modular functions of one variable III (Antwerp, Belgium 1972) ,P. Deligne and W. Kuyk (Eds.), Lecture Notes in Mathematics \textbf{350}, Springer, 1973, 1--56.

\bibitem{toth14}
L\'azl\'o T\'oth, \href{http://tatra.mat.savba.sk/Full/59/07toth.pdf}{\textit{Subgroups of finite abelian groups having rank two via Goursat's lemma}}, Tatra Mountain Mathematical Publications \textbf{59} (2014), 93--103.

%\bibitem{verdure09} Hugues Verdure, \href{http://www.math.uit.no/ansatte/hugues/papers/quad.pdf}{\textit{A quadratic reciprocity law for elliptic curves}}, Acta Scientiarum Mathematicarum (Szeged) \textbf{75} (2009), 457--465.

%\bibitem{zywina08} David Zywina, \href{http://blms.oxfordjournals.org/content/42/5/811}{\textit{Elliptic curves with maximal Galois action on their torsion points}}, Bulletin of the London Mathematical Society \textbf{42} (2010), 811--826.

%\bibitem{zywina11a} David Zywina, \href{http://www.worldscientific.com/doi/abs/10.1142/S1793042111004411}{\textit{A refinement of Koblitz's conjecture}}, International Journal of Number Theory \textbf{4} (2011), 739--769.

\bibitem{winckler13}
Bruno Winckler, \href{http://arxiv.org/abs/1311.5715}{\textit{Th\'eor\`eme de Chebotarev effectif}}, 2013 Preprint, arXiv:1311.5715.

\bibitem{zywina14a}
David Zywina, \href{http://arxiv.org/abs/1508.07661}{\textit{On the surjectivity of mod-$\ell$ representations associated to elliptic curves}}, arXiv:1508.07661.

\bibitem{zywina15}
David Zywina, \href{http://arxiv.org/abs/1508.07660}{\textit{The possible images of the mod-$\ell$ representations associated to elliptic curves over $\Q$}}, arXiv:1508.07660.

\end{thebibliography}
\end{document}